\theoremstyle{plain}
\newtheorem{thm}{Theorem}[subsection]
\newtheorem{lem}[thm]{Lemma}
\newtheorem{prop}[thm]{Proposition}
\newtheorem{cor}[thm]{Corollary}
\theoremstyle{definition}
\newtheorem{defn}[thm]{Definition}
\newtheorem{eg}[thm]{Example}
\theoremstyle{remark}
\newtheorem{rem}[thm]{Remark}
\numberwithin{equation}{section}
\newcommand{\ZZ}{\mathbb{Z}}
\newcommand{\NN}{\mathbb{N}}
\newcommand{\QQ}{\mathbb{Q}}
\newcommand{\CC}{\mathbb{C}}
\newcommand{\FF}{\mathbb{F}}
\newcommand{\GG}{\mathbb{G}}
\newcommand{\mfk}{\mathfrak{m}}
\newcommand{\nfk}{\mathfrak{n}}
\newcommand{\Pfk}{\mathfrak{P}}
\newcommand{\Gcal}{\mathcal{G}}
\newcommand{\Mcal}{\mathcal{M}}
\newcommand{\Ocal}{\mathcal{O}}
\newcommand{\Zcal}{\mathcal{Z}}
\newcommand{\End}{\operatorname{End}}
\newcommand{\Gal}{\operatorname{Gal}}
\newcommand{\Tr}{\operatorname{Tr}}
\newcommand{\Nr}{\operatorname{Nr}}
\newcommand{\sgn}{\operatorname{sgn}}
\newcommand{\Res}{\operatorname{Res}}
\newcommand{\ord}{\operatorname{ord}}
\newcommand{\Frob}{\operatorname{Frob}}
\newcommand{\ovl}{\overline}
\newcommand{\td}{\tilde}
\newcommand{\sbe}{\subseteq}
\newcommand{\inj}{\hookrightarrow}
\let\oldforall\forall
\renewcommand{\forall}{\oldforall \: }
\let\oldexist\exists
\renewcommand{\exists}{\oldexist \: }
\newcommand{\floor}[1]{\lfloor #1 \rfloor}
\newcommand{\ang}[1]{\langle #1 \rangle}
\newcommand{\anginf}[1]{\langle #1 \rangle_\infty}
\newcommand{\lrang}[1]{\left\langle #1 \right\rangle}
\newcommand{\lranginf}[1]{\left\langle #1 \right\rangle_\infty}
\newcommand{\Mod}[1]{\ (\mathrm{mod}\ #1)}
\newcommand{\Fq}{\FF_q}
\newcommand{\Fqst}{\FF_q^\times}
\newcommand{\Fqd}{\FF_{q^d}}
\newcommand{\Fqdl}{\FF_{q^{d\l}}}
\newcommand{\T}{\theta}
\newcommand{\Ami}{A_{+,i}}
\newcommand{\vaf}{\Pi_v^{\textnormal{ari}}}
\newcommand{\vag}{\Gamma_v^{\textnormal{ari}}}
\newcommand{\vgf}{\Pi_v^{\textnormal{geo}}}
\newcommand{\vgg}{\Gamma_v^{\textnormal{geo}}}
\newcommand{\vtf}{\Pi_v}
\newcommand{\vtg}{\Gamma_v}
\newcommand{\bggs}{\Gcal^{\textnormal{geo}}}
\newcommand{\bags}{\Gcal^{\textnormal{ari}}}
\newcommand{\ggs}{G_\l^{\textnormal{geo}}}
\newcommand{\ggsf}{G_f^{\textnormal{geo}}}
\newcommand{\ags}{G_\l^{\textnormal{ari}}}
\let\l\ell
\DeclarePairedDelimiter{\angres}{\langle}{\rangle_{\textnormal{Res}}}
\DeclarePairedDelimiter{\angtr}{\langle}{\rangle_\textnormal{Tr}}
\newcommand{\myToC}{{
		\renewcommand{\contentsname}{Contents}
		\@starttoc{toc}{\contentsname}
}}
\leaders\hbox{\,.\,}\hfil}
\title[Geometric Gauss Sums and Gross-Koblitz Formula over Function Fields]{Geometric Gauss Sums and Gross-Koblitz Formula \\ over Function Fields}
\author{Ting-Wei Chang}
\date{\today}
\subjclass[2020]{Primary 11R58, 11L05; Secondary 11R60}
\keywords{Function field, Gauss sum, Gross-Koblitz formula, Hasse-Davenport relation, Stickelberger's theorem}
\thanks{This work is supported by the National Science and Technology Council grant no. 109-2115-M-007-017-MY5 and 113-2628-M-007-003.}
\begin{document}
	
	\begin{abstract}
		In this paper, we introduce an analog of Gauss sums over function fields in positive characteristic.
		We establish several fundamental properties, including reflection formula, Stickelberger's theorem, and Hasse-Davenport relations.
		In addition, we determine their absolute values and signs at infinity.
		While these results parallel the classical theory of Gauss sums as well as Thakur’s “arithmetic” analogs over function fields, our approach differs completely from both of the preceding cases.
		Specifically, we first prove a Gross-Koblitz-type formula relating geometric Gauss sums to special $v$-adic gamma values.
		The properties of geometric Gauss sums then follow from the specializations of this formula together with the functional equations of $v$-adic gamma functions.
	\end{abstract}
	
	\maketitle
	
	\setcounter{tocdepth}{2}
	\myToC
	
	\section{Introduction}    \label{section-introduction}
	
	\subsection{Classical Gauss sums}     \label{subsection-classical-gauss-sums}
	
	In this paper, we introduce an analog of Gauss sums in positive characteristic.
	Classically, Gauss sums are finite sums of roots of unity arising from a multiplicative and an additive character on finite fields.
	Thus, they are regarded as “finite” analogs of Euler's gamma function, where two characters correspond respectively to $t^s$ and $e^t$.
	Gauss sums are fundamental objects in number theory and arise in different areas.
	For example, they appear in the annihilation of the ideal class group of cyclotomic fields by Stickelberger's theorem \cite{stickelberger1890ueber} (see also \cite[Chapter 1]{lang1990cyclotomic}, \cite[Chapter 6]{washington1997introduction}), as special $p$-adic gamma values by Gross-Koblitz formula \cite{gk1979gauss}, and as Jacobi sum Hecke characters by the work of Weil \cite{weil1952jacobi}.
	In what follows, we outline the aspects of Gauss sums that will be central to this paper.
	
	As complex numbers, Gauss sums lie uniformly on circles in the complex plane, with radii completely determined by the cardinality of the given finite field.
	Thus, to determine their exact values, it is enough to evaluate their “signs”.
	Nevertheless, this question is only answered for particular cases of Gauss sums and remains open in general.
	We refer readers to \cite{be1981determination}.
	On the other hand, Gauss sums exhibit rich symmetry and compatibility in view of the reflection and the Hasse-Davenport product and lifting formulas \cite{hd1935dienullstellen}.
	These properties further strengthen the analogy between Gauss sums and Euler’s gamma function.
	
	As algebraic numbers, the prime factorizations of Gauss sums as principal ideals are described by Stickelberger's theorem, via particular integral group ring elements known as Stickelberger elements.
	This, in turn, implies that the Stickelberger elements annihilate the ideal class groups of cyclotomic fields.
	This phenomenon was later studied for arbitrary abelian extensions of global fields.
	In this context, the Brumer-Stark conjecture predicts that the so-called Stickelberger-Brumer elements, defined as special values of the $L$-function evaluators, annihilate the corresponding ideal class group.
	For further details, see \cite[Chapter 15]{rosen2002number}.
	
	In the $p$-adic setting, where $p$ is an odd prime number, Gauss sums can be expressed as product of specific Morita's $p$-adic gamma values \cite{morita1975padic}.
	This is due to the technical calculation of Gross and Koblitz \cite{gk1979gauss} (based on an unpublished work of Katz expressing Gauss sums as limits of particular factorials; see \cite[Chapter 15]{lang1990cyclotomic} for historical review), which is now called Gross-Koblitz formula.
	As a result, we obtained a class of algebraicity and monomial relations among $p$-adic gamma values in terms of Gauss sums.
	
	To this point, we have seen the importance of Gauss sums from various perspectives.
	The aim of this paper is to investigate similar phenomena in the setting of function fields in positive characteristic.
	Specifically, we introduce the notion of \textit{geometric Gauss sums} and develop their fundamental properties in parallel with the classical theory.
	
	\subsection{Gauss sums in positive characteristic}
	
	Let $p$ be any prime number and $q$ be a power of $p$.
	Let $A:=\Fq[\T]$ be the polynomial ring in the variable $\T$ over a finite field $\Fq$ and $k := \Fq(\T)$ be its field of fractions with a fixed algebraic closure $\ovl{k}$.
	We let $A_+$ be the set of all monic polynomials in $A$ and $\Ami$ be its subset consisting of all monic polynomials of degree $i \geq 0$.
	We fix an irreducible polynomial $v \in A_{+,d}$ where $d>0$.
	
	\subsubsection{Thakur's arithmetic Gauss sums}     \label{subsubsection-thakurs-arithmetic-gauss-sums}
	
	In his seminal paper \cite{thakur1988gauss}, Thakur developed a theory of Gauss sums over $A$.
	In his setting, Gauss sums are defined as finite “character” sums involving usual roots of unity in $\Fqd \sbe \ovl{k}$ and $v$-torsions of the Carlitz module, the latter serving as function field analogs of roots of unity.
	We will recall the precise definition in \eqref{eq-ari-gauss-sum}.
	Thakur's Gauss sums not only fit into the frameworks of all the classical theories outlined in \S\ref{subsection-classical-gauss-sums}, but also inspired a wide range of subsequent developments and generalizations, including \cite{thakur1991gauss}, \cite{thakur1993shtukas}, \cite{ap2015universal}, \cite{gm2021special}, and \cite{cwy2024vadic}.
	
	In particular, Thakur \cite[Theorem VI]{thakur1988gauss} (see also \eqref{eq-gkt-formula-for-ari}) established an analog of Gross-Koblitz formula relating his Gauss sums to the $v$-adic arithmetic gamma values.
	For this reason, we refer to these sums as \textit{arithmetic Gauss sums}.
	On the other hand, there exist other analogs of gamma functions in the function field setting, called geometric \cite[Section 5]{thakur1991gamma} and two-variable \cite[Subsection 9.9]{goss1996basic} gamma functions (see also Definition \ref{defn-v-adic-gamma-definition}).
	This naturally raises the question of whether there are corresponding notions of “geometric” and “two-variable” Gauss sums that align with the frameworks of the theories in \S\ref{subsection-classical-gauss-sums}.
	In this paper, we answer this question affirmatively.
	
	\subsubsection{Geometric Gauss sums}    \label{subsubsection-geometric-gauss-sums}
	
	Let $\nfk \in A_+$ which is relatively prime to $v$.
	We consider the $\nfk$-th cyclotomic function field $K_\nfk$ with ring of integers $\Ocal_{\nfk}$, and choose a prime $\Pfk$ in $K_\nfk$ lying over $v$ with residue degree $\l$.
	Let $\FF_\Pfk := \Ocal_{\nfk}/\Pfk$ be the residue field of $\Pfk$, and denote $C(\FF_{\Pfk})$ the $A$-module structure on $\FF_{\Pfk}$ via the Carlitz $A$-module $C$.
	We let $\omega$ be the $A$-module isomorphism from the $\nfk$-torsions of $C(\FF_{\Pfk})$ to the Carlitz $\nfk$-torsions in $K_\nfk$ which is the inverse of reduction map, called the “geometric” Teichmüller character.
	And let $\psi : \FF_\Pfk \to \Fqdl$ be a fixed $\Fq$-algebra isomorphism.
	Then for any $x \in \nfk^{-1}A$, we define a \textit{geometric Gauss sum} to be
	$$
	\bggs_x
	:= \bggs_x(\Pfk,\psi)
	:= 1 + \sum_{z \in \FF_\Pfk^\times} \omega\left(C_{x(v^\l-1)} (z^{-1})\right)\psi(z),
	$$
	which lies in the compositum $K_{\nfk,d\l} := K_\nfk\Fqdl \sbe \ovl{k}$ of the geometric extension $K_\nfk$ and the constant field extension $k\Fqdl$ over $k$.
	
	To see the analogy between geometric and classical Gauss sums, note that $\omega$ in the geometric case plays the role of “multiplicative character” (although it is in fact additive; cf. \S\ref{subsubsection-a-comparison-of-Gauss-sums}) preserving the $A$-module structures between the residue field and the torsion points of Carlitz module $C(\ovl{k})$.
	This corresponds to the classical case (see \cite[(1.2)]{gk1979gauss}), where the multiplicative characters preserve the $\ZZ$-module structures between the multiplicative group of the residue field and roots of unity in $\ovl{\QQ}^\times$ (i.e., the torsion points of $\ovl{\QQ}^\times$ as a $\ZZ$-module).
	From this point of view, we shall regard $\psi$ as an “additive character”.
	Thus, the inclusion of “plus $1$” in the above definition arises from the natural analogy between addition and multiplication.
	Also note that the usual trace map in the additive character $\psi$ is excluded, as such definitions arise as special cases of the above ones with $\l=1$ (Remark \ref{rem-ggs-with-trace}).
	
	\subsubsection{The Galois group and Gauss sum monomials}    \label{subsubsection-gauss-sum-monomials}
	
	We also define the notion of Gauss sum monomials.
	For a rational number $y \in \QQ$, we define its fractional part $\ang{y}$ as the unique number in $\QQ$ such that $0 \leq \ang{y} <1$ and $y \equiv \ang{y} \pmod{\ZZ}$.
	On the other hand, recall that the $\infty$-adic absolute value on $k$ is given by $|0|_\infty := 0$ and $|f/g|_\infty := q^{\deg f - \deg g}$ for $f,g \in A \setminus\{0\}$.
	Then for each $x \in k$, we define its “$A$-fractional part” $\anginf{x}$ as the unique element in $k$ such that $0 \leq |\anginf{x}|_\infty < 1$ and $x \equiv \anginf{x} \pmod{A}$.
	
	The Galois group $\Gal(K_{\nfk,d\l}/k)$ is canonically isomorphic to $\Gal(K_\nfk/k) \times \Gal(k\Fqdl/k)$.
	We denote $\sigma_{a,s}$ the element in $\Gal(K_{\nfk,d\l}/k)$ extending $\rho_a \in \Gal(K_\nfk/k)$ and $\tau_q^s \in \Gal(k\Fqdl/k)$, where $\rho_a$ corresponds to $a \in (A/\nfk)^\times$ via the Artin map and $\tau_q$ is the $q$-th power Frobenius on the constant field.
	To ease the notation, we also use $\tau_q$ to denote the canonical extension from $\Gal(k\Fqdl/k)$ to $\Gal(K_{\nfk,d\l}/k)$.
	
	For any $y \in (q^{d\l}-1)^{-1} \ZZ$, we write
	$$
	\ang{y} = \sum_{s=0}^{d\l-1} \frac{y_s q^s}{q^{d\l}-1}
	\quad
	(0 \leq y_s < q \text{ for all } s).
	$$
	We consider the integral group ring element $\sum_{s=0}^{d\l-1} y_s \tau_q^s \in \ZZ[\Gal(K_{\nfk,d\l}/k)]$ acting on $\bggs_x$, and define the \textit{geometric Gauss sum monomials}
	$$
	\ggs(x,y)
	:= (\bggs_x)^{\sum_{s=0}^{d\l-1} y_s \tau_q^s}
	= \prod_{s=0}^{d\l-1} (\bggs_x)^{y_s \tau_q^s} \in K_{\nfk,d\l}.
	$$
	In particular, put
	\begin{equation}     \label{eq-intro-gauss-sum-monomial}
		\ggs(x)
		:= \ggs \left(x , \frac{1}{q-1} \right)
		= \prod_{s=0}^{d\l-1} (\bggs_x)^{\tau_q^s} \in K_\nfk.
	\end{equation}
	We also define the \textit{two-variable Gauss sums} and their monomials upon dividing by the arithmetic ones (see \eqref{eq-two-variable-gauss-sum}).
	
	\subsection{The properties of geometric Gauss sums}      \label{subsection-the-properties-of-geometric-gauss-sums}
	
	We establish several fundamental properties of geometric Gauss sums and Gauss sum monomials.
	In the remaining of \S\ref{section-introduction}, we take $\psi: \FF_{\Pfk} \to \Fqdl \sbe \ovl{k}$ to be the usual Teichmüller character.
	We note that any other such $\Fq$-algebra isomorphisms are some $q$-th powers of $\psi$.
	
	\subsubsection{Reflection formula}
	
	The following theorem is viewed as a reflection formula of geometric Gauss sums.
	See \cite[Lemma 6.1(b)]{washington1997introduction}, \cite[(2.1)]{gk1979gauss} for the classical result, and \cite[Theorem II]{thakur1988gauss}, \cite[\nopp 2.1]{thakur1993behaviour} for the arithmetic case.
	
	\begin{thm}[Reflection formula, Theorem \ref{thm-gauss-sum-reflection}]    \label{thm-intro-gauss-sum-reflection}
		For any $x \in \nfk^{-1}A \setminus A$, we have
		$$
		\prod_{\epsilon \in \Fqst} \prod_{s=0}^{d\l-1} (\bggs_x)^{\sigma_{\epsilon,s}}
		=\prod_{\epsilon \in \Fqst} \ggs(\epsilon x)
		= v^\l.
		$$
		In particular, the prime factorization of $\bggs_x$ only involves primes above $v$.
	\end{thm}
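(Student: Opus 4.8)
The identity decomposes into the two displayed equalities, and only the second is substantial.

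\emph{First equality.} The plan is to evaluate the Galois action termwise on the defining sum. Since $\Gal(K_{\nfk,d\l}/k)\cong\Gal(K_\nfk/k)\times\Gal(k\Fqdl/k)$, we may write $\sigma_{\epsilon,s}=\sigma_{\epsilon,0}\tau_q^s=\tau_q^s\sigma_{\epsilon,0}$, so it is enough to treat $\sigma_{\epsilon,0}$, which restricts to the Artin symbol $\rho_\epsilon$ on $K_\nfk$ and fixes the constant field $\Fqdl$ pointwise. Because $\rho_\epsilon$ acts on the Carlitz $\nfk$-torsion as multiplication by $\epsilon$ and $\omega$ is $A$-linear, $\sigma_{\epsilon,0}$ commutes past $\omega$ and carries $C_{x(v^\l-1)}$ to $C_{\epsilon x(v^\l-1)}$, while it fixes each $\psi(z)\in\Fqdl$; hence $(\bggs_x)^{\sigma_{\epsilon,0}}=\bggs_{\epsilon x}$, and therefore $(\bggs_x)^{\sigma_{\epsilon,s}}=(\bggs_{\epsilon x})^{\tau_q^s}$. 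Regrouping the double product over $\epsilon$ and $s$ then gives $\prod_\epsilon\prod_s(\bggs_x)^{\sigma_{\epsilon,s}}=\prod_\epsilon\prod_s(\bggs_{\epsilon x})^{\tau_q^s}=\prod_\epsilon\ggs(\epsilon x)$. As a corroborating computation, the substitution $z\mapsto\epsilon z$ together with $\psi(\epsilon)=\epsilon$ (since $\psi$ is an $\Fq$-algebra map and $C_\epsilon$ is scalar multiplication on $\Fq$) yields the affine relation $\bggs_{\epsilon x}=1+\epsilon(\bggs_x-1)$, whence $\prod_{\epsilon\in\Fqst}\bggs_{\epsilon x}=1-(\bggs_x-1)^{q-1}$ — already visibly a single element of $K_{\nfk,d\l}$.

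\emph{Second equality.} For $\prod_{\epsilon\in\Fqst}\ggs(\epsilon x)=v^\l$ I would invoke the Gross--Koblitz-type formula that the paper establishes, which expresses $\bggs_x$, hence each $\ggs(\epsilon x)$, as an explicit power of a uniformizer at a prime above $v$ (in the pertinent $v$-adic completion) times a finite product of special $v$-adic gamma values indexed by the digits of the $A$-fractional parts $\lranginf{\epsilon x}$. Taking the product over $\epsilon\in\Fqst$ and applying the reflection (complement) functional equation of the $v$-adic gamma function, the gamma factors and the ensuing roots of unity cancel out, leaving exactly the accumulated power of the uniformizer, which one then checks equals the divisor of $v^\l$. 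That the exponent is exactly $\l$ reflects the fact that $\bggs_x$ is a Gauss sum over $\FF_\Pfk$, of size $q^{d\l}=|v|_\infty^\l$, so the formula is the function-field incarnation of the classical $g(\chi)\overline{g(\chi)}=|\FF|$, with $v$ in the role of $|\FF_v|=q^d$.

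I expect the main obstacle to be the exact identification: establishing that $\prod_\epsilon\ggs(\epsilon x)$ equals $v^\l$ \emph{on the nose} — with no residual root-of-unity or polynomial factor and no unit ambiguity — requires the precise normalization built into the Gross--Koblitz formula together with the exact shape of the $v$-adic gamma reflection identity (or, alternatively, a careful Stickelberger-type evaluation of the $\Pfk'$-adic valuations of $\bggs_x$ for the primes $\Pfk'\mid v$, supplemented by an argument that no other prime divides $\bggs_x$). Granting the second equality, the final assertion is immediate: $\bggs_x$ is an algebraic integer — a finite sum of products of Carlitz $\nfk$-torsion points with roots of unity — and by the displayed identity $\bggs_x\mid v^\l$, so every prime occurring in its factorization lies above $v$.
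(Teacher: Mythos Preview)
Your approach matches the paper's: the first equality is exactly Proposition~\ref{prop-galois-actions}(1) plus the definition of $\ggs(\cdot)$, and for the second the paper applies its Gross--Koblitz--Thakur formula (Theorem~\ref{thm-gkt-formula-for-geo}) together with Thakur's reflection formula for $\vgf$, as you propose. One correction to your mental model: in the geometric GKT formula there is no uniformizer factor---instead there are rational ``delta'' factors $\delta_{\epsilon x,i}\in k$, and under reflection the product $\prod_\epsilon\prod_i\vgf(\anginf{v^i\epsilon x})$ does not cancel to $1$ but yields further explicit rationals $\prod_{x_i\neq 0}\sgn(x_i)^{-1}\anginf{v^{\l-i}x}$; combining these with the deltas gives a telescoping product $\prod_{x_i\neq 0} v\cdot\anginf{v^{\l-i-1}x}/\anginf{v^{\l-i}x}$ over the nonzero $v$-adic digits of $x$, which the paper then evaluates to exactly $v^\l$.
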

	
	\subsubsection{Analog of Stickelberger's theorem}
	
	We also establish the exact prime factorizations of geometric Gauss sums as principal ideals, which is an analog of classical Stickelberger's theorem \cite{stickelberger1890ueber} (see also \cite[Theorem 2.2]{lang1990cyclotomic}).
	See \cite[Theorem IV(1)]{thakur1988gauss} for the analogous result in the arithmetic case.
	
	Let $\Ocal_{\nfk}$ and $\Pfk_{\nfk} := \Pfk$ be defined as in \S\ref{subsubsection-geometric-gauss-sums}.
	We further let $\Ocal_{\nfk,d\l}$ be the ring of integers of $K_{\nfk,d\l}$ and $\Pfk_{\nfk,d\l}$ be the unique prime in $K_{\nfk,d\l}$ above $\Pfk_\nfk$ which is the kernel of $\Ocal_{\nfk,d\l} \simeq \Fqdl \otimes_{\Fq} \Ocal_{\nfk} \to \Fqdl$ given by $\epsilon \otimes \alpha \mapsto \epsilon\psi(\ovl{\alpha})$ where $\epsilon \in \Fqdl$ and $\alpha \in \Ocal_{\nfk}$.
	
	\begin{thm}[Analog of Stickelberger's theorem, Theorem \ref{thm-stickelberger}]
		Given $x \in k$ with $0 < |x|_\infty < 1$, write $x = a_0/\nfk$ where $\deg a_0 < \deg \nfk$ and $(a_0,\nfk)=1$.
		Define
		$$
		\eta_{x,\nfk,d\l} := \sigma_{a_0,\deg \nfk} \cdot \eta_{\nfk,d\l}
		\quad
		\text{where}
		\quad
		\eta_{\nfk,d\l} := \sum_{\substack{a\in A_+ \\ \deg a < \deg \nfk \\ (a,\nfk)=1}}  \sigma_{a,\deg a}^{-1} \in \ZZ[\Gal(K_{\nfk,d\l}/k)].
		$$
		Then $\bggs_x$ has prime factorization
		$$
		\bggs_x \cdot \Ocal_{\nfk,d\l}
		= \Pfk_{\nfk,d\l}^{\eta_{x,\nfk,d\l}}.
		$$
		Consequently, let
		$$
		\eta_{x,\nfk} := \rho_{a_0} \cdot \eta_{\nfk}
		\quad
		\text{where}
		\quad
		\eta_{\nfk} := \sum_{\substack{a\in A_+ \\ \deg a < \deg \nfk \\ (a,\nfk)=1}}  \rho_a^{-1} \in \ZZ[\Gal(K_\nfk/k)].
		$$
		Then $\ggs(x)$ has prime factorization
		$$
		\ggs(x) \cdot \Ocal_{\nfk}
		= \Pfk_{\nfk}^{\eta_{x,\nfk}}.
		$$
	\end{thm}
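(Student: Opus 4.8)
The plan is to read off the prime factorisation directly from the Gross--Koblitz-type formula, using the reflection formula (Theorem~\ref{thm-intro-gauss-sum-reflection}) to localise the problem at $v$ and the functional equations of the $v$-adic gamma function to pin down the individual valuations. First I would observe that $\bggs_x$ is a sum of products of Carlitz $\nfk$-torsion points and elements of $\Fqdl$, hence lies in $\Ocal_{\nfk,d\l}$, and is nonzero by the reflection formula; so every exponent in its prime factorisation is $\ge 0$, and by the reflection formula the only primes occurring lie above $v$. These primes are the conjugates $\sigma(\Pfk_{\nfk,d\l})$, one for each coset of the decomposition group $D:=\ang{\sigma_{v,d}}$ of $\Pfk_{\nfk,d\l}$ over $v$, which has order $\l$ (the residue degree of $\Pfk_\nfk$ over $v$, since $\Pfk_{\nfk,d\l}/\Pfk_\nfk$ is unramified of residue degree $1$). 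Thus $\bggs_x\,\Ocal_{\nfk,d\l}=\prod_{\sigma D}\sigma(\Pfk_{\nfk,d\l})^{m_\sigma}$ with $m_\sigma=\ord_{\Pfk_{\nfk,d\l}}(\sigma^{-1}\bggs_x)\ge 0$, and it suffices to show $\sum_{\sigma D}m_\sigma\,\sigma$ equals the image of $\eta_{x,\nfk,d\l}$ in $\ZZ[\Gal(K_{\nfk,d\l}/k)/D]$.

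The second ingredient is the Galois equivariance
$$
\sigma_{b,t}\bigl(\bggs_x(\Pfk,\psi)\bigr)=\bggs_{bx}(\Pfk,\psi^{q^t}),
$$
which follows because $\omega$ is $A$-linear --- so $\sigma_{b,t}$ carries $\omega\!\left(C_{x(v^\l-1)}(z^{-1})\right)$ to $\omega\!\left(C_{bx(v^\l-1)}(z^{-1})\right)$, legitimate since $\bggs_\bullet$ depends only on its argument modulo $A$ --- together with $\psi(z)^{q^t}=\psi^{q^t}(z)$; one also checks $\tau_q^{\,j}(\Pfk_{\nfk,d\l}(\psi))=\Pfk_{\nfk,d\l}(\psi^{q^j})$ from the defining kernel. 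Combining these, each $m_\sigma$ (for $\sigma=\sigma_{b,t}$) is rewritten as the $\Pfk_{\nfk,d\l}$-adic valuation of a single Gauss sum $\bggs_y(\Pfk,\psi')$, with $y\equiv b^{-1}x\pmod A$ and $\psi'=\psi^{q^{-t}}$, so that the whole factorisation is controlled by the quantities $\ord_{\Pfk_{\nfk,d\l}}\!\bigl(\bggs_y(\Pfk,\psi')\bigr)$ as $y$ and $\psi'$ vary.

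Now I would feed in the Gross--Koblitz formula: for $0<|y|_\infty<1$ it expresses $\bggs_y(\Pfk,\psi')$, in the completion at $\Pfk_{\nfk,d\l}$, as an explicit unit times a power of a $v$-torsion uniformiser times a product of $v$-adic gamma values. Since those gamma values are $v$-adic units, $\ord_{\Pfk_{\nfk,d\l}}\!\bigl(\bggs_y(\Pfk,\psi')\bigr)$ equals the exponent of that uniformiser --- an explicit combinatorial ``digit/carry count'' of $\anginf{y}$ in base $q^d$, a sum of $\l$ terms indexed by $D$ (and possibly larger than $1$, just as in the classical Stickelberger theorem). Tracing this through the reduction of the previous paragraph, the exponent $m_{\sigma_{c,u}}$ turns out to equal the number of monic $a\in A$ with $\deg a<\deg\nfk$, $(a,\nfk)=1$, $a\equiv a_0 c^{-1}\pmod\nfk$ and $\deg a\equiv\deg\nfk-u\pmod{d\l}$, summed over the coset $\sigma_{c,u}D$ --- precisely the coefficient of $\sigma_{c,u}$ in the image of $\eta_{x,\nfk,d\l}=\sum_a\sigma_{a_0 a^{-1},\,\deg\nfk-\deg a}$ in $\ZZ[\Gal/D]$. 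Here one uses that the monic $a$ with $\deg a<\deg\nfk$ and $(a,\nfk)=1$ form a set of representatives for the cosets of $\Fqst$ in $(A/\nfk)^\times$, so each residue class contributes at most one term; with that, matching the counts against the coefficients of $\eta_{x,\nfk,d\l}$ is bookkeeping, and gives $\bggs_x\,\Ocal_{\nfk,d\l}=\Pfk_{\nfk,d\l}^{\eta_{x,\nfk,d\l}}$.

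For the last assertion, $\ggs(x)=\prod_{s=0}^{d\l-1}(\bggs_x)^{\tau_q^s}=N_{K_{\nfk,d\l}/K_\nfk}(\bggs_x)$, so $\ggs(x)\,\Ocal_{\nfk,d\l}=\Pfk_{\nfk,d\l}^{(\sum_s\tau_q^s)\eta_{x,\nfk,d\l}}$. Since $\sum_{s=0}^{d\l-1}\tau_q^s$ annihilates the constant-field component of every element of $\Gal(K_{\nfk,d\l}/k)$, one has $(\sum_s\tau_q^s)\eta_{x,\nfk,d\l}=(\sum_s\tau_q^s)\eta_{x,\nfk}$; and since $\{\tau_q^s(\Pfk_{\nfk,d\l})\}_{s=0}^{d\l-1}$ is exactly the set of primes of $K_{\nfk,d\l}$ above $\Pfk_\nfk$, the right-hand side equals $\bigl(\Pfk_\nfk^{\eta_{x,\nfk}}\bigr)\Ocal_{\nfk,d\l}$; as $\ggs(x)\in K_\nfk$, this descends to $\ggs(x)\,\Ocal_\nfk=\Pfk_\nfk^{\eta_{x,\nfk}}$. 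The main obstacle is the third step: putting the Gross--Koblitz formula into a form from which the uniformiser exponent can be extracted at every prime above $v$ --- equivalently, identifying the relevant ``$v$-adic digit count'' of $\anginf{y}$ and controlling its behaviour under $\psi\mapsto\psi^{q}$ --- and then verifying that these counts reassemble exactly into $\eta_{x,\nfk,d\l}$; the other steps are formal manipulations of the Galois action and of fractional ideals.
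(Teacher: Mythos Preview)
Your plan is essentially the paper's own proof. Both arguments localise at $v$ via the reflection formula, use the Galois equivariance of Proposition~\ref{prop-galois-actions} to reduce the question to computing $\ord_{\Pfk_{\nfk,d\l}}$ of conjugates of $\bggs_x$, then read these valuations off from the Gross--Koblitz formula (Theorem~\ref{thm-first-gkt-formula}) using that the $v$-adic gamma values are units, and finally match the resulting digit count against the Stickelberger element via the combinatorics of the $v$-adic expansion.

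Two small remarks. First, the non-unit factor in Theorem~\ref{thm-first-gkt-formula} is not a power of a $v$-torsion uniformiser: it is $\delta_x^{(s)}\in k$, equal to $1$ or to $v\cdot\anginf{v^{\l-e-1}x}$, so the valuation you extract is $0$ or $1+\ord_v(\anginf{v^{\l-e-1}x})$. Second, the paper does not compute $m_\sigma$ for every coset $\sigma D$ as you propose; instead it computes only at the cosets $\sigma_{a,\deg a}D$ that visibly appear in $\eta_{\nfk,d\l}$, obtains a divisibility $\Pfk_{\nfk,d\l}^{\eta}\mid\bggs_x\,\Ocal_{\nfk,d\l}$, and upgrades to equality by applying $\sum_{\epsilon,s}\sigma_{\epsilon,s}$ and invoking the reflection formula $\prod_{\epsilon,s}(\bggs_x)^{\sigma_{\epsilon,s}}=v^\l$ to see both sides have the same total length. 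Your approach of computing every $m_\sigma$ directly is equally valid but requires also checking $m_\sigma=0$ whenever $\delta_x^{(s)}=1$, i.e.\ whenever the relevant $v$-adic digit is not monic of the prescribed degree; the paper's counting shortcut bypasses this case analysis.
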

	
	Thus, the integral group ring elements $\eta_{\nfk}$ and $\eta_{\nfk,d\l}$ are analogs of the classical Stickelberger element corresponding to the cyclotomic function field $K_{\nfk}$ and the composite field $K_{\nfk,d\l}$, respectively.
	It can also be shown that these two elements arise from the $L$-function evaluators corresponding to their respective fields (see \cite[Proposition 15.15]{rosen2002number} for the former).
	Hence, our geometric Gauss sums provide an explicit construction of the Brumer-Stark units for these two abelian extensions over $k$.
	(See Gross' theorem \cite{gross1980annihilation} and the Brumer-Stark conjecture for function fields \cite[Chapter V]{tate1984lesconjectures}, \cite{hayes1985stickelberger}, and \cite[Chapter 15]{rosen2002number}.)
	
	\subsubsection{Analog of Hasse-Davenport product relation}
	
	The following theorem is an analog of Hasse-Davenport product relation \cite{hd1935dienullstellen} (see also \cite[(3.3)]{gk1979gauss}).
	See \cite[Remark 4.6.5(2)]{thakur2004function} for the analogous result in the arithmetic case.
	
	\begin{thm}[Analog of Hasse-Davenport product relation, Theorem \ref{thm-hd-geo-product}]
		Suppose $x \in \nfk^{-1}A$.
		For any $g \in A_{+,h}$ with $(g,v) = 1$, we let $f$ be the order of $v$ modulo $g\nfk$.
		Then for any $y \in (q^{df}-1)^{-1}\ZZ$, we have
		$$
		\prod_{\alpha} \ggsf\left( \frac{x+\alpha}{g},y \right) \bigg/ \ggsf\left( \frac{\alpha}{g},y \right) = \ggsf(x,q^hy)
		$$
		where $\alpha$ runs through a complete residue system modulo $g$.
		In particular, we have
		$$
		\prod_{\alpha} \ggsf\left( \frac{x+\alpha}{g} \right) \bigg/ \ggsf\left( \frac{\alpha}{g} \right) = \ggsf(x).
		$$
	\end{thm}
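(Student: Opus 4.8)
The plan, following the strategy announced in the abstract, is to prove the identity $v$-adically by combining the geometric Gross--Koblitz formula with a multiplication (distribution) formula for the $v$-adic geometric gamma function $\vgg$, mirroring the way the classical Hasse--Davenport product relation (cf.\ \cite[(3.3)]{gk1979gauss}) can be read off from Gross--Koblitz. Since both sides are algebraic numbers lying in a common field, it suffices to prove the equality after the $v$-adic embedding entering the Gross--Koblitz formula. I first observe that, because $\nfk \mid g\nfk$ and $f$ is the order of $v$ modulo $g\nfk$, all the Gauss sum monomials on both sides are taken with respect to a common prime of residue degree $f$ above $v$ in $K_{g\nfk}$ and lie in the single field $K_{g\nfk,df}$; in particular $\ggsf(x,y)$ and $\ggsf(x,q^hy)$ are meaningful for $x\in\nfk^{-1}A$, and no comparison between different cyclotomic fields is required.

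Next I would rewrite each factor using the Gross--Koblitz-type formula, expressing $\ggsf\!\left(\tfrac{x+\alpha}{g},y\right)$, $\ggsf\!\left(\tfrac{\alpha}{g},y\right)$ and $\ggsf(x,q^hy)$ as products of values of $\vgg$ at appropriate $A$-fractional parts of the arguments, times an explicit constant built from $v$ and an element of the constant field $\Fqdf$ --- the analogue of the power of $\pi$ and the root of unity in the classical formula. The exponents occurring in these gamma products are governed by the $q$-adic digit expansion of $\ang{y}$, and replacing $y$ by $q^hy$ cyclically shifts those digits; on Gauss sums this shift is precisely the action of the Frobenius $\tau_q^h$, so the twist in $\ggsf(x,q^hy)$ is reproduced automatically.

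The core input is then a distribution relation for $\vgg$ of the shape
$$
\prod_{\alpha} \vgg\!\left(\lranginf{\tfrac{x+\alpha}{g}}\right)
\Big/ \prod_{\alpha} \vgg\!\left(\lranginf{\tfrac{\alpha}{g}}\right)
= \vgg\!\left(\anginf{x}\right)\cdot(\text{explicit constant}),
$$
with $\alpha$ running over a complete residue system modulo $g$, which I would extract from the functional equations of the $v$-adic geometric gamma function (see Definition \ref{defn-v-adic-gamma-definition}). Dividing through by the $x=0$ case is exactly what clears the otherwise unwieldy normalizing constants, which is the structural reason the denominators $\ggsf(\tfrac{\alpha}{g},y)$ appear in the statement. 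Combining this relation with the Gross--Koblitz substitutions, it remains to verify that the residual powers of $v$ and the $\Fqdf$-valued factors produced by the several applications of Gross--Koblitz and by the gamma relation cancel between numerator and denominator; on the exponent side this reduces to a distribution-type identity for $A$-fractional parts and the associated digit sums. I expect this bookkeeping to be the main obstacle --- tracking how $\anginf{\cdot}$ interacts with division by $g$ and with the digit expansion of $\ang{y}$, and matching all the constant factors --- whereas the reflection formula (Theorem \ref{thm-intro-gauss-sum-reflection}) provides a handy consistency check.

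Finally, the ``in particular'' assertion is the special case $y=\tfrac{1}{q-1}$: every $q$-adic digit of $\ang{\tfrac{1}{q-1}}$ equals $1$ and is therefore fixed by the cyclic shift, so $\ggsf(x,q^h/(q-1))=\ggsf(x,\tfrac1{q-1})=\ggsf(x)$, and the general formula specializes to the displayed one.
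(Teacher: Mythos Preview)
Your overall strategy coincides with the paper's: reduce to the ``basic'' case $y=1/(q^{df}-1)$ (and extend by the group-ring action of $\sum_s y_s\tau_q^s$), then rewrite every factor via the Gross--Koblitz--Thakur formula and apply the multiplication formula for the $v$-adic factorial $\vgf$ (the paper's Theorem~\ref{thm-multiplication-formula}) together with the set identity $\{\anginf{v^i(x+\alpha)/g}\}_\alpha=\{(\anginf{v^ix}+\alpha)/g\}_\alpha$. Where you diverge is in the endgame. You plan to track all the explicit constants (powers of $v$, the $\delta_{x,i}$ factors, units from the gamma relation) and verify they cancel; the paper deliberately avoids this. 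It only proves the identity up to an unknown $\kappa\in k^\times$, and then pins down $\kappa=1$ by an indirect argument: Theorem~\ref{thm-gauss-sum-reflection} forces $\kappa$ to be (up to $\Fqst$) a power of $v$; Proposition~\ref{prop-absolute-values} on $\infty$-adic valuations forces $\ord_\infty(\kappa)=0$, hence $\kappa\in\Fqst$; and finally monicity of all the rational factors appearing (the $\delta$'s, the $g$-powers, the translation terms) gives $\kappa=1$. So the reflection formula is not merely a consistency check here --- together with the $\infty$-adic valuation result it is the mechanism that determines the constant. Your direct bookkeeping may well go through, but the $\delta_{x,i}$ factors in Theorem~\ref{thm-gkt-formula-for-geo} are case-dependent and unpleasant to sum over $\alpha$, whereas the paper's route sidesteps all of that. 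Your derivation of the ``in particular'' case from $y=1/(q-1)$ is correct.
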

	
	We mention that by Theorem \ref{thm-intro-gauss-sum-reflection}, the denominator in the second equality is an explicit integral power of $v$ (see Remark \ref{rem-hd-v-power}).
	Also, we refer the readers to \S\ref{subsubsection-hasse-davenport-relations} for other analogs of Hasse-Davenport relations.
	In particular, see Theorem \ref{thm-hd-two-product} for the corresponding result of the two-variable case, which provides an interesting relation between geometric Gauss sums and Thakur's arithmetic Gauss sums in \S\ref{subsubsection-thakurs-arithmetic-gauss-sums}.
	
	\subsubsection{Absolute values and signs at infinity}
	
	There is also a uniform behavior on the $\infty$-adic absolute values of geometric Gauss sums.
	See \cite[Lemma 6.1(c)]{washington1997introduction} for the classical case and \cite[Theorem IV(2)]{thakur1988gauss} for the arithmetic case.
	
	\begin{prop}[$\infty$-adic absolute values, Proposition \ref{prop-absolute-values}]
		The valuation (normalized so that the valuation of $\T$ is $-1$) of the geometric Gauss sums $(\bggs_x)^{\tau_q^s}$ at any infinite place of $K_{\nfk,d\l}$ is $-1/(q-1)$ for all $x \in \nfk^{-1}A \setminus A$ and $0 \leq s \leq d\l-1$.
		In particular, the normalized $\infty$-adic valuation of the Gauss sum monomials $\ggs(x)$ is $-d\l/(q-1)$ for all $x \in \nfk^{-1}A \setminus A$.
	\end{prop}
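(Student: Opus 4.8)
The plan is to deduce both assertions from the reflection formula (Theorem~\ref{thm-gauss-sum-reflection}), which in its first form reads $\prod_{\epsilon\in\Fqst}\prod_{s=0}^{d\l-1}(\bggs_x)^{\sigma_{\epsilon,s}}=v^\l$, together with the standard description of how the infinite place decomposes in $K_{\nfk,d\l}/k$. The key point is that every Galois element $\sigma_{\epsilon,s}$ appearing in that product fixes every place of $K_{\nfk,d\l}$ above $\infty$; granting this, evaluating such a place on the reflection identity at once determines the valuation of $\bggs_x$, and the remaining assertions follow by dividing the exponent $d\l$ of $v^\l$ evenly among the $(q-1)d\l$ conjugates.

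To justify that point, we recall the local picture at $\infty$. In the cyclotomic extension $K_\nfk/k$ the place $\infty$ has residue degree $1$ and is tamely ramified with ramification index $q-1$, its decomposition group being equal to its inertia group, namely the image $\{\rho_\epsilon:\epsilon\in\Fqst\}$ of the scalars under $\Fqst\hookrightarrow(A/\nfk)^\times$ (see, e.g., \cite[Chapter 12]{rosen2002number}). In the constant field extension $k\Fqdl/k$ the place $\infty$ is unramified and inert, with decomposition group all of $\Gal(k\Fqdl/k)=\langle\tau_q\rangle$. Since $K_{\nfk,d\l}/k$ is abelian with group $\Gal(K_\nfk/k)\times\Gal(k\Fqdl/k)$, the decomposition group $D$ of a place $w\mid\infty$ of $K_{\nfk,d\l}$ is independent of $w$; it contains $\Gal(K_{\nfk,d\l}/K_\nfk)=\langle\tau_q\rangle$ (as $\infty$ is inert there as well) and projects onto $\{\rho_\epsilon:\epsilon\in\Fqst\}$ in the first coordinate, from which one reads off $D=\{\rho_\epsilon:\epsilon\in\Fqst\}\times\langle\tau_q\rangle$. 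In particular $\sigma_{\epsilon,s}\in D$ for all $\epsilon\in\Fqst$ and $0\le s\le d\l-1$, so each such $\sigma_{\epsilon,s}$ fixes $w$ and hence $w\big((\bggs_x)^{\sigma_{\epsilon,s}}\big)=w(\bggs_x)$.

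We then fix an infinite place $w$ of $K_{\nfk,d\l}$, normalized so that $w(\T)=-1$; then $w$ restricts to $v_\infty$ on $k$, so $w(v^\l)=-\deg(v^\l)=-d\l$ since $v$ is monic of degree $d$. Applying $w$ to the reflection identity and using the previous paragraph, the left-hand side has $w$-value $(q-1)\cdot d\l\cdot w(\bggs_x)$, whence $w(\bggs_x)=-1/(q-1)$. Since $\tau_q\in D$ fixes $w$, we also get $w\big((\bggs_x)^{\tau_q^s}\big)=w(\bggs_x)=-1/(q-1)$ for every $0\le s\le d\l-1$; as $w$ was an arbitrary infinite place, this is the first assertion. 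Summing over $s$ and recalling $\ggs(x)=\prod_{s=0}^{d\l-1}(\bggs_x)^{\tau_q^s}\in K_\nfk\subseteq K_{\nfk,d\l}$ gives $w(\ggs(x))=-d\l/(q-1)$, the second assertion.

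The only substantive input beyond the reflection formula is the description of the decomposition group at $\infty$ — in particular the classical fact that the inertia group at $\infty$ in $K_\nfk/k$ is exactly the scalar subgroup $\{\rho_\epsilon\}$, so that $e_\infty=q-1$ — and once this is granted the argument is pure bookkeeping. The apparent alternative of estimating $w(\bggs_x)$ directly from the defining sum $1+\sum_{z\in\FF_\Pfk^\times}\omega\big(C_{x(v^\l-1)}(z^{-1})\big)\psi(z)$ (the constants $\psi(z)$ being $w$-units and the torsion values $\omega(\cdots)$ having known $w$-valuation) runs into the need to exclude cancellation among the summands, which is precisely the delicate point that passing through the already-established reflection formula avoids.
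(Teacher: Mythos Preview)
Your proof is correct and follows essentially the same approach as the paper: both deduce the valuation from the reflection formula (Theorem~\ref{thm-gauss-sum-reflection}) together with the identification of the decomposition group at $\infty$ in $K_{\nfk,d\l}/k$ as $\{\rho_\epsilon:\epsilon\in\Fqst\}\times\langle\tau_q\rangle\simeq\Fqst\times\ZZ/d\l\ZZ$. The paper phrases the final step via the local norm $\Nr_{K_{\td\infty}/k_\infty}$ and the identity $\ord_{\td\infty}(\alpha)=[K_{\td\infty}:k_\infty]^{-1}\ord_\infty(\Nr(\alpha))$, while you apply $w$ directly to the reflection identity and use that each $\sigma_{\epsilon,s}\in D$ fixes $w$; these are the same computation.
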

	
	\begin{rem}[Signs at infinity]
		Contrary to the classical situation, we will give the explicit determination of the signs of geometric Gauss sums and Gauss sum monomials at infinity in Proposition \ref{prop-sign}.
	\end{rem}
	
	\subsubsection{A comparison of Gauss sums}    \label{subsubsection-a-comparison-of-Gauss-sums}
	
	At this point, it is worth indicating one of the main differences between geometric Gauss sums and the other two types of Gauss sums in the existing literature.
	Classically, Gauss sums arise from a multiplicative and an additive character.
	In the arithmetic case \cite{thakur1988gauss} (see also \eqref{eq-ari-gauss-sum}), Thakur makes analogy by preserving the multiplicative character while replacing the additive one with an appropriate $A$-module isomorphism.
	(In fact, one needs not only a multiplicative character but an $\Fq$-algebra isomorphism in order to get non-vanishing Gauss sums; see \cite[Proposition I]{thakur1988gauss} and the remark after that.)
	
	In the geometric case (recall \S\ref{subsubsection-geometric-gauss-sums}), on the other hand, the multiplicative character is replaced by an $A$-module map, and the additive one is replaced by an $\Fq$-algebra isomorphism.
	This transition from multiplication to addition leads us to adopt a completely different approach from both the classical and arithmetic cases.
	In particular, we establish the properties of geometric Gauss sums by first relating them to special $v$-adic geometric and two-variable gamma values, which are analogs of \textit{Gross-Koblitz-Thakur formula}.
	
	\subsection{Gross-Koblitz-Thakur formulas}
	
	Let $k_v$ be the completion of $k$ at $v$ and $A_v$ be its ring of integers.
	Let $\CC_v$ be the completion of a fixed algebraic closure of $k_v$ so that we have the inclusions $K_\nfk \sbe K_{\nfk,d\l} \sbe \ovl{k} \sbe \CC_v$.
	
	\subsubsection{\texorpdfstring{$v$}{v}-adic gamma functions}
	
	For an element $x\in A_v$, we set
	$$
	x^\flat := 
	\begin{cases}
		x, & \text{if } x \in A_v^\times, \\
		1, & \text{if } x\in v A_v.
	\end{cases}
	$$
	In \cite[Section 5]{thakur1991gamma}, Thakur introduced a $v$-adic analog of gamma function $\vgg: A_v \to A_v$, now called \textit{$v$-adic geometric gamma function}, as
	$$
	\vgg(x) := \frac{1}{x^\flat} \prod_{i=0}^{\infty} \left( \prod_{a\in \Ami} \frac{a^\flat}{(x+a)^\flat} \right).
	$$
	This definition was later generalized by Goss at the very end of \cite[Subsection 9.9]{goss1996basic}.
	Specifically, define $\vgg: A_v \times \ZZ_p \to A_v$ by
	$$
	\vgg(x,y+1) := \frac{1}{x^\flat} \prod_{i=0}^\infty \left( \prod_{a\in\Ami} \frac{a^\flat}{(x+a)^\flat} \right)^{y_i}
	$$
	where $y = \sum_{i=0}^\infty y_iq^i$ with $0 \leq y_i <q$ for all $i$.
	One sees that
	$$
	\vgg\left(x , 1-\frac{1}{q-1}\right)
	= \vgg(x).
	$$
	Inspired by Thakur’s modification in the $\infty$-adic case \cite[Section 8]{thakur1991gamma}, we will also consider the \textit{$v$-adic two-variable gamma function} upon dividing by the arithmetic ones defined by Goss \cite[Appendix]{goss1980modular} (Definition \ref{defn-v-adic-gamma-definition}(3)).
	
	\subsubsection{Gross-Koblitz-Thakur formulas}
	
	As mentioned in \S\ref{subsubsection-a-comparison-of-Gauss-sums}, we prove analogs of Gross-Koblitz-Thakur formula for geometric and two-variable gamma functions.
	More precisely, we show that the special $v$-adic gamma monomials can be recognized as the special monomials of geometric Gauss sums in \S\ref{subsubsection-gauss-sum-monomials} as follows.
	(See \cite[Theorem 1.7]{gk1979gauss} for the original Gross-Koblitz formula, and \cite[Theorem VI]{thakur1988gauss}, \eqref{eq-gkt-formula-for-ari} for Thakur's arithmetic analog.)
	
	\begin{thm}[Analogs of Gross-Koblitz-Thakur formula, Theorem \ref{thm-gkt-formula-for-geo}]    \label{thm-intro-gkt-formula}
		For any $x \in \nfk^{-1}A$ and $y \in (q^{d\l}-1)^{-1} \ZZ$, we have
		$$
		\ggs(x)
		= \kappa_1 \cdot
		\prod_{i=0}^{\l-1} \vgg \left( \anginf{v^ix} \right)^{-1}
		$$
		and
		$$
		\ggs (x,y)
		= \kappa_2 \cdot
		\prod_{i=0}^{\l-1} \vgg \left( \anginf{v^ix}, \ang{|v^i|_\infty y} \right)
		$$
		where $\kappa_1$ and $\kappa_2$ are explicitly given algebraic numbers over $k$.
		In particular, $\vgg(a/(v-1))$ and $\vgg(a/(v-1),r/(q^d-1))$ are algebraic over $k$ for all $a \in A$ and $r \in \ZZ$.
	\end{thm}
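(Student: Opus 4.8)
Both displays will come from one $v$-adic computation; since $\ggs(x)$, $\ggs(x,y)$ and the $v$-adic gamma values all lie in $\ovl{k}$, it suffices to verify the identities after fixing an embedding $K_{\nfk,d\l}\inj\CC_v$, and I will describe the argument for the (more general) two-variable formula, the first display being the case $y=\tfrac{1}{q-1}$ after the elementary rewriting $\vgg(z,\tfrac{1}{q-1}) = (z^\flat)^{q-3}\vgg(z)^{q-2}$ obtained from Goss's definition (here one uses $\ang{|v^i|_\infty\cdot\tfrac{1}{q-1}} = \tfrac{1}{q-1}$, as $q^{di}\equiv 1\pmod{q-1}$).

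The core of the proof is a $v$-adic product expansion of the geometric Gauss sum. Taking $\psi$ to be the Teichm\"uller character, so that $\psi(z)$ is the Teichm\"uller representative of $z\in\FF_\Pfk^\times$, I would unwind the Carlitz $\nfk$-torsion point $\omega\big(C_{x(v^\l-1)}(z^{-1})\big)$ through the infinite product expansion of the Carlitz exponential $e_C$ --- this plays, in the present setting, the role of Dwork's splitting function in the classical Gross--Koblitz argument. Summing against $\psi$ converts the finite sum defining $\bggs_x$ into a $v$-adically convergent infinite product; I expect each Frobenius twist $(\bggs_x)^{\tau_q^s}$ to become, up to an explicit algebraic prefactor, a product of linear factors $(z+a)^\flat$ over monic $a\in A$ equal to one of the finite blocks assembling $\vgg$. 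The combinatorics is controlled by $\l=\ord_{\nfk}(v)$: the operators $\tau_q^s$ with $0\le s\le d\l-1$ permute the $v$-adic data within a single orbit, producing the $\l$ distinct arguments $\anginf{v^ix}$ and carrying the base-$q$ digit vector of $y$ to those of $\ang{|v^i|_\infty y}$, so that $\ggs(x,y) = (\bggs_x)^{\sum_s y_s\tau_q^s}$ reassembles to $\kappa_2\prod_{i=0}^{\l-1}\vgg\big(\anginf{v^ix},\ang{|v^i|_\infty y}\big)$.

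To pin down $\kappa_1$ and $\kappa_2$ explicitly I would track every factor discarded above: the $v$-divisible linear factors removed by the $\flat$-operation and the ``$+1$'' in the definition of $\bggs_x$. These contribute powers of $v$ together with values of $e_C$ at $\nfk$-torsion arguments --- equivalently, the Carlitz period $\tilde{\pi}$ and cyclotomic quantities of $K_\nfk$ --- all manifestly algebraic over $k$. As a check, Theorem~\ref{thm-intro-gauss-sum-reflection} forces $\prod_{\epsilon\in\Fqst}\ggs(\epsilon x)=v^\l$, so the total power of $v$ extracted in the bookkeeping must be $\l$; I would use this to guard against errors and to shorten the computation of $\kappa_1$.

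The algebraicity corollary then follows at once. Take $\nfk=v-1$, which is coprime to $v$ and has $\l=\ord_{v-1}(v)=1$ since $v\equiv 1\pmod{v-1}$; the first display reads $\ggs(x)=\kappa_1\vgg(\anginf{x})^{-1}$ with $\ggs(x)\in K_{v-1}\sbe\ovl{k}$ and $\kappa_1$ algebraic, so $\vgg(a/(v-1))$ is algebraic for $\deg a<d$, and the translation relation --- the defining product of $\vgg$ telescopes since $a\mapsto a+b$ permutes $\Ami$ once $i>\deg b$, whence $\vgg(z+b)/\vgg(z)\in k$ for $b\in A$ --- extends this to all $a\in A$; the two-variable case is identical via the second display with $d\l=d$ and the analogous relation in the second argument. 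I expect the main obstacle to be the core step: building the Carlitz-exponential analog of Dwork's argument and showing the resulting infinite product converges $v$-adically to precisely the asserted finite product of $\flat$-factors with the correct algebraic constant --- the reduction between the two displays and the algebraicity corollary are then routine.
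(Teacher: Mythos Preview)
Your plan has a genuine gap at the ``core step,'' and the mechanism you sketch does not match what actually makes the argument go through.

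You propose to expand $\omega(C_{x(v^\l-1)}(z^{-1}))$ via the infinite product of the Carlitz exponential $e_C$, playing the role of Dwork's splitting function, and then sum against $\psi$ to obtain a $v$-adically convergent product. But $e_C$ and its product expansion are $\infty$-adic objects (the product is over the lattice $\tilde\pi A\subset\CC_\infty$); there is no direct way to feed them into a $v$-adic limit, and the classical Dwork argument has no obvious Carlitz analog here precisely because both ``characters'' $\omega$ and $\psi$ are $\Fq$-linear rather than multiplicative/additive. The paper's proof does not proceed this way at all. The crucial missing ingredient is a \emph{scalar product expression} for the Gauss sum (Theorem~\ref{thm-coleman-function-and-gauss-sum}):
\[
\bggs_x = 1 - \sum_{i=1}^{d\l} C_{\mfk x}(\lambda_i)\,\psi(\ovl{\lambda}{}^*_i),
\]
where $\{\lambda_i\},\{\lambda_i^*\}$ come from $\mfk$-dual families and Ore's formula. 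Establishing this requires a nontrivial comparison of three pairings (residue, Poonen, trace) showing that $\{\ovl\lambda_i\}$ and $\{\ovl\lambda{}^*_i\}$ are trace-dual in $\FF_\Pfk$ --- this is the structural insight your sketch lacks. Once one has this, the link to gamma values runs through the ABP duality identity
\[
\sum_i (\lambda_i^*)^{q^N} C_{a_0}(\lambda_i) = -\Psi_N(a_0/\nfk),\qquad 1+\Psi_N(z)=\prod_{a\in A_{+,N}}\bigl(1+\tfrac{z}{a}\bigr),
\]
so that $\bggs_x$ becomes a $v$-adic limit of the algebraic quantities $1+\Psi_N$ (via the Teichm\"uller limit $\psi(\ovl\alpha)=\lim_N\alpha^{q^{Nd\l}}$). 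The gamma side is then computed by writing out the defining product of $\vgf$, separating off the $v$-divisible factors, and recognizing a telescoping product of the $1+\Psi_N$'s; only a single factor survives, and it is exactly the Gauss sum by the previous step.

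Two smaller points. First, your reduction of the first display to the second via ``$\vgg(z,\tfrac1{q-1})=(z^\flat)^{q-3}\vgg(z)^{q-2}$'' is not correct: the specialization giving the one-variable gamma is $\vgg(z,1-\tfrac1{q-1})=\vgg(z)$, not $y=\tfrac1{q-1}$; the paper instead proves the special case $\ang y = q^s/(q^{d\l}-1)$ first and builds general $y$ by applying the group-ring element $\sum_s y_s\tau_q^s$, with the one-variable formula obtained by taking all $y_s=1$. Second, your proposed check via Theorem~\ref{thm-intro-gauss-sum-reflection} is circular in this paper's logic: the reflection formula is \emph{deduced from} the Gross--Koblitz--Thakur formula together with Thakur's reflection formula for $\vgf$, not used to verify it.
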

	
	Theorem \ref{thm-intro-gkt-formula} implies that the product of $v$-adic gamma values along the $\Frob_v$-orbits is algebraic, which aligns with Thakur's theorem for the geometric case \cite[Corollary 8.6.2]{thakur2004function}.
	Thus, Theorem \ref{thm-intro-gkt-formula} not only gives a “Gauss sum” explanation of Thakur's result, but also generalizes it to the two-variable case (when taking Thakur's arithmetic analog into account; see Theorem \ref{thm-gkt-formula-for-two}).
	
	\subsubsection{Strategy of proof}
	
	To prove Theorem \ref{thm-intro-gkt-formula}, one key step involves interpreting the geometric Gauss sums as the “reduction of the twisted Coleman functions” evaluated at a particular point (Theorem \ref{thm-coleman-function-and-gauss-sum}; see also Remark \ref{rem-ggs-and-coleman}).
	These functions were first introduced by Anderson in \cite{anderson1992twodimensional} using the soliton machinery and later reconstructed by Anderson-Brownawell-Papanikolas in \cite[\nopp 6.3.5]{abp2004determination} more directly.
	They also play a pivotal role in the period interpretations of $\infty$-adic gamma values in positive characteristic, as demonstrated in \cite{sinha1997periods}, \cite{abp2004determination}, and \cite{wei2022algebraic}.
	
	In order to prove Theorem \ref{thm-coleman-function-and-gauss-sum}, we make use of a “duality” identity in \cite[Theorem 5.4.4]{abp2004determination} (see also Theorem \ref{thm-restatement-of-abp-5.4.4}).
	Since the characters defining the geometric Gauss sums are both $\Fq$-linear, we are naturally led to the comparison between three different pairings: the residue pairing, the Poonen pairing, and the trace pairing (see \S\ref{subsection-pairing-comparisons}).
	Theorem \ref{thm-coleman-function-and-gauss-sum} then follows from the compatibility among these pairings (Theorem \ref{thm-pairing-summary}).
	
	\begin{rem}
		Using Theorem \ref{thm-intro-gkt-formula}, together with Thakur’s theory of $v$-adic gamma functions \cite{thakur1991gamma} (e.g., their standard functional equations), we are able to establish the properties of geometric Gauss sums presented in \S\ref{subsection-the-properties-of-geometric-gauss-sums}.
		We also use the compatibility among pairings to determine their signs at infinity in \S\ref{subsection-the-behavior-of-geometric-gauss-sums-at-infinity}.
	\end{rem}
	
	\begin{rem}
		Using his original definition of Coleman functions, Anderson \cite{anderson1992twodimensional} constructed “Jacobi sum” Hecke characters for $K_\nfk$, which parallel the classical result due to Weil \cite{weil1952jacobi}.
		Under this setting, he also developed the reflection formula, Stickelberger's theorem, and absolute values.
		Thus, by Theorem \ref{thm-coleman-function-and-gauss-sum}, we are able to reinterpret Anderson's Jacobi sums in terms of geometric Gauss sum monomials in \S\ref{subsubsection-gauss-sum-monomials} (cf. \cite[\nopp 3.4]{anderson1992twodimensional} and \eqref{eq-intro-gauss-sum-monomial}), which would extend his theory to the composite field $K_{\nfk,d\l}$.
	\end{rem}
	
	\subsection*{Acknowledgments}
	
	The author would like to express sincere gratitude to Fu-Tsun Wei for his exceptional mentorship, insightful discussions, and constant support throughout the development of this work.
	Appreciation is also extended to Jing Yu for providing valuable comments and feedback.
	Special thanks to the National Science and Technology Council for its financial support over the past few years, and to the National Center for Theoretical Sciences for organizing workshops and conferences that have enriched his understanding of mathematics and contributed to his academic growth.
	
	\section{Preliminaries}    \label{section-preliminaries}
	
	Let $A := \Fq[\T]$ be the polynomial ring in the variable $\T$ over a finite field $\Fq$ of $q$ elements, where $q$ is a power of a prime number $p$, and $k := \Fq(\T)$ be the field of fractions of $A$ with a fixed algebraic closure $\ovl{k}$.
	Let $A_+$ be the set of all monic polynomials in $A$ and $\Ami$ be its subset consisting of monic polynomials of degree $i\geq 0$.
	
	\subsection{Carlitz and adjoint Carlitz modules}
	
	In this section, we recall the notions of Carlitz and adjoint Carlitz modules over an arbitrary $A$-field.
	See \cite[Chapter 3]{goss1996basic} for more information.
	
	\subsubsection{Carlitz modules}     \label{subsubsection-carlitz-modules}
	
	Let $(L,\iota)$ be an $A$-field where $\iota: A \to L$ is the structural $\Fq$-algebra homomorphism.
	Let $\tau$ be the $q$-th power Frobenius element in the $\Fq$-linear endomorphism ring $\End_{\Fq}(\GG_a)$ of the additive group $\GG_a$ of $L$.
	This ring is isomorphic to the twisted polynomial ring $L\{\tau\}$ with the usual addition rule and the multiplication given by $\tau \alpha = \alpha^q \tau$ for all $\alpha \in L$.
	It is also isomorphic to the ring of $\Fq$-linear polynomials with the usual addition rule and the multiplication given by composition.
	Under this identification, each element $\alpha\tau^n \in L\{\tau\}$ corresponds to the $\Fq$-linear polynomial $\alpha z^{q^n}$ for all $\alpha\in L$.
	
	The \textit{Carlitz module over $L$} is the $\Fq$-algebra homomorphism $C: A \to L\{\tau\}$ given by $C_\T := \iota(\T) + \tau$. 
	Since $A$ is generated freely by $\T$ over $\Fq$, the Carlitz module is determined completely by $C_\T$.
	In the language of $\Fq$-linear polynomials, we have $C_\T(z) = \iota(\T)z + z^q$, called a \textit{Carlitz polynomial}.
	More generally, for each $a \in A$, a straightforward calculation shows that the Carlitz polynomial $C_a(z)$ is
	\begin{equation}    \label{eq-carlitz-polynomial}
		C_a(z) = \sum_{i=0}^{\deg a} c_{a,i} z^{q^i} \in \iota(A)[z] \sbe L[z]
	\end{equation}
	where $c_{a,0} = \iota(a)$ and $c_{a,\deg a}$ is the leading coefficient of $a$.
	
	Via the structure map $\iota$, there is a natural $A$-module action on $L$ given by the usual multiplication in $L$.
	Now via $C$, we obtain a new $A$-module structure on $L$ given by $a \cdot \alpha := C_a(\alpha)$ for all $a\in A$ and $\alpha \in L$.
	The latter structure will be denoted as $C(L)$ to distinguish the usual $A$-module structure on $L$.
	Very often, we identify the Carlitz module over $L$ with $C(L)$.
	
	\subsubsection{Adjoint Carlitz modules}     \label{subsubsection-adjoint-carlitz-modules}
	
	Suppose now the $A$-field $L$ is perfect, so that $\tau$ induces an automorphism of $L$.
	We let $L\{\tau^{-1}\}$ be the twisted polynomial ring in $\tau^{-1}$ over $L$ with the usual addition rule and the multiplication given by $\tau^{-1} \alpha = \alpha^{1/q} \tau^{-1}$ for all $\alpha \in L$.
	The \textit{adjoint Carlitz module over $L$} is defined by the $\Fq$-algebra homomorphism $C^*: A \to L\{\tau^{-1}\}$ where $C^*_\T := \iota(\T) + \tau^{-1}$.
	Similar to the Carlitz module, the adjoint Carlitz module is determined completely by $C^*_\T$.
	For each $a\in A$, the adjoint of the Carlitz polynomial \eqref{eq-carlitz-polynomial} is
	\begin{equation}      \label{eq-adjoint-carlitz-polynomial}
		C^*_a(z) := \sum_{i=0}^{\deg a} c_{a,i}^{q^{-i}} z^{q^{-i}}.
	\end{equation}
	Via $C^*$, we obtain a new $A$-module structure $C^*(L)$ on $L$, which will also be identified with the adjoint Carlitz module over $L$.
	
	\subsection{Carlitz's torsion points and cyclotomic function fields}
	
	In this section, we review some fundamental properties of cyclotomic function fields.
	We refer the readers to \cite{hayes1974explicit}, \cite[Chapter 12]{rosen2002number}, or \cite[Section 7.1]{papikian2023drinfeld}.
	
	\subsubsection{Cyclotomic function fields}    \label{subsubsection-cyclotomic-function-fields}
	
	Consider the Carlitz module $C(\ovl{k})$ with the natural structure map $\iota: A \inj k \inj \ovl{k}$.
	We fix $\nfk \in A_+$ and let $\Lambda_\nfk$ be the $\nfk$-torsion points of $C(\ovl{k})$.
	In other words, $\Lambda_\nfk$ consists of roots of the Carlitz polynomial $C_\nfk(z)$ in $\ovl{k}$ (recall \eqref{eq-carlitz-polynomial}).
	We let $K_\nfk := k(\Lambda_\nfk)$, called the \textit{$\nfk$-th cyclotomic function field}, and $\Ocal_\nfk$ be the integral closure of $A$ in $K_\nfk$.
	Then the following properties are known.
	
	\begin{itemize}
		\item There is an $A$-module isomorphism $\Lambda_{\nfk} \simeq A/\nfk$.
		In particular, $\Lambda_{\nfk}$ has a generator $\lambda_\nfk \in \Lambda_\nfk$, called a primitive $\nfk$-th root of $C$.
		For $a\in A$, $C_a(\lambda_\nfk)$ is primitive if and only if $(a,\nfk) = 1$.
		
		\item The $\nfk$-th cyclotomic polynomial $C_\nfk^\star (z) := \prod (z-\lambda)$, where $\lambda$ runs through all primitive $\nfk$-th roots of $C$, has coefficients in $A$.
		
		\item $K_\nfk/k$ is Galois and $\Gal(K_\nfk/k) \simeq (A/\nfk A)^\times$ where each $a \in (A/\nfk A)^\times$ corresponds to the automorphism $\rho_a \in \Gal(K_\nfk/k)$ such that $\rho_a(\lambda_\nfk) = C_a(\lambda_\nfk)$ (the Artin map).
		
		\item The ring $\Ocal_\nfk = A[\lambda_\nfk]$.
		
		\item For any prime $v \in A_+$, the polynomial $C_v(z)/z$ is Eisenstein at $v$.
		Moreover, $v$ is ramified in $K_\nfk$ if and only if $v \mid \nfk$.
		
		\item If $v \nmid \nfk$, let $\l$ be the order of $v$ in $(A/\nfk A)^\times$.
		Then $v$ splits into $[K_\nfk : k]/\l$ primes in $K_\nfk$, each with residue degree $\l$.
		
		\item The infinite prime $\infty$ in $k$ splits into $[K_\nfk : k]/(q-1)$ primes in $K_\nfk$ with the decomposition group $\{\rho_\epsilon \mid \epsilon \in \Fqst\} \sbe \Gal(K_\nfk/k)$ which is isomorphic to $\Fqst$.
	\end{itemize}
	
	\subsubsection{Torsions of the Carlitz modules}
	
	We have the following proposition.
	
	\begin{prop}     \label{prop-reduction-of-lambda}
		Let $\nfk \in A_+$ and $v \in A_+$ be an irreducible polynomial not dividing $\nfk$.
		Let $\Pfk$ be a prime in $K_\nfk$ above $v$ of degree $\l$ with residue field $\FF_{\Pfk} := \Ocal_\nfk/\Pfk$.
		Then we have the $A$-module isomorphisms $A/(v^\l-1) \simeq \Lambda_{v^\l-1} \simeq C(\FF_{\Pfk})$, where the second map is given by reduction modulo $\Pfk$.
		In particular, we have the isomorphism $\Lambda_{\nfk} \simeq C(\FF_{\Pfk})[\nfk]$.
	\end{prop}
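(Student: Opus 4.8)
The plan is to deduce everything from the structure of Carlitz torsion in the maximal cyclotomic field $K_{v^\l-1}$; the only mildly delicate point is that $\Lambda_{v^\l-1}$ does not lie in $K_\nfk$, so one must first pick a prime of $K_{v^\l-1}$ above $\Pfk$ and check that the residue fields agree.

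Set $\mfk:=v^\l-1$. First I would record its basic features. It is monic of degree $d\l\geq1$ (with $d:=\deg v$), hence $\mfk\in A_+$, so $A/\mfk\simeq\Lambda_\mfk$ by the Carlitz torsion facts in \S\ref{subsubsection-cyclotomic-function-fields}; in particular $\Lambda_\mfk$ consists of $q^{d\l}$ elements, and being the (necessarily distinct) roots of the monic polynomial $C_\mfk(z)\in A[z]$, it is integral over $A$, so $\Lambda_\mfk\subseteq\Ocal_\mfk\subseteq K_\mfk$, and $C_\mfk(z)=z\prod_{0\neq\mu\in\Lambda_\mfk}(z-\mu)$; comparing the coefficients of $z$ gives $\prod_{0\neq\mu\in\Lambda_\mfk}\mu=\pm\,\mfk$. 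Since $\l$ is the order of $v$ modulo $\nfk$ we have $\nfk\mid\mfk$, hence $K_\nfk\subseteq K_\mfk$, $\Lambda_\nfk\subseteq\Lambda_\mfk$, and $\Lambda_\nfk=\Lambda_\mfk[\nfk]$. Now fix a prime $\Pfk'$ of $\Ocal_\mfk$ above $\Pfk$: as $\deg v\geq1$ we have $v\nmid\mfk$, so $v$ is unramified in $K_\mfk$, and the order of $v$ modulo $\mfk$ is again $\l$ (because $v^\l-1\mid v^m-1\iff\l\mid m$ in the PID $A$); thus every prime of $K_\mfk$ over $v$ has residue degree $\l$, so $\Pfk'\mid\Pfk$ is unramified with $\FF_{\Pfk'}=\FF_\Pfk$. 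Reduction modulo $\Pfk'$ therefore restricts to a map $r\colon\Lambda_\mfk\to\FF_\Pfk$, which is the ``reduction modulo $\Pfk$'' intended in the statement.

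It remains to show $r$ is an $A$-module isomorphism. It is $A$-linear because the Carlitz module is defined over $A$: writing $C_a(z)=\sum_i c_{a,i}z^{q^i}$ with $c_{a,i}\in A$, reduction modulo the prime $v$ below $\Pfk'$ sends these coefficients to those of the Carlitz polynomials of $C(\FF_\Pfk)$ (whose structure map $A\to\FF_\Pfk$ factors through $A/v$), so $\ovl{C_a(\mu)}=\ovl{C_a}(\ovl{\mu})$. It is injective since $\ker r=\Lambda_\mfk\cap\Pfk'=\{0\}$: every nonzero $\mu\in\Lambda_\mfk$ is a $\Pfk'$-unit, being a factor of $\prod_{0\neq\mu'\in\Lambda_\mfk}\mu'=\pm\,\mfk$, which is a $\Pfk'$-unit because $\mfk=v^\l-1\equiv-1\pmod{\Pfk'}$. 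Finally $|\Lambda_\mfk|=q^{d\l}=|\FF_\Pfk|$ (as $\Pfk$ has residue degree $\l$ over $v$, whose residue field has $q^d$ elements), so the injective $A$-linear map $r$ is bijective. Composing with $A/\mfk\simeq\Lambda_\mfk$ yields $A/(v^\l-1)\simeq\Lambda_{v^\l-1}\simeq C(\FF_\Pfk)$, and restricting $r$ to $\nfk$-torsion, via $\Lambda_\mfk[\nfk]=\Lambda_\nfk$, gives $\Lambda_\nfk\simeq C(\FF_\Pfk)[\nfk]$. One can also see surjectivity of $r$ more structurally: \S\ref{subsubsection-cyclotomic-function-fields} says $C_v(z)/z$ is Eisenstein at $v$, so $C_v(z)\equiv z^{q^d}\pmod v$, i.e.\ $\ovl{C_v}=\tau^d$ in $\FF_\Pfk\{\tau\}$, whence $\ovl{C_\mfk}(z)=z^{q^{d\l}}-z$ and the roots of $\ovl{C_\mfk}$ already fill out $\FF_\Pfk$. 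I do not expect a serious obstacle; the only real care needed is the bookkeeping that identifies $\FF_{\Pfk'}$ with $\FF_\Pfk$.
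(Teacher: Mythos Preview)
Your proof is correct and follows essentially the same route as the paper: both arguments show injectivity of the reduction map by factoring $C_\mfk(z)=z\prod_{0\neq\mu}(z-\mu)$, reading off that the product of the nonzero torsion points is $\pm\mfk$ (the paper does this via the derivative at $z=0$, you by comparing the linear coefficient), noting this is a unit at $\Pfk'$, and then concluding by a cardinality count. The only difference is that you spell out the identification $\FF_{\Pfk'}=\FF_\Pfk$ via the residue degree computation, whereas the paper dispatches this in a single parenthetical remark.
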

	
	\begin{proof}
		The first isomorphism is a general fact mentioned earlier.
		For the second, from
		$$
		C_\nfk(z) = z\prod_{0\neq \lambda \in \Lambda_\nfk} (z - \lambda),
		$$
		we take derivative and substitute $z=0$, which results in
		$$
		\nfk = \prod_{0\neq \lambda \in \Lambda_\nfk} (-\lambda).
		$$
		Since the reduction of $\nfk$ is non-zero in $\FF_{\Pfk}$, we obtain an injection $\Lambda_{\nfk} \to C(\FF_{\Pfk})[\nfk] \sbe C(\FF_\Pfk)$.
		In particular, we have $\Lambda_{v^\l-1} \simeq C(\FF_{\Pfk})$ by counting cardinality.
		(Here, we identify the residue field of $v$ in $K_{v^\l-1}$ with $\FF_{\Pfk}$.)
	\end{proof}
	
	\subsubsection{Torsions of the adjoint Carlitz modules}       \label{subsubsection-torsions-of-the-adjoint-carlitz-modules}
	
	Now, consider the adjoint Carlitz module $C^*(\ovl{k})$.
	For $\nfk \in A_+$, we let $\Lambda_\nfk^*$ be the $\nfk$-torsion points of $C^*(\ovl{k})$.
	In other words, $\Lambda_\nfk^*$ consists of roots of $C_\nfk^*(z)$ in $\ovl{k}$
	(or more precisely, roots of the polynomial $C_\nfk^*(z)^{q^{\deg \nfk}} \in A[z]$; recall \eqref{eq-adjoint-carlitz-polynomial}).
	Then we have the following theorem.
	
	\begin{thm}      \label{thm-goss-1.7.11}
		The roots of $C_\nfk(z)$ and $C_\nfk^*(z)$ generate the same field extension over $k$. In other words, $K_\nfk = k(\Lambda_\nfk) = k(\Lambda_\nfk^*)$.
	\end{thm}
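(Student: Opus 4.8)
The plan is to reduce the equality of fields to an equality of Galois characters. Write $N := \deg\nfk$. The Carlitz side is standard: the $A$-module isomorphism $\Lambda_\nfk \simeq A/\nfk$ turns the action of $\Gal(\ovl k/k)$ into a character $\chi\colon \Gal(\ovl k/k)\to (A/\nfk)^\times$ with $K_\nfk = \ovl k^{\ker\chi}$ and, via the Artin map, $\chi(\Frob_v) \equiv v \Mod{\nfk}$ for every prime $v\nmid\nfk$. For the adjoint side I would first record the same basic facts. By \eqref{eq-carlitz-polynomial} and \eqref{eq-adjoint-carlitz-polynomial} one has the identity $C_\nfk^*(z)^{q^N} = C_\nfk^\dagger(z)$ in $A[z]$, where $C_\nfk^\dagger(z) := \sum_{j=0}^{N} c_{\nfk,N-j}^{\,q^j}\, z^{q^j}$ is an honest separable $\Fq$-linear polynomial of $q$-degree $N$, with linear coefficient $c_{\nfk,N}$ (the leading coefficient of $\nfk$) and leading coefficient $\iota(\nfk)^{q^N}$; hence $\Lambda_\nfk^* = \ker C_\nfk^\dagger$ has exactly $q^N$ elements. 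Running the same count for prime‑power torsion and using the structure theory of modules over the local rings $A/w^e$ (for $w^e\parallel\nfk$) gives $\Lambda_\nfk^*\simeq A/\nfk$ as $A$-modules. Since the coefficients of every $C^*_a$ lie in the perfect closure of $k$, the group $\Gal(\ovl k/k)$ commutes with the $A$-action on $\Lambda_\nfk^*$, so we obtain a character $\chi^*\colon\Gal(\ovl k/k)\to(A/\nfk)^\times$ with $k(\Lambda_\nfk^*) = \ovl k^{\ker\chi^*}$. It now suffices to prove $\chi^* = \chi^{-1}$.

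To compute $\chi^*$ on a Frobenius, fix a prime $v\nmid\nfk$. Putting $z=0$ in $C_\nfk^\dagger(z)/z = \iota(\nfk)^{q^N}\prod_{0\neq\mu\in\Lambda_\nfk^*}(z-\mu)$ gives $\prod_{0\neq\mu}(-\mu) = c_{\nfk,N}\,\iota(\nfk)^{-q^N}$, a $v$-unit; as all roots of $C_\nfk^\dagger$ are $v$-integral (the leading coefficient $\iota(\nfk)^{q^N}$ is a $v$-unit and all coefficients lie in $A$) and their valuations sum to zero, each nonzero $\mu$ is a $v$-unit. Because $\Lambda_\nfk^*$ is an $\Fq$-vector space, reduction modulo any prime $\qfk$ of $k(\Lambda_\nfk^*)$ above $v$ is then injective on $\Lambda_\nfk^*$; comparing cardinalities exactly as in the proof of Proposition \ref{prop-reduction-of-lambda} yields an $A$-module isomorphism $\Lambda_\nfk^*\isoto C^*(\FF_\qfk)[\nfk]$, and $\ovl{C_\nfk^\dagger}$ is separable, so $v$ is unramified in $k(\Lambda_\nfk^*)$ and $\Frob_v$ acts on $\Lambda_\nfk^*$, through this isomorphism, as the $q^{\deg v}$-power Frobenius $\tau^{\deg v}$ on $C^*(\FF_\qfk)$. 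The crucial local point is this: over $\ovl{\Fq}$ with structure map sending $\T$ to a root of $v$, the fact that $C_v(z)/z$ is Eisenstein at $v$ forces $c_{v,i}\equiv 0\Mod{v}$ for $0\le i\le \deg v-1$, while $v$ monic gives $c_{v,\deg v}=1$; feeding this into \eqref{eq-adjoint-carlitz-polynomial} shows $C^*_v = \tau^{-\deg v}$ as an operator on $\ovl{\Fq}$. Consequently, on $C^*(\FF_\qfk)[\nfk]\simeq A/\nfk$ we have $\tau^{\deg v} = (C^*_v)^{-1} =$ multiplication by the inverse of $v$ in $(A/\nfk)^\times$; that is, $\chi^*(\Frob_v)\equiv v^{-1}\equiv\chi(\Frob_v)^{-1}\Mod{\nfk}$.

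Finally, $\chi\chi^*$ is a character of $\Gal(\ovl k/k)$ factoring through the finite abelian quotient $\Gal(L/k)$ with $L := K_\nfk\, k(\Lambda_\nfk^*)$, and it is trivial on $\Frob_v$ for every $v\nmid\nfk$; by the Chebotarev density theorem these Frobenius elements are dense in $\Gal(L/k)$, so $\chi\chi^* = 1$. Hence $\chi^* = \chi^{-1}$, $\ker\chi^* = \ker\chi$, and $k(\Lambda_\nfk^*) = \ovl k^{\ker\chi} = k(\Lambda_\nfk) = K_\nfk$. I expect the main work to be the adjoint analogue of Proposition \ref{prop-reduction-of-lambda} — assembling the polynomial $C_\nfk^\dagger$, checking that the nonzero adjoint torsion points are $v$-units and hence that reduction is injective — together with the clean identity $C^*_v = \tau^{-\deg v}$ over the residue field at $v$, which is precisely what converts ``multiplication by $v$'' on the Carlitz side into ``multiplication by $v^{-1}$'' on the adjoint side.
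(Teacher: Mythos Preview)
Your argument is correct. The paper does not prove this result itself but simply cites \cite[Theorem 1.7.11]{goss1996basic}, so the comparison is with Goss's proof rather than anything in the paper. Goss's approach is purely algebraic, working inside the twisted polynomial rings: the adjoint $\phi \mapsto \phi^*$ is an anti-isomorphism $L\{\tau\} \to L\{\tau^{-1}\}$ for perfect $L$, and via the Ore factorization theory (cf.\ \cite[Corollary 1.7.7]{goss1996basic}, and Theorem \ref{thm-ore} of this paper) a complete splitting of $C_\nfk$ into linear left factors over $K_\nfk$ transports, after reversing the order, to one for $C_\nfk^*$ over the same field, and vice versa. Your route is arithmetic: compute the Galois character on adjoint torsion locally at each $v\nmid\nfk$ via the identity $C_v^* \equiv \tau^{-\deg v}$ over $A/v$, and then invoke Chebotarev. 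This buys you the sharper statement $\chi^* = \chi^{-1}$, not merely $\ker\chi^* = \ker\chi$; that refinement is essentially what underlies the Galois equivariance of the Poonen pairing and the adjoint reduction statement (Proposition \ref{prop-reduction-of-lambda*}) used later in the paper. The cost is the density argument and the reduction-is-injective bookkeeping, both of which Goss's algebraic argument avoids entirely.
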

	
	\begin{proof}
		See \cite[Theorem 1.7.11]{goss1996basic}.
	\end{proof}
	
	Note that the leading coefficient of the polynomial $C_\nfk^*(z)^{q^{\deg\nfk}} \in A[z]$ is $\nfk^{q^{\deg \nfk}}$.
	So every $\lambda^* \in \Lambda_\nfk^* \sbe K_\nfk$ is integral at $v$ for each irreducible $v \in A_+$ not dividing $\nfk$.
	Hence, we may consider a statement similar to Proposition \ref{prop-reduction-of-lambda}.
	
	\begin{prop}     \label{prop-reduction-of-lambda*}
		Let $\nfk \in A_+$ and $v \in A_+$ be an irreducible polynomial not dividing $\nfk$.
		Let $\Pfk$ be a prime in $K_\nfk$ above $v$ of degree $\l$ with residue field $\FF_{\Pfk} := \Ocal_\nfk/\Pfk$ (which is perfect).
		Then we have the $A$-module isomorphisms $A/(v^\l-1) \simeq \Lambda^*_{v^\l-1} \simeq C^*(\FF_{\Pfk})$.
	\end{prop}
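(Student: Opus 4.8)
The plan is to follow the proof of Proposition~\ref{prop-reduction-of-lambda} line by line, with the Carlitz module $C$ replaced by its adjoint $C^{*}$ and the second isomorphism again realized by reduction. The whole argument has two steps: the abstract isomorphism $A/(v^{\l}-1)\simeq\Lambda^{*}_{v^{\l}-1}$, and the reduction isomorphism $\Lambda^{*}_{v^{\l}-1}\simeq C^{*}(\FF_{\Pfk})$.

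For the abstract isomorphism I would argue exactly as for $\Lambda_{\nfk}\simeq A/\nfk$ in \S\ref{subsubsection-cyclotomic-function-fields}. The defining polynomial $C^{*}_{v^{\l}-1}(z)^{q^{d\l}}\in A[z]$ (recall \eqref{eq-adjoint-carlitz-polynomial}) is $\Fq$-linear of degree $q^{d\l}$, and its coefficient of $z$ is the leading coefficient of $v^{\l}-1$, namely $1$; hence it is separable, so $\Lambda^{*}_{v^{\l}-1}$ is an $\Fq$-vector space of dimension $d\l$ killed by $v^{\l}-1$. Since $\Lambda^{*}_{a}\subsetneq\Lambda^{*}_{v^{\l}-1}$ for every proper divisor $a\mid v^{\l}-1$, it is a faithful $A/(v^{\l}-1)$-module of cardinality $|A/(v^{\l}-1)|$, and any such module is free of rank one (localize at the finitely many maximal ideals of $A/(v^{\l}-1)$); this is the adjoint counterpart of $\Lambda_{\nfk}\simeq A/\nfk$.

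For the reduction isomorphism: the leading coefficient of $C^{*}_{v^{\l}-1}(z)^{q^{d\l}}\in A[z]$ is a power of $v^{\l}-1$, hence a unit at $v$, so every $\lambda^{*}\in\Lambda^{*}_{v^{\l}-1}$ is integral at $v$; reducing modulo a prime above $v$ in $K_{v^{\l}-1}$ — whose residue field at $v$ we identify with $\FF_{\Pfk}$, exactly as in the proof of Proposition~\ref{prop-reduction-of-lambda} — and using that reduction is compatible with the adjoint Carlitz action, we get an $A$-linear map $\Lambda^{*}_{v^{\l}-1}\to C^{*}(\FF_{\Pfk})$. To see it is injective I would produce the adjoint analog of the identity $\nfk=\prod_{0\ne\lambda\in\Lambda_{\nfk}}(-\lambda)$ used in the Carlitz case: comparing the coefficients of $z$ on the two sides of $C^{*}_{v^{\l}-1}(z)^{q^{d\l}}=(v^{\l}-1)^{q^{d\l}}\,z\prod_{0\ne\lambda^{*}}(z-\lambda^{*})$ gives $\prod_{0\ne\lambda^{*}}(-\lambda^{*})=(v^{\l}-1)^{-q^{d\l}}$, a unit at $v$ since $(v,v^{\l}-1)=1$, so no nonzero $\lambda^{*}$ reduces to $0$. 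As $|\Lambda^{*}_{v^{\l}-1}|=q^{d\l}=|C^{*}(\FF_{\Pfk})|$, this injection is an $A$-module isomorphism, and composing with the first step finishes the proof.

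I do not expect a serious obstacle here: the whole argument is a transcription of the proof of Proposition~\ref{prop-reduction-of-lambda}, the one genuinely new ingredient being the coefficient computation for $C^{*}_{v^{\l}-1}(z)^{q^{d\l}}$ that yields the adjoint counterpart of $\nfk=\prod(-\lambda)$. The points that deserve a little care are the bookkeeping with the polynomial $C^{*}_{\mfk}(z)^{q^{\deg\mfk}}\in A[z]$ rather than with the formal expression \eqref{eq-adjoint-carlitz-polynomial} (whose coefficients lie outside $A$), and the fact that $\Lambda^{*}_{v^{\l}-1}$ a priori lives in $K_{v^{\l}-1}$ by Theorem~\ref{thm-goss-1.7.11}, not in $K_{\nfk}$. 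As a sanity check, since $C_{v}(z)/z$ is Eisenstein at $v$ one has $\overline{C_{v}}=\tau^{d}$ over $\FF_{\Pfk}$, hence $\overline{C^{*}_{v}}=\tau^{-d}$, from which one sees directly that $v^{\l}-1$ annihilates $C^{*}(\FF_{\Pfk})$, consistent with the above.
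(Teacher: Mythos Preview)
Your proposal is correct and follows essentially the same approach as the paper: the paper's own proof merely points to the identity $(C_\nfk^*(z)/\nfk)^{q^{\deg\nfk}} = z\prod_{0\ne\lambda^*\in\Lambda_\nfk^*}(z-\lambda^*)$ and says ``we omit the details,'' and you have faithfully unpacked exactly those details, including the key product $\prod_{0\ne\lambda^*}(-\lambda^*)=(v^\l-1)^{-q^{d\l}}$ being a $v$-unit.
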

	
	\begin{proof}
		A similar argument to that of Proposition \ref{prop-reduction-of-lambda} applies in the adjoint setting.
		The first isomorphism is also a general property.
		For the second, one considers
		$$
		(C_\nfk^*(z) / \nfk)^{q^{\deg\nfk}}
		= z\prod_{0 \neq \lambda^* \in \Lambda_\nfk^*} (z - \lambda^*).
		$$
		We omit the details.
	\end{proof}
	
	\subsection{A duality identity}       \label{subsection-a-duality-identity}
	
	In this section, we recall an important duality result in \cite{abp2004determination}.
	
	\subsubsection{Dual families}
	
	Let $\Res: k \to \Fq$ be the usual residue map for the variable $\T$.
	For $\nfk \in A_+$, define the residue pairing $\angres{\cdot,\cdot}: A/\nfk \times A/\nfk \to \Fq$ by $\angres{a,b} := \Res(ab/\nfk)$.
	Since with respect to the ordered bases $\{1,\T,\ldots,\T^{\deg\nfk-1}\}$ and $\{\T^{\deg\nfk-1},\T^{\deg\nfk-2},\ldots,1\}$, the matrix representation of $\angres{\cdot,\cdot}$ is lower triangular with $1$'s along the diagonal, the residue pairing is perfect.
	We call dual bases $\{a_i\}_{i=1}^{\deg\nfk}, \{b_j\}_{j=1}^{\deg\nfk}$ ($a_i,b_j\in A$) of this pairing \textit{$\nfk$-dual families}.
	
	\subsubsection{An Ore's result}    \label{subsubsection-an-ores-result}
	
	Fix any $\nfk$-dual families $\{a_i\}_{i=1}^{\deg\nfk}, \{b_j\}_{j=1}^{\deg\nfk}$.
	We choose an $A$-module isomorphism from $A/\nfk$ to $\Lambda_\nfk$, and suppose the element $1\in A/\nfk$ is mapped to $\lambda \in \Lambda_\nfk$.
	Let $\lambda_j := C_{b_j}(\lambda)$ be the corresponding image of $b_j$.
	Since the given $A$-module isomorphism is $\Fq$-linear, $\{\lambda_j\}_{j=1}^{\deg\nfk}$ becomes an $\Fq$-basis of $\Lambda_\nfk$.
	An explicit formula given by Ore then says that one may express an $\Fq$-basis of $\Lambda_\nfk^*$ in terms of $\{\lambda_j\}_{j=1}^{\deg\nfk}$ and the Moore determinant.
	For $x_1,\ldots,x_n \in \ovl{k}$, define $\Delta(x_1,\ldots,x_n) := \det_{1 \leq i,j \leq n} x_j^{q^{i-1}}$.
	
	\begin{thm}[Ore]      \label{thm-ore}
		For each $1\leq i \leq \deg\nfk$, set $\Delta_i := \Delta(\lambda_1,\ldots,\lambda_{i-1},\lambda_{i+1},\ldots,\lambda_{\deg\nfk})$ and $\Delta := \Delta(\lambda_1,\ldots,\lambda_{\deg\nfk}) \neq 0$. Let
		$$
		\lambda_i^* := (-1)^{\deg\nfk+i} \left(\frac{\Delta_i}{\Delta}\right)^q.
		$$
		Then $\{\lambda_i^*\}_{i=1}^{\deg\nfk}$ forms an $\Fq$-basis of $\Lambda_\nfk^*$.
	\end{thm}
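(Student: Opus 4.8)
The plan is to reduce the theorem to two claims — that $\lambda_1^*,\dots,\lambda_{\deg\nfk}^*$ are $\Fq$-linearly independent, and that each $\lambda_i^*$ lies in $\Lambda_\nfk^*$ — and then to conclude by a dimension count. Write $n:=\deg\nfk$. By \eqref{eq-adjoint-carlitz-polynomial}, $C_\nfk^*(z)^{q^n}\in A[z]$ is $\Fq$-linear of degree $q^n$ in $z$, and its coefficient of $z$ is $c_{\nfk,n}$, the leading coefficient of $\nfk$, hence $1$; so it is separable and $\Lambda_\nfk^*$ is an $\Fq$-vector space of dimension $n$. Granting the two claims, the $n$ independent elements $\lambda_1^*,\dots,\lambda_n^*$ then span all of $\Lambda_\nfk^*$, as asserted.

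For linear independence I would argue as follows. A relation $\sum_{i=1}^n a_i\lambda_i^*=0$ with $a_i\in\Fq$ is, since $\Delta\neq0$ and each $a_i$ is a $q$-th power of itself and Frobenius is additive, equivalent to $\sum_i(-1)^{n+i}a_i\Delta_i=0$; and up to the sign $(-1)^{n-1}$ the left-hand side is the expansion along the first row of the determinant of the $n\times n$ matrix whose first row is $(a_1,\dots,a_n)$ and whose remaining $n-1$ rows are the first $n-1$ rows of the Moore matrix of $\lambda_1,\dots,\lambda_n$ (its first-row minors are exactly the $\Delta_i$). Vanishing of that determinant forces $(a_1,\dots,a_n)$ to be an $\ovl{k}$-linear combination of those $n-1$ rows, which are $\ovl{k}$-linearly independent because $\Delta\neq 0$; hence $a_i=B(\lambda_i)$ for all $i$, where $B(z)$ is $\Fq$-linear of degree at most $q^{n-2}$. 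Then $B$ carries $\Lambda_\nfk=\langle\lambda_1,\dots,\lambda_n\rangle_{\Fq}$ into $\Fq$, so by the rank-nullity theorem its kernel on $\Lambda_\nfk$ has $\Fq$-dimension at least $n-1$; but that kernel sits inside the polynomial kernel of $B$, of $\Fq$-dimension at most $n-2$. This is absurd unless $B$ vanishes on $\Lambda_\nfk$, i.e.\ all $a_i=0$.

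For membership I would prove the statement in its natural generality — an arbitrary monic separable $\Fq$-linear polynomial $f$ (with a chosen $\Fq$-basis $\lambda_1,\dots,\lambda_n$ of $\ker f$) in place of $C_\nfk$ — by induction on $n$. The key computational input is the Moore-determinant identity $C_\nfk(z)=\Delta(\lambda_1,\dots,\lambda_n,z)/\Delta$, which identifies the monic $\Fq$-linear polynomial of degree $q^n$ with kernel $\Lambda_\nfk$, and, more generally, its behaviour under $\Fq$-linear substitution, $\Delta(\psi(x_1),\dots,\psi(x_s))=\Delta(\lambda_n,x_1,\dots,x_s)/\lambda_n$. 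Factor $f=\phi\circ\psi$ with $\psi(w)=w^q-\lambda_n^{q-1}w$ (kernel $\Fq\lambda_n$); then $\phi$ is monic, separable, $\Fq$-linear of degree $q^{n-1}$ with $\ker\phi=\langle\psi(\lambda_1),\dots,\psi(\lambda_{n-1})\rangle$, and a direct computation shows that the inductive hypothesis applied to that basis of $\ker\phi$ produces exactly $\lambda_1^*,\dots,\lambda_{n-1}^*$ (the signs and the $q$-th powers cancel). Hence $\lambda_1^*,\dots,\lambda_{n-1}^*\in\ker\phi^*\sbe\ker(\psi^*\phi^*)=\ker f^*$; and since $\lambda_1^*,\dots,\lambda_n^*$ are $\Fq$-independent by the previous paragraph, $\lambda_n^*\notin\ker\phi^*$, so only $f^*(\lambda_n^*)=0$ — equivalently $\phi^*(\lambda_n^*)\in\ker\psi^*=\Fq\cdot\lambda_n^{-q}$ — remains to be checked, a single Moore-determinant identity proved via the Moore product formula and Frobenius cancellations, with the base case $n=1$ ($\lambda_1^*=\lambda_1^{-q}$) immediate. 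Applying this with $f=C_\nfk$ gives the theorem. I expect this last verification to be the main obstacle; equivalently, in a non-inductive approach one must directly collapse $C_\nfk^*(\lambda_i^*)^{q^n}=\sum_j c_{\nfk,n-j}^{q^j}(\lambda_i^*)^{q^j}$ to zero using a Frobenius-twisted analog of Jacobi's identity on the minors of the adjugate, the only real difficulty beyond the classical (untwisted) case being the bookkeeping of the Frobenius twists forced by the exponents $q^j$.
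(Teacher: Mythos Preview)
The paper does not supply a proof of this theorem; it simply refers to Ore's original paper and to Goss's textbook. So there is no ``paper's own proof'' to compare against, and your proposal stands on its own.

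Your argument is in good shape. The dimension count for $\Lambda_\nfk^*$ and the linear-independence step are both correct; in particular, your determinant interpretation of $\sum_i(-1)^{n+i}a_i\Delta_i$ and the rank--nullity contradiction for the auxiliary $\Fq$-linear polynomial $B$ go through cleanly. For the membership step, your inductive scheme and the substitution identity
\[
\Delta\bigl(\psi(x_1),\dots,\psi(x_s)\bigr)=\Delta(\lambda_n,x_1,\dots,x_s)/\lambda_n
\]
are both correct (the identity follows at once from the Moore product formula together with $\psi(y)=\prod_{c\in\Fq}(y+c\lambda_n)$), and the sign bookkeeping showing $\mu_i^*=\lambda_i^*$ for $i\le n-1$ checks out.

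The only point you leave open is the ``last verification'' that $\phi^*(\lambda_n^*)\in\Fq\cdot\lambda_n^{-q}$. You can sidestep it entirely: once the inductive step with pivot $\lambda_n$ shows $\lambda_1^*,\dots,\lambda_{n-1}^*\in\ker f^*$, run the \emph{same} inductive step with pivot $\lambda_1$ instead (factor $f=\phi_1\circ\psi_1$ with $\ker\psi_1=\Fq\lambda_1$ and basis $\psi_1(\lambda_2),\dots,\psi_1(\lambda_n)$ of $\ker\phi_1$). The identical computation --- now with $\lambda_1$ already in first position, so no sign shuffle --- yields $\lambda_2^*,\dots,\lambda_n^*\in\ker\phi_1^*\subseteq\ker f^*$. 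The two runs together cover all $i$, and no direct evaluation of $\phi^*(\lambda_n^*)$ is needed. With this adjustment your proof is complete.
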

	
	\begin{proof}
		See \cite[\nopp 8]{ore1933special} or \cite[Theorem 1.7.13]{goss1996basic}.
	\end{proof}
	
	\begin{rem}
		In fact, these $\lambda_i^*$ are the dual bases of $\lambda_j$ with respect to the “Poonen pairing” (with a minus sign modification).
		We will justify this claim in Proposition \ref{prop-lambdai*-and-poonen-pairing}.
	\end{rem}
	
	\subsubsection{A duality identity}
	
	For each integer $N\geq 0$, define $\Psi_N(z) \in k[z]$ so that
	\begin{equation}      \label{eq-Psi}
		1 + \Psi_N(z)
		= \prod_{a\in A_{+,N}} \left(1+\frac{z}{a}\right),
	\end{equation}
	where the product runs through all monic polynomials of degree $N$ in $A$.
	Then we have the following identity between $\{\lambda_i^*\}_{i=1}^{\deg\nfk}$ and $\{\lambda_j\}_{j=1}^{\deg\nfk}$, which is in fact an “algebraic” reformulation of \cite[Theorem 5.4.4]{abp2004determination}.
	See \S\ref{subsubsection-a-duality-identity-between-torsions-ofe-and-e^*} (in particular, \eqref{eq-the-equation-of-the-main-goal}) for the comparison with the original statement.
	
	\begin{thm}  \label{thm-restatement-of-abp-5.4.4}
		Fix $\nfk \in A_+$ of positive degree and $\nfk$-dual families 
		$$
		\{a_i\}_{i=1}^{\deg\nfk},
		\quad
		\{b_j\}_{j=1}^{\deg\nfk}.
		$$
		Then for $a_0 \in A$ with $\deg a_0 < \deg \nfk$, we have  \\
		(1)
		$$
		\sum_{i=1}^{\deg \nfk} (\lambda_i^*)^{q^N} C_{a_0}(\lambda_i) = -\Psi_N(a_0/\nfk)
		$$
		for all integers $N\geq 0$.   \\
		(2) Moreover, if $a_0 \in A_+$, we have
		$$
		\sum_{i=1}^{\deg \nfk} \lambda_i^* C_{a_0}(\lambda_i)^{q^{\deg \nfk - \deg a_0}} = 1.
		$$
	\end{thm}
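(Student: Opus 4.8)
The plan is to handle the two parts by quite different means: part (2) has a short self-contained proof via Moore-determinant cofactor identities and does not even use the dual-family structure, whereas part (1) is genuinely the duality of Anderson--Brownawell--Papanikolas \cite[Theorem 5.4.4]{abp2004determination} in disguise, and I would deduce it by translating their statement through a dictionary relating the soliton data to the torsion points $\lambda_i,\lambda_i^*$ and the polynomials $\Psi_N$.

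For part (2), set $n:=\deg\nfk$ and $m:=\deg a_0$. Let $M:=(\lambda_j^{q^{i-1}})_{1\le i,j\le n}$ be the Moore matrix of the $\Fq$-basis $\{\lambda_j\}$ of $\Lambda_\nfk$, so $\det M=\Delta$, and observe that deleting the last row identifies the $(n,j)$-cofactor of $M$ with $(-1)^{n+j}\Delta_j$. The adjugate identity $M\cdot\operatorname{adj}(M)=\Delta\cdot I$ then gives
$$
\sum_{j=1}^{n}(-1)^{n+j}\Delta_j\,\lambda_j^{q^{k}}=\delta_{k,\,n-1}\,\Delta\qquad(0\le k\le n-1).
$$
Raising this to the $q$-th power (the signs $(-1)^{n+j}$ are fixed by Frobenius, which is $\Fq$-linear), dividing by $\Delta^{q}$, and using $\lambda_j^{*}=(-1)^{n+j}(\Delta_j/\Delta)^{q}$ from Theorem \ref{thm-ore} yields the orthogonality relation
$$
\sum_{j=1}^{n}\lambda_j^{*}\,\lambda_j^{q^{k}}=\delta_{k,\,n}\qquad(1\le k\le n).
$$
Now expand $C_{a_0}(z)=\sum_{j=0}^{m}c_{a_0,j}z^{q^{j}}$ with leading coefficient $c_{a_0,m}=1$ (since $a_0\in A_+$), so $C_{a_0}(\lambda_i)^{q^{n-m}}=\sum_{j=0}^{m}c_{a_0,j}^{\,q^{n-m}}\lambda_i^{q^{j+n-m}}$; substituting into the sum and applying the orthogonality relation, whose exponents $j+n-m$ all lie in $\{1,\dots,n\}$ because $m<n$, collapses everything to the single surviving term $c_{a_0,m}^{\,q^{n-m}}=1$.

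For part (1), I would invoke \cite[Theorem 5.4.4]{abp2004determination}. The point is that the left-hand side $\sum_i(\lambda_i^{*})^{q^{N}}C_{a_0}(\lambda_i)$ should be recognized as the $N$-th coefficient of the relevant soliton generating series attached to the $\nfk$-dual families $\{a_i\},\{b_j\}$, once one identifies $\lambda_i$ with the image of $b_i$ under $A/\nfk\simeq\Lambda_\nfk$ and $\lambda_i^{*}$ with the adjoint datum $C^{*}_{a_i}(\mu)$ for the canonical generator $\mu$ of $\Lambda_\nfk^{*}$ dual to $\lambda$ (this compatibility is Proposition \ref{prop-lambdai*-and-poonen-pairing}, and it is the residue pairing $\Res(a_0 b_i/\nfk)$ that injects the fraction $a_0/\nfk$ on the right-hand side). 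On the other side, the series $\sum_{N\ge0}\Psi_N(a_0/\nfk)X^{N}$ is exactly what the ABP duality produces, so matching coefficients gives the asserted identity for every $N\ge0$, with the orientation of the pairing supplying the sign $-1$. The bookkeeping turning the original ABP formulation into this algebraic shape is carried out in \S\ref{subsubsection-a-duality-identity-between-torsions-ofe-and-e^*}, in particular \eqref{eq-the-equation-of-the-main-goal}.

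The main obstacle is part (1). Unlike part (2), it does not reduce to finite linear algebra: the Frobenius twist $q^{N}$ applied to $\lambda_i^{*}$ turns the cofactor relations above into generalized Moore determinants with a gap in their exponents, which no longer telescope, so no elementary manipulation of $\Delta$ and the $\Delta_i$ closes the argument. Controlling precisely this failure is what the ABP soliton machinery is for, and so the real work is importing their duality and checking that the dictionary above is faithful --- in particular that the $\Fq$-linearity of both characters entering the geometric setup makes the residue, Poonen, and trace pairings agree (cf. Theorem \ref{thm-pairing-summary}).
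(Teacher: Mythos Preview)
Your proposal is correct. For part (1) you follow exactly the paper's route: invoke the original \cite[Theorem 5.4.4]{abp2004determination} and translate through the dictionary $\lambda_i^{*}=e^{*}(\alpha' a_i/\nfk)^{q}$, which is precisely what the paper does in \S\ref{subsubsection-a-duality-identity-between-torsions-ofe-and-e^*} via the pairing comparisons culminating in Theorem~\ref{thm-pairing-summary}.

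For part (2), however, you take a genuinely different and more elementary path. The paper treats both parts uniformly: it imports ABP's statement wholesale (Theorem~\ref{thm-original-abp-5.4.4}) and then establishes the single identification \eqref{eq-the-equation-of-the-main-goal} to convert both identities at once. Your argument for part~(2), by contrast, is self-contained finite linear algebra --- the adjugate identity for the Moore matrix gives the orthogonality $\sum_j\lambda_j^{*}\lambda_j^{q^{k}}=\delta_{k,n}$ for $1\le k\le n$, and expanding $C_{a_0}(\lambda_i)^{q^{n-m}}$ collapses the sum immediately. This bypasses ABP entirely for part~(2) and never touches the Poonen or trace pairings, so it isolates exactly where the deep input is needed (only part~(1)). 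The paper's uniform approach, on the other hand, buys conceptual economy: one dictionary, one citation, both identities. Your diagnosis of the obstacle in part~(1) --- that twisting $\lambda_i^{*}$ by $q^{N}$ produces generalized Moore determinants with a gap that no cofactor manipulation closes --- is accurate and explains clearly why the asymmetry between the two parts is real.
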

	
	\section{Geometric Gauss sums}
	
	In this section, we introduce the notion of geometric Gauss sums and establish some basic properties.
	We also derive an equivalent scalar product expression which will be a key identity in \S\ref{section-gross-koblitz-thakur-formulas-and-their-applications}.
	
	\subsection{Definition}
	
	\subsubsection{Definition of geometric Gauss sums}    \label{subsubsection-definition-of-gauss-sums}
	
	Let $\nfk \in A_+$ and $v \in A_{+,d}$ be irreducible and relatively prime to $\nfk$ with $d>0$.
	Let $\Lambda_\nfk$ be the $\nfk$-torsion points of the Carlitz module $C(\ovl{k})$, $K_\nfk := k(\Lambda_\nfk)$ be the $\nfk$-th cyclotomic function field and $\Ocal_\nfk$ be the integral closure of $A$ in $K_\nfk$.
	Fix a prime $\Pfk$ in $\Ocal_\nfk$ above $v$ with residue degree $\l$, and put $\FF_\Pfk := \Ocal_\nfk/\Pfk$.
	We let $\omega: C(\FF_{\Pfk})[\nfk] \to \Lambda_\nfk$ be the $A$-module isomorphism from the $\nfk$-torsions of $C(\FF_{\Pfk})$ to the Carlitz $\nfk$-torsions in $K_\nfk$ which is the inverse of reduction map (Proposition \ref{prop-reduction-of-lambda}).
	We regard $\omega$ as the “geometric” Teichmüller character (see \eqref{eq-geo-teichmüller}).
	Fix an $\Fq$-algebra isomorphism $\psi: \FF_\Pfk \to \Fqdl$.
	
	\begin{defn}    \label{defn-ggs}
		For any $x \in \nfk^{-1}A$, we define a \textit{geometric Gauss sum} to be
		$$
		\bggs_x
		:= \bggs_x (\Pfk,\psi)
		:= 1 + \sum_{z \in \FF_\Pfk^\times} \omega\left(C_{x(v^\l-1)}(z^{-1})\right)\psi(z) \in K_{\nfk,d\l},
		$$
		where $K_{\nfk,d\l} \sbe \ovl{k}$ is the compositum of the geometric extension $K_\nfk$ and the constant field extension $k\Fqdl$ over $k$.
	\end{defn}
	
	We note that $\bggs_{x} = \bggs_{x+a}$ for all $a \in A$.
	Thus, the geometric Gauss sum is well-defined modulo $A$.
	Also, note that $\bggs_x = 1$ when $x \in A$.
	
	\begin{eg}      \label{eg-ggs}
		Take $v = \T$ and $\nfk = v-1$.
		Then for any $x = \epsilon/\nfk \in \nfk^{-1}A$ where $\epsilon \in \Fq$, we have
		$$
		\bggs_x
		= 1 - \epsilon \omega(1)
		= 1 - \epsilon (1-\T)^{1/(q-1)}
		$$
		where $\omega(1) \in \Lambda_\nfk$ is some $(q-1)$-st root of $1-\T$.
	\end{eg}
	
	\subsubsection{Compatibility}
	
	The geometric Gauss sums satisfy a compatibility condition.
	Suppose we view $x \in \nfk^{-1}A$ as an element in $(\nfk')^{-1}A$, where $\nfk' \in A_+$ is a multiple of $\nfk$ which is also relatively prime to $v$, then the corresponding geometric Gauss sums remain the same.
	To justify this claim, we use the following identity.
	
	\begin{lem}    \label{lem-descending-ggs}
		Let $r$ be a prime power and $n\in\NN$.
		Suppose $h$ is a function on $\FF_{r^n}$ with values in a ring containing $\FF_{r^n}$ and satisfies $h(0) = 0$, then
		$$
		\sum_{z \in \FF_{r^n}^\times} h\left( \Tr_{\FF_{r^n}/\FF_r} (z^{-1}) \right) z^{r^j}
		=  \sum_{z \in \FF_r^\times} h(z^{-1}) z^{r^j}
		$$
		for all $j \geq 0$.
	\end{lem}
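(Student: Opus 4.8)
The plan is to evaluate both sides of the identity by partitioning the left-hand sum according to the value $w := \Tr_{\FF_{r^n}/\FF_r}(z^{-1})$, which always lies in $\FF_r$. First I would rewrite the left-hand side as
$$
\sum_{w \in \FF_r} h(w) \left( \sum_{\substack{z \in \FF_{r^n}^\times \\ \Tr_{\FF_{r^n}/\FF_r}(z^{-1}) = w}} z^{r^j} \right),
$$
where the term $w = 0$ drops out thanks to the hypothesis $h(0) = 0$. Applying the bijection $z \mapsto u := z^{-1}$ of $\FF_{r^n}^\times$ to the inner sum (and noting that every such $u$ is automatically nonzero once $w \neq 0$), it becomes $\sum_u u^{-r^j} = \bigl( \sum_u u^{-1} \bigr)^{r^j}$, the sum running over $u \in \FF_{r^n}$ with $\Tr_{\FF_{r^n}/\FF_r}(u) = w$; here I use that $r^j$ is a power of $p$, so raising to the $r^j$-th power is additive. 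Thus the lemma reduces to the single identity
$$
\sum_{\substack{u \in \FF_{r^n} \\ \Tr_{\FF_{r^n}/\FF_r}(u) = w}} u^{-1} = w^{-1} \qquad (w \in \FF_r^\times).
$$
Granting it, the left-hand side becomes $\sum_{w \in \FF_r^\times} h(w) (w^{-1})^{r^j} = \sum_{w \in \FF_r^\times} h(w) w^{-1}$, using $w^{-1} \in \FF_r$; and the right-hand side equals the same expression after substituting $w = z^{-1}$ and using $z^{r^j} = z$ for $z \in \FF_r$.

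To prove the displayed identity, I would identify the fiber $\{u \in \FF_{r^n} : \Tr_{\FF_{r^n}/\FF_r}(u) = w\}$ with the root set of
$$
P_w(X) := X + X^r + X^{r^2} + \cdots + X^{r^{n-1}} - w,
$$
since $P_w(u) = \Tr_{\FF_{r^n}/\FF_r}(u) - w$. This $P_w$ is monic of degree $r^{n-1}$ and separable (its derivative is the constant $1$), and because the trace is surjective onto $\FF_r$ with all fibers of size $r^{n-1}$, the fiber comprises all the roots, so $P_w(X) = \prod_u (X - u)$ with the product over the fiber. Since $w \neq 0$ the root $0$ does not occur, so the sum of reciprocals of the roots equals $-P_w'(0)/P_w(0)$ (the value at $X = 0$ of the logarithmic derivative, up to sign); and $P_w(0) = -w$ while $P_w'(0) = 1$, the latter because for $i \geq 1$ the exponent $r^i$ exceeds $1$ and contributes nothing to the linear coefficient, which therefore comes solely from the term $X$. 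Hence $\sum_u u^{-1} = -1/(-w) = w^{-1}$, as needed.

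The only real content is the reciprocal-sum identity $\sum_{\Tr(u) = w} u^{-1} = w^{-1}$; once the root-set description via $P_w$ is set up, it is immediate from Vieta's formulas. The rest — the partitioning, the bijective substitution $u = z^{-1}$, pulling the $r^j$-th power through the sum by Frobenius additivity, and the parallel evaluation of the right-hand side — is routine bookkeeping. I would also note in passing that the degenerate case $n = 1$ (where $P_w(X) = X - w$ and the fiber is $\{w\}$) is covered by exactly the same computation.
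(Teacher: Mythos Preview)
Your proof is correct. The partition by the trace value $w$, the substitution $u = z^{-1}$, the Frobenius pull-through, and the key reciprocal-sum identity $\sum_{\Tr(u)=w} u^{-1} = w^{-1}$ via the logarithmic derivative of $P_w(X) = X + X^r + \cdots + X^{r^{n-1}} - w$ are all sound; the edge case $n=1$ is indeed subsumed.

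The paper itself does not supply an argument: its entire proof consists of the sentence ``This is equivalent to \cite[Lemma I]{thakur1988gauss}.'' So there is no route to compare against --- you have given a direct, self-contained proof where the paper defers to the literature. What your approach buys is independence from the cited reference; what the citation buys is brevity. Either is acceptable, and your version would make the exposition more self-contained.
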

	
	\begin{proof}
		This is equivalent to \cite[Lemma I]{thakur1988gauss}.
	\end{proof}
	
	Now, let $\l'$ be the order of $v$ modulo $\nfk'$.
	Then necessarily we have $\l$ divides $\l'$.
	Assume $\l' = m\l$ for some natural number $m$.
	We write $\Pfk$ (resp. $\Pfk'$) as the chosen prime in $K_{\nfk}$ (resp. $K_{\nfk'}$) above $v$ with residue field $\FF_{\Pfk}$ (resp. $\FF_{\Pfk'}$).
	Note that $m$ is the residue degree of $\Pfk'$ over $\Pfk$.
	
	\begin{prop}    \label{prop-compatibility-of-ggs}
		Setting as above, and consider $\omega$ and $\psi$ as functions defined on $C(\FF_{\Pfk'})$ and $\FF_{\Pfk'}$, respectively.
		Then we have
		$$
		\sum_{z \in \FF_{\Pfk}^\times} \omega\left(C_{x(v^\l-1)}(z^{-1})\right)\psi(z)
		= \sum_{z \in \FF_{\Pfk'}^\times} \omega\left(C_{x(v^{\l'}-1)}(z^{-1})\right)\psi(z).
		$$
	\end{prop}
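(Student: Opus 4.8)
The plan is to massage the sum over $\FF_{\Pfk'}^\times$ into a trace form and then apply Lemma~\ref{lem-descending-ggs}. Two preliminary facts are needed. First, since $\l=\ord_\nfk(v)$ divides $\l'$, both $v^\l-1$ and $v^{\l'}-1$ lie in $\nfk A$, so $x(v^\l-1)$ and $x(v^{\l'}-1)$ are elements of $A$ and $C_{x(v^\l-1)},C_{x(v^{\l'}-1)}$ are honest Carlitz polynomials. Second, because $C_v(z)/z$ is Eisenstein at $v$ (see \S\ref{subsubsection-cyclotomic-function-fields}), one has $C_v(z)\equiv z^{q^d}\pmod v$ in $A[z]$; reducing modulo $\Pfk$ (resp.\ $\Pfk'$), which lies over $v$, this shows that $C_v$ acts as the $q^d$-power Frobenius on $C(\FF_\Pfk)$ (resp.\ $C(\FF_{\Pfk'})$).

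Next I would prove the key identity. Put $r:=|\FF_\Pfk|=q^{d\l}$, so $[\FF_{\Pfk'}:\FF_\Pfk]=m$ and $\Tr_{\FF_{\Pfk'}/\FF_\Pfk}(w)=\sum_{i=0}^{m-1}w^{q^{d\l i}}$. Using $v^{\l'}-1=(v^\l-1)\sum_{i=0}^{m-1}v^{\l i}$, that $C$ is a ring homomorphism, and the Frobenius fact above, for every $z\in\FF_{\Pfk'}^\times$
$$
C_{x(v^{\l'}-1)}(z^{-1})
=C_{x(v^\l-1)}\!\left(\sum_{i=0}^{m-1}C_{v^{\l i}}(z^{-1})\right)
=C_{x(v^\l-1)}\!\left(\Tr_{\FF_{\Pfk'}/\FF_\Pfk}(z^{-1})\right).
$$
I also need the two copies of $\omega$ to agree: since $(\nfk,v)=1$, both $C(\FF_\Pfk)[\nfk]$ and $C(\FF_{\Pfk'})[\nfk]$ have cardinality $|A/\nfk|$ and the former sits inside the latter, so they coincide, and on $K_\nfk\supseteq\Lambda_\nfk$ reduction modulo $\Pfk'$ is reduction modulo $\Pfk=\Pfk'\cap\Ocal_\nfk$; hence the inverse-of-reduction map for $\Pfk'$ restricts on this common group to the map $\omega$ for $\Pfk$. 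Combining, the right-hand side of the proposition equals $\sum_{z\in\FF_{\Pfk'}^\times}\omega\big(C_{x(v^\l-1)}(\Tr_{\FF_{\Pfk'}/\FF_\Pfk}(z^{-1}))\big)\psi(z)$.

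Finally I would run the descent. Fix an $\Fq$-algebra isomorphism $\FF_{\Pfk'}\isoto\FF_{q^{d\l'}}$ extending $\psi$ (the choice is immaterial), and set $h(w):=\omega(C_{x(v^\l-1)}(w))$ for $w\in\FF_\Pfk$, extended by $0$ to $\FF_{\Pfk'}$; then $h(0)=0$. Identifying $\FF_{\Pfk'}$ with $\FF_{r^m}$ via $\psi$, so that $\psi(z)$ becomes $z=z^{r^0}$ and $\Tr_{\FF_{\Pfk'}/\FF_\Pfk}$ becomes $\Tr_{\FF_{r^m}/\FF_r}$, Lemma~\ref{lem-descending-ggs} with $n=m$ and $j=0$ gives
$$
\sum_{z\in\FF_{\Pfk'}^\times}h\!\left(\Tr_{\FF_{\Pfk'}/\FF_\Pfk}(z^{-1})\right)\psi(z)
=\sum_{z\in\FF_\Pfk^\times}h(z^{-1})\psi(z)
=\sum_{z\in\FF_\Pfk^\times}\omega\!\left(C_{x(v^\l-1)}(z^{-1})\right)\psi(z),
$$
which is the left-hand side of the proposition.

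The one substantive point is the key identity: recognizing that multiplication by $v$ on the Carlitz module of a residue field above $v$ is the $q^d$-power Frobenius, so that the partial geometric sum of $v$-powers produced by $(v^{\l'}-1)/(v^\l-1)$ collapses to a field trace — exactly the shape on which Lemma~\ref{lem-descending-ggs} operates. The remaining items (integrality of $x(v^\l-1)$, and compatibility of $\omega$ and $\psi$ with the trace across $\Pfk'/\Pfk$) are routine bookkeeping.
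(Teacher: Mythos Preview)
Your proposal is correct and follows essentially the same route as the paper: factor $v^{\l'}-1=(v^\l-1)\sum_{i=0}^{m-1}v^{\l i}$, use that $C_v$ acts as the $q^d$-power Frobenius on the residue field to collapse the inner sum to $\Tr_{\FF_{\Pfk'}/\FF_\Pfk}(z^{-1})$, and then invoke Lemma~\ref{lem-descending-ggs}. The paper's own proof is simply a terser version of what you wrote, omitting the bookkeeping about integrality of $x(v^\l-1)$ and the compatibility of $\omega$ across $\Pfk'/\Pfk$.
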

	
	\begin{proof}
		Consider the right-hand side of the proposition.
		For each $z \in \FF_{\Pfk'}^\times$, by linearity and the fact that $C_v \equiv \tau^d \pmod{v}$ sends $z$ to $z^{q^d}$ (recall \S\ref{subsubsection-cyclotomic-function-fields}), we have
		$$
		\omega\left(C_{x(v^{\l'}-1)}(z^{-1})\right)
		= \sum_{i=0}^{m-1} \omega\left(C_{x(v^\l-1)v^{i\l}}(z^{-1})\right)
		= \omega\left(C_{x(v^\l-1)} (\Tr_{\FF_{\Pfk'}/\FF_{\Pfk}}(z^{-1}))\right).
		$$
		The result now follows from Lemma \ref{lem-descending-ggs}.
	\end{proof}
	
	\begin{rem}    \label{rem-ggs-with-trace}
		In Definition \ref{defn-ggs}, we can alternatively adopt the definition by including the usual trace map in the additive character $\psi$ and omitting the “plus $1$”.
		That is, define
		$$
		\widetilde{\Gcal}_{x,\l}^{\textnormal{geo}}
		:= -\sum_{z \in \FF_{\Pfk}^\times} \omega\left(C_{x(v^\l-1)}(z^{-1})\right)\psi \left( \Tr_{\FF_{\Pfk}/\FF_v} (z) \right)
		$$
		where $\FF_v := A/v$.
		Let $\tau_q$ be the $q$-th power Frobenius map on the constant field.
		Then one may express
		\begin{equation}    \label{eq-gg-tilde-and-gg}
			\widetilde{\Gcal}_{x,\l}^{\textnormal{geo}}
			= \sum_{i=0}^{\l-1} (1-\bggs_x)^{\tau_q^{di}}.
		\end{equation}
		Hence, the geometric Gauss sums $\bggs_x$ are viewed as refinements of $\widetilde{\Gcal}_{x,\l}^{\textnormal{geo}}$.
		Also, one sees similarly by change of variables and Lemma \ref{lem-descending-ggs} that
		$$
		\widetilde{\Gcal}_{x,\l}^{\textnormal{geo}}
		= -\sum_{z \in \FF_{\Pfk}^\times} \omega\left(C_{x(v^\l-1)} (\Tr_{\FF_{\Pfk}/\FF_v}(z^{-1}))\right) \psi(z)
		= -\sum_{z \in \FF_v^\times} \omega\left(C_{x(v^\l-1)}(z^{-1})\right) \psi(z)
		= \widetilde{\Gcal}_{x',1}^{\textnormal{geo}}
		$$
		where $x' \in (v-1)^{-1}A$ satisfying $x(v^\l-1) \equiv x'(v-1) \pmod{v-1}$.
		So including the trace map will always reduce the definition to the $\l=1$ case.
	\end{rem}
	
	\subsection{Basic properties}
	
	\subsubsection{An additive Hasse-Davenport lifting relation}    \label{subsubsection-an-additive-hasse-davenport-lifting-relation}
	
	From Lemma \ref{lem-descending-ggs} (or \eqref{eq-gg-tilde-and-gg} and the fact that $\bggs_x$ is fixed by $\tau_q^{d\l}$), one deduces the following “additive” analog of Hasse-Davenport lifting relation (cf. Proposition \ref{prop-compatibility-of-ggs}).
	$$
	m \cdot \widetilde{\Gcal}_{x,\l}^{\textnormal{geo}}
	= \widetilde{\Gcal}_{x,m\l}^{\textnormal{geo}}
	\quad
	\text{for all}
	\quad
	m \in \NN.
	$$
	This deviation from the classical Gauss sums \cite{hd1935dienullstellen}, where $m$ appears in the exponent, is also a natural phenomenon of the addition/multiplication comparison.
	As we will see in \S\ref{subsubsection-hasse-davenport-relations}, the original geometric Gauss sums (i.e., including the “plus $1$”) turn out to satisfy the multiplicative relations in parallel with the classical ones.
	
	\subsubsection{Fourier analysis}
	
	The geometric Gauss sums can be interpreted as the Fourier coefficients of “geometric characters” on $\FF_{\Pfk}^\times$.
	Let $f(z) := \omega(C_{x(v^\l-1)}(z))$.
	Then by the $\Fq$-linearity of $f$ and Fourier inversion formula, one sees that
	$$
	f(z) = \sum_{i=0}^{d\l-1} \left( (1 - \bggs_x) \psi(z) \right)^{\tau_q^i}.
	$$
	Hence, we have
	$$
	\Tr_{\FF_\Pfk/\Fq}(z) - f(z)
	= \sum_{i=0}^{d\l-1} \left( \bggs_x \psi(z) \right)^{\tau_q^i}.
	$$
	
	\subsubsection{Galois actions}      \label{subsubsection-galois-actions}
	
	Note that the Galois group $\Gal(K_{\nfk,d\l}/k)$ is canonically isomorphic to $\Gal(K_\nfk/k) \times \Gal(k\Fqdl/k)$, which is further isomorphic to $(A/\nfk)^\times \times \ZZ/d\l\ZZ$ (the first component is by Artin map; recall \S\ref{subsubsection-cyclotomic-function-fields}).
	We let $\rho_a \in \Gal(K_\nfk/k)$ be the element corresponding to $a \in (A/\nfk)^\times$ and $\tau_q \in \Gal(k\Fqdl/k)$ be the $q$-th power Frobenius on the constant field as similarly defined in Remark \ref{rem-ggs-with-trace}.
	Finally, we let $\sigma_{a,s}$ be the element in $\Gal(K_{\nfk,d\l}/k)$ corresponding to $(\rho_a,\tau_q^s)$ in $\Gal(K_\nfk/k) \times \Gal(k\Fqdl/k)$.
	
	\begin{prop}       \label{prop-galois-actions}
		(1) $(\bggs_x)^{\sigma_{a,0}}
		= \bggs_{ax}$ for all $a \in (A/\nfk)^\times$.      \\
		(2) $(\bggs_x)^{\sigma_{1,-d}} = \bggs_{vx}$.
	\end{prop}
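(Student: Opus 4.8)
The plan is to unwind the definition of $\bggs_x$ and track how the two types of Galois generators act on each factor in the sum $1 + \sum_{z\in\FF_\Pfk^\times}\omega(C_{x(v^\l-1)}(z^{-1}))\psi(z)$. The key point is that the Galois action permutes the summands, so we must understand the action on $\omega(\cdot)$ (a Carlitz torsion point, living in $K_\nfk$) and on $\psi(z)$ (a constant, living in $k\Fqdl$) separately, and then re-index the sum.

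For part (1), I would note that $\sigma_{a,0}$ restricts to the identity on the constant field $k\Fqdl$, so it fixes each value $\psi(z)\in\Fqdl$; and it restricts to $\rho_a\in\Gal(K_\nfk/k)$ on $K_\nfk$. Since $\omega$ is the inverse of the reduction map and reduction commutes with Galois (as $\Pfk$ is the chosen prime and $\rho_a$ fixes $\Pfk$ only up to the decomposition group — here one must be a little careful, but the relevant statement is the compatibility of $\omega$ with the Galois action through the Artin map), the Artin map identity $\rho_a(\lambda_\nfk) = C_a(\lambda_\nfk)$ gives $\omega(C_{x(v^\l-1)}(z^{-1}))^{\rho_a} = \omega(C_{a x(v^\l-1)}(z^{-1}))$ — essentially, applying $\rho_a$ to a torsion point corresponds to multiplying its ``index'' by $a$ via the Carlitz action. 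Therefore
$$
(\bggs_x)^{\sigma_{a,0}} = 1 + \sum_{z\in\FF_\Pfk^\times}\omega\big(C_{ax(v^\l-1)}(z^{-1})\big)\psi(z) = \bggs_{ax},
$$
with no re-indexing of $z$ needed. The mild subtlety to address is that $\rho_a$ need not fix the prime $\Pfk$ pointwise, but $a\in(A/\nfk)^\times$ and $v$ is coprime to $\nfk$, so $\rho_a$ fixes the residue field $\FF_\Pfk$ in a way compatible with $\omega$; this should be folded into the (already-established) statement that $\omega$ is the geometric Teichmüller character, i.e. that it intertwines the Galois action with the Carlitz module structure.

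For part (2), the generator $\sigma_{1,-d}$ is trivial on $K_\nfk$ and acts as $\tau_q^{-d}$ on the constant field. Now $v$ has degree $d$, so $C_v \equiv \tau^d \pmod v$ (used already in the proof of Proposition \ref{prop-compatibility-of-ggs}); reducing mod $\Pfk$, the Carlitz action of $v$ on $\FF_\Pfk$ is the $q^d$-power Frobenius, i.e. $C_v(z) = z^{q^d}$ for $z\in\FF_\Pfk$. Applying $\sigma_{1,-d}$ fixes each $\omega(\cdots)$ and sends $\psi(z)$ to $\psi(z)^{q^{-d}} = \psi(z^{1/q^d})$ (using that $\psi$ is an $\Fq$-algebra isomorphism, so it commutes with Frobenius). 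Re-indexing the sum by $w = z^{1/q^d}$, i.e. $z = w^{q^d} = C_v(w)$, turns the sum into $\sum_{w}\omega(C_{x(v^\l-1)}(C_v(w)^{-1}))\psi(w) = \sum_w \omega(C_{vx(v^\l-1)}(w^{-1}))\psi(w)$, since $C_v$ commutes with $C_{x(v^\l-1)}$ and $C_v(w)^{-1} = C_v(w^{-1})$ as $C_v$ on $\FF_\Pfk$ is a field automorphism. Here there is one more check: replacing $x$ by $vx$ also changes the modulus $v^\l - 1$ implicitly, but since $v^\l \equiv 1$ on $\FF_\Pfk$ (as $\l$ is the residue degree), $C_{vx(v^\l-1)}$ and $C_{x(v^\l-1)}\circ C_v$ agree as functions on $\FF_\Pfk$, so the re-indexed sum is exactly $\bggs_{vx} - 1$. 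Adding back the $1$ gives $(\bggs_x)^{\sigma_{1,-d}} = \bggs_{vx}$.

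I expect the main obstacle to be the bookkeeping around exactly how $\omega$ interacts with the Galois action in part (1) — specifically pinning down that ``$\rho_a$ applied to $\omega(\text{index } t)$ equals $\omega(\text{index } at)$'' in a way that does not secretly depend on which prime $\Pfk$ above $v$ was chosen, and confirming that the $\FF_\Pfk$-reduction map is Galois-equivariant in the needed sense. Once that compatibility is cleanly stated (it is really the defining property of the geometric Teichmüller character), both parts reduce to short substitutions. Part (2) is essentially the identity $C_v \equiv \tau^d \pmod v$ combined with the substitution $z \mapsto C_v(z)$, which is routine given Proposition \ref{prop-compatibility-of-ggs}'s proof.
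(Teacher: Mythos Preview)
Your proposal is correct and follows essentially the same route as the paper: part (1) reduces to the identity $\rho_a\circ\omega = \omega\circ C_a$, and part (2) is the change of variables $z\mapsto z^{q^d}$ combined with $C_v\equiv\tau^d\pmod v$. One small remark: your worry in part (1) about whether $\rho_a$ fixes $\Pfk$ is a red herring---the identity $\rho_a(\omega(\bar\lambda)) = C_a(\lambda) = \omega(C_a(\bar\lambda))$ only uses that reduction mod $\Pfk$ commutes with the $A$-action $C_a$ (the coefficients of $C_a$ lie in $A$), and that $\rho_a$ acts on $\Lambda_\nfk$ by $C_a$; no compatibility with the choice of $\Pfk$ is needed.
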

	
	\begin{proof}
		(1) follows from the commutativity of $A$-actions $\rho_a \circ \omega = \omega \circ C_a$.
		For (2), by change of variables and using the fact that $C_v \equiv \tau^d \pmod{v}$, we have
		$$
		(\bggs_x)^{\sigma_{1,-d}}
		= 1 + \sum_{z \in \FF_\Pfk^\times} \omega\left(C_{x(v^\l-1)} (z^{-q^d})\right) \psi(y)
		= 1 + \sum_{z \in \FF_\Pfk^\times} \omega\left(C_{vx(v^\l-1)} (z^{-1})\right) \psi(y)
		= \bggs_{vx}.
		$$
	\end{proof}
	
	\begin{rem}   \label{rem-fixed-field}
		Note that $v$ splits into $[K_\nfk:k]/\l \cdot d\l = [K_\nfk:k]d$ primes in $K_{\nfk,d\l}$.
		So the decomposition group $D$ of $v$ in $K_{\nfk,d\l}$ (note $K_{\nfk,d\l}/k$ is abelian) contains $\l$ elements and is generated by the Artin automorphism $\sigma_{v,d} \in D$.
		Thus, by Proposition \ref{prop-galois-actions}(2), we see that $\bggs_x$ lies in the fixed field $K_{\nfk,d\l}^D$ of $D$.
		Moreover, it also implies
		$$
		\left(\prod_{i=0}^{\l-1} \bggs_{v^ix} \right)^{\sigma_{1,d}} = \prod_{i=0}^{\l-1} \bggs_{v^ix}
		\quad
		\text{and}
		\quad
		\prod_{s=0}^{d-1} \prod_{i=0}^{\l-1} (\bggs_x)^{\sigma_{v^i,s}}
		= \prod_{s=0}^{d\l-1} (\bggs_x)^{\sigma_{1,s}}.
		$$
	\end{rem}
	
	\subsection{A scalar product expression}     \label{subsection-a-scalar-product-expression}
	
	Put $\mfk := v^\l-1$ and let $\Pfk_\mfk$ be a prime in $K_\mfk$ above $\Pfk_\nfk := \Pfk$ fixed in \S\ref{subsubsection-definition-of-gauss-sums} with residue field $\FF_{\Pfk_\mfk}$.
	Since $\FF_{\Pfk_\nfk}$ and $\FF_{\Pfk_\mfk}$ are isomorphic, we may identify
	\begin{equation}     \label{eq-scalar-product-observation}
		\bggs_x
		= 1 + \sum_{z \in \FF_{\Pfk_\nfk}^\times} \omega\left(C_{x(v^\l-1)}(z^{-1})\right)\psi(z)   \\
		= 1 + \sum_{z \in \FF_{\Pfk_\mfk}^\times} \omega\left(C_{\mfk x} (z^{-1})\right)\psi(z).
	\end{equation}
	The following theorem will be a key to \S\ref{section-gross-koblitz-thakur-formulas-and-their-applications}.
	
	\begin{thm}   \label{thm-coleman-function-and-gauss-sum}
		Fix $\mfk$-dual families $\{a_i\}_{i=1}^{d\l}, \{b_j\}_{j=1}^{d\l}$ and let $\{\lambda_i^*\}_{i=1}^{d\l}, \{\lambda_j\}_{j=1}^{d\l}$ be defined accordingly as in \S\ref{subsubsection-an-ores-result}.
		Then we have
		$$
		\bggs_x
		= 1 - \sum_{i=1}^{d\l} C_{\mfk x}(\lambda_i) \psi(\ovl{\lambda}{}^*_i)
		$$
		where we take the reduction of $\lambda_i^*$ modulo $\Pfk_\mfk$.
		(Recall \S\ref{subsubsection-torsions-of-the-adjoint-carlitz-modules} that each $\lambda_i^*$ is integral at $v$.)
	\end{thm}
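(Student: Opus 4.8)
The plan is to evaluate the character sum in the scalar-product form \eqref{eq-scalar-product-observation} of $\bggs_x$ by Fourier analysis on the residue field $\FF_{\Pfk_\mfk}$, thereby reducing the statement to a compatibility between the trace pairing on $\FF_{\Pfk_\mfk}$ and the pairing that underlies Ore's dual basis. By Proposition \ref{prop-reduction-of-lambda} the geometric Teichmüller map extends to an $A$-module isomorphism $\omega\colon C(\FF_{\Pfk_\mfk})\isoto\Lambda_\mfk$ inverse to reduction modulo $\Pfk_\mfk$, and the reductions $\ovl\lambda_j$ of the $\Fq$-basis $\{\lambda_j\}_{j=1}^{d\l}$ of $\Lambda_\mfk$ form an $\Fq$-basis of $\FF_{\Pfk_\mfk}$ with $\omega(\ovl\lambda_j)=\lambda_j$. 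Expanding $z^{-1}=\sum_j c_j(z^{-1})\ovl\lambda_j$ in this basis ($c_j(z^{-1})\in\Fq$) and using that $\omega$ and $C_{\mfk x}$ are $\Fq$-linear, one gets $\omega(C_{\mfk x}(z^{-1}))=\sum_j c_j(z^{-1})\,C_{\mfk x}(\lambda_j)$, hence
$$
\bggs_x=1+\sum_{j=1}^{d\l}C_{\mfk x}(\lambda_j)\sum_{z\in\FF_{\Pfk_\mfk}^\times}c_j(z^{-1})\,\psi(z).
$$

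Next I would compute the inner sum. Writing $\{\mu_j\}$ for the basis of $\FF_{\Pfk_\mfk}$ dual to $\{\ovl\lambda_j\}$ under the trace pairing, we have $c_j(w)=\Tr_{\FF_{\Pfk_\mfk}/\Fq}(\mu_j w)$; since $\psi$ is an $\Fq$-algebra isomorphism onto $\Fqdl$ it commutes with traces and inverses, so putting $\zeta=\psi(z^{-1})$ the inner sum becomes $\sum_{\zeta\in\Fqdl^\times}\Tr_{\Fqdl/\Fq}(\psi(\mu_j)\zeta)\,\zeta^{-1}$. Expanding the trace as $\sum_{s=0}^{d\l-1}(\psi(\mu_j)\zeta)^{q^s}$ and using $\sum_{\zeta\in\Fqdl^\times}\zeta^{q^s-1}=0$ for $1\le s\le d\l-1$ while $\sum_{\zeta\in\Fqdl^\times}\zeta^0=q^{d\l}-1=-1$ in $\Fq$, the inner sum collapses to $-\psi(\mu_j)$. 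Thus
$$
\bggs_x=1-\sum_{j=1}^{d\l}C_{\mfk x}(\lambda_j)\,\psi(\mu_j),
$$
and it remains to identify $\mu_j$ with the reduction $\ovl{\lambda}{}^*_j$ of Ore's dual basis (note $C_{\mfk x}(\lambda_j)$ lies in $\Lambda_\nfk\subseteq K_{\nfk,d\l}$, since $\nfk$ annihilates $C_{\mfk x}(\ovl\lambda_j)$ in $C(\FF_{\Pfk_\mfk})$, so the right-hand side does land in $K_{\nfk,d\l}$).

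I expect this last identification $\mu_j=\ovl{\lambda}{}^*_j$ — equivalently $\Tr_{\FF_{\Pfk_\mfk}/\Fq}(\ovl{\lambda}{}^*_i\ovl\lambda_j)=\delta_{ij}$ — to be the main obstacle; it is precisely the comparison of the three pairings. Fixing the $A$-module isomorphisms $A/\mfk\isoto\Lambda_\mfk$ (sending $1\mapsto\lambda$, Proposition \ref{prop-reduction-of-lambda}) and $A/\mfk\isoto\Lambda_\mfk^*$ (Proposition \ref{prop-reduction-of-lambda*}), one should show that the $\mfk$-dual families $\{a_i\},\{b_j\}$ of the residue pairing correspond respectively to $\{\lambda^*_i\},\{\lambda_j\}$; this is where the duality identity Theorem \ref{thm-restatement-of-abp-5.4.4}, together with Ore's Theorem \ref{thm-ore}, enters, as it exhibits $\{\lambda^*_i\}$ as the dual basis of $\{\lambda_j\}$ with respect to the ``Poonen pairing'' that matches the residue pairing. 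Because reduction modulo $\Pfk_\mfk$ is $A$-linear and carries the Poonen pairing on Carlitz torsions to the trace pairing on $\FF_{\Pfk_\mfk}$, it preserves duality, yielding $\mu_j=\ovl{\lambda}{}^*_j$ and hence the theorem. The technical heart is therefore the explicit comparison of the residue, Poonen, and trace pairings.
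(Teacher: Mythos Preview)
Your proposal is correct and lands on exactly the same reduction as the paper: after a short manipulation of the character sum one obtains
\[
\bggs_x=1-\sum_{j=1}^{d\l} C_{\mfk x}(\lambda_j)\,\psi(\mu_j),
\]
where $\{\mu_j\}$ is the trace-dual of $\{\ovl\lambda_j\}$ in $\FF_{\Pfk_\mfk}$, so the entire content is the identification $\mu_j=\ovl{\lambda}{}^*_j$, i.e.\ the pairing comparison of Theorem~\ref{thm-pairing-summary}. You correctly flag this as the main obstacle and outline the residue/Poonen/trace comparison that resolves it.

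The only difference from the paper is in how the character sum is collapsed. The paper first inserts the scalar identity $\sum_i\psi(\ovl{\lambda}{}^*_i)\psi(\ovl\lambda_i)=1$ (a consequence of Theorem~\ref{thm-restatement-of-abp-5.4.4}(2)), performs the change of variables $z\ovl\lambda_i\mapsto z$, and then uses that, with respect to trace-dual bases, the matrices representing multiplication by $z^{-1}$ and by $z$ are related by inverse-transpose, so the double sum collapses. You instead expand $z^{-1}$ in the basis $\{\ovl\lambda_j\}$ and evaluate the inner sum directly via orthogonality of powers on $\Fqdl^\times$. Your route is marginally more elementary (it avoids invoking Theorem~\ref{thm-restatement-of-abp-5.4.4}(2) at this stage and the matrix bookkeeping), while the paper's route makes the role of the duality identity more visible; substantively the two arguments are the same.
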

	
	\begin{proof}
		From Theorem \ref{thm-restatement-of-abp-5.4.4}(2), we have
		$$
		\sum_{i=1}^{d\l} \psi(\ovl{\lambda}{}^*_i) \psi(\ovl{\lambda}_i) = 1
		$$
		where we take reductions modulo $\Pfk_\mfk$.
		Using \eqref{eq-scalar-product-observation} and applying change of variables $z \ovl{\lambda}_i \mapsto z$, we have
		\begin{align*}
			\bggs_x
			&= 1 + \sum_{z \in \FF_{\Pfk_\mfk}^\times} \omega\left(C_{\mfk x} (z^{-1})\right)\psi(z) \sum_{i=1}^{d\l} \psi(\ovl{\lambda}{}^*_i) \psi(\ovl{\lambda}_i)    \\
			&= 1 + \sum_{z \in \FF_{\Pfk_\mfk}^\times} \sum_{i=1}^{d\l} \omega\left(C_{\mfk x} (z^{-1}\ovl{\lambda}_i)\right) \psi(z\ovl{\lambda}{}^*_i).
		\end{align*}
		
		Let $X := (\ovl{\lambda}_1, \ldots, \ovl{\lambda}_{d\l})$ and $Y := (\ovl{\lambda}{}^*_1, \ldots, \ovl{\lambda}{}^*_{d\l})$ be as row vectors. 
		From Propositions \ref{prop-reduction-of-lambda} and \ref{prop-reduction-of-lambda*}, the entries of $X$ and $Y$ are both $\Fq$-bases of the finite field $\FF_{\Pfk_\mfk}$.
		In \S\ref{subsection-pairing-comparisons} (see Theorem \ref{thm-pairing-summary}), we will show that in fact, they are dual bases with respect to the trace pairing $\angtr{\cdot,\cdot}: \FF_{\Pfk_\mfk} \times \FF_{\Pfk_\mfk} \to \Fq$ defined by $\angtr{a,b} := \Tr_{\FF_{\Pfk}/\Fq}(ab)$.
		Since both $\omega \circ C_{\mfk x}$ and $\psi$ are $\Fq$-linear, it follows that the representing matrices of the actions of $z^{-1}$ on $\FF_{\Pfk_\mfk}$ with respect to $X$ and $z$ on $\FF_{\Pfk_\mfk}$ with respect to $Y$ are mutually inverse.
		Hence, we obtain
		$$
		\bggs_x
		= 1 + (q^{d\l} - 1) \sum_{i=1}^{d\l} \omega\left(C_{\mfk x} (\ovl{\lambda}_i)\right) \psi(\ovl{\lambda}{}^*_i)
		= 1 - \sum_{i=1}^{d\l} C_{\mfk x}(\lambda_i) \psi(\ovl{\lambda}{}^*_i).
		$$
		This completes the proof.
	\end{proof}
	
	\begin{eg}
		Consider Example \ref{eg-ggs}.
		Suppose $v = \T, \nfk = v-1$, and $x = \epsilon/\nfk \in \nfk^{-1}A$ for some $\epsilon \in \Fq$.
		In this case, $d = \l = 1$ and $\mfk = \nfk = v-1$.
		One sees by Theorem \ref{thm-ore} that
		$$
		\lambda_1^* = \lambda_1^{-q}.
		$$
		Since $\dim_{\Fq} \Lambda_{\nfk} = 1$ (see \S\ref{subsubsection-cyclotomic-function-fields}) and $\omega(1), \lambda_1 \in \Lambda_\nfk \setminus \{0\}$, there exists $\eta \in \Fqst$ such that $\omega(1) = \eta \lambda_1$.
		That is, the reduction of $\eta \lambda_1$ modulo $\Pfk_\mfk$ is the identity element $1 \in \FF_{\Pfk_\mfk} \simeq \Fq$.
		Then by Theorem \ref{thm-coleman-function-and-gauss-sum},
		$$
		\bggs_x
		= 1 - \epsilon\lambda_1 \psi(\ovl{\lambda}{}^*_1)
		= 1 - \epsilon \lambda_1 \eta
		= 1- \epsilon \omega(1),
		$$
		which aligns with Example \ref{eg-ggs}.
	\end{eg}
	
	\begin{rem}   \label{rem-ggs-and-coleman}
		Let $t,z$ be two independent variables.
		For each $a \in A$, we let
		$$
		C_a(t,z) := C_a(z)|_{\T = t} \in \Fq[t,z]
		\quad
		\text{and}
		\quad
		C_a^\star (t,z) := C_a^\star(z)|_{\T = t} \in \Fq[t,z],
		$$
		where $C_a(z)$ is the Carlitz polynomial and $C_a^\star (z)$ is the $a$-th cyclotomic polynomial.
		Let $O_a := \Fq[t,z]/(C_a^\star (t,z))$.
		Then with notations as in Theorem \ref{thm-coleman-function-and-gauss-sum}, for each $x \in \mfk^{-1}A$, the \textit{Coleman function} is defined as (see \cite[\nopp 6.3.5]{abp2004determination})
		$$
		g_x(t,z) := 1 - \sum_{i=1}^{d\l} C_{\mfk xb_i}(t,z) (\lambda_i^*)^{1/q}
		\in \ovl{k} \otimes_{\Fq} O_\mfk.
		$$
		Note we have $g_x = g_{x+a}$ for all $a \in A$.
		Moreover, one sees by definition that $C_{\mfk xb_i}(\T,\lambda) = C_{\mfk xb_i}(\lambda) = C_{\mfk x}(C_{b_i}(\lambda))
		= C_{\mfk x}(\lambda_i)$.
		Hence, by Theorem \ref{thm-coleman-function-and-gauss-sum}, we may regard the geometric Gauss sums as the “reduction at $v$” of the twisted Coleman functions specialized at $(\T,\lambda)$.
	\end{rem}
	
	Other fundamental properties of geometric Gauss sums, including their absolute values, analogs of Hasse-Davenport relations, and Stickelberger's theorem will be established in \S\ref{subsection-more-on-geometric-gauss-sums}.
	We will also determine their signs at infinity in \S\ref{subsection-the-behavior-of-geometric-gauss-sums-at-infinity}.
	
	\section{\texorpdfstring{$v$}{v}-adic gamma functions}     \label{section-v-adic-gamma-functions}
	
	In this section, we recall three $v$-adic gamma functions over $k$, and derive the standard functional equations for the two-variable case.
	Fix an irreducible $v\in A_{+,d}$ with $d>0$.
	We let $k_v$ be the completion of $k$ at $v$ with ring of integers $A_v$, and $\CC_v$ be the completion of a fixed algebraic closure of $k_v$ containing $\ovl{k}$.
	
	\subsection{Definition}
	
	\subsubsection{Three \texorpdfstring{$v$}{v}-adic gamma functions}
	
	A $p$-adic integer $y \in \ZZ_p$, when written as $y = \sum y_i q^i$, is always assumed that $0\leq y_i < q$.
	In particular, $y_i = 0$ for all $i < 0$.
	For an element $x\in A_v$, we set
	$$
	x^\flat := 
	\begin{cases}
		x, & \text{if } x \in A_v^\times, \\
		1, & \text{if } x\in v A_v.
	\end{cases}
	$$
	
	Recall the following three analogs of $v$-adic gamma and factorial functions.
	
	\begin{defn}    \label{defn-v-adic-gamma-definition}
		(1) \textit{$v$-adic arithmetic gamma function} (\cite[Appendix]{goss1980modular}): Define $\vag: \ZZ_p \to A_v$ by
		$$
		\vag (y) := \vaf(y-1)
		\quad
		\text{where}
		\quad
		\vaf(y) := \prod_{i=0}^{\infty} \left( -\prod_{a\in \Ami} a^\flat \right)^{y_i}.
		$$
		(2) \textit{$v$-adic geometric gamma function} (\cite[Section 5]{thakur1991gamma}): Define $\vgg: A_v \to A_v$ by
		$$
		\vgg(x) := \frac{1}{x^\flat} \vgf(x)
		\quad
		\text{where}
		\quad
		\vgf(x) := \prod_{i=0}^{\infty} \left( \prod_{a\in \Ami} \frac{a^\flat}{(x+a)^\flat} \right).
		$$
		(3) \textit{$v$-adic two-variable gamma function} (\cite[Subsection 9.9]{goss1996basic}): Define $\vtg: A_v \times \ZZ_p \to A_v$ by
		$$
		\vtg(x,y)
		:= \frac{\vgg(x,y)}{\vag(y)}
		= \frac{1}{x^\flat} \vtf(x,y-1)
		\quad
		\text{and}
		\quad
		\vtf(x,y)
		:= \frac{\vgf(x,y)}{\vaf(y)}
		$$
		where
		$$
		\vgg(x,y) := \frac{1}{x^\flat} \vgf(x,y-1)
		\quad
		\text{and}
		\quad
		\vgf(x,y) := \prod_{i=0}^\infty \left( \prod_{a\in\Ami} \frac{a^\flat}{(x+a)^\flat} \right)^{y_i}.
		$$
	\end{defn}
	
	Actually, Goss' definition in the two-variable case is the $\vgg(x,y)$ above, which generalizes the geometric case as one has
	$$
	\vgg\left(x , 1-\frac{1}{q-1}\right)
	= \vgg(x).
	$$
	Inspired by Thakur's modification of its $\infty$-adic counterpart \cite[Section 8]{thakur1991gamma}, we define the $v$-adic two-variable gamma function with the arithmetic one involved.
	In particular, we have
	$$
	\vtg\left(x , 1-\frac{1}{q-1}\right) 
	= \epsilon \vgg(x),
	\quad
	\text{where}
	\quad
	\epsilon = \vag\left(1-\frac{1}{q-1}\right)^{-1}
	$$
	is a root of unity \cite[Theorem 4.4]{thakur1991gamma}.
	Thus, our $v$-adic two-variable gamma function generalizes both the arithmetic and geometric cases.
	
	\subsubsection{The convergence of the two-variable case}
	
	The convergence of the two-variable case is justified by the following lemma.
	
	\begin{lem}
		Let $x,y\in A$ and $n,m\in\NN$ with $x\equiv y \pmod{v^r}$ and $n\equiv m\pmod{q^s}$ for large $r,s \in \NN$.
		Then we have
		$$
		\vgf(x,n) \equiv \vgf(x,m) \pmod{v^{\floor{s/d}}}
		\quad
		\text{and}
		\quad
		\vgf(x,m) \equiv \vgf(y,m) \pmod{v^r}.
		$$
	\end{lem}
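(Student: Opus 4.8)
The plan is to reduce both congruences to a single elementary equidistribution fact: for $i\geq dk$, the monic polynomials of degree $i$ are evenly spread among the residue classes modulo $v^k$.

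First I would record that, since $x,y\in A$ and $n,m\in\NN$, each of $\vgf(x,n)$, $\vgf(x,m)$, $\vgf(y,m)$ is a \emph{finite} product of the $v$-adic units $a^\flat,(x+a)^\flat\in A_v^\times$; hence these values lie in $A_v^\times$ and congruences modulo powers of $v$ can be manipulated multiplicatively. The key step is the claim that for every $k\geq 1$ and every $i\geq dk$,
$$
\prod_{a\in\Ami}a^\flat \;\equiv\; W_k^{\,q^{i-dk}} \pmod{v^k},
\qquad\text{where}\qquad
W_k:=\prod_{u\in(A/v^k)^\times}u .
$$
Indeed $\prod_{a\in\Ami}a^\flat=\prod_{a\in\Ami,\,v\nmid a}a$, and when $i\geq dk=\deg(v^k)$ the division algorithm (write $a=v^kQ+R$ with $Q$ monic of degree $i-dk$ and $\deg R<dk$) shows each class of $A/v^k$ is represented by exactly $q^{i-dk}$ monic polynomials of degree $i$; grouping the factors by residue class mod $v^k$ gives the claim. (In fact $W_k\equiv -1\pmod{v^k}$ by the usual Wilson-type argument and $q^{i-dk}$ is odd whenever $p$ is odd, so $\prod_{a\in\Ami}a^\flat\equiv -1\pmod{v^k}$; only the displayed form is needed, though.) Since $a\mapsto x+a$ is a bijection of $\Ami$ onto itself once $i>\deg x$, applying the claim to the shifted family gives, for $i\geq\max(dk,\deg x+1)$,
$$
\prod_{a\in\Ami}(x+a)^\flat\;\equiv\;W_k^{\,q^{i-dk}}\;\equiv\;\prod_{a\in\Ami}a^\flat\pmod{v^k},
\qquad\text{hence}\qquad
\prod_{a\in\Ami}\frac{a^\flat}{(x+a)^\flat}\;\equiv\;1\pmod{v^k}.
$$

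Granting this, the first congruence is immediate: $n\equiv m\pmod{q^s}$ forces $n_i=m_i$ for all $i<s$, so $\vgf(x,n)/\vgf(x,m)=\prod_{i\geq s}\bigl(\prod_{a\in\Ami}\frac{a^\flat}{(x+a)^\flat}\bigr)^{n_i-m_i}$ is a finite product, and once $s>\deg x$ every index $i\geq s$ satisfies $i>\deg x$ and $i\geq s\geq d\floor{s/d}$, so each inner product is $\equiv 1\pmod{v^{\floor{s/d}}}$ by the claim with $k=\floor{s/d}$; multiplying gives $\vgf(x,n)\equiv\vgf(x,m)\pmod{v^{\floor{s/d}}}$. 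For the second congruence, if $x\equiv y\pmod{v^r}$ then $x+a\equiv y+a\pmod{v^r}$ for every $a$, so either $v$ divides both (and $(x+a)^\flat=1=(y+a)^\flat$) or both are $v$-adic units with $(y+a)/(x+a)=1+(y-x)/(x+a)\equiv 1\pmod{v^r}$; in either case $(y+a)^\flat/(x+a)^\flat\equiv 1\pmod{v^r}$, and taking the finite product over $a$ and over $i$ yields $\vgf(x,m)/\vgf(y,m)\equiv 1\pmod{v^r}$.

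The only substantive ingredient is the counting in the key step, and even that is routine; the remaining work is bookkeeping — chiefly tracking that $s$ must exceed $\deg x$ (which is what "for large $r,s$" absorbs, while $r$ is in fact unconstrained) and that the $\flat$-operation is compatible with the relevant congruences. I anticipate no genuine obstacle.
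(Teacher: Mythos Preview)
Your proposal is correct and follows essentially the same route as the paper: both proofs dispose of the second congruence by the trivial observation that $(x+a)^\flat$ and $(y+a)^\flat$ are simultaneously $1$ or simultaneously units congruent mod $v^r$, and both reduce the first congruence to showing $\prod_{a\in\Ami}\frac{a^\flat}{(x+a)^\flat}\equiv 1\pmod{v^{\lfloor s/d\rfloor}}$ for $i\geq s$ via the equidistribution of $\Ami$ in residue classes mod $v^{\lfloor i/d\rfloor}$ and a Wilson-type congruence. Your version is in fact slightly cleaner: once you note that $a\mapsto x+a$ is a bijection of $\Ami$ onto itself for $i>\deg x$, the shifted and unshifted products are literally equal, so the Wilson step (which the paper carries out separately for numerator and denominator) is not even needed for the ratio---your equidistribution claim is stated but never really used.
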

	
	\begin{proof}
		The second congruence equation follows from the observation that for each $a\in\Ami$, $(x+a)^\flat$ and $(y+a)^\flat$ are either simultaneously $1$ or $x+a$ and $y+a$.
		For the first one, we write $n=\sum n_iq^i$ and $m=\sum m_iq^i$ in $q$-adic expansions.
		By assumption we have $n_i = m_i$ for $i < s$.
		So it's sufficient to show that
		$$
		\prod_{a\in\Ami} \frac{a^\flat}{(x+a)^\flat} \equiv 1 \pmod{v^{\floor{s/d}}}
		$$
		for all $i\geq s$.
		For the numerator, write $i=ed+j$ where $0\leq j < d$.
		And for each $a\in\Ami$, write $a=f\cdot v^e+g$ where $\deg f=j$ and $\deg g<ed$.
		Note that $v\nmid a$ if and only if $v\nmid g$, and as $a$ runs through $\Ami$, we get $q^i/q^{ed} = q^j$ copies of complete residue system modulo $v^e$.
		Hence, we see by Wilson's theorem that
		$$
		\prod_{a\in\Ami} a^\flat
		= \prod_{\substack{a\in\Ami \\ v\nmid a}} a
		\equiv \left(\prod_{g \in (A/v^eA)^\times} g \right)^{q^j} 
		\equiv -1 \pmod{v^e = v^{\floor{i/d}}}.
		$$
		This shows that the numerator is congruent to $-1$ modulo $v^{\floor{s/d}}$.
		A similar argument shows that this is also true for the denominator.
		Writing $x+a = f\cdot v^e+g$, we observe that there are $q^j$ possibilities of $f$, each corresponding to a complete residue system modulo $v^e$.
		Hence, the result follows.
	\end{proof}
	
	\subsection{Functional equations}
	
	Natural functional equations of the arithmetic and geometric gammas, such as their respective reflection and multiplication formulas, were well established by Thakur in \cite{thakur1991gamma}.
	Using the same idea and argument, one may easily deduce the corresponding results for the two-variable case.
	(We note that both $\vaf(\cdot)$ and $\vgf(x,\cdot)$ fit into the framework of \cite[Section 2]{thakur1991gamma}.)
	
	\begin{prop}
		Let $y,y' \in \ZZ_p$ with $y = \sum y_iq^i$ and $y' = \sum y_i'q^i$ in $q$-adic expansions.
		If $y_i+y_i' < q$ for all $i$ (i.e., $y+y'$ has no carry over base $q$), we have
		$$
		\frac{\vtg(x,1+y+y')}{\vtg(x,1+y)\vtg(x,1+y')}
		= x^\flat.
		$$
	\end{prop}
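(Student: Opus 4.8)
The plan is to reduce the identity to the statement that the second-variable map $y \mapsto \vtf(x,y)$ is multiplicative under carry-free addition, and then to bookkeep the normalizing factor $1/x^\flat$ coming from the definition $\vtg(x,z) = \frac{1}{x^\flat}\,\vtf(x,z-1)$. First I would unwind the definitions in Definition~\ref{defn-v-adic-gamma-definition}(3): the $i$-th factor of $\vtf(x,y) = \vgf(x,y)/\vaf(y)$ is
$$
\left( \prod_{a \in \Ami} \frac{a^\flat}{(x+a)^\flat} \right)^{y_i} \bigg/ \left( -\prod_{a\in\Ami} a^\flat \right)^{y_i}
= \left( \frac{-1}{\prod_{a\in\Ami}(x+a)^\flat} \right)^{y_i},
$$
since $\prod_{a\in\Ami} a^\flat$ cancels between numerator and denominator. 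Setting $h_i := \bigl(-\prod_{a\in\Ami}(x+a)^\flat\bigr)^{-1}$, this gives the clean formula $\vtf(x,y) = \prod_{i=0}^\infty h_i^{y_i}$, the product converging as in Definition~\ref{defn-v-adic-gamma-definition}.

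Next I would invoke the hypothesis: because $y+y'$ has no carry over base $q$, its $q$-adic digits are simply $(y+y')_i = y_i + y_i'$ for every $i \geq 0$. Hence
$$
\vtf(x,y+y') = \prod_{i=0}^\infty h_i^{\,y_i + y_i'} = \left(\prod_{i=0}^\infty h_i^{y_i}\right)\left(\prod_{i=0}^\infty h_i^{y_i'}\right) = \vtf(x,y)\,\vtf(x,y').
$$

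Finally, using $(1+y+y')-1 = y+y'$ together with the analogous identities for $1+y$ and $1+y'$, I would substitute into the definition of $\vtg$:
$$
\frac{\vtg(x,1+y+y')}{\vtg(x,1+y)\,\vtg(x,1+y')}
= \frac{\tfrac{1}{x^\flat}\,\vtf(x,y+y')}{\tfrac{1}{x^\flat}\vtf(x,y)\cdot\tfrac{1}{x^\flat}\vtf(x,y')}
= \frac{\tfrac{1}{x^\flat}\,\vtf(x,y)\,\vtf(x,y')}{\tfrac{1}{(x^\flat)^2}\,\vtf(x,y)\,\vtf(x,y')}
= x^\flat,
$$
which is the claimed formula. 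There is no serious obstacle here; the only points deserving care are the cancellation of $\prod_{a\in\Ami} a^\flat$ (and the resulting leftover sign), and recognizing that the carry-free hypothesis is exactly what makes the digit expansion, and hence the exponent in $\prod_i h_i^{\,\cdot}$, additive. Alternatively, one could observe that $\vtf(x,\cdot)$ fits the framework of \cite[Section 2]{thakur1991gamma} and quote the general functional equation there, but the direct computation above is shorter.
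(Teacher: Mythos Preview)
Your proof is correct and is essentially the approach the paper has in mind: the paper does not spell out a proof for this proposition but simply remarks that $\vaf(\cdot)$ and $\vgf(x,\cdot)$ fit into the framework of \cite[Section~2]{thakur1991gamma}, which amounts precisely to the digit-wise multiplicativity $\vtf(x,y+y') = \vtf(x,y)\,\vtf(x,y')$ under carry-free addition that you verify directly. Your explicit cancellation of $\prod_{a\in\Ami} a^\flat$ and bookkeeping of the $1/x^\flat$ factor make the argument self-contained, and your closing remark about Thakur's framework is exactly the shortcut the paper itself invokes.
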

	
	\subsubsection{Reflection formula}
	
	\begin{thm}
		For $(x,y) \in A_v \times \ZZ_p$, we have  \\
		(1)
		$$
		\vtg(x,y) \vtg(x,1-y)
		= (-1)^{d-1} (x^{\flat})^{q-3} \vgg(x)^{q-1}.
		$$
		(2)
		$$
		\prod_{\epsilon\in \Fqst}
		\vtg(\epsilon x,y) \vtg(\epsilon x,1-y)
		= \left( \frac{1}{x^{\flat}} \right)^{q-1}.
		$$
	\end{thm}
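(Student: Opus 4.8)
\emph{Strategy.} The plan is to reduce both identities to the reflection formulas for the $v$-adic arithmetic and geometric gamma functions, which Thakur established within the framework of \cite[Section~2]{thakur1991gamma}, by exploiting the decomposition $\vtf(x,\cdot) = \vgf(x,\cdot)/\vaf(\cdot)$. The structural fact I would use is that each of $\vaf(\cdot)$, $\vgf(x,\cdot)$, and hence $\vtf(x,y) = \prod_{i\ge 0}\bigl(-1/\prod_{a\in\Ami}(x+a)^\flat\bigr)^{y_i}$, has the shape $F(w) = \prod_{i\ge 0} c_i^{w_i}$ indexed by the base-$q$ digits of $w$, with $c_i \to 1$ by the same Wilson-type congruence used for the convergence lemma above; such an $F$ satisfies $F(w)F(w') = F(w+w')$ whenever the base-$q$ sum $w+w'$ produces no carry.

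First I would record that $\vtg(x,y) = (x^\flat)^{-1}\vtf(x,y-1)$ and $\vtg(x,1-y) = (x^\flat)^{-1}\vtf(x,-y)$, and check by a short digit count that $(y-1)+(-y) = -1$ carries nowhere in base $q$ (the only slot that needs attention is the lowest nonzero digit of $y$; the case $y \in \ZZ_{\ge 0}$ is handled the same way). The additivity then gives
$$
\vtg(x,y)\,\vtg(x,1-y) = \frac{\vtf(x,-1)}{(x^\flat)^2} = \frac{\vgf(x,-1)}{(x^\flat)^2\,\vaf(-1)}.
$$
Since every base-$q$ digit of $-1$ is $q-1$, I would simplify $\vgf(x,-1) = \vgf(x)^{q-1} = (x^\flat\vgg(x))^{q-1}$ (the infinite products converging by the Wilson congruence), obtaining
$$
\vtg(x,y)\,\vtg(x,1-y) = \frac{(x^\flat)^{q-3}\,\vgg(x)^{q-1}}{\vaf(-1)}.
$$
It then remains to identify $\vaf(-1) = \vag(0)$. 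The display above with the two gamma functions interchanged shows $\vag(y)\vag(1-y) = \vaf(y-1)\vaf(-y) = \vaf(-1)$ is independent of $y$, and Thakur's reflection formula for the $v$-adic arithmetic gamma function (which at bottom is a Wilson-type evaluation modulo powers of $v$) pins this constant down as $(-1)^{d-1}$. Since $(-1)^{d-1}$ equals its own inverse, this is (1).

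For (2), I would substitute $\epsilon x$ for $x$ in (1) and multiply over $\epsilon \in \Fqst$. Here $((-1)^{d-1})^{q-1} = 1$, and since $\prod_{\epsilon\in\Fqst}(\epsilon x)^\flat = \pm(x^\flat)^{q-1}$ (according as $x \in A_v^\times$ or $x \in vA_v$) while $(\pm 1)^{q-3} = 1$, one gets $\prod_{\epsilon}((\epsilon x)^\flat)^{q-3} = (x^\flat)^{(q-1)(q-3)}$, so
$$
\prod_{\epsilon\in\Fqst}\vtg(\epsilon x,y)\,\vtg(\epsilon x,1-y) = (x^\flat)^{(q-1)(q-3)}\Bigl(\prod_{\epsilon\in\Fqst}\vgg(\epsilon x)\Bigr)^{q-1}.
$$
Finally, Thakur's reflection formula for the $v$-adic geometric gamma function gives $\prod_{\epsilon\in\Fqst}\vgg(\epsilon x) = \zeta\,(x^\flat)^{2-q}$ for some root of unity $\zeta$ with $\zeta^{q-1} = 1$; raising to the $(q-1)$-st power removes $\zeta$ and leaves $(x^\flat)^{(q-1)(q-3)}(x^\flat)^{(q-1)(2-q)} = (x^\flat)^{-(q-1)} = (1/x^\flat)^{q-1}$, which is (2).

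The part I expect to be the main obstacle is pinning down the two scalar constants precisely: $\vaf(-1) = (-1)^{d-1}$ from the $v$-adic arithmetic reflection, and $\prod_{\epsilon\in\Fqst}\vgg(\epsilon x) = (\text{root of unity})\cdot(x^\flat)^{2-q}$ from the $v$-adic geometric reflection — each demanding a careful Wilson-type count modulo high powers of $v$, alongside the bookkeeping of $x^\flat$ in the two cases $x \in A_v^\times$ and $x \in vA_v$. The no-carry verification for $(y-1)+(-y) = -1$ and the convergence of $\vgf(x,-1)$, $\vaf(-1)$ are routine but should be stated carefully.
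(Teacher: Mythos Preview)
Your proposal is correct and is precisely the argument the paper has in mind. The paper does not write out a proof for this theorem; it only remarks beforehand that ``both $\vaf(\cdot)$ and $\vgf(x,\cdot)$ fit into the framework of \cite[Section~2]{thakur1991gamma}'' and that the two-variable functional equations follow by the same idea. Your write-up is an explicit execution of exactly this: the no-carry additivity $\vtf(x,y-1)\vtf(x,-y)=\vtf(x,-1)$ (your digit check that $(y-1)+(-y)=-1$ is carry-free is correct in all cases), the identification $\vgf(x,-1)=\vgf(x)^{q-1}=(x^\flat\vgg(x))^{q-1}$, and the two remaining constants $\vaf(-1)=(-1)^{d-1}$ and $\prod_{\epsilon}\vgg(\epsilon x)=\zeta\,(x^\flat)^{2-q}$ with $\zeta^{q-1}=1$, which are Thakur's arithmetic and geometric $v$-adic reflection formulas respectively. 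There is nothing to add.
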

	
	\subsubsection{Multiplication formula}
	
	\begin{thm}
		For $(x,y) \in A_v \times \ZZ_p$ and $n\in\NN$ with $(n,q)=1$, we have
		$$
		\prod_{i=0}^{n-1} \vtg\left(x,\frac{y+i}{n}\right)
		= \left(\frac{1}{x^\flat}\right)^{n-1}
		\left( \frac{\vgf(x,-1)}{\vaf(-1)} \right)^{(n-1)/2}
		\vtg(x,y)
		$$
		where the square root is taken to be $\vgf(x,-1/2)/\vaf(-1/2)$ when $n$ is even.
	\end{thm}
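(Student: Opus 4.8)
The strategy is the one announced above: to deduce the formula from Thakur's treatment of multiplication formulas for the $v$-adic gamma functions \cite{thakur1991gamma}. Recall that $y \mapsto \vaf(y)$ and, for a fixed $x$, $y \mapsto \vgf(x,y)$ are ``digit-product'' functions $y = \sum_i y_i q^i \mapsto \prod_{i \ge 0} c_i^{y_i}$ whose $i$-th coefficient is, up to sign, a product over $\Ami$; these are exactly the functions treated in \cite[Section~2]{thakur1991gamma}, whose coefficients obey the recursion that makes the $q$-adic carry computations cancel the dependence on the variable. The plan is to run Thakur's combinatorial argument in that generality to obtain, for such a function $F$, a ``raw'' Gauss--Legendre identity
$$
\prod_{i=0}^{n-1} F\left(\frac{w+i}{n}\right) = c_F(n)\cdot F(w+n-1) \qquad (w \in \ZZ_p),
$$
with $c_F(n)$ a constant depending only on $F$ and $n$, and then to apply this to $F = \vgf(x,\cdot)$ and to $F = \vaf$ and divide. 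The precise shape on the right, in particular the argument shift by $n-1$, is what Thakur's carry combinatorics yields; note there is no analogue here of the classical ``$n^{\,1/2-w}$''-type monomial factor, since no such monomial in $w$ makes sense in characteristic $p$.

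Granting the raw identity, the first step is the reduction. Using $\vtg(x,z) = \frac{1}{x^\flat}\vtf(x,z-1)$ and $\frac{y+i}{n} - 1 = \frac{(y-n)+i}{n}$, the left-hand side of the theorem equals $\bigl(\tfrac{1}{x^\flat}\bigr)^{n} \prod_{i=0}^{n-1} \vtf\!\bigl(x,\tfrac{(y-n)+i}{n}\bigr)$, which by the raw identity for $\vgf$ and for $\vaf$ (taking $w = y-n$, so $w+n-1 = y-1$) becomes $\bigl(\tfrac{1}{x^\flat}\bigr)^{n} \frac{c^{\mathrm{geo}}(x,n)}{c^{\mathrm{ari}}(n)}\, \vtf(x,y-1)$. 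Since the right-hand side of the theorem is $\bigl(\tfrac{1}{x^\flat}\bigr)^{n-1}\bigl(\vgf(x,-1)/\vaf(-1)\bigr)^{(n-1)/2}\cdot \tfrac{1}{x^\flat}\vtf(x,y-1)$, the explicit powers of $x^\flat$ match, and it remains to show $\frac{c^{\mathrm{geo}}(x,n)}{c^{\mathrm{ari}}(n)} = \bigl(\vgf(x,-1)/\vaf(-1)\bigr)^{(n-1)/2}$. For the constant, evaluate the raw identity at $w = -n$: the $i=0$ term on the left is $F(-1)$, cancelling $F(-1)$ on the right, so $c_F(n) = \prod_{i=1}^{n-1}F\bigl(\tfrac in - 1\bigr)$. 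Now the reflection formula $F(u)F(-1-u) = F(-1)$ — valid for $F = \vgf(x,\cdot),\vaf,\vtf(x,\cdot)$, being immediate from digit-complementation as $-1 \in \ZZ_p$ has all base-$q$ digits equal to $q-1$ — pairs the index $i$ with $n-i$, since $\bigl(\tfrac in - 1\bigr) + \bigl(\tfrac{n-i}{n} - 1\bigr) = -1$. For odd $n$ this gives $c_F(n) = F(-1)^{(n-1)/2}$, and dividing the geometric by the arithmetic yields the desired identity. For even $n = 2m$ (which forces $q$, hence $p$, odd) the middle index $i = m$ is self-paired and contributes the extra factor $F\bigl(-\tfrac12\bigr)$, with $F(-\tfrac12)^2 = F(-1)$ since $-\tfrac12 \in \ZZ_p$ has all base-$q$ digits $\tfrac{q-1}{2}$; dividing, the quotient is $\bigl(\vgf(x,-1)/\vaf(-1)\bigr)^{m-1}\cdot\frac{\vgf(x,-1/2)}{\vaf(-1/2)}$, which is exactly $\bigl(\vgf(x,-1)/\vaf(-1)\bigr)^{(n-1)/2}$ under the stipulated reading of the half-integer power as $\vgf(x,-1/2)/\vaf(-1/2)$.

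The main obstacle I anticipate is not the above bookkeeping but the digit combinatorics underlying the raw identity itself, i.e.\ faithfully reproducing Thakur's argument in \cite[Section~2]{thakur1991gamma} for $\vgf(x,\cdot)$ and $\vaf$ — in particular verifying that it produces the shape $c_F(n)\cdot F(w+n-1)$ with no extraneous $w$-dependent factor, and that the $\flat$-truncation does not disturb it. The latter should be routine: just as the convergence lemma above exhibits $\prod_{a\in\Ami}a^\flat \equiv -1$ modulo a large power of $v$, the truncation affects $c_F(n)$ only modulo large powers of $v$ and therefore drops out of the exact identity, while the coefficient recursion survives the truncation in the same controlled fashion. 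One must, of course, keep careful track of the signs in Thakur's normalizations of $c^{\mathrm{geo}}(x,n)$ and $c^{\mathrm{ari}}(n)$ — the point being that they cancel in the quotient, consistently with the sign-free right-hand side of the statement.
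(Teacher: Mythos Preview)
Your approach is correct and is essentially the paper's own (unstated) argument: the paper simply remarks that both $\vaf(\cdot)$ and $\vgf(x,\cdot)$ fit into the framework of \cite[Section~2]{thakur1991gamma}, so the two-variable multiplication formula follows by the same argument as Thakur's. Your reduction via $\vtg(x,z)=\tfrac{1}{x^\flat}\vtf(x,z-1)$, the raw identity $\prod_{i=0}^{n-1}F\bigl(\tfrac{w+i}{n}\bigr)=c_F(n)\,F(w+n-1)$, and the evaluation of $c_F(n)$ by reflection-pairing are all correct, and indeed just re-derive the constant $F(-1)^{(n-1)/2}$ that Thakur's Lemma~2.4 already supplies (in the paper's normalization $g(0)^{(n-1)/2}$ with $g(z)=F(z-1)$).

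Two minor points. First, you can streamline by applying Thakur's framework directly to $\vtf(x,\cdot)=\vgf(x,\cdot)/\vaf(\cdot)$, which is itself a digit-product function; this avoids treating numerator and denominator separately and then dividing. Second, your anticipated ``main obstacle'' is not one: Thakur's Lemma~2.4 is a purely combinatorial statement about any function of the form $y=\sum y_iq^i\mapsto\prod_i c_i^{y_i}$, and both $\vaf$ and $\vgf(x,\cdot)$ are \emph{by definition} of this form with $c_i=-\prod_{a\in\Ami}a^\flat$ and $c_i=\prod_{a\in\Ami}a^\flat/(x+a)^\flat$ respectively. The $\flat$ is baked into the coefficients $c_i$ themselves, so there is no ``truncation'' interacting with the carry combinatorics, and no sign bookkeeping beyond what is already encoded in the $c_i$.
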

	
	We define $\partial_v: \ZZ_p \to \ZZ_p$ as $\partial_v(\sum_{i=0}^{\infty} y_i q^i) := \sum_{i=0}^\infty y_{i+d}q^i$, which shifts the $q$-adic digits of a $p$-adic integer $d$ steps left.
	We also define the sign function $\sgn$ on $A$ which sends a non-zero polynomial to its leading coefficient, and put $\sgn(0) := 0$.
	
	\begin{thm}    \label{thm-multiplication-formula}
		Let $x_0 \in A$ be the unique element such that $x_0 \equiv x \pmod{v}$ and $\deg x_0 < d$.
		Then for any $g\in A_{+,h}$ with $(g,v) = 1$ and $y = \sum y_i q^i \in \ZZ_p$, we have
		$$
		\prod_{\alpha} \vtf\left(\frac{x+\alpha}{g},y\right) = \vtf(x,q^h y) g^{-\delta_x y_{i'-h} + m} (g^{q^d-1})^{\partial_v(q^hy)}.
		$$
		Equivalently,
		$$
		\prod_{\alpha} \vgf\left(\frac{x+\alpha}{g},y\right)
		= \vgf(x,q^h y) \frac{1}{\vaf(q^hy)} \vaf(y)^{q^h}
		g^{-\delta_x y_{i'-h} + m} (g^{q^d-1})^{\partial_v(q^hy)}.
		$$
		Here, $\alpha$ runs through a complete residue system modulo $g$, $i':=\deg x_0$, $\delta_x := 1$ if $h \leq i' <d$ and $\sgn(x_0) = -1$, and $\delta_x := 0$ otherwise, $m:=\sum_{i=h}^{d-1} y_{i-h}q^i$ is a non-negative integer, and $(g^{q^d-1})^{\partial_v(q^hy)} \in A_v$ is a one-unit.
	\end{thm}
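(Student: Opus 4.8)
The plan is to prove the second (equivalent) displayed identity, the one for $\vgf$, directly from the product definition of $\vgf$; the identity for $\vtf=\vgf/\vaf$ then follows immediately upon dividing both sides by $\vaf(y)^{q^h}=\prod_\alpha\vaf(y)$. I would first reduce to the case where $y$ is a non-negative integer, so that every product in sight is finite and the asserted equality is an honest identity in $k_v$; the general case $y\in\ZZ_p$ then follows from the $v$-adic continuity of $\vgf(x,\cdot)$ (the convergence lemma proved above) and of $\vaf(\cdot)$ (these fit the framework of \cite[Section 2]{thakur1991gamma}), noting that $m$, $y_{i'-h}$ and $\partial_v(q^hy)$ all depend continuously on $y$ and that $g^{q^d-1}\equiv1\pmod v$ is a one-unit by Fermat in $\FF_v^\times$ (here $v\nmid g$, so $g$ is a unit in $A_v$ and $|\FF_v^\times|=q^d-1$). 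Throughout I would take the complete residue system modulo $g$ to be the $q^h$ polynomials of degree $<h$, and write $P_n:=\prod_{b\in A_{+,n}}b^\flat$.

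The first step is a reindexing. For each $i\ge 0$ the assignment $(\alpha,a)\mapsto b:=\alpha+ga$ is a bijection from the product of the residue system with $\Ami$ onto $A_{+,i+h}$, since Euclidean division by $g$ recovers $(\alpha,a)$ and $\deg(ga)=\deg b$ forces $a$ to be monic of degree $i$. Writing $\tfrac{x+\alpha}{g}+a=\tfrac{x+b}{g}$ and noting $\bigl(\tfrac{x+b}{g}\bigr)^{\flat}$ equals $\tfrac{x+b}{g}$ or $1$ according as $v\nmid(x+b)$ or $v\mid(x+b)$ (as $g\in A_v^\times$), I get $\prod_\alpha\prod_{a\in\Ami}\bigl(\tfrac{x+\alpha}{g}+a\bigr)^{\flat}=g^{-N_i}\prod_{b\in A_{+,i+h}}(x+b)^\flat$ with $N_i:=\#\{b\in A_{+,i+h}:v\nmid(x+b)\}$, while $\prod_\alpha\prod_{a\in\Ami}a^\flat=P_i^{q^h}$. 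Pairing this with $\vgf(x,q^hy)=\prod_{i\ge0}\bigl(P_{i+h}\big/\prod_{b\in A_{+,i+h}}(x+b)^\flat\bigr)^{y_i}$ (valid since $(q^hy)_{i+h}=y_i$ and $(q^hy)_j=0$ for $j<h$), the products over $b$ cancel and I reach
$$
\frac{\prod_\alpha\vgf\bigl(\tfrac{x+\alpha}{g},y\bigr)}{\vgf(x,q^hy)}
=\prod_{i\ge0}\Bigl(\frac{P_i^{q^h}}{P_{i+h}}\Bigr)^{y_i}\cdot\prod_{i\ge0}g^{N_iy_i}.
$$

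It then remains to evaluate the two products. For the first, I would observe that $(-P_i)^{q^h}=-P_i^{q^h}$ in the characteristic-$p$ ring $A_v$, so term-by-term the signs cancel and $\prod_{i\ge0}(P_i^{q^h}/P_{i+h})^{y_i}=\vaf(y)^{q^h}/\vaf(q^hy)$ straight from the definition of $\vaf$ (convergence being clear from $P_n\equiv-1\pmod{v^{\floor{n/d}}}$, as in the proof of the convergence lemma). For the $g$-power, I would count $M_i:=q^{i+h}-N_i=\#\{b\in A_{+,i+h}:b\equiv-x_0\pmod v\}$: when $i+h\ge d$ exactly $q^{i+h-d}$ monic polynomials of degree $i+h$ lie in any fixed class mod $v$ (here $\deg v=d$); when $i+h<d$ a monic polynomial of that degree congruent to $-x_0$ must equal $-x_0$, which occurs precisely for $i=i'-h$, $h\le i'$ and $\sgn(x_0)=-1$, giving $M_i=\delta_x$ at that one index and $0$ otherwise. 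Hence $\sum_i N_iy_i=q^hy-\sum_i M_iy_i=q^hy-\partial_v(q^hy)-\delta_x y_{i'-h}$, because $\sum_{i+h\ge d}q^{i+h-d}y_i=\partial_v(q^hy)$; and the elementary splitting of the $q$-adic digits of $q^hy$ below and above $q^d$ gives $q^hy=m+q^d\partial_v(q^hy)$, so $\sum_i N_iy_i=m-\delta_x y_{i'-h}+(q^d-1)\partial_v(q^hy)$. Since $m-\delta_x y_{i'-h}\in\ZZ$ and $g^{q^d-1}$ is a one-unit, this is $\prod_i g^{N_iy_i}=g^{m-\delta_x y_{i'-h}}(g^{q^d-1})^{\partial_v(q^hy)}$, and assembling the two pieces yields the formula.

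The step I expect to be the main obstacle is the careful bookkeeping of the $\flat$-operation in the low-degree range $i+h<d$: this is exactly where the correction factor $\delta_x$ is produced, and one must handle the degenerate possibilities for $x_0$ (its degree, its leading coefficient, the case $v\mid x$ where $x_0=0$) and, just as importantly, split the $v$-adic exponent $\sum_i N_iy_i$ into a genuinely integral part $m-\delta_x y_{i'-h}$ and a one-unit part $(g^{q^d-1})^{\partial_v(q^hy)}$ raised to a $\ZZ_p$-power, which is what makes the right-hand side well-defined at all. The remaining case $h>d$ is dispatched by the same computation, where now $m=0$, $\delta_x=0$ and $\partial_v(q^hy)=q^{h-d}y$.
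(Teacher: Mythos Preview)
Your proposal is correct and follows essentially the same route as the paper: the same reindexing bijection $(\alpha,a)\mapsto b=\alpha+ga$ onto $A_{+,i+h}$, the same identification of the $P_i^{q^h}/P_{i+h}$ product with $\vaf(y)^{q^h}/\vaf(q^hy)$ via the sign cancellation $(-1)^{q^h}=-1$, and the same explicit count of $\{b\in A_{+,i+h}:v\mid b+x\}$ to produce the $g$-exponent $m-\delta_x y_{i'-h}+(q^d-1)\partial_v(q^hy)$. The only cosmetic difference is that you reduce to integral $y$ and invoke continuity at the end, whereas the paper carries the $v$-adic limit through the computation; both are valid and the bookkeeping is the same.
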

	
	\begin{proof}
		Starting with $\vgf(\cdot,\cdot)$ (Definition \ref{defn-v-adic-gamma-definition}(3)), we have
		\begin{multline}   \label{eq-multiplication-formula-1}
			\prod_\alpha \vgf\left(\frac{x+\alpha}{g},y\right)
			= \lim_{N\to\infty}
			\left( \prod_{i=0}^N \left( \prod_{a\in\Ami} \prod_{\alpha} \frac{1}{(ga+\alpha+x)^\flat} \right)^{y_i} \right)   \\
			\left( \prod_{i=0}^N \prod_{a\in\Ami} \prod_\alpha (a^\flat)^{y_i} \right)
			\left( \prod_{i=0}^N \left( \prod_{a\in\Ami} \prod_{\substack{\alpha \\ v\nmid ga+\alpha+x}} g \right)^{y_i} \right). 
		\end{multline}
		In the first term, we see from the division algorithm that for each such $a$ and $\alpha$, $ga+\alpha$ corresponds to a unique $b \in A_+$ with $h \leq \deg b\leq N+h$, and vice versa.
		So by change of variables, the first two terms of \eqref{eq-multiplication-formula-1} are
		\begin{align*}
			&\left( \prod_{i=0}^N \left( \prod_{a\in\Ami} \prod_{\alpha} \frac{1}{(ga+\alpha+x)^\flat} \right)^{y_i} \right) 
			\left( \prod_{i=0}^N \prod_{a\in\Ami} \prod_\alpha (a^\flat)^{y_i} \right)    \\
			={} &\left( \prod_{i=h}^{N+h} \left( \prod_{b\in\Ami} \frac{b^\flat}{(b+x)^\flat} \right)^{y_{i-h}} \right)
			\left( \prod_{i=h}^{N+h} \left( \prod_{b\in\Ami} \frac{1}{b^\flat} \right)^{y_{i-h}} \right)
			\left( \prod_{i=0}^N \prod_{a\in\Ami} (a^\flat)^{y_i} \right)^{q^h}
		\end{align*}
		Note that the first term converges to $\vgf(x,q^h y)$.
		And for the latter two, by multiplying $-1$ to each of their terms, they converge to $1/\vaf(q^h y)$ and $\vaf(y)^{q^h}$, respectively.
		Recall by Definition \ref{defn-v-adic-gamma-definition}(3), $\vtf(x,y) = \vgf(x,y)/\vaf(y)$.
		Thus, \eqref{eq-multiplication-formula-1} becomes
		$$
		\prod_{\alpha} \vtf\left(\frac{x+\alpha}{g},y\right) = \vtf(x,q^h y)  \lim_{N\to\infty} \prod_{i=0}^N \left( \prod_{a\in\Ami} \prod_{\substack{\alpha \\ v\nmid ga+\alpha+x}} g \right)^{y_i}.
		$$
		
		It remains to compute the limit.
		Using the same change of variables, we see that
		$$
		\lim_{N\to\infty} \prod_{i=0}^N \left( \prod_{a\in\Ami} \prod_{\substack{\alpha \\ v\nmid ga+\alpha+x}} g \right)^{y_i}
		= \lim_{N\to\infty} \prod_{i=h}^{N+h} \left( \prod_{b\in\Ami} \prod_{ v \nmid b+x} g \right)^{y_{i-h}}
		= \lim_{N\to\infty} g^{\sum_{i=h}^{N+h} k_iy_{i-h}}
		$$
		where
		$$
		k_i =
		\begin{cases}
			(q^d-1)q^{i-d}, & \text{if } i \geq d,  \\
			q^i-1, & \text{if } h \leq i = i' < d  \text{ and } \sgn(x_0)=-1, \\
			q^i, & \text{otherwise},
		\end{cases}
		$$
		which counts the cardinality of the set $\{b \in\Ami \mid v \nmid b+x\}$.
		By the definitions of $\delta_x,i'$ and $m$, we have
		$$
		\sum_{i=h}^{N+h} k_iy_{i-h}
		= -\delta_x y_{i'-h} + m + (q^d-1)\sum_{i=d}^{N+h} y_{i-h} q^{i-d}.
		$$
		Finally, since $g^{q^d-1}$ is a one-unit in $A_v$, by non-Archimedean analysis, the exponential function with base $g^{q^d-1}$ interpolates to $\ZZ_p$ (see \cite[Theorem 32.4]{schikhof1985ultrametric}).
		Thus, we have
		$$
		\lim_{N \to \infty} g^{(q^d-1) \sum_{i=d}^{N+h} y_{i-h} q^{i-d}}
		= (g^{q^d-1})^{\partial_v(q^hy)}
		$$
		which is a one-unit.
	\end{proof}
	
	\subsubsection{monomial relations}
	
	We conclude this section by presenting some monomial relations among special $v$-adic gamma values.
	They are all immediate consequences of the functional equations of $\vtg(\cdot,\cdot)$ we have seen in this section.
	For $\kappa_1,\kappa_2 \in \CC_v^\times$, we write $\kappa_1 \sim \kappa_2$ if $\kappa_1/\kappa_2 \in \ovl{k}^\times$.
	
	\begin{cor}
		Let $x\in (k \cap A_v) \setminus A$ and $y\in (\QQ \cap \ZZ_p) \setminus \ZZ$.  \\
		(1) For any $a\in A$ and $n \in \ZZ$,
		$$
		\vtg(x+a,y+n)
		\sim
		\vtg(x,y).
		$$
		(2)
		$$
		\vtg(x,y) \vtg(x,1-y)
		\sim \vgg(x)^{q-1}
		\quad
		\text{and}
		\quad
		\prod_{\epsilon\in \Fqst}
		\vtg(\epsilon x,y) \vtg(\epsilon x,1-y)
		\sim 1.
		$$
		(3) Suppose $y \in (q^t-1)^{-1}\ZZ$.
		Write the fractional part of $-y$ as $\sum_{s=0}^{t-1} r_sq^s/(q^t -1)$ where $0 \leq r_s < q$ for all $s$.
		Then
		$$
		\vtg(x,y)
		\sim
		\prod_{s=0}^{t-1} \vtg \left(x , 1-\frac{q^s}{q^t-1}\right)^{r_s}.
		$$
		(4) For any $n \in \NN$ with $(n,q) = 1$,
		$$
		\prod_{i=0}^{n-1} \vtg\left( x,\frac{y+i}{n} \right)
		\sim
		\vtg(x,y)\vgg(x)^{(n-1)(q-1)/2}.
		$$
		(5) For any $g \in A_{+,h}$ with $(g,v) = 1$,
		$$
		\prod_{\substack{\alpha \in A \\ \deg \alpha < \deg g}} \vtg \left(\frac{x+\alpha}{g},y\right)
		\sim
		\vtg(x,q^h y). 
		$$
	\end{cor}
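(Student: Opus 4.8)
\emph{Proof plan.} The plan is to obtain each of (1)--(5) by specializing the functional equations of this section and then simply discarding every factor that lies in $\ovl{k}^\times$. The first thing I would record is that all the ``correction factors'' appearing in those formulas are indeed algebraic over $k$: when $x\in k\cap A_v$ one has $x^\flat\in\{x,1\}\subseteq k^\times$; for $g\in A_+$ every power $g^m$ lies in $k^\times$; and since $y\in\QQ$ forces the base-$q$ expansions of $y$, $q^hy$, and $\partial_v(q^hy)$ to be eventually periodic, the one-unit $(g^{q^d-1})^{\partial_v(q^hy)}$ occurring in Theorem~\ref{thm-multiplication-formula} is a rational power of $g^{q^d-1}\in k^\times$, hence algebraic over $k$. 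Granting this, parts (2) and (5) are immediate: (2) is the reflection Theorem after deleting the factors $(-1)^{d-1}(x^\flat)^{q-3}$ and $(1/x^\flat)^{q-1}$, while (5) is Theorem~\ref{thm-multiplication-formula} after deleting $g^{-\delta_xy_{i'-h}+m}(g^{q^d-1})^{\partial_v(q^hy)}$ and then passing from $\vtf$ to $\vtg$ via $\vtf(z,w)=z^\flat\,\vtg(z,w+1)$ together with part (1).

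For part (4) I would start from the multiplication Theorem $\prod_{i=0}^{n-1}\vtg\!\left(x,\tfrac{y+i}{n}\right)=(1/x^\flat)^{n-1}\bigl(\vgf(x,-1)/\vaf(-1)\bigr)^{(n-1)/2}\vtg(x,y)$, delete $(1/x^\flat)^{n-1}$, and identify, modulo $\ovl{k}^\times$, the factor $\vgf(x,-1)/\vaf(-1)=\vtf(x,-1)=x^\flat\,\vtg(x,0)$ with $\vgg(x)^{q-1}$; this follows from the reflection formula at $y=0$, since $\vtg(x,1)=1/x^\flat\sim1$. When $n$ is even the hypothesis $(n,q)=1$ forces $q$ odd, and applying the reflection formula at $y=\tfrac12$ gives $\vtf(x,-\tfrac12)^2\sim\vgg(x)^{q-1}$; since $\ovl{k}$ is closed under square roots this yields $\vtf(x,-\tfrac12)\sim\vgg(x)^{(q-1)/2}$, which is exactly what the ``even $n$'' convention in that theorem requires.

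The substantive work is in parts (1) and (3). For the $x$-shift in (1) I would note that for $i>\deg a$ the translation $a'\mapsto a'+a$ permutes $\Ami$, so the ratio $\vtf(x+a,y)/\vtf(x,y)$ collapses to a finite product of elements of $k$, whence $\vtf(x+a,y)\sim\vtf(x,y)$ and then $\vtg(x+a,y)\sim\vtg(x,y)$. For the $y$-shift I would use that in $\vtf$-form the Proposition of this section reads $\vtf(x,w+w')=\vtf(x,w)\vtf(x,w')$ whenever $w+w'$ has no carrying over base $q$; since $y\notin\ZZ$ its digits are not eventually all $q-1$, so adding any $n\in\ZZ$ changes only finitely many digits, and hence $\vtf(x,y-1+n)/\vtf(x,y-1)$ is again a finite product of elements of $k$, giving $\vtg(x,y+n)\sim\vtg(x,y)$. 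For (3) I would use (1) to replace $\vtg(x,y)$ by $\vtg(x,\ang{y})$, write $\ang{y}=1+Z$ with $Z=-\ang{-y}=\sum_{s=0}^{t-1}r_sY_s$ where $Y_s\in\ZZ_p$ has digit $1$ at every position congruent to $s$ modulo $t$ and $0$ elsewhere (so that $1+Y_s=1-q^s/(q^t-1)$); the supports of the $Y_s$ are disjoint and $0\le r_s<q$, so forming $Z$ involves no carrying, and iterating the no-carry identity gives $\vtg(x,y)\sim\prod_{s=0}^{t-1}\vtg(x,1+Y_s)^{r_s}=\prod_{s=0}^{t-1}\vtg\!\left(x,1-\tfrac{q^s}{q^t-1}\right)^{r_s}$.

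I do not expect a genuine obstacle here --- all five statements are corollaries --- but the points demanding the most care are the carrying bookkeeping in part (1) (that $y\notin\ZZ$ prevents an infinite carry chain, and that translation by $a$ permutes each $\Ami$) and, in part (4), the evaluation $\vgf(x,-1)/\vaf(-1)\sim\vgg(x)^{q-1}$ together with the even/odd split for $n$.
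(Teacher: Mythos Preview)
Your proposal is correct and follows exactly the approach the paper intends: the paper gives no explicit proof, stating only that these relations ``are all immediate consequences of the functional equations of $\vtg(\cdot,\cdot)$ we have seen in this section.'' Your write-up supplies precisely those details---the translation-invariance argument for (1), the no-carry decomposition for (3), the reflection-formula identification $\vtf(x,-1)\sim\vgg(x)^{q-1}$ (and its square-root variant when $n$ is even) for (4), and the algebraicity of the one-unit correction in (5)---all of which are exactly what ``immediate'' means here.
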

	
	\section{Gross-Koblitz-Thakur formulas and their applications}      \label{section-gross-koblitz-thakur-formulas-and-their-applications}
	
	In this section, we prove Gross-Koblitz-Thakur formulas for $v$-adic geometric and two-variable gamma functions, and use them to establish further properties of geometric Gauss sums.
	These include the absolute values, analogs of Hasse-Davenport relations, and an analog of Stickelberger's theorem on prime factorizations.
	
	\subsection{Gross-Koblitz-Thakur formulas for \texorpdfstring{$v$}{v}-adic gamma functions}
		
	\subsubsection{Arithmetic and geometric Teichmüller characters}     \label{subsubsection-arithmetic-and-geometric-characters}
	
	As in \S\ref{subsubsection-definition-of-gauss-sums}, we let $\nfk \in A_+$ which is relatively prime to $v$ and $\mfk := v^\l-1$ where $\l$ is the order of $v$ modulo $\nfk$.
	Recall that we assumed $\Pfk$ is any prime in $K_\nfk$ above $v$, and $\psi: \FF_\Pfk \to \Fqdl$ is any $\Fq$-algebra isomorphism.
	Now, we choose $\Pfk$ so that the completion of $K_\nfk$ at $\Pfk$ is contained in $\CC_v$, as fixed in \S\ref{section-v-adic-gamma-functions} (with ring of integers $\Ocal_{\nfk,\Pfk}$), and $\psi$ as the Teichmüller embedding from $\FF_\Pfk = \Ocal_\nfk/\Pfk$ into $\Fqdl \sbe \Ocal_{\nfk,\Pfk}$.
	We mention that any other such $\Fq$-algebra isomorphisms are some $q$-th powers of the Teichmüller embedding.
	
	We can regard $\psi: \FF_{\Pfk} \to \Fqdl$ as the “arithmetic” Teichmüller character, which satisfies
	\begin{equation}   \label{eq-ari-teichmüller}
		\psi(\ovl{\alpha}) = \lim_{N \to \infty} \alpha^{q^{Nd\l}}
		\quad
		\text{for all}
		\quad
		\alpha \in \Ocal_{\nfk,\Pfk}.
	\end{equation}
	Correspondingly, for the “geometric” Teichmüller character $\omega: C(\FF_{\Pfk}) \to \Lambda_{\mfk}$ (inverse of the reduction map), we have
	\begin{equation}    \label{eq-geo-teichmüller}
		\omega(\ovl{\alpha}) = \lim_{N \to \infty} C_{v^{N\l}} (\alpha)
		\quad
		\text{for all}
		\quad
		\alpha \in \Ocal_{\nfk,\Pfk}.
	\end{equation}
	
	\subsubsection{Geometric Gauss sum monomials}
	
	For each $y \in \QQ$, we define its fractional part $\ang{y}$ as the unique number such that $0 \leq \ang{y} <1$ and $y \equiv \ang{y} \pmod{\ZZ}$.
	Similarly, for each $x \in k$, we define its $A$-fractional part $\anginf{x}$ as the unique element in $k$ such that $0 \leq |\anginf{x}|_\infty < 1$ and $x \equiv \anginf{x} \pmod{A}$.
	(Here, $|\cdot|_\infty$ is the usual $\infty$-adic absolute value on $k$.)
	
	For any $x \in \nfk^{-1} A$ and $y \in (q^{d\l}-1)^{-1} \ZZ$, we write
	$$
	\ang{y} = \sum_{s=0}^{d\l-1} \frac{y_s q^s}{q^{d\l}-1}
	\quad
	(0 \leq y_s < q \text{ for all } s),
	$$
	and define the special monomials of geometric Gauss sums by the action of the group ring element $\sum_{s=0}^{d\l-1} y_s \tau_q^s \in \ZZ[\Gal(K_{\nfk,d\l}/k)]$ on $\bggs_x$ (recall \S\ref{subsubsection-galois-actions} that $\tau_q$ denotes the $q$-th power Frobenius on the constant field, which will also be extended canonically to $K_{\nfk,d\l}$).
	That is, define
	\begin{equation}    \label{eq-gauss-sum-monomial}
		\ggs(x,y) := \prod_{s=0}^{d\l-1} (\bggs_x)^{y_s \tau_q^s}
		\quad
		\text{and}
		\quad
		\ggs(x)
		:= \ggs \left(x , \frac{1}{q-1} \right)
		:= \prod_{s=0}^{d\l-1} (\bggs_x)^{\tau_q^s}.
	\end{equation}
	
	\subsubsection{A special case}
	
	All of the results in \S\ref{section-gross-koblitz-thakur-formulas-and-their-applications} are based on the following calculation.
	
	\begin{thm}      \label{thm-first-gkt-formula}
		Suppose $x \in \nfk^{-1} A$ and $y \in (q^{d\l}-1)^{-1} \ZZ$ are given by
		$$
		\anginf{x} = \sum_{i=0}^{\l-1} \frac{x_i v^i}{v^\l-1}
		\quad
		(\deg x_i < d \text{ for all } i),
		\quad
		\ang{y} = \frac{q^s}{q^{d\l}-1}
		\quad
		(0 \leq s < d\l).
		$$
		Then we have
		$$
		\ggs(x,y)
		= \delta_x^{(s)} \cdot
		\prod_{i=0}^{\l-1} \vgf \left( \anginf{v^ix}, -\ang{|v^i|_\infty y} \right)^{-1}
		$$
		where $\delta_x^{(s)}$ is defined as follows:
		Let $0 \leq e \leq \l-1$ be the unique integer such that $ed \leq s < (e+1)d$ and $s' := s - ed$ be the unique integer such that $s' \equiv s \pmod{d}$ and $0 \leq s' < d$.
		Then $\delta_x^{(s)} := v \anginf{v^{\l-e-1}x}$ if $x_e \in A_{+,s'}$ and $1$ otherwise.
	\end{thm}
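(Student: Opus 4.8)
The plan is to run the scalar‑product expression of Theorem~\ref{thm-coleman-function-and-gauss-sum} through the Teichmüller limit and the ABP duality identity of Theorem~\ref{thm-restatement-of-abp-5.4.4}, rewriting $\ggs(x,y)$ as a $v$-adic limit of the polynomials $\Psi_N$ from \eqref{eq-Psi}, and then to identify that limit with the asserted product of $\vgf$-values. Since $\bggs_{x+a}=\bggs_x$ for $a\in A$ and both sides depend only on $x\bmod A$, I may take $x=\anginf{x}$, so that $c:=\mfk x=\sum_{i=0}^{\l-1}x_iv^i\in A$ with $\deg c<\deg\mfk=d\l$; and since $\ang{y}=q^s/(q^{d\l}-1)$, the group-ring element $\sum_{s'}y_{s'}\tau_q^{s'}$ collapses to $\tau_q^s$, so $\ggs(x,y)=(\bggs_x)^{\tau_q^s}$. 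Working inside $K_\mfk\Fqdl$ (with $K_\mfk$ geometric over $k$), where $\tau_q$ fixes $K_\mfk\ni C_c(\lambda_i)$ and acts as the $q$-power Frobenius on $\Fqdl\ni\psi(\ovl{\lambda}{}^{*}_i)$, Theorem~\ref{thm-coleman-function-and-gauss-sum} gives $(\bggs_x)^{\tau_q^s}=1-\sum_{i=1}^{d\l}C_c(\lambda_i)\,\psi(\ovl{\lambda}{}^{*}_i)^{q^s}$. Substituting the Teichmüller limit $\psi(\ovl{\lambda}{}^{*}_i)^{q^s}=\lim_{N}(\lambda_i^{*})^{q^{Nd\l+s}}$ (the residue field has $q^{d\l}$ elements; cf.\ \eqref{eq-ari-teichmüller}), exchanging it with the finite sum, and applying Theorem~\ref{thm-restatement-of-abp-5.4.4}(1) with $a_0=c$ and $N'=Nd\l+s$, I obtain
$$
\ggs(x,y)=(\bggs_x)^{\tau_q^s}=1+\lim_{N\to\infty}\Psi_{Nd\l+s}(x)=\lim_{N\to\infty}\prod_{a\in A_{+,Nd\l+s}}\left(1+\frac{x}{a}\right),
$$
the limit existing by the Teichmüller step, even though the degree-$M$ products do not stabilise as $M$ ranges over all integers.

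It remains to evaluate this limit. Using $v^\l\equiv1\pmod{\mfk}$, I would rewrite $\prod_{a\in A_{+,M}}(1+x/a)=\prod_a\tfrac{\mfk a+c}{\mfk a}$ and reindex by $b:=\mfk a+c$, so $b$ runs over monic polynomials of degree $M+d\l$ congruent to $c$ modulo $\mfk$, and then by the base-$v$ digits of $b$; the congruence $b\equiv c\pmod{\mfk}$ forces, for each $r$, the digits of $b$ in positions $\equiv r\pmod{\l}$ to sum to $x_r$. Splitting each factor as $\tfrac{x+a}{a}=\tfrac{(x+a)^\flat}{a^\flat}\cdot\tfrac{x+a}{(x+a)^\flat}\cdot\tfrac{a^\flat}{a}$ isolates a ``$v$-coprime part'' from the powers of $v$; following the $v$-coprime part along the $\l$ residue classes mod $\l$ of the digit positions produces precisely the degree-$j$ factors $\prod_{a\in A_{+,j}}\tfrac{a^\flat}{(\anginf{v^ix}+a)^\flat}$ of $\vgf$ at the cyclic shifts $\anginf{v^ix}=c^{(i)}/\mfk$, where $c^{(i)}$ is the reduction of $v^ic$ modulo $\mfk$ of degree $<d\l$. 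Which degree $j$ attaches to which shift $i$ is governed by $m_i:=(di+s)\bmod d\l$, for which $\ang{|v^i|_\infty y}=q^{m_i}/(q^{d\l}-1)$: as $i$ runs over $0,\ldots,\l-1$ and one translates by multiples of $d\l$, the $m_i$ exhaust $\{\,j\ge0:j\equiv s'\pmod{d}\,\}$ with $s'=s-ed$, which matches the degrees occurring in $\prod_{a\in A_{+,Nd\l+s}}(\cdots)$ in the limit — this explains the appearance of all $\l$ shifts and the second argument $-\ang{|v^i|_\infty y}$ of the $i$-th gamma factor. Finally, the powers of $v$ that survive the cancellation come solely from the forced top base-$v$ digit of $b$, a monic polynomial of degree $s'$ occupying a position $\equiv e\pmod{\l}$ with $e=\lfloor s/d\rfloor$; carrying it through the wrap-around $v^\l\equiv1\pmod{\mfk}$ one checks this leftover equals $v\anginf{v^{\l-e-1}x}$ exactly when $x_e$ is itself monic of degree $s'$, and is $1$ otherwise, i.e.\ it is $\delta_x^{(s)}$.

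The main obstacle is this second half. Unlike a partial product of $\vgf$, the single-degree product $\prod_{a\in A_{+,Nd\l+s}}(1+x/a)$ has no factor tending to $1$ on its own, so one must genuinely reorganise it into $\l$ interlaced strands, show each converges to a factor of a $v$-adic geometric gamma value at a cyclic shift of $x$, and extract $\delta_x^{(s)}$ from the digit arithmetic. The delicate points are the balancing of $v$-adic valuations that makes the rearranged product converge, and the precise emergence of the conditions defining $\delta_x^{(s)}$ — the roles of $e=\lfloor s/d\rfloor$, of $s'$, and of ``$x_e\in A_{+,s'}$'' — from the position of the top base-$v$ digit of $b$ under the congruence $b\equiv c\pmod{\mfk}$.
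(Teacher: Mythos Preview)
Your first half is correct and coincides with the paper: the identity $\ggs(x,y)=(\bggs_x)^{\tau_q^s}=\lim_N(1+\Psi_{Nd\l+s}(x))$ via Theorem~\ref{thm-coleman-function-and-gauss-sum}, the Teichm\"uller limit \eqref{eq-ari-teichmüller}, and Theorem~\ref{thm-restatement-of-abp-5.4.4}(1) is exactly what the paper uses, only run there as the concluding step rather than the opening one.

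The gap is your second half. The digit reorganisation of a \emph{single}-degree product $\prod_{a\in A_{+,Nd\l+s}}(1+x/a)$ into $\l$ \emph{infinite} products over all degrees $\equiv s'\pmod d$ is only sketched, and you say so yourself: none of the individual factors tends to $1$, there is no visible telescoping from this end, and the emergence of $\delta_x^{(s)}$ from ``the forced top base-$v$ digit of $b$'' is asserted rather than verified. The paper's missing idea is to run the computation from the gamma side instead. Writing each $\vgf(\anginf{v^ix},-\ang{q^{di}y})$ as $\lim_N\prod_{j}\prod_{a\in A_{+,s_i+jd\l}}a^\flat/(\anginf{v^ix}+a)^\flat$ and stripping out the $v$-multiples via $\anginf{v^ix}+va'=v(\anginf{v^{i-1}x}+a')$ turns each degree-$k$ block with $k\ge d$ into the ratio $(1+\Psi_{k-d}(\anginf{v^{i-1}x}))/(1+\Psi_k(\anginf{v^ix}))$. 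Over all $i,j$ these telescope completely, leaving only the lone bottom block $k=s'$, $i\equiv\l-e\pmod\l$, which yields $\delta_x^{(s)}$ directly from the criterion $v\mid \anginf{v^ix}+a\iff a\equiv x_{\l-i}\pmod v$, and a single surviving top term $(1+\Psi_{s+d(\l-e-1)+Nd\l}(\anginf{v^{\l-e-1}x}))^{-1}$, which by Proposition~\ref{prop-galois-actions}(2) is precisely your $(\bggs_x)^{-\tau_q^s}$. So the route is the same in spirit but reversed in execution; telescoping from above is what makes the ``delicate points'' about $e$, $s'$, and the condition $x_e\in A_{+,s'}$ fall out mechanically from the one factor of degree $s'<d$ that has no lower partner to cancel against.
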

	
	\begin{proof}
		Since all the quantities are unchanged when replacing $(x,y)$ by $(\anginf{x},\ang{y})$, we may assume $\anginf{x} = x$ and $\ang{y} = y$.
		For each $0 \leq i \leq \l-1$, set
		$$
		\ang{|v^i|_\infty y}
		= \ang{q^{di}y}
		= \ang{\frac{q^{s+di}}{q^{d\l}-1}}
		=: \frac{q^{s_i}}{q^{d\l}-1}
		$$
		where $s_i \equiv s+di \pmod{d\l}$ with $0 \leq s_i < d\l$ (the index is considered modulo $\l$).
		Then by the definition of $\vgf(\cdot,\cdot)$ (Definition \ref{defn-v-adic-gamma-definition}(3)), we have
		\begin{equation}   \label{eq-first-gkt-1}
			\prod_{i=0}^{\l-1} \vgf \left( \anginf{v^ix}, -\ang{q^{di}y} \right)
			= \lim_{N\to\infty}  \prod_{i=0}^{\l-1} \prod_{j=0}^N  \prod_{a \in A_{+,s_i+jd\l}} \frac{a^\flat}{\left(\anginf{v^ix}+a\right)^\flat}.       
		\end{equation}
		Put $x_\l := x_0$ and note that $\anginf{v^ix} \equiv -x_{\l-i} \pmod{v}$ in $A_v$ for all $0\leq i \leq \l-1$.
		So
		\begin{equation}      \label{eq-gkt-analyze-den}
			v \mid \anginf{v^ix} + a
			\iff a \equiv x_{\l-i} \pmod{v}.
		\end{equation}
		
		Recall the definition of $\Psi_N(z)$ in  \eqref{eq-Psi} and consider the right-hand side of \eqref{eq-first-gkt-1}.
		For the $s_i = s'$ (which means $i \equiv \l-e \pmod{\l}$) and $j=0$ term, using \eqref{eq-gkt-analyze-den} and the property $0 \leq s' < d$, we have
		$$
		\prod_{a \in A_{+,s'}} \frac{a^\flat}{\left(\anginf{v^{\l-e} x}+a\right)^\flat}
		= \delta_x^{(s)}
		\prod_{a \in A_{+,s'}} 	\frac{a}{\anginf{v^{\l-e} x}+a}
		= \delta_x^{(s)} \left(1+ \Psi_{s'}\left(\anginf{v^{\l-e} x}\right)\right)^{-1}.
		$$
		For $s_i = s'$ and each $j \geq 1$, one investigates the $v$-multiple parts and obtains
		\begin{align*}
			&\prod_{j=1}^N \prod_{a \in A_{+,s'+jd\l}} \frac{a^\flat}{\left(\anginf{v^{\l-e}x}+a\right)^\flat}   \\
			={} &\prod_{j=1}^N \frac{ \prod_{a\in A_{+,s'+jd\l}} a \Big/ \prod_{a\in A_{+,s'+jd\l-d}} va }{ \prod_{a\in A_{+,s'+jd\l}} \left(\anginf{v^{\l-e}x}+a\right) \Big/ \prod_{a\in A_{+,s'+jd\l-d}} v\left(\anginf{v^{\l-e-1}x} + a\right) }      \\
			={} &\prod_{j=1}^N \frac{1+\Psi_{s'+jd\l-d} \left(\anginf{v^{\l-e-1}x}\right)}{1+\Psi_{s'+jd\l} \left(\anginf{v^{\l-e}x}\right)}.
		\end{align*}
		For $s_i \neq s'$ (which means $i \not\equiv \l-e \pmod{\l}$), we note that $d\leq s_i < d\l$.
		Therefore, similar computation yields
		$$
		\prod_{j=0}^N \prod_{a \in A_{+,s_i+jd\l}} \frac{a^\flat}{\left(\anginf{v^ix}+a\right)^\flat}
		= \prod_{j=0}^N \frac{1+\Psi_{s_i+jd\l-d} \left(\anginf{v^{\l+i-1}x}\right)}{1+\Psi_{s_i+jd\l} \left(\anginf{v^ix}\right)}.
		$$
		Thus, the right-hand side of \eqref{eq-first-gkt-1} is seen to be
		\begin{multline*}
			\delta_x^{(s)}
			\lim_{N\to\infty} \left( \frac{1}{1+ \Psi_{s'} \left(\anginf{v^{\l-e}x}\right)} \prod_{j=1}^N \frac{1+\Psi_{s'+jd\l-d} \left(\anginf{v^{\l-e-1}x}\right)}{1+\Psi_{s'+jd\l} \left(\anginf{v^{\l-e}x}\right)} \right)
			\\
			\left( \prod_{\substack{i=0 \\ i \not\equiv \l-e \bmod \l}}^{\l-1} \prod_{j=0}^N \frac{1+\Psi_{s_i+jd\l-d} \left(\anginf{v^{\l+i-1}x}\right)}{1+\Psi_{s_i+jd\l} \left(\anginf{v^ix}\right)} \right).
		\end{multline*}
		A careful inspection shows that except for the term
		$$
		\left(1+\Psi_{s_{\l-e-1}+Nd\l} \left(\anginf{v^{\l-e-1}x}\right) \right)^{-1}
		= \left(1+\Psi_{s+d(\l-e-1)+Nd\l} \left(\anginf{v^{\l-e-1}x}\right) \right)^{-1},
		$$
		the denominator of the former product will cancel out the numerator of the latter one.
		Finally, by applying Theorem \ref{thm-restatement-of-abp-5.4.4}(1), \eqref{eq-ari-teichmüller}, Theorem \ref{thm-coleman-function-and-gauss-sum}, Proposition \ref{prop-galois-actions}(2), and \eqref{eq-gauss-sum-monomial} sequentially, we obtain
		\begin{align*}
			&\lim_{N \to \infty} \left(1+\Psi_{s+d(\l-e-1)+Nd\l} \left(\anginf{v^{\l-e-1}x}\right) \right)^{-1}   \\
			={} &\lim_{N \to \infty} \left( 1 - \sum_{i=1}^{d\l} (\lambda_i^*)^{q^{s+d(\l-e-1)+Nd\l}} C_{\mfk\anginf{v^{\l-e-1}x}}(\lambda_i) \right)^{-1}    \\
			={} &\left( 1 - \sum_{i=1}^{d\l} \psi(\ovl{\lambda}{}^*_i)^{q^{s+d(\l-e-1)}} C_{\mfk\anginf{v^{\l-e-1}x}}(\lambda_i) \right)^{-1}   \\
			={} &\left(\bggs_{v^{\l-e-1}x}\right)^{-\tau_q^{s+d(\l-e-1)}}
			= (\bggs_x)^{-\tau_q^s}
			= \ggs (x,y)^{-1}.
		\end{align*}
		This completes the proof.
	\end{proof}
	
	\begin{rem}
		In the above proof, the telescoping product of the special $v$-adic gamma values and the algebraicity of the $v$-adic limit were first demonstrated by Thakur in the geometric case \cite[Section 8.6]{thakur2004function}.
		In there, the limit was interpreted as the Coleman functions evaluated at particular point.
		Our Theorem \ref{thm-first-gkt-formula} generalizes his result to the two-variable case and explains the limit in terms of geometric Gauss sums (recall Remark \ref{rem-ggs-and-coleman}).
	\end{rem}
	
	\subsubsection{The geometric case}
	
	Note that the formula in Theorem \ref{thm-first-gkt-formula} involves not a product but a single geometric Gauss sum.
	Hence, we may use it to obtain analogous formulas for the geometric and two-variable gamma functions.
	
	\begin{thm}[Gross-Koblitz-Thakur formula for geometric gamma function]      \label{thm-gkt-formula-for-geo}
		For any $x \in \nfk^{-1} A$ and $y \in (q^{d\l}-1)^{-1} \ZZ$, write
		$$
		\anginf{x} = \sum_{i=0}^{\l-1} \frac{x_i v^i}{v^\l-1}
		\quad
		(\deg x_i < d \text{ for all } i),
		\quad
		\ang{y} = \sum_{s=0}^{d\l-1} \frac{y_s q^s}{q^{d\l}-1}
		\quad
		(0 \leq y_s < q \text{ for all } s).
		$$
		Then we have
		$$
		\ggs (x,y)
		= \prod_{i=0}^{\l-1}
		\delta_{x,i}^{y_{di+\deg x_i}} \cdot
		\prod_{i=0}^{\l-1} \vgf \left( \anginf{v^ix}, -\ang{|v^i|_\infty y} \right)^{-1}
		$$
		where $\delta_{x,i} := v\anginf{v^{\l-i-1}x}$ if $x_i \in A_+$ and $1$ otherwise.
		In particular, we have
		$$
		\ggs (x)
		= \prod_{i=0}^{\l-1} \delta_{x,i}
		\cdot
		\prod_{i=0}^{\l-1} \vgf \left( \anginf{v^ix} \right)^{-1}.
		$$
	\end{thm}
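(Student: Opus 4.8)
The plan is to derive Theorem~\ref{thm-gkt-formula-for-geo} from the single-digit computation of Theorem~\ref{thm-first-gkt-formula} by exploiting the multiplicativity already built into the definition \eqref{eq-gauss-sum-monomial} of the geometric Gauss sum monomials. For $0\le s<d\l$ put $y^{(s)}:=q^s/(q^{d\l}-1)$; its base-$q$ digit string is supported in position $s$, so Theorem~\ref{thm-first-gkt-formula} applies and gives, on the one hand, $\ggs(x,y^{(s)})=(\bggs_x)^{\tau_q^s}$, and on the other hand
$$
\ggs(x,y^{(s)})=\delta_x^{(s)}\cdot\prod_{i=0}^{\l-1}\vgf\left(\anginf{v^ix},-\ang{|v^i|_\infty y^{(s)}}\right)^{-1}.
$$
Meanwhile, directly from \eqref{eq-gauss-sum-monomial} one has $\ggs(x,y)=\prod_{s=0}^{d\l-1}(\bggs_x)^{y_s\tau_q^s}=\prod_{s=0}^{d\l-1}\ggs(x,y^{(s)})^{y_s}$. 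So I would substitute the displayed formula into this product and reorganise the right-hand side into the two advertised pieces: a product of the $\delta_{x,i}$'s and a power of $v$-adic geometric gamma values.

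For the gamma piece, fix $0\le i\le\l-1$ and collect the $i$-th slot of each single-digit formula; the goal is
$$
\prod_{s=0}^{d\l-1}\vgf\left(\anginf{v^ix},-\ang{|v^i|_\infty y^{(s)}}\right)^{y_s}=\vgf\left(\anginf{v^ix},-\ang{|v^i|_\infty y}\right).
$$
I would prove this from the elementary multiplicativity $\vgf(w,u)\,\vgf(w,u')=\vgf(w,u+u')$, which is immediate from Definition~\ref{defn-v-adic-gamma-definition}(3) whenever $u$ and $u'$ add with no carry base $q$, once the underlying digit identity is checked. The point is that multiplication by $|v^i|_\infty=q^{di}$ merely rotates the length-$d\l$ digit block cyclically, so the digit $y_s$ of $\ang{y}$ lands in position $\langle s+di\rangle_{d\l}$ of $\ang{q^{di}y}$, and as $s$ runs over $\{0,\dots,d\l-1\}$ these target positions are a permutation of $\ZZ/d\l\ZZ$. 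Passing to $\ZZ_p$ via $-1/(q^{d\l}-1)=\sum_{j\ge0}q^{jd\l}$, each $-\ang{q^{di}y^{(s)}}$ becomes a $(d\l)$-periodic digit string, and the $y_s$-fold scalings of these strings have pairwise disjoint supports that reassemble, position by position and with no carry, into $-\ang{q^{di}y}$. This yields the displayed identity.

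For the $\delta$ piece, recall from Theorem~\ref{thm-first-gkt-formula} that, writing $s=ed+s'$ with $0\le s'<d$, the factor $v\anginf{v^{\l-e-1}x}$ enters $\delta_x^{(s)}$ exactly when $x_e\in A_{+,s'}$. Since $\deg x_e<d$ there is at most one such $s'$, namely $s'=\deg x_e$, and this occurs precisely when $x_e\in A_+$. Hence in $\prod_{s}(\delta_x^{(s)})^{y_s}$ the quantity $v\anginf{v^{\l-e-1}x}$ contributes exactly $(v\anginf{v^{\l-e-1}x})^{y_{de+\deg x_e}}$ when $x_e\in A_+$ and nothing otherwise, which is precisely $\delta_{x,e}^{\,y_{de+\deg x_e}}$ in the notation of Theorem~\ref{thm-gkt-formula-for-geo}. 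Re-indexing $e$ as $i$ gives $\prod_{s}(\delta_x^{(s)})^{y_s}=\prod_{i=0}^{\l-1}\delta_{x,i}^{\,y_{di+\deg x_i}}$, which together with the previous paragraph is the main formula. Finally, the ``in particular'' assertion is the specialisation $y=1/(q-1)$, for which all $d\l$ base-$q$ digits $y_s$ equal $1$: the $\delta$-exponents become $1$, while $\ang{|v^i|_\infty y}=\ang{q^{di}/(q-1)}=1/(q-1)$ for every $i$, whose negative in $\ZZ_p$ is $\sum_{j\ge0}q^j$, exactly the digit string defining the one-variable $\vgf(\cdot)$ of Definition~\ref{defn-v-adic-gamma-definition}(2); so each factor collapses to $\vgf(\anginf{v^ix})$.

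I expect the only genuine obstacle to lie in the bookkeeping of the gamma piece: one must track the cyclic digit rotation induced by multiplication by $q^{di}$ carefully enough to see simultaneously that the scaled periodic strings reassemble into $-\ang{q^{di}y}$ and that no carry ever occurs, so that multiplicativity of $\vgf$ can legitimately be applied term by term. Everything else is direct substitution and reindexing.
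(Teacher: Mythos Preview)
Your proposal is correct and follows essentially the same approach as the paper's own proof: both reduce to the single-digit case of Theorem~\ref{thm-first-gkt-formula}, use the multiplicativity $\ggs(x,y)=\prod_s\ggs(x,y^{(s)})^{y_s}$ built into \eqref{eq-gauss-sum-monomial}, reassemble the gamma factors via the no-carry additivity of the second argument of $\vgf$, and identify $\prod_s(\delta_x^{(s)})^{y_s}$ with $\prod_i\delta_{x,i}^{y_{di+\deg x_i}}$. The paper records the gamma step in a single line (starting from $\prod_i\vgf(\anginf{v^ix},-\ang{q^{di}y})$ and expanding), whereas you argue it more explicitly via the cyclic digit rotation and disjoint-support reasoning, but the content is identical.
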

	
	\begin{proof}
		We may once again assume $\anginf{x} = x$ and $\ang{y} = y$.
		Note that the second assertion follows immediately from the first by taking $y = 1/(q-1)$ (recall \eqref{eq-gauss-sum-monomial} and Definition \ref{defn-v-adic-gamma-definition}).
		On the other hand, by Theorem \ref{thm-first-gkt-formula}, one sees that
		\begin{align*}
			&\prod_{i=0}^{\l-1} \vgf \left( \anginf{v^ix}, -\ang{q^{di} y} \right)
			= \prod_{s=0}^{d\l-1} \prod_{i=0}^{\l-1} \vgf \left( \anginf{v^ix}, -\ang{\frac{q^{s+di}}{q^{d\l}-1}} \right)^{y_s}      \\
			={} &\prod_{s=0}^{d\l-1} \left( \delta_x^{(s)} \ggs\left(x,\frac{q^s}{q^{d\l}-1}\right)^{-1} \right)^{y_s}
			= \left( \prod_{s=0}^{d\l-1} \left( \delta_x^{(s)} \right)^{y_s} \right) \ggs (x,y)^{-1}
		\end{align*}
		where $\delta_x^{(s)}$ is defined as in Theorem \ref{thm-first-gkt-formula}:
		Let $0 \leq e \leq \l-1$ be the unique integer such that $ed \leq s < (e+1)d$ and $s' := s - ed$.
		Then $\delta_x^{(s)} := v \anginf{v^{\l-e-1}x}$ if $x_e \in A_{+,s'}$ and $1$ otherwise.
		One sees that the delta part is identical to that in the theorem.
	\end{proof}
	
	\subsubsection{The arithmetic case}
	
	We now recall Thakur's analog of Gross-Koblitz formula for the $v$-adic arithmetic gamma function \cite{thakur1988gauss}.
	Let $\chi := \psi|_{A/v}: A/v \to \Fqd \sbe \ovl{k}$ be the usual Teichmüller character, and choose an $A$-module isomorphism $\phi :A/v \to \Lambda_v$ (recall \S\ref{subsubsection-cyclotomic-function-fields}).
	Then define the \textit{arithmetic Gauss sum} to be
	\begin{equation}    \label{eq-ari-gauss-sum}
		\bags
		:= -\sum_{z\in (A/v)^\times} \chi(z^{-1}) \phi(z) \in K_v \Fqd.
	\end{equation}
	(The same element is denoted as $g_0$ in the original paper.)
	Furthermore, for $y \in (q^{d\l}-1)^{-1} \ZZ$, we write
	$$
	\ang{y} = \sum_{s=0}^{d\l-1} \frac{y_s q^s}{q^{d\l}-1}
	\quad
	(0 \leq y_s < q \text{ for all } s),
	$$
	and define the special monomial of arithmetic Gauss sums as
	$$
	\ags(y) := \prod_{s=0}^{d\l-1} (\bags)^{y_s \tau_q^s}.
	$$
	(Here, $\tau_q$ is the canonical extension of the $q$-th power Frobenius on the constant field.)
	
	Let $\varpi_v \in k_v(\phi(1))$ be the unique $(q^d-1)$-st root of $-v$ such that $\varpi_v \equiv -\phi(1) \pmod{\phi(1)^2}$.
	Then under this setting, Thakur proved the following Gross-Koblitz formula for the $v$-adic arithmetic gamma function (\cite[Theorem VI]{thakur1988gauss} and \cite[Theorem 4.8]{thakur1991gamma}).
	\begin{equation}    \label{eq-gkt-formula-for-ari}
		\ags(y) = \varpi_v^{(q^d-1) \sum_{i=0}^{\l-1} \ang{|v^i|_\infty y}} \prod_{i=0}^{\l-1} \vaf \left(-\ang{|v^i|_\infty y}\right)^{-1}.
	\end{equation}
	In particular, this implies that $\vag(r/(q^d-1))$ is algebraic for all $r \in \ZZ$.
	
	\subsubsection{The two-variable case}
	
	Combining Theorem \ref{thm-gkt-formula-for-geo} with \eqref{eq-gkt-formula-for-ari}, we obtain immediately an analogous formula for the two-variable case.
	Consider the two-variable Gauss sums
	\begin{equation}     \label{eq-two-variable-gauss-sum}
		G_\l (x,y) := \frac{\ggs(x,y)}{\ags(y)}.
	\end{equation}
	
	\begin{thm}[Gross-Koblitz-Thakur formula for two-variable gamma function]     \label{thm-gkt-formula-for-two}
		For any $x \in \nfk^{-1} A$ and $y \in (q^{d\l}-1)^{-1} \ZZ$, write
		$$
		\anginf{x} = \sum_{i=0}^{\l-1} \frac{x_i v^i}{v^\l-1}
		\quad
		(\deg x_i < d \text{ for all } i),
		\quad
		\ang{y} = \sum_{s=0}^{d\l-1} \frac{y_s q^s}{q^{d\l}-1}
		\quad
		(0 \leq y_s < q \text{ for all } s).
		$$
		Then we have
		\begin{equation*}
			G_{\l} (x,y)
			= \left( \prod_{i=0}^{\l-1}
			 \delta_{x,i}^{y_{di+\deg x_i}} \right) 
			\varpi_v^{-(q^d-1) \sum_{i=0}^{\l-1} \ang{|v^i|_\infty y}} \cdot
			\prod_{i=0}^{\l-1} \vtf\left( \anginf{v^ix}, -\ang{|v^i|_\infty y} \right)^{-1}
		\end{equation*}
		where $\delta_{x,i} := v\anginf{v^{\l-i-1}x}$ if $x_i \in A_+$ and $1$ otherwise.
	\end{thm}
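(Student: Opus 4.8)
The plan is to deduce Theorem~\ref{thm-gkt-formula-for-two} directly from the geometric Gross-Koblitz-Thakur formula of Theorem~\ref{thm-gkt-formula-for-geo} and Thakur's arithmetic formula~\eqref{eq-gkt-formula-for-ari}, using only the definition $G_\l(x,y) = \ggs(x,y)/\ags(y)$ from~\eqref{eq-two-variable-gauss-sum} together with the relation $\vtf(x,y) = \vgf(x,y)/\vaf(y)$ of Definition~\ref{defn-v-adic-gamma-definition}(3). All the analytic content has already been spent in establishing those two formulas, so what remains is purely formal.

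Concretely, I would first substitute the geometric formula for the numerator: with the expansions $\anginf{x} = \sum_{i=0}^{\l-1} x_i v^i/(v^\l-1)$ and $\ang{y} = \sum_{s=0}^{d\l-1} y_s q^s/(q^{d\l}-1)$ fixed in the statement, Theorem~\ref{thm-gkt-formula-for-geo} gives
$$
\ggs(x,y) = \prod_{i=0}^{\l-1} \delta_{x,i}^{\,y_{di+\deg x_i}} \cdot \prod_{i=0}^{\l-1} \vgf\!\left(\anginf{v^ix}, -\ang{|v^i|_\infty y}\right)^{-1},
$$
with $\delta_{x,i}$ exactly as stated. Next I would substitute the arithmetic formula for the denominator: since $\chi = \psi|_{A/v}$ is the restriction of the same fixed Teichmüller character $\psi$ and $y \in (q^{d\l}-1)^{-1}\ZZ$, formula~\eqref{eq-gkt-formula-for-ari} applies verbatim and gives
$$
\ags(y) = \varpi_v^{\,(q^d-1)\sum_{i=0}^{\l-1} \ang{|v^i|_\infty y}} \prod_{i=0}^{\l-1} \vaf\!\left(-\ang{|v^i|_\infty y}\right)^{-1},
$$
which is in particular nonzero, so $G_\l(x,y)$ is well defined. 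Dividing the two displays, the $\delta_{x,i}$ factors survive unchanged, the power of $\varpi_v$ changes sign, and the $i$-th pair of gamma factors collapses as
$$
\frac{\vaf\!\left(-\ang{|v^i|_\infty y}\right)}{\vgf\!\left(\anginf{v^ix}, -\ang{|v^i|_\infty y}\right)} = \vtf\!\left(\anginf{v^ix}, -\ang{|v^i|_\infty y}\right)^{-1}
$$
by Definition~\ref{defn-v-adic-gamma-definition}(3), which produces exactly the asserted identity.

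There is no genuine obstacle here; the only point needing attention is bookkeeping. One must check that the running index $i \in \{0,\dots,\l-1\}$ and the base-$q$ digits $y_s$ are labelled in the same way in Theorem~\ref{thm-gkt-formula-for-geo} and in~\eqref{eq-gkt-formula-for-ari} — they are, since both are written relative to the same expansions of $\anginf{x}$ and $\ang{y}$ and the same choice of prime $\Pfk$ and isomorphism $\psi$ — and that the term-by-term pairing of the geometric and arithmetic gamma products is legitimate, i.e.\ that their $i$-th factors share the second argument $-\ang{|v^i|_\infty y}$, which they visibly do. Once the conventions are aligned the computation is one line, and I would present the result as an immediate corollary of Theorem~\ref{thm-gkt-formula-for-geo} and~\eqref{eq-gkt-formula-for-ari} rather than as a standalone argument.
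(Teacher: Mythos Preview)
Your proposal is correct and is exactly the approach the paper takes: the text immediately preceding the theorem says ``Combining Theorem~\ref{thm-gkt-formula-for-geo} with~\eqref{eq-gkt-formula-for-ari}, we obtain immediately an analogous formula for the two-variable case,'' and no further proof is given. Your write-up spells out precisely this division together with the identity $\vtf = \vgf/\vaf$ from Definition~\ref{defn-v-adic-gamma-definition}(3), which is all that is needed.
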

	
	\subsubsection{The algebraicity}
	
	It is straightforward to write Theorems \ref{thm-gkt-formula-for-geo} and \ref{thm-gkt-formula-for-two} in terms of special gamma values.
	Thus, as in the classical and arithmetic cases, these formulas yield a class of algebraic special $v$-adic gamma values.
	
	\begin{cor}
		For any $a \in A$ and $r \in \ZZ$, $\vgg(a/(v-1))$, $\vgg(a/(v-1),r/(q^d-1))$ and $\vtg(a/(v-1),r/(q^d-1))$ are algebraic over $k$.
	\end{cor}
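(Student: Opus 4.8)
\emph{Proposal.} The plan is to specialize the Gross-Koblitz-Thakur formulas of Theorems~\ref{thm-gkt-formula-for-geo} and~\ref{thm-gkt-formula-for-two}, together with Thakur's arithmetic formula~\eqref{eq-gkt-formula-for-ari}, to the modulus $\nfk:=v-1$. Since $v\equiv 1\pmod{v-1}$, the order $\l$ of $v$ modulo $\nfk$ is $1$, so $\mfk=v-1=\nfk$ and $d\l=d$; in particular every product $\prod_{i=0}^{\l-1}$ on the gamma side of those formulas collapses to a single factor. Writing $x:=a/(v-1)\in A_v$ and $\anginf{x}=x_0/(v-1)$ with $\deg x_0<d$, Theorem~\ref{thm-gkt-formula-for-geo} then reads
$$
\ggs(x,y)=\delta_{x,0}^{\,y_{\deg x_0}}\cdot\vgf\left(\anginf{x},-\ang{y}\right)^{-1}
\qquad\text{and}\qquad
\ggs(x)=\delta_{x,0}\cdot\vgf\left(\anginf{x}\right)^{-1}
$$
for $y\in(q^d-1)^{-1}\ZZ$, where $\delta_{x,0}\in k^{\times}$ (it is $1$ or $v\anginf{x}$); likewise Theorem~\ref{thm-gkt-formula-for-two} and~\eqref{eq-gkt-formula-for-ari} express $\vtf(\anginf{x},-\ang{y})$ and $\vaf(-\ang{y})$ in terms of $\ggs(x,y)$, $\ags(y)$ and $\varpi_v$, a $(q^d-1)$-st root of $-v$.

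Next I would observe that everything on the left of these identities is algebraic over $k$: the monomial $\ggs(x,y)=\prod_{s=0}^{d-1}(\bggs_x)^{y_s\tau_q^s}$ lies in $K_{\nfk,d\l}=K_{v-1,d}\subseteq\ovl{k}$; the arithmetic monomial $\ags(y)$ is a product of powers of $\bags=-\sum_{z}\chi(z^{-1})\phi(z)\in\ovl{k}$, so $G_\l(x,y)=\ggs(x,y)/\ags(y)$ is algebraic too; and $\delta_{x,0}\in k$ and $\varpi_v\in\ovl{k}$ are algebraic. Solving the displayed identities together with their two-variable and arithmetic counterparts therefore shows that $\vgf(\anginf{x},-\ang{y})$, $\vtf(\anginf{x},-\ang{y})$, $\vaf(-\ang{y})$ and $\vgf(\anginf{x})$ are all algebraic over $k$, for every $a\in A$ and every $y\in(q^d-1)^{-1}\ZZ$.

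It then remains to convert these facts into the stated forms. For this I would use two elementary observations. First, in the infinite products defining $\vgf(\cdot,\cdot)$ and $\vaf(\cdot)$, each partial factor $\prod_{a\in\Ami}\frac{a^{\flat}}{(w+a)^{\flat}}$ and $-\prod_{a\in\Ami}a^{\flat}$ lies in $k^{\times}$, and replacing $w$ by an element of $w+A$, or the second argument by one with the same tail of $q$-adic digits, alters only finitely many factors; hence $\vgf(w,z)/\vgf(w',z')\in k^{\times}$ and $\vaf(z)/\vaf(z')\in k^{\times}$ whenever $w\equiv w'\pmod{A}$ and $z,z'$ have eventually equal $q$-adic expansions, so algebraicity depends only on $w\bmod A$ and on the eventual period block of $z$. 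Second, for any $r\in\ZZ$ the $q$-adic expansion of $r/(q^d-1)$, hence of $r/(q^d-1)-1$, is eventually periodic with period dividing $d$, and its period block made purely periodic equals $-m/(q^d-1)$ for some $0\le m\le q^d-1$: this is $-\ang{y}$ with $y=m/(q^d-1)\in(q^d-1)^{-1}\ZZ$ when $m<q^d-1$, and equals $-1$ when $m=q^d-1$, in which case one invokes $\vgf(w,-1)=\vgf(w)^{q-1}$ and $\vaf(-1)=\vaf(-1/(q-1))^{q-1}$, both algebraic by the previous step (the latter via the $y=1/(q-1)$ case of~\eqref{eq-gkt-formula-for-ari}). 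Feeding this through the defining relations $\vgg(x)=\vgf(x)/x^{\flat}$, $\vgg(x,y)=\vgf(x,y-1)/x^{\flat}$ and $\vtg(x,y)=\vtf(x,y-1)/x^{\flat}$ (with $\vtf=\vgf/\vaf$ and $x^{\flat}\in k^{\times}$) then yields the algebraicity of $\vgg(a/(v-1))$, $\vgg(a/(v-1),r/(q^d-1))$ and $\vtg(a/(v-1),r/(q^d-1))$.

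The entire substance sits in the Gross-Koblitz-Thakur formulas already established; granting those, the corollary reduces to bookkeeping, just as in the classical and arithmetic cases. The one step that calls for genuine care — and where I expect to spend a sentence or two — is reconciling the $\ZZ_p$-valued second argument of the $v$-adic gamma functions with the rational fractional part $\ang{y}$ that appears in the formulas: one must verify that $\vgf(w,\cdot)$ and $\vaf(\cdot)$ factor, up to $k^{\times}$, through the eventual period block of their argument, and handle the single boundary block $-1$ separately by means of the one-variable specialization.
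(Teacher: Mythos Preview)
Your proposal is correct and follows exactly the approach the paper indicates: the paper gives no detailed proof, merely stating that ``it is straightforward to write Theorems~\ref{thm-gkt-formula-for-geo} and~\ref{thm-gkt-formula-for-two} in terms of special gamma values'' and that the algebraicity follows as in the classical and arithmetic cases. You have simply supplied the bookkeeping the paper leaves implicit---specializing to $\nfk=v-1$ so that $\l=1$ and each product over $i$ collapses, then using that the Gauss sum monomials lie in $\ovl{k}$ and that $\vgf(\cdot,\cdot)$, $\vaf(\cdot)$ are unchanged up to $k^\times$ under shifting the first argument by $A$ and the second by elements with the same $q$-adic tail---which is precisely the intended argument.
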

	
	\subsection{More on geometric Gauss sums}    \label{subsection-more-on-geometric-gauss-sums}
	
	Now, we use our results in the previous section to establish further arithmetic properties of geometric Gauss sums.
	We keep the same notations as in \S\ref{subsubsection-galois-actions}.
	
	\subsubsection{Reflection formula}
	
	The following theorem is viewed as the reflection formula for geometric Gauss sums.
	
	\begin{thm}[Reflection formula for geometric Gauss sums]    \label{thm-gauss-sum-reflection}
		For any $x \in \nfk^{-1}A \setminus A$, we have
		$$
		\prod_{\epsilon \in \Fqst} \prod_{s=0}^{d\l-1} (\bggs_x)^{\sigma_{\epsilon,s}}
		=\prod_{\epsilon \in \Fqst} \ggs(\epsilon x)
		= v^\l.
		$$
		In particular, the prime factorization of $\bggs_x$ only involves primes above $v$.
	\end{thm}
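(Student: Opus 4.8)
The plan is as follows. The two displayed products coincide for a purely formal reason: by Proposition~\ref{prop-galois-actions}(1) one has $(\bggs_x)^{\sigma_{\epsilon,0}}=\bggs_{\epsilon x}$, and since $\sigma_{\epsilon,s}=\sigma_{\epsilon,0}\tau_q^{s}$ with $\tau_q^{s}$ the canonical extension of the $q$-power Frobenius, it follows from \eqref{eq-gauss-sum-monomial} that $\prod_{s=0}^{d\l-1}(\bggs_x)^{\sigma_{\epsilon,s}}=\prod_{s=0}^{d\l-1}(\bggs_{\epsilon x})^{\tau_q^{s}}=\ggs(\epsilon x)$; taking $\prod_{\epsilon\in\Fqst}$ yields the first equality. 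So the task reduces to proving $\prod_{\epsilon\in\Fqst}\ggs(\epsilon x)=v^{\l}$.

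For this I would feed the Gross--Koblitz--Thakur formula into the $\Fqst$-product and then reflect the gamma values. Writing $\anginf{x}=\sum_{i=0}^{\l-1}x_iv^{i}/(v^{\l}-1)$ with $\deg x_i<d$, and noting $\anginf{v^{i}(\epsilon x)}=\epsilon\anginf{v^{i}x}$ for $\epsilon\in\Fqst$, Theorem~\ref{thm-gkt-formula-for-geo} (with $y=1/(q-1)$) gives $\ggs(\epsilon x)=\bigl(\prod_{i=0}^{\l-1}\delta_{\epsilon x,i}\bigr)\prod_{i=0}^{\l-1}\vgf(\epsilon\anginf{v^{i}x})^{-1}$. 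The product over $\epsilon$ of the $\delta$-part is elementary: among all $\epsilon\in\Fqst$, exactly one makes $\epsilon x_i$ monic when $x_i\ne0$ (and none when $x_i=0$), so $\prod_{\epsilon}\prod_i\delta_{\epsilon x,i}$ equals $v^{N}$ times an element of $\Fqst$ times $\prod_{i:\, x_i\ne0}\anginf{v^{\l-i-1}x}$, where $N=\#\{i:x_i\ne0\}$. The product over $\epsilon$ of the gamma part is handled by Thakur's reflection formula for the $v$-adic geometric gamma function \cite{thakur1991gamma} (equivalently, by the reflection formula for $\vtg$ obtained in \S\ref{section-v-adic-gamma-functions} together with $\vgf(z)=z^{\flat}\vgg(z)$), applied for each $i$: this evaluates $\prod_{\epsilon}\vgf(\epsilon z)$ in terms of $z^{\flat}$ up to a root of unity, so $\prod_{\epsilon}\prod_i\vgf(\epsilon\anginf{v^{i}x})^{-1}$ equals, up to roots of unity, $\prod_{i:\, v\nmid\anginf{v^{i}x}}\anginf{v^{i}x}^{-1}$.

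The crux is the bookkeeping showing that these two contributions multiply to exactly $v^{\l}$. The key is the recursion $\anginf{v^{(k+1)\bmod\l}x}=v\,\anginf{v^{k}x}-x_{(\l-1-k)\bmod\l}$ for consecutive $\Frob_v$-translates: after the shift $i\mapsto\l-i-1$ it matches the index set of $\anginf{\cdot}$-factors occurring in the $\delta$-part with a cyclic translate of the index set occurring in the gamma part, so the two products of $A$-fractional parts telescope. A valuation count then shows the $v$-power of the result is $N$ plus the sum over $\{i:x_i\ne0\}$ of the cyclic gap-minus-one to the next such index, i.e.\ $N+(\l-N)=\l$; and a tracking of the residual units and roots of unity — which one can pin down using that $\prod_{\epsilon}\ggs(\epsilon x)$ lies in $K_{\nfk}$ and is fixed by every $\sigma_{\epsilon',0}$, together with a single explicit specialization — forces the product to be precisely $v^{\l}$. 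I expect this last matching step — the $v$-adic analog of how the uniformizer powers enter the classical reflection $\mathfrak{g}(\chi)\mathfrak{g}(\ovl{\chi})=\chi(-1)q$ — to be the main obstacle; the three inputs (Proposition~\ref{prop-galois-actions}, Theorem~\ref{thm-gkt-formula-for-geo}, and the gamma reflection) are already in hand.

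For the final clause: once $\prod_{\epsilon}\prod_{s}(\bggs_x)^{\sigma_{\epsilon,s}}=v^{\l}$ is known, note the left-hand side expresses $v^{\l}$ as a product of Galois conjugates of $\bggs_x$ over the subgroup $\{\sigma_{\epsilon,s}\}\cong\Fqst\times\ZZ/d\l\ZZ$, each of which lies in $\Ocal_{\nfk,d\l}$ because $\Lambda_{\nfk}\subseteq\Ocal_{\nfk}$ and $\psi$ is valued in $\Fqdl$. If a prime $\mathfrak q$ of $K_{\nfk,d\l}$ not lying over $v$ divided $\bggs_x$, then $v_{\mathfrak q}(v^{\l})=0$ would force $\sum_{\sigma}v_{\sigma^{-1}\mathfrak q}(\bggs_x)=0$ with all summands nonnegative, hence $v_{\mathfrak q}(\bggs_x)=0$, a contradiction. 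Therefore the prime factorization of $\bggs_x$ involves only primes above $v$.
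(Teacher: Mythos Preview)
Your approach is essentially the paper's own: the first equality via Proposition~\ref{prop-galois-actions}(1) and \eqref{eq-gauss-sum-monomial}, then the Gross--Koblitz--Thakur formula (Theorem~\ref{thm-gkt-formula-for-geo}) followed by Thakur's reflection formula for $\vgf$ and a telescoping of the $\anginf{v^{\bullet}x}$-factors along the cyclic $v$-digit structure. The final clause is also argued the same way.

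One comment: you flag the unit tracking as ``the main obstacle'' and propose to settle it by Galois invariance plus a specialization, but that last device is shakier than you suggest --- a single specialization fixes the constant for one $x$, not for all, and you have not argued that the residual unit is independent of $x$. In fact the obstacle is illusory. If you do not discard signs along the way, the $\delta$-contribution is exactly $\prod_{i:\,x_i\neq 0} v\cdot \sgn(x_i)^{-1}\cdot \anginf{v^{\l-i-1}x}$, and Thakur's reflection gives the gamma contribution exactly as $\prod_{i:\,x_i\neq 0} \sgn(x_i)^{-1}\cdot \anginf{v^{\l-i}x}$; the $\sgn(x_i)^{-1}$ factors cancel on the nose, and what remains is the clean product $\prod_{i:\,x_i\neq 0} v\cdot \anginf{v^{\l-i-1}x}/\anginf{v^{\l-i}x}$, which telescopes to $v^{\l}$ by your own gap-counting. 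So keep the exact constants from the start and the ``matching step'' disappears.
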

	
	\begin{eg}
		Consider Example \ref{eg-ggs}.
		When $v = \T, \nfk = v-1$, and $x = 1/\nfk$, Theorem \ref{thm-gauss-sum-reflection} implies
		$$
		\prod_{\epsilon \in \Fqst} \bggs_{\epsilon x}
		= \prod_{\epsilon \in \Fqst} \left( 1-\epsilon(1-\T)^{1/(q-1)} \right)
		= v,
		$$
		which can also be verified directly.
	\end{eg}
	
	\begin{proof}
		The first assertion follows from Proposition \ref{prop-galois-actions}(1) and \eqref{eq-gauss-sum-monomial}.
		For the second, we may assume $\anginf{x} = x$ and write
		$$
		x = \sum_{i=0}^{\l-1} \frac{x_i v^i}{v^\l-1}
		\quad
		(\deg x_i < d \text{ for all } i).
		$$
		By Theorem \ref{thm-gkt-formula-for-geo}, we have
		\begin{equation}     \label{eq-gauss-sum-reflection-1}
			\prod_{\epsilon \in \Fqst} \ggs (\epsilon x)
			= \prod_{\epsilon \in \Fqst} \left( \prod_{i=0}^{\l-1} \delta_{\epsilon x,i} \cdot  \prod_{i=0}^{\l-1} \vgf(\anginf{v^i \epsilon x})^{-1} \right)
		\end{equation}
		where $\delta_{\epsilon x,i} := v\anginf{v^{\l-i-1} \epsilon x}$ if $\epsilon x_i \in A_+$ and $1$ otherwise.
		Note the former case happens if and only if $x_i \neq 0$ and $\epsilon = \sgn( x_i)^{-1}$.
		So the delta part is
		$$
		\prod_{\epsilon \in \Fqst} \prod_{i=0}^{\l-1} \delta_{\epsilon x,i}
		= \prod_{\substack{i=0 \\ x_i \neq 0}}^{\l-1} v \cdot \sgn(x_i)^{-1} \cdot \anginf{v^{\l-i-1}x}.
		$$
		For the gamma (factorial) part, we apply the reflection formula of $\vgf(\cdot)$ \cite[Theorem 4.10.5]{thakur2004function} and see that
		$$
		\prod_{\epsilon \in \Fqst} \prod_{i=0}^{\l-1} \vgf(\anginf{v^i \epsilon x})
		= \prod_{\substack{i=0 \\ x_i \neq 0}}^{\l-1} \sgn(x_i)^{-1} \cdot \anginf{v^{\l-i}x}.
		$$
		Combining these two with \eqref{eq-gauss-sum-reflection-1}, we have
		\begin{equation}   \label{eq-gauss-sum-reflection-2}
			\prod_{\epsilon \in \Fqst} \ggs(\epsilon x)
			= \prod_{\substack{i=0 \\ x_i \neq 0}}^{\l-1} v \cdot \frac{\anginf{v^{\l-i-1}x}}{\anginf{v^{\l-i}x}}.
		\end{equation}
		
		Now, we let $\{x_{i_j} \mid 1 \leq j \leq n\}$ where $0 \leq i_1 < \cdots < i_n \leq \l-1$ be all the non-zero digits in the $v$-adic expansion of the numerator of $x$.
		Then \eqref{eq-gauss-sum-reflection-2} becomes
		\begin{equation}       \label{eq-gauss-sum-reflection-3}
			\prod_{\epsilon \in \Fqst} \ggs(\epsilon x)
			= v^n \prod_{j=1}^n \frac{\anginf{v^{\l-i_j-1}x}}{\anginf{v^{\l-i_j}x}}.
		\end{equation}
		In the last product, one sees that for $1\leq j \leq n-1$, the numerator of the $j$-th term cancels out the denominator of the $(j+1)$-st term, leaving $v^{i_{j+1}-i_j-1}$ in the numerator.
		And similarly, the numerator of the last term cancels out the denominator of the first term, leaving $v^{\l+i_1-i_n-1}$ also in the numerator.
		So \eqref{eq-gauss-sum-reflection-3} becomes
		$$
		\prod_{\epsilon \in \Fqst} \ggs(\epsilon x)
		= v^n \cdot v^{i_2-i_1-1} \cdot v^{i_3-i_2-1} \cdots v^{\l + i_1 - i_n - 1}
		= v^\l.
		$$
		This completes the proof.
	\end{proof}
	
	\begin{cor}
		(1) For any $x \in \nfk^{-1}A$ and $y \in (q^{d\l}-1)^{-1}\ZZ$, we have
		$$
		\ggs(x,y) \ggs(x,1-y) = \ggs(x)^{q-1}.
		$$
		(2) Moreover, if $x \in \nfk^{-1}A \setminus A$, we have
		$$
		\prod_{\epsilon \in \Fqst}
		\ggs(\epsilon x,y) \ggs(\epsilon x,1-y) = v^{\l(q-1)}.
		$$
	\end{cor}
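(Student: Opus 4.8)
The plan is to derive part (1) directly from the definition \eqref{eq-gauss-sum-monomial} of the Gauss sum monomials, and then to obtain part (2) by combining part (1) with the reflection formula already established in Theorem~\ref{thm-gauss-sum-reflection}. The key observation is that $\ggs(x,y)$ is, by construction, the image of $\bggs_x$ under the integral group ring element $\sum_{s=0}^{d\l-1} y_s\tau_q^s \in \ZZ[\Gal(K_{\nfk,d\l}/k)]$, where the $y_s$ are the base-$q$ digits of the fractional part $\ang{y} = \sum_s y_sq^s/(q^{d\l}-1)$. Consequently, writing $y'_s$ for the digits of $\ang{1-y}$, the product $\ggs(x,y)\,\ggs(x,1-y)$ is the image of $\bggs_x$ under $\sum_s (y_s+y'_s)\tau_q^s$, so the whole statement comes down to comparing these two digit vectors.

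First I would treat the generic case $y \notin \ZZ$ (when $y \in \ZZ$, so that the interesting instances have $x \in A$, both sides are simply $1$). For $y \notin \ZZ$ we have $0 < \ang{y}, \ang{1-y} < 1$, hence $\ang{y} + \ang{1-y} = 1$, which yields
$$
\sum_{s=0}^{d\l-1}(y_s+y'_s)\,q^s = q^{d\l}-1 = \sum_{s=0}^{d\l-1}(q-1)\,q^s .
$$
Since $0 \le y_s + y'_s \le 2(q-1) < q^2$, reducing this identity modulo $q$ pins down $y_0+y'_0 = q-1$, the only value in $[0,2q-2]$ congruent to $-1$; dividing through by $q$ and iterating forces $y_s+y'_s = q-1$ for every $s$. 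It then follows from \eqref{eq-gauss-sum-monomial} that
$$
\ggs(x,y)\,\ggs(x,1-y)
= (\bggs_x)^{\sum_s (y_s+y'_s)\tau_q^s}
= (\bggs_x)^{(q-1)\sum_s \tau_q^s}
= \Bigl((\bggs_x)^{\sum_s \tau_q^s}\Bigr)^{q-1}
= \ggs(x)^{q-1},
$$
which is part (1).

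For part (2), with $x \in \nfk^{-1}A \setminus A$ (so that every $\epsilon x$, $\epsilon \in \Fqst$, again lies in $\nfk^{-1}A \setminus A$), I would apply part (1) to each $\epsilon x$ and multiply over $\Fqst$:
$$
\prod_{\epsilon \in \Fqst}\ggs(\epsilon x,y)\,\ggs(\epsilon x,1-y)
= \Bigl(\prod_{\epsilon \in \Fqst}\ggs(\epsilon x)\Bigr)^{q-1}
= (v^\l)^{q-1}
= v^{\l(q-1)} ,
$$
the middle equality being precisely the reflection formula of Theorem~\ref{thm-gauss-sum-reflection}. I do not expect any real obstacle: the only point that needs care is the digit-complementarity $y_s + y'_s = q-1$, which is the elementary computation above, and beyond it the argument is just bookkeeping with the monomial definition together with the already-proved reflection formula. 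A less transparent alternative would be to combine the Gross--Koblitz--Thakur formula of Theorem~\ref{thm-gkt-formula-for-geo} with the reflection formula for the $v$-adic geometric factorial $\vgf$, but this introduces spurious $v$-adic scalar factors.
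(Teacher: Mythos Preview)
Your argument is essentially the paper's own: derive (1) from the digit complementarity $y'_s=q-1-y_s$ together with the monomial definition~\eqref{eq-gauss-sum-monomial}, then deduce (2) by multiplying over $\Fqst$ and invoking Theorem~\ref{thm-gauss-sum-reflection}. One small slip: your parenthetical about the case $y\in\ZZ$ is not quite right, since for $x\notin A$ the right-hand side $\ggs(x)^{q-1}$ has nonzero $\infty$-adic valuation and is not $1$; the paper's proof, like yours, tacitly works under $y\notin\ZZ$.
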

	
	\begin{proof}
		Write $\ang{y} = \sum_{s=0}^{d\l-1} y_sq^s/(q^{d\l}-1)$ where $0\leq y_s < q$ for all $s$.
		Then the first assertion follows from \eqref{eq-gauss-sum-monomial} and the observation that $\ang{1-y} = \sum_{s=0}^{d\l-1} (q-1-y_s)q^s/(q^{d\l}-1)$.
		And the second follows from (1) and Theorem \ref{thm-gauss-sum-reflection}.
	\end{proof}
	
	\subsubsection{\texorpdfstring{$\infty$}{infty}-adic absolute values}
	
	Theorem \ref{thm-gauss-sum-reflection} has another immediate consequence concerning the absolute values of geometric Gauss sums.
	
	\begin{prop}    \label{prop-absolute-values}
		For any $x \in \nfk^{-1}A \setminus A$ and $0 \leq s \leq d\l-1$, the valuation (normalized so that the valuation of $\T$ is $-1$) of $(\bggs_x)^{\tau_q^s}$ at any infinite place of $K_{\nfk,d\l}$ is $-1/(q-1)$.
		In particular, the normalized $\infty$-adic valuation of $\ggs(x)$ is $-d\l/(q-1)$.
	\end{prop}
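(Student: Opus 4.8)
The plan is to derive this entirely from the reflection formula (Theorem~\ref{thm-gauss-sum-reflection}) together with the splitting behaviour of the infinite place in the abelian extension $K_{\nfk,d\l}/k$. The reduction I would make is the following: it is enough to show that $w(\bggs_x) = -1/(q-1)$ for a single infinite place $w$ of $K_{\nfk,d\l}$, normalised so that $w(\T) = -1$; once we also know that $\tau_q$ fixes $w$, applying $\tau_q^s$ gives $w\big((\bggs_x)^{\tau_q^s}\big) = w(\bggs_x)$ for all $s$ and all infinite $w$, and the assertion for $\ggs(x) = \prod_{s=0}^{d\l-1}(\bggs_x)^{\tau_q^s}$ is then immediate by additivity of $w$. (The reflection formula also guarantees $\prod_{\epsilon}\ggs(\epsilon x) = v^\l \neq 0$, so each $\ggs(\epsilon x)$, and hence $\bggs_x$ and its conjugates, is nonzero; taking valuations is therefore legitimate.)

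The first step is to identify the decomposition group of $w$. By \S\ref{subsubsection-cyclotomic-function-fields}, $\infty$ has decomposition group $\{\rho_\epsilon : \epsilon\in\Fqst\}$ in $K_\nfk/k$, with residue field $\Fq$ and ramification index $q-1$, while in the constant extension $k\Fqdl/k$ it is unramified of residue degree $d\l$. Since $K_{\nfk,d\l} = K_\nfk\Fqdl$ has constant field $\Fqdl$, each infinite place of $K_\nfk$ is non-split in $K_{\nfk,d\l}$, so the corresponding infinite place $w$ of $K_{\nfk,d\l}$ has ramification index $q-1$ and residue degree $d\l$ over $k$; as $K_{\nfk,d\l}/k$ is abelian, its decomposition group is $D_w = \{\sigma_{\epsilon,s} : \epsilon\in\Fqst,\ 0\le s < d\l\}$. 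In particular each $\sigma_{\epsilon,0}$ and $\tau_q = \sigma_{1,1}$ lies in $D_w$, hence fixes $w$, and $w$ takes values in $\tfrac{1}{q-1}\ZZ$.

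Next I would do the counting. By Proposition~\ref{prop-galois-actions}(1) and \eqref{eq-gauss-sum-monomial}, $\ggs(\epsilon x) = \ggs(x)^{\sigma_{\epsilon,0}}$, so $w(\ggs(\epsilon x)) = w(\ggs(x))$ for every $\epsilon\in\Fqst$ because $\sigma_{\epsilon,0}$ fixes $w$. Summing over the $q-1$ values of $\epsilon$ and invoking Theorem~\ref{thm-gauss-sum-reflection}, $(q-1)\,w(\ggs(x)) = w(v^\l) = \l\, w(v) = -d\l$ since $v$ is monic of degree $d$, hence $w(\ggs(x)) = -d\l/(q-1)$. Finally, since $\tau_q^s$ fixes $w$, the $d\l$ factors $(\bggs_x)^{\tau_q^s}$ all have $w$-valuation equal to $w(\bggs_x)$, and the relation $\ggs(x) = \prod_{s=0}^{d\l-1}(\bggs_x)^{\tau_q^s}$ forces $d\l\cdot w(\bggs_x) = -d\l/(q-1)$, i.e.\ $w(\bggs_x) = -1/(q-1)$.

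There is no analytic content here; the argument is pure ramification/decomposition bookkeeping at $\infty$ plus careful tracking of the group-ring exponents. The only step that genuinely needs the cyclotomic structure, rather than a soft symmetry argument, is the claim that the $q-1$ conjugates $\ggs(\epsilon x)$ all have the same $w$-valuation — this is precisely the statement that $\{\rho_\epsilon\}$ is the decomposition (equivalently, here, the inertia) group of every infinite place, which in turn rests on the splitting of $\infty$ in cyclotomic function fields recorded in \S\ref{subsubsection-cyclotomic-function-fields}.
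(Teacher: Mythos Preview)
Your argument is correct and is essentially the same as the paper's: both identify the decomposition group of an infinite place of $K_{\nfk,d\l}$ as $\{\sigma_{\epsilon,s}:\epsilon\in\Fqst,\ 0\le s<d\l\}$ and then feed the reflection formula $\prod_{\epsilon,s}(\bggs_x)^{\sigma_{\epsilon,s}}=v^\l$ into the valuation. The only cosmetic difference is that the paper packages your two steps into a single local norm computation, writing $\ord_{\td\infty}\big((\bggs_x)^{\tau_q^s}\big)=\frac{1}{(q-1)d\l}\,\ord_\infty\big(\Nr_{K_{\td\infty}/k_\infty}(\bggs_x)^{\tau_q^s}\big)=\frac{1}{(q-1)d\l}\,\ord_\infty(v^\l)$, whereas you first extract $w(\ggs(x))$ and then divide by $d\l$; the underlying bookkeeping is identical.
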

	
	\begin{proof}
		Choose any infinite place $\td{\infty}$ of $K_{\nfk,d\l}$ above $\infty$.
		Let $K_{\td{\infty}}$ (resp. $k_\infty$) be the completion of $K_{\nfk,d\l}$ at $\td{\infty}$ (resp. $k$ at $\infty$) with normalized valuation $\ord_{\td{\infty}} (\T) = -1$.
		The Galois group $\Gal(K_{\td{\infty}}/k_\infty)$ is isomorphic to $\Fqst \times \ZZ/d\l \ZZ$ (recall \S\ref{subsubsection-cyclotomic-function-fields}).
		So by Theorem \ref{thm-gauss-sum-reflection}, we have
		$$
		\ord_{\td{\infty}} ((\bggs_x)^{\tau_q^s})
		= \frac{1}{[K_{\td{\infty}} : k_\infty]} \ord_{\infty} \left( \Nr_{K_{\td{\infty}}/k_\infty} ((\bggs_x)^{\tau_q^s}) \right)
		=  \frac{1}{(q-1)d\l} \ord_{\infty}(v^\l)
		= -\frac{1}{q-1}.
		$$
	\end{proof}
	
	In \S\ref{subsection-the-behavior-of-geometric-gauss-sums-at-infinity}, we investigate the signs of geometric Gauss sums at infinity.
	
	\subsubsection{Hasse-Davenport relations}     \label{subsubsection-hasse-davenport-relations}
	
	Next, we establish various analogs of Hasse-Davenport product and lifting relations.
	
	\begin{thm}[Hasse-Davenport product relations for geometric Gauss sums]     \label{thm-hd-geo-product}
		Suppose $x \in \nfk^{-1}A$.
		For any $g \in A_{+,h}$ with $(g,v) = 1$, we let $f$ be the order of $v$ modulo $g\nfk$.
		Then for any $y \in (q^{df}-1)^{-1}\ZZ$, we have
		$$
		\prod_{\alpha} \ggsf\left( \frac{x+\alpha}{g},y \right) \bigg/ \ggsf\left( \frac{\alpha}{g},y \right) = \ggsf(x,q^hy)
		$$
		where $\alpha$ runs through a complete residue system modulo $g$.
		In particular, we have
		\begin{equation}    \label{eq-hd-geo-product}
			\prod_{\alpha} \ggsf\left( \frac{x+\alpha}{g} \right) \bigg/ \ggsf\left( \frac{\alpha}{g} \right) = \ggsf(x).
		\end{equation}
	\end{thm}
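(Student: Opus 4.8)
The plan is to transport the asserted identity through the Gross--Koblitz--Thakur formula for the $v$-adic geometric gamma function (Theorem~\ref{thm-gkt-formula-for-geo}), turning it into the multiplication formula for that same function (Theorem~\ref{thm-multiplication-formula}). Observe first that the ``in particular'' statement is the case $y=1/(q-1)$ of the general one: then $q^hy\equiv y\pmod{\ZZ}$, so $\ggsf(x,q^hy)=\ggsf(x,y)=\ggsf(x)$. Hence it suffices to prove the first displayed equation. After reducing to $x=\anginf{x}$ and $y=\ang{y}$, I would note that the three geometric Gauss sums in play --- those attached to $(x+\alpha)/g$, to $\alpha/g$, and to $x$ --- all live at the common level $g\nfk$ with residue degree $f$ by Proposition~\ref{prop-compatibility-of-ggs}, so Theorem~\ref{thm-gkt-formula-for-geo} applies to each of them (with $\l$ there replaced by $f$).

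Expanding every factor of the left-hand side by Theorem~\ref{thm-gkt-formula-for-geo} splits it into a ``gamma part''
$$
\prod_{i=0}^{f-1}\ \frac{\prod_{\alpha}\vgf\bigl(\anginf{v^i\alpha/g},\,-\ang{|v^i|_\infty y}\bigr)}{\prod_{\alpha}\vgf\bigl(\anginf{v^i(x+\alpha)/g},\,-\ang{|v^i|_\infty y}\bigr)}
$$
and a ``delta part'' assembled from the explicit factors $\delta_{\bullet,i}$. For the gamma part, fix $i$; since $(v,g)=1$, as $\alpha$ ranges over a complete residue system modulo $g$ so does $v^i\alpha$, and absorbing the polynomial part of $v^ix$ one checks that $\anginf{v^i(x+\alpha)/g}$ ranges over $\{(\anginf{v^ix}+\beta)/g:\deg\beta<h\}$. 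So the inner product over $\alpha$ equals $\prod_{\deg\beta<h}\vgf\bigl((\anginf{v^ix}+\beta)/g,\,-\ang{|v^i|_\infty y}\bigr)$, and Theorem~\ref{thm-multiplication-formula} applies. The $\vaf$-quotient, the factor $g^m$, and the one-unit $(g^{q^d-1})^{\partial_v(\cdots)}$ produced there depend only on $y$ and $g$, hence cancel against their counterparts in the $x=0$ (denominator) factors; and since $\vgf(0,\cdot)=1$ the whole $x=0$ gamma contribution vanishes. What remains of the gamma part is $\prod_i\vgf\bigl(\anginf{v^ix},\,q^h(-\ang{|v^i|_\infty y})\bigr)^{-1}$ times an explicit residual power of $g$.

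It remains to match this against $\ggsf(x,q^hy)=\bigl(\prod_i\delta_{x,i}^{(q^hy)_{di+\deg x_i}}\bigr)\prod_i\vgf\bigl(\anginf{v^ix},\,-\ang{|v^i|_\infty q^hy}\bigr)^{-1}$. Here $\ang{|v^i|_\infty q^hy}=\ang{q^h\ang{|v^i|_\infty y}}$, and the two $\ZZ_p$-arguments $q^h(-\ang{|v^i|_\infty y})$ and $-\ang{q^h\ang{|v^i|_\infty y}}$ differ by the integer $\lfloor q^h\ang{|v^i|_\infty y}\rfloor\in[0,q^h)$; consequently the two $\vgf$-values differ by an explicit finite product of factors of the form $(\anginf{v^ix}+c)^\flat$ and $c^\flat$ with $\deg c<h$. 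I expect the crux of the proof to be checking that this $\vgf$-discrepancy, the residual power of $g$ from the multiplication formula, and the ratio of ``delta parts'' from Theorem~\ref{thm-gkt-formula-for-geo} (which telescopes over $\alpha$ much as in the proof of Theorem~\ref{thm-gauss-sum-reflection}) recombine exactly into $\prod_i\delta_{x,i}^{(q^hy)_{di+\deg x_i}}$ --- this is precisely the single-digit $\delta^{(s)}$-mechanism of Theorem~\ref{thm-first-gkt-formula} reassembled. An equally viable alternative is to prove the relation first for $y=q^s/(q^{df}-1)$ by running the telescoping argument of Theorem~\ref{thm-first-gkt-formula} directly on the products $\prod_\alpha\vgf(\cdots)$, and then obtain the general $y$ by multiplicativity, exactly as Theorem~\ref{thm-gkt-formula-for-geo} is deduced from Theorem~\ref{thm-first-gkt-formula}.
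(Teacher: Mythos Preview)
Your outline is correct in spirit and uses the same two ingredients as the paper --- the Gross--Koblitz--Thakur formula (Theorems~\ref{thm-first-gkt-formula}/\ref{thm-gkt-formula-for-geo}) and the multiplication formula (Theorem~\ref{thm-multiplication-formula}) --- together with the same set-theoretic observation that $\{\anginf{v^i(x+\alpha)/g}\}=\{(\anginf{v^ix}+\beta)/g\}$. Where you diverge from the paper is in the endgame: you propose to track every explicit constant (the delta parts, the residual power of $g$, the translation discrepancy between $q^h(-\ang{|v^i|_\infty y})$ and $-\ang{|v^i|_\infty q^hy}$) and verify that they recombine exactly. The paper instead first reduces to the single-digit case $y=1/(q^{df}-1)$ and then deliberately works only up to a factor $\kappa\in k^\times$, sidestepping the delta bookkeeping entirely. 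To pin down $\kappa$, it invokes the reflection formula (Theorem~\ref{thm-gauss-sum-reflection}) to force $\kappa$ to be an $\Fqst$-multiple of a power of $v$, then the $\infty$-adic valuation result (Proposition~\ref{prop-absolute-values}) to force $\kappa\in\Fqst$, and finally observes that all the explicit rational factors produced along the way are quotients of monic polynomials, whence $\kappa=1$.

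Your route should work, and your ``alternative'' at the end is exactly the paper's reduction to a single $q$-adic digit. But the explicit-constant matching you flag as the ``crux'' is precisely what the paper avoids; its soft argument via valuations and signs is both shorter and more robust, and is worth knowing since the same trick is reused in the proof of Theorem~\ref{thm-hd-two-product}.
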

	
	\begin{proof}
		It suffices to show the case $y = 1/(q^{df}-1)$.
		The general situation will follow by applying the group ring element $\sum_{s=0}^{df-1} y_s\tau_q^s$ to both sides (we put $y = \sum_{s=0}^{df-1} y_sq^s/(q^{df}-1)$ as always).
		Note we may also assume $x \in \nfk^{-1}A \setminus A$ because otherwise the result is trivial.
		Observe that for each $0 \leq i \leq \l-1$, we have
		$$
		\left\{ \lranginf{v^i\left(\frac{x+\alpha}{g}\right)} \Biggm| \deg \alpha < \deg g \right\}
		= \left\{ \frac{\anginf{v^ix}+\alpha}{g} \Biggm| \deg \alpha < \deg g \right\}
		$$
		and
		\begin{equation}     \label{eq-v^i-alpha-g}
			\left\{ \lranginf{v^i \frac{\alpha}{g}} \biggm| \deg \alpha < \deg g \right\}
			= \left\{ \frac{\alpha}{g} \biggm| \deg \alpha < \deg g \right\}.
		\end{equation}
		Using these, Theorems \ref{thm-first-gkt-formula}, \ref{thm-multiplication-formula}, and the translation formula of $\vgf(x,\cdot)$ (see the proof of \cite[Lemma 4.6.2]{thakur2004function}), we have up to explicit rational multiples (for $\kappa_1,\kappa_2 \in \CC_v$, we write $\kappa_1 \sim_k \kappa_2$ if $\kappa_1/\kappa_2 \in k^\times$),
		\begin{alignat*}{2}
			& &&\prod_{\alpha} \ggsf\left( \frac{x+\alpha}{g},y \right) \bigg/ \ggsf\left( \frac{\alpha}{g},y \right)
			\sim_k \prod_{\alpha} \prod_{i=0}^{f-1} \frac{\vgf(\anginf{v^i \alpha/g},-q^{di} y)}{\vgf(\anginf{v^i(x+\alpha)/g},-q^{di} y)}    \\
			&= &&\prod_{i=0}^{f-1} \prod_{\alpha} \frac{\vgf(\alpha/g,-q^{di} y)}{\vgf((\anginf{v^ix}+\alpha)/g,-q^{di} y)}
			\sim_k \prod_{i=0}^{f-1} \vgf\left(\anginf{v^ix},-q^{di} q^hy\right)^{-1}   \\
			&\sim_k &&\prod_{i=0}^{f-1} \vgf\left(\anginf{v^ix}, -\ang{q^{di} q^hy}\right)^{-1}
			\sim_k \ggsf(x,q^hy).
		\end{alignat*}
		Thus, the result is proved up to some $\kappa \in k^\times$.
		
		Since the geometric Gauss sums lie above $v$ by Theorem \ref{thm-gauss-sum-reflection}, we see that $\kappa$ is up to an $\Fqst$-multiple, an integral power of $v$.
		Furthermore, recall we assumed $x \notin A$.
		Also note that $(x+\alpha)/g \notin A$ for all $\alpha$ and $\alpha/g \in A$ if and only if $\alpha=0$.
		So by Proposition \ref{prop-absolute-values}, the $\infty$-adic valuation of $\kappa$ is $0$.
		This shows that $\kappa \in \Fqst$ is a constant.
		Finally, from the explicit expression of $\kappa$, one sees that both of its denominator and numerator are monic polynomials.
		(Note that all the deltas coming from Theorem \ref{thm-first-gkt-formula} and the rational factors coming from the translation formula of $\vgf(x,\cdot)$ have this property.
		Also, recall that $g$ is monic by the hypothesis.)
		Hence, we conclude that $\kappa = 1$.
	\end{proof}
	
	\begin{rem}       \label{rem-hd-v-power}
		Keep the same assumptions as Theorem \ref{thm-hd-geo-product}.
		Note that by Theorem \ref{thm-gauss-sum-reflection}, the second identity \eqref{eq-hd-geo-product} can be rewritten as
		$$
		\prod_{\alpha} \ggsf\left( \frac{x+\alpha}{g} \right) 
		= v^{f(q^h-1)/(q-1)} \ggsf(x).
		$$
	\end{rem}
	
	The following result is a two-variable analog of Theorem \ref{thm-hd-geo-product}.
	Note that this gives an intriguing correlation between geometric and arithmetic Gauss sums.
	
	\begin{thm}[Hasse-Davenport product relation for two-variable Gauss sums]    \label{thm-hd-two-product}
		Suppose $x \in \nfk^{-1}A$.
		For any $g \in A_{+,h}$ with $(g,v) = 1$, we let $f$ be the order of $v$ modulo $g\nfk$.
		Then for any $y \in (q^{df}-1)^{-1}\ZZ$, we have
		$$
		\prod_{\alpha} G_f \left(\frac{x+\alpha}{g}, y\right)
		= g_v^{-q^h(q^{df}-1)\ang{y}} \cdot G_f(x,q^hy),
		$$
		where $g_v \in \Fqd^\times \sbe A_v^\times$ is the Teichmüller representative of $g$ in $A_v$.
		Equivalently,
		$$
		\prod_{\alpha} G_f^{\textnormal{geo}} \left(\frac{x+\alpha}{g}, y\right) \bigg/ G_f^{\textnormal{ari}}(y)
		= g_v^{-q^h(q^{df}-1)\ang{y}} \cdot G_f^{\textnormal{geo}} (x,q^hy) \Big/ G_f^{\textnormal{ari}} (q^hy).
		$$
	\end{thm}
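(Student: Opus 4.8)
The plan is to mirror the proof of Theorem~\ref{thm-hd-geo-product}, now carrying the arithmetic factor along and using the two-variable Gross-Koblitz-Thakur formula (Theorem~\ref{thm-gkt-formula-for-two}, applied with $g\nfk$ and $f$ in place of $\nfk$ and $\l$). Since $G_f(x,y)=\ggsf(x,y)\big/G_f^{\textnormal{ari}}(y)$, a complete residue system modulo $g$ has $q^h$ elements, and $G_f^{\textnormal{ari}}(y)$ does not depend on the first variable, we have $\prod_\alpha G_f((x+\alpha)/g,y)=G_f^{\textnormal{ari}}(y)^{-q^h}\prod_\alpha\ggsf((x+\alpha)/g,y)$. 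As in the geometric case it is enough to treat $y=1/(q^{df}-1)$: writing $\ggsf(\cdot,y)=\prod_s(\bggs_\cdot)^{y_s\tau_q^s}$, $G_f^{\textnormal{ari}}(y)=\prod_s(\bags)^{y_s\tau_q^s}$, and using $g_v\in\Fqd^\times$ with $\tau_q(g_v)=g_v^q$, one checks that the identity for general $y$ is the image of the one for $y=1/(q^{df}-1)$ under $(\cdot)^{\sum_s y_s\tau_q^s}$. We may also assume $\anginf{x}=x\notin A$.

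Write $h=h_0+dh_1$ with $0\le h_0<d$. For $y=1/(q^{df}-1)$ one has $\sum_{i=0}^{f-1}\ang{|v^i|_\infty y}=1/(q^d-1)$, so Theorem~\ref{thm-gkt-formula-for-two} gives
\[
G_f\left(\frac{x+\alpha}{g},y\right)=D_{(x+\alpha)/g}\cdot\varpi_v^{-1}\cdot\prod_{i=0}^{f-1}\vtf\left(\anginf{v^i(x+\alpha)/g},\,c_i\right)^{-1},
\]
where $c_i:=-\ang{|v^i|_\infty y}$ and $D_{(x+\alpha)/g}$ is the explicit delta-factor of that theorem; similarly $G_f(x,q^hy)=D'_x\cdot\varpi_v^{-q^{h_0}}\cdot\prod_i\vtf(\anginf{v^ix},c_i')^{-1}$ with $c_i':=-\ang{|v^i|_\infty q^hy}$. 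Since multiplication by $v^i$ permutes $g^{-1}A/A$, the set $\{\anginf{v^i(x+\alpha)/g}:\deg\alpha<h\}$ equals $\{(\anginf{v^ix}+\alpha)/g:\deg\alpha<h\}$ for each $i$, exactly as in the proof of Theorem~\ref{thm-hd-geo-product}; applying the multiplication formula for the $v$-adic two-variable factorial (Theorem~\ref{thm-multiplication-formula}) and then its translation formula, one evaluates $\prod_\alpha\vtf((\anginf{v^ix}+\alpha)/g,c_i)$, up to explicit polynomial factors and one-units $(g^{q^d-1})^{\partial_v(q^hc_i)}$, as $\vtf(\anginf{v^ix},c_i')$. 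Cancelling the $\vtf$-factors, which match those appearing in $G_f(x,q^hy)$, yields $\prod_\alpha G_f((x+\alpha)/g,y)=\kappa\cdot G_f(x,q^hy)$, where $\kappa$ is the explicit product of the delta-factors, the $g$-powers, the one-units, the translation factors, and $\varpi_v^{q^{h_0}-q^h}$.

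It remains to show $\kappa=g_v^{-q^h}$, which is the crux. Since $q^h-q^{h_0}=q^{h_0}(q^{dh_1}-1)$ is a multiple of $q^d-1$ and $\varpi_v^{q^d-1}=-v$, the factor $\varpi_v^{q^{h_0}-q^h}$ is a power of $v$, so $\kappa$ is an algebraic number over $k$ whose prime factorization is supported only above $v$ and $\infty$ — for the geometric Gauss sums by Theorem~\ref{thm-gauss-sum-reflection}, and for $\bags$ by Thakur's Stickelberger theorem. Its $\infty$-adic valuation is forced to be $0$ by Proposition~\ref{prop-absolute-values} together with Thakur's absolute-value computation for the arithmetic Gauss sum, and the monicity of $g$ and of every delta-factor kills the residual $\Fqst$-ambiguity, so $\kappa$ is a root of unity; a comparison of the two sides modulo $v$, where $g\equiv g_v\pmod{vA_v}$ and $g_v$ appears as the value at $g$ of the Teichmüller character underlying $\bags$, pins $\kappa$ down to $g_v^{-q^h}$. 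I expect this final constant determination — separating the honest polynomial and $\varpi_v$ contributions from the Teichmüller unit $g_v$ and checking that everything else cancels — to be the main obstacle; the two reductions above and the appeals to Theorems~\ref{thm-hd-geo-product}, \ref{thm-gkt-formula-for-two}, \ref{thm-multiplication-formula} are routine. Equivalently, one may bypass the gamma functions altogether and combine the already-proven geometric relation (Theorem~\ref{thm-hd-geo-product}) directly with the arithmetic structure, reducing the whole statement to the single evaluation $\prod_{\deg\alpha<h}\bggs_{\alpha/g}=g_v^{-q^h}\,(\bags)^{q^h}(\bags)^{-\tau_q^h}$.
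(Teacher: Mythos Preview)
Your overall strategy coincides with the paper's, and your final sentence in fact \emph{is} the paper's route: the paper immediately divides by $G_f(x,q^hy)$ and applies the already-proven geometric relation (Theorem~\ref{thm-hd-geo-product}) to reduce the whole statement, for $y=1/(q^{df}-1)$, to evaluating $\prod_\alpha \bggs_{\alpha/g}\cdot(\bags)^{\tau_q^h}/(\bags)^{q^h}$. It then expresses this via Theorem~\ref{thm-first-gkt-formula}, \eqref{eq-gkt-formula-for-ari}, the translation formula, and Theorem~\ref{thm-multiplication-formula} (with first argument $0$), obtaining $\sim_k (g^{q^d-1})^{-\partial_v(q^h/(1-q^d))}\,\varpi_v^{q^{h'}-q^h}$.

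The gap in your write-up is precisely the point you flag as the crux: you never show how $g_v^{-q^h}$ actually appears. Your argument that ``$\kappa$ is a root of unity'' is circular: the one-unit factors $(g^{q^d-1})^{\partial_v(\cdots)}$ you carry are genuine $v$-adic transcendentals a priori, so $\kappa$ is not obviously algebraic, let alone in $\ovl{\Fq}^\times$; and the vague ``comparison of the two sides modulo $v$'' cannot separate $g_v^{-q^h}$ from a general one-unit (both are $\equiv g^{-q^h}\pmod v$). What the paper does instead is compute the one-unit explicitly: writing $h=h'+cd$ with $0\le h'<d$, one has $\partial_v(q^h/(1-q^d))=q^{h'}/(1-q^d)+(\text{integer})$ and hence $(g^{q^d-1})^{-\partial_v(q^h/(1-q^d))}=g^{\text{integer}}\cdot g_v^{-q^h}$, using that $g=g_v\cdot(\text{one-unit})$ and $(\text{one-unit})^{(q^d-1)\cdot q^{h'}/(1-q^d)}=(\text{one-unit})^{-q^{h'}}$ while $g_v^{q^d-1}=1$. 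This explicit decomposition is what makes $\kappa/g_v^{-q^h}\in k^\times$, after which your valuation-and-sign argument (identical to the paper's) finishes the job. Without that computation, you have not produced $g_v$ anywhere in the proof.
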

	
	\begin{proof}
		The equivalence of the two identities follows from \eqref{eq-two-variable-gauss-sum}.
		Our strategy is similar to the proof of Theorem \ref{thm-hd-geo-product}.
		Note that we may again assume $y = 1/(q^{df}-1)$.
		Applying \eqref{eq-two-variable-gauss-sum}, Theorems \ref{thm-hd-geo-product}, \ref{thm-first-gkt-formula}, \eqref{eq-gkt-formula-for-ari}, \eqref{eq-v^i-alpha-g}, the translation formula of $\vaf(\cdot)$ (see the proof of \cite[Lemma 4.6.2]{thakur2004function}), and Theorem \ref{thm-multiplication-formula} sequentially, we have
		\begin{alignat*}{2}
			& &&\prod_{\alpha} G_f \left(\frac{x+\alpha}{g}, y\right) \bigg/ G_f(x,q^hy)
			= \left( \prod_{\alpha} G_f^{\textnormal{geo}} \left(\frac{\alpha}{g}, y \right) \right)
			\left( \frac{G_f^{\textnormal{ari}}(q^hy)}{G_f^{\textnormal{ari}}(y)^{q^h}} \right)   \\
			&\sim_k &&\left( \prod_{\alpha} \prod_{i=0}^{f-1} \vgf\left( \anginf{v^i \alpha/g}, -\ang{q^{di} y} \right)^{-1} \right)
			\left( \prod_{i=0}^{f-1} \frac{\vaf(-\ang{q^{di} y})^{q^h}}{\vaf(-\ang{q^{di} q^hy})}\right)  \varpi_v^{q^{h'}-q^h}    \\
			&\sim_k &&\left( \frac{\vaf(1/(1-q^d))^{q^h}}{\prod_\alpha \vgf( \alpha/g , 1/(1-q^d))} \cdot \frac{1}{\vaf(q^h/(1-q^d))} \right) \varpi_v^{q^{h'}-q^h} \\
			&\sim_k &&\left(g^{q^d-1}\right)^{-\partial_v(q^h/(1-q^d))} \varpi_v^{q^{h'}-q^h}.
		\end{alignat*}
		Here, $h'$ is the unique integer such that $0 \leq h' \leq d-1$ and $h' \equiv h \pmod{d}$.
		Note that
		$$
		\left(g^{q^d-1}\right)^{-\partial_v(q^h/(1-q^d))}
		=
		\begin{cases}
			g^{q^{h-d}} g_v^{-q^h}, & h \geq d, \\
			g^{q^h} g_v^{-q^h}, & h < d.
		\end{cases}
		$$
		On the other hand, write $h = h' + cd$ for some $c \in \NN \cup \{0\}$.
		Then $q^{h'} - q^h = -(q^d-1)e$ where $e := q^{h'} (1+q^d + \cdots + q^{(c-1)d})$ and so
		\begin{equation}    \label{eq-hd-two-product-varpi}
			\varpi_v^{q^{h'}-q^h}
			= \left(\varpi_v^{q^d-1}\right)^{-e}
			= (-v)^{-e} \in k.
		\end{equation}
		Thus, the result is proved up to some $\kappa \in k^\times$.
		
		Now, since both Gauss sums lie above $v$ (Theorem \ref{thm-gauss-sum-reflection} and \cite[Theorem II]{thakur1988gauss}), we see that $\kappa$ is up to an $\Fqst$-multiple, an integral power of $v$.
		Furthermore, since the absolute values of geometric and arithmetic Gauss sums are identical (Proposition \ref{prop-absolute-values} and \cite[Theorem IV]{thakur1988gauss}), we see that the $\infty$-adic valuation of $\kappa$ is $0$.
		This implies that $\kappa \in \Fqst$ is a constant.
		Finally, from the explicit expression of $\kappa$, one sees that
		$$
		\kappa = (-1)^c (-1)^e = 1,
		$$
		where $(-1)^c$ comes from applying the translation formula of $\vaf(\cdot)$ and $(-1)^e$ is the sign of \eqref{eq-hd-two-product-varpi}.
		(We once again note that all the deltas coming from Theorem \ref{thm-first-gkt-formula} and the integral power of $g$ coming from Theorem \ref{thm-multiplication-formula} have sign $1$.)
	\end{proof}
	
	We also have the following product relation with respect to $y$.
	
	\begin{thm}
		Suppose $y \in N^{-1}\ZZ$ for some $N \in \NN$ with $(N,q) = 1$.
		For any $n\in\NN$ with $(n,q) = 1$, we let $f$ be the order of $q^d$ modulo $Nn$.
		Then for any $x \in (v^f-1)^{-1}A$, we have
		$$
		\prod_{i=0}^{n-1} \ggsf\left(x,\frac{y+i}{n}\right)
		= \ggsf(x)^{(n-1)(q-1)/2} \ggsf(x,y).
		$$
	\end{thm}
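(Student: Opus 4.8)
The plan is to follow the template of the proofs of Theorems~\ref{thm-hd-geo-product} and~\ref{thm-hd-two-product}: pass to $v$-adic geometric factorials via the Gross--Koblitz--Thakur formula, collapse the product by a Gauss-type multiplication formula in the \emph{second} variable, and then pin down the resulting constant. Since $f$ is the order of $q^d$ modulo $Nn$, we have $Nn\mid q^{df}-1$, so $y$ and every $(y+i)/n$ lie in $(q^{df}-1)^{-1}\ZZ$; by the compatibility of geometric Gauss sums (Proposition~\ref{prop-compatibility-of-ggs}) we may work at level $f$, so all of $\ggsf(\cdot,\cdot)$, $\ggsf(\cdot)$ appearing are defined. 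We may assume $x\in\nfk^{-1}A\setminus A$ (otherwise both sides are $1$) and, since $\ggsf(x,\cdot)$ depends only on the $A$-fractional part, that $y\in[0,1)$. First I would expand each left-hand factor by Theorem~\ref{thm-gkt-formula-for-geo}: $\ggsf(x,(y+i)/n)$ is a ``delta'' term---a monomial in $v$ and the $\anginf{v^m x}$, depending on $i$ only through digit exponents---times $\prod_{j=0}^{f-1}\vgf\bigl(\anginf{v^jx},-\ang{|v^j|_\infty(y+i)/n}\bigr)^{-1}$.

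The combinatorial heart is that for each $0\le j\le f-1$, because $q^{dj}$ is a unit modulo $n$, the multiset $\{\ang{|v^j|_\infty(y+i)/n}:0\le i<n\}$ equals $\{(\beta_j+i)/n:0\le i<n\}$, where $\beta_j:=\ang{|v^j|_\infty y}\in[0,1)$. Fixing $j$, the inner product becomes $\prod_{i=0}^{n-1}\vgf(\anginf{v^jx},-(\beta_j+i)/n)$. To collapse it I would use the multiplication formula in the second variable: writing $\vtg=\vgg/\vag$ and $\vgg(x,z)=\tfrac{1}{x^\flat}\vgf(x,z-1)$ reduces the $\vgf$-product to the $\vtg(x,\cdot)$-multiplication formula of \S\ref{section-v-adic-gamma-functions} (equivalently, $\vgf(x,\cdot)$ fits Thakur's framework \cite[Section~2]{thakur1991gamma}), and then, using the integer-translation formula for $\vgf(x,\cdot)$ (as in the proof of \cite[Lemma~4.6.2]{thakur2004function}) together with $\vgf(x,-1)=\vgf(x)^{q-1}$, one obtains
\[
\prod_{i=0}^{n-1}\vgf\!\left(\anginf{v^jx},\,-\tfrac{\beta_j+i}{n}\right)=\mu_j\cdot\vgf(\anginf{v^jx})^{(n-1)(q-1)/2}\cdot\vgf\bigl(\anginf{v^jx},-\beta_j\bigr),
\]
where $\mu_j$ is an explicit factor built from $\vaf(-1)^{(n-1)/2}$, $(1/x^\flat)^{n-1}$ and the arithmetic multiplication formula, and $\prod_j\mu_j\in k^\times$ (the $\varpi_v$-powers combining into powers of $-v=\varpi_v^{q^d-1}$ over the $\Frob_v$-orbit, as in the proof of Theorem~\ref{thm-hd-two-product}). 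Multiplying over $j$, inverting, and running Theorem~\ref{thm-gkt-formula-for-geo} backwards, $\prod_j\vgf(\anginf{v^jx},-\beta_j)^{-1}$ reproduces $\ggsf(x,y)$ up to delta terms and $\prod_j\vgf(\anginf{v^jx})^{-(n-1)(q-1)/2}$ reproduces $\ggsf(x)^{(n-1)(q-1)/2}$ up to delta terms. Collecting everything,
\[
\prod_{i=0}^{n-1}\ggsf\!\left(x,\frac{y+i}{n}\right)=\kappa\cdot\ggsf(x)^{(n-1)(q-1)/2}\,\ggsf(x,y)
\]
for an explicit $\kappa\in k^\times$, a monomial in $v$, the $\anginf{v^m x}$, and the $\mu_j$.

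It remains to show $\kappa=1$, which I would do as at the ends of the proofs of Theorems~\ref{thm-hd-geo-product} and~\ref{thm-hd-two-product}. Since every geometric Gauss sum monomial here has prime factorization supported over $v$ (Theorem~\ref{thm-gauss-sum-reflection}), $\kappa$ is, up to an $\Fqst$-multiple, an integral power of $v$; comparing $\infty$-adic valuations via Proposition~\ref{prop-absolute-values} forces that power to vanish, which amounts to the elementary identity that the sum over $0\le i<n$ of the base-$q$ digit sum of $(q^{df}-1)\ang{(y+i)/n}$ equals $(n-1)df(q-1)/2$ plus the base-$q$ digit sum of $(q^{df}-1)\ang{y}$. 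Thus $\kappa\in\Fqst$, and a final inspection of signs---all the deltas of Theorem~\ref{thm-first-gkt-formula} and all auxiliary rational factors being monic, and the signs from $\varpi_v^{q^d-1}=-v$ and from the arithmetic translation formula cancelling---gives $\kappa=1$.

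I expect the main obstacle to be the bookkeeping: tracking precisely which fractional parts $-\ang{|v^j|_\infty(y+i)/n}$ survive after the reindexing by $q^{dj}\bmod n$ and after passing through $\vtg\leftrightarrow\vgf$, and keeping the delta terms on the two sides aligned. The two most delicate endgame points are verifying the digit-sum identity that kills the $v$-valuation of $\kappa$ and computing the sign of $\kappa$; once these are in hand the identity drops out.
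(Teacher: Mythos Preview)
Your route is plausible but far more circuitous than the paper's, and it misses the observation that short-circuits the whole argument. The paper does \emph{not} pass through the Gross--Koblitz--Thakur formula at all. Instead it notes that, by the very definition \eqref{eq-gauss-sum-monomial}, the map $y\mapsto\ggsf(x,y)$ is itself a ``digit product'' in Thakur's sense: setting
\[
g\Bigl(1+\sum_{i\ge 0} y_iq^i\Bigr):=\prod_{i=0}^{df-1}\bigl((\bggs_x)^{\tau_q^i}\bigr)^{y_i},
\]
one has $\ggsf(x,y)=g(1-\ang{y})$, and $g$ fits the framework of \cite[Section~2]{thakur1991gamma}. The set identity $\{1-\ang{(y+i)/n}\}_i=\{((1-\ang{y})+i)/n\}_i$ then feeds directly into \cite[Lemma~2.4]{thakur1991gamma}, giving the stated formula with $g(0)^{(n-1)/2}=\ggsf(x)^{(n-1)(q-1)/2}$. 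You essentially make this observation yourself when you remark parenthetically that ``$\vgf(x,\cdot)$ fits Thakur's framework''; the same is true one level up for $\ggsf(x,\cdot)$, and that is the whole proof.

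By contrast, your plan unwinds each factor via Theorem~\ref{thm-gkt-formula-for-geo}, applies a second-variable multiplication formula to $\vgf(\anginf{v^jx},\cdot)$ orbit by orbit, reassembles, and then chases $\kappa$ through valuations and signs. This can likely be made to work, but the endgame is harder than you suggest: the delta exponents $y_{di+\deg x_i}$ in Theorem~\ref{thm-gkt-formula-for-geo} vary with the index $i$ across your $n$ factors, so matching the product of deltas on the left with those on the right (and with the $\mu_j$) amounts to precisely the base-$q$ digit combinatorics that Thakur's Lemma~2.4 already packages. In effect you would be reproving that lemma in this instance by hand, with extra $v$-adic scaffolding that the direct argument avoids entirely.
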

	
	\begin{proof}
		We define a function $g$ on $\ZZ_p$ by
		$$
		g\left(1 + \sum_{i=0}^\infty y_iq^i\right) := \prod_{i=0}^{df-1} \left( (\bggs_x)^{\tau_q^i} \right)^{y_i}
		\text{ where }
		0 \leq y_i < q
		\text{ for all }
		i.
		$$
		Then $g$ fits into the framework of \cite[Section 2]{thakur1991gamma}, and we have for all $y \in (q^{df}-1)^{-1}\ZZ$,
		$$
		\ggsf(x,y) = g(1 - \ang{y}).
		$$
		Now, observe that
		$$
		\left\{ 1 - \lrang{\frac{y+i}{n}} \biggm| 0\leq i \leq n-1 \right\}
		= \left\{ \frac{(1-\ang{y})+i}{n} \biggm| 0\leq i \leq n-1 \right\}.
		$$
		So by \cite[Lemma 2.4]{thakur1991gamma},
		\begin{align*}
			\prod_{i=0}^{n-1} \ggsf\left(x,\frac{y+i}{n}\right)
			&= \prod_{i=0}^{n-1} g\left(1-\lrang{\frac{y+i}{n}}\right)
			= \prod_{i=0}^{n-1} g\left(\frac{(1-\ang{y})+i}{n}\right)   \\
			&= g(0)^{(n-1)/2} g(1-\ang{y})
			= \ggsf(x)^{(n-1)(q-1)/2} \ggsf(x,y).
		\end{align*}
	\end{proof}
	
	We also establish an analog of Hasse-Davenport lifting relation.
	Compare it with Proposition \ref{prop-compatibility-of-ggs} and \S\ref{subsubsection-an-additive-hasse-davenport-lifting-relation}.
	See the corresponding result of the arithmetic case \cite[Theorem VIII]{thakur1988gauss} also.
	
	\begin{thm}[Hasse-Davenport lifting relation for geometric Gauss sums]
		Suppose $x \in \nfk^{-1}A \sbe (\nfk')^{-1}A$ where $\nfk \mid \nfk'$ with $(\nfk',v) = 1$.
		Let $\l'$ be the order of $v$ modulo $\nfk'$ and put $m := \l' / \l \in \NN$.
		Then for any $y \in (q^{d\l}-1)^{-1}\ZZ$, we have
		$$
		G_{\l'}^{\textnormal{geo}}(x,y) = \ggs(x,y)^m.
		$$
	\end{thm}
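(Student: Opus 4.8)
The plan is to reduce the identity to a comparison of two base-$q$ expansions of $\ang{y}$, combined with the fact that $\bggs_x$ is fixed by $\tau_q^{d\l}$.

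First I would record the elementary divisibilities. Since $\nfk \mid \nfk'$, reducing $v^{\l'} \equiv 1 \pmod{\nfk'}$ modulo $\nfk$ shows $\l \mid \l'$, so $m = \l'/\l$ is a genuine positive integer; hence $d\l \mid d\l'$ and $q^{d\l}-1 \mid q^{d\l'}-1$, which in particular gives $(q^{d\l}-1)^{-1}\ZZ \sbe (q^{d\l'}-1)^{-1}\ZZ$, so that the monomial $G_{\l'}^{\textnormal{geo}}(x,y)$ is well defined. I would also invoke Proposition \ref{prop-compatibility-of-ggs} to see that the single Gauss sum $\bggs_x$ attached to $\nfk'$ (valued a priori in $K_{\nfk',d\l'}$) is literally the same element as the one attached to $\nfk$ (valued in $K_{\nfk,d\l}$), once $\psi$ is extended by Teichm\"uller to $\FF_{\Pfk'}$; and that the canonical extension of $\tau_q$ to $K_{\nfk',d\l'}$ restricts on $K_{\nfk,d\l}$ to the $\tau_q$ of \S\ref{subsubsection-galois-actions}. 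So both sides of the asserted identity already live in $K_{\nfk,d\l}$ and are built from the same element $\bggs_x$.

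Next I would compare the digit expansions. Writing $\ang{y} = p/(q^{d\l}-1)$ with $0 \le p \le q^{d\l}-2$ and letting $y_0,\dots,y_{d\l-1}$ be the base-$q$ digits of $p$ (so that $\ang{y} = \sum_{s=0}^{d\l-1} y_s q^s/(q^{d\l}-1)$), the identity $(q^{d\l'}-1)/(q^{d\l}-1) = 1 + q^{d\l} + \cdots + q^{(m-1)d\l}$ shows that the numerator of $\ang{y}$ relative to $q^{d\l'}-1$ equals $p\,(1 + q^{d\l} + \cdots + q^{(m-1)d\l})$. Because $p < q^{d\l}$, the summands $p\,q^{jd\l}$ occupy pairwise disjoint blocks of base-$q$ digits, so no carrying occurs, and the digits $y'_0,\dots,y'_{d\l'-1}$ of $\ang{y}$ with respect to $q^{d\l'}-1$ are simply the $(d\l)$-periodic repetition $y'_s = y_{s \bmod d\l}$. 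This no-carry step is the one place that requires a moment's care, and it hinges precisely on the bound $p < q^{d\l}$.

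Finally I would unwind \eqref{eq-gauss-sum-monomial} for $\nfk'$ and reindex $s = jd\l + t$ with $0 \le j < m$ and $0 \le t < d\l$:
\[
G_{\l'}^{\textnormal{geo}}(x,y)
= \prod_{s=0}^{d\l'-1} (\bggs_x)^{y'_s \tau_q^s}
= \prod_{j=0}^{m-1} \prod_{t=0}^{d\l-1} (\bggs_x)^{y_t \tau_q^{jd\l+t}}.
\]
Then, using that $\bggs_x$ is fixed by $\tau_q^{d\l}$ (Remark \ref{rem-ggs-with-trace}, or equivalently $\sigma_{1,d\l} = \id$ on $K_{\nfk,d\l}$), each $\tau_q^{jd\l+t}$ may be replaced by $\tau_q^t$; the inner product then equals $\ggs(x,y)$ for every $j$, and the whole expression collapses to $\ggs(x,y)^m$. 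I do not expect a genuine obstacle here: once the two digit indexings are aligned the argument is purely formal, so the only real content is the compatibility of the base-$q$ expansions of $\ang{y}$ across the denominators $q^{d\l}-1$ and $q^{d\l'}-1$.
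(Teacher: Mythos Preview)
Your proof is correct and follows essentially the same approach as the paper's: both arguments reduce to the observation that $\bggs_x$ is fixed by $\tau_q^{d\l}$ together with the base-$q$ digit identity $\sum_{s=0}^{d\l-1} y_s q^s/(q^{d\l}-1) = \sum_{j=0}^{m-1} q^{jd\l}(y_0 + y_1 q + \cdots + y_{d\l-1} q^{d\l-1})/(q^{d\l'}-1)$. Your write-up is simply more explicit, in particular spelling out the no-carry justification and invoking Proposition~\ref{prop-compatibility-of-ggs} to identify the Gauss sums attached to $\nfk$ and $\nfk'$.
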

	
	\begin{proof}
		This follows immediately from \eqref{eq-gauss-sum-monomial} because $\bggs_x$ is fixed by $\tau_q^{d\l}$ and
		$$
		\sum_{s=0}^{d\l-1} \frac{y_sq^s}{q^{d\l}-1}
		= \sum_{j=0}^{m-1} q^{jd\l} \cdot \frac{y_0 + y_1 q+ \cdots + y_{d\l-1} q^{d\l-1}}{q^{d\l'}-1}.
		$$
	\end{proof}
	
	\subsubsection{Stickelberger's theorem}
	
	Finally, we consider the prime factorizations of geometric Gauss sums, which is an analog of the classical Stickelberger's theorem.
	Let $\Ocal_{\nfk}$ and $\Ocal_{\nfk,d\l}$ be the ring of integers of $K_\nfk$ and $K_{\nfk,d\l}$, respectively.
	We let $\Pfk_{\nfk,d\l}$ be the prime in $K_{\nfk,d\l}$ above $v$ corresponding to the inclusions $K_{\nfk,d\l} \sbe \ovl{k} \sbe \CC_v$, and $\Pfk_\nfk := \Pfk$ be the prime in $K_\nfk$ below $\Pfk_{\nfk,d\l}$ (recall \S\ref{subsubsection-arithmetic-and-geometric-characters}).
	
	\begin{thm}[Stickelberger's theorem for geometric Gauss sums]     \label{thm-stickelberger}
		Given $x \in k$ with $0 < |x|_\infty < 1$, write $x = a_0/\nfk$ where $\deg a_0 < \deg \nfk$ and $(a_0,\nfk)=1$.
		Define
		$$
		\eta_{x,\nfk,d\l} := \sigma_{a_0,\deg \nfk} \cdot \eta_{\nfk,d\l}
		\quad
		\text{where}
		\quad
		\eta_{\nfk,d\l} := \sum_{\substack{a\in A_+ \\ \deg a < \deg \nfk \\ (a,\nfk)=1}}  \sigma_{a,\deg a}^{-1} \in \ZZ[\Gal(K_{\nfk,d\l}/k)].
		$$
		Then $\bggs_x$ has prime factorization
		$$
		\bggs_x \cdot \Ocal_{\nfk,d\l}
		= \Pfk_{\nfk,d\l}^{\eta_{x,\nfk,d\l}}.
		$$
		Consequently, let
		$$
		\eta_{x,\nfk} := \rho_{a_0} \cdot \eta_{\nfk}
		\quad
		\text{where}
		\quad
		\eta_{\nfk} := \sum_{\substack{a\in A_+ \\ \deg a < \deg \nfk \\ (a,\nfk)=1}}  \rho_a^{-1} \in \ZZ[\Gal(K_{\nfk}/k)].
		$$
		Then $\ggs(x)$ has prime factorization
		$$
		\ggs(x) \cdot \Ocal_{\nfk}
		= \Pfk_{\nfk}^{\eta_{x,\nfk}}.
		$$
	\end{thm}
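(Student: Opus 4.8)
The plan is to establish the factorization of $\bggs_x$ in $\Ocal_{\nfk,d\l}$ first, and then obtain the factorization of $\ggs(x)$ in $\Ocal_\nfk$ by taking ideal norms. For the first assertion, note that by the reflection formula (Theorem~\ref{thm-gauss-sum-reflection}) the ideal $\bggs_x\Ocal_{\nfk,d\l}$ is supported only on primes above $v$. Since $(v,\nfk)=1$, the prime $v$ is unramified in $K_{\nfk,d\l}$, and the primes above it form a single $\Gal(K_{\nfk,d\l}/k)$-orbit with decomposition group $D=\langle\sigma_{v,d}\rangle$ of order $\l$ (Remark~\ref{rem-fixed-field}), $\bggs_x$ itself lying in the fixed field $K_{\nfk,d\l}^D$. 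Hence $\bggs_x\Ocal_{\nfk,d\l}$ is completely determined once we know, for each prime above $v$, its multiplicity; writing such a prime as $\sigma\Pfk_{\nfk,d\l}$ with $\sigma^{-1}=\sigma_{b,t}$, this multiplicity is $\ord_{\Pfk_{\nfk,d\l}}((\bggs_x)^{\sigma_{b,t}})$.

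The next step is to compute these valuations from the special Gross--Koblitz--Thakur formula. By Proposition~\ref{prop-galois-actions}(1) and \eqref{eq-gauss-sum-monomial},
$$
(\bggs_x)^{\sigma_{b,t}} = (\bggs_{bx})^{\tau_q^t} = \ggs\left(bx,\ \frac{q^t}{q^{d\l}-1}\right),
$$
so Theorem~\ref{thm-first-gkt-formula} identifies this element, inside $\CC_v$, with
$$
\delta_{bx}^{(t)}\cdot \prod_{i=0}^{\l-1}\vgf\left(\anginf{v^i(bx)},\ -\left\langle |v^i|_\infty \tfrac{q^t}{q^{d\l}-1}\right\rangle\right)^{-1}.
$$
Every value of $\vgf(\cdot,\cdot)$ lies in $A_v^\times$, being a convergent product of ratios of the flattened quantities $(\,\cdot\,)^\flat\in A_v^\times$; since $v$ is unramified in $K_{\nfk,d\l}$, these factors contribute nothing to $\ord_{\Pfk_{\nfk,d\l}}$. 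Therefore $\ord_{\Pfk_{\nfk,d\l}}((\bggs_x)^{\sigma_{b,t}})=\ord_v(\delta_{bx}^{(t)})$, which by the definition of $\delta^{(t)}_{\,\cdot}$ in Theorem~\ref{thm-first-gkt-formula} is $0$ unless the relevant $v$-adic digit of $\anginf{bx}$ is a monic polynomial of the prescribed degree, in which case it equals $1+\ord_v(\anginf{v^{\l-e-1}(bx)})$, a cyclic shift of the digits of $\anginf{bx}$. This pins down $\bggs_x\Ocal_{\nfk,d\l}$ explicitly.

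It then remains to check that this data agrees with $\Pfk_{\nfk,d\l}^{\eta_{x,\nfk,d\l}}$. Expanding, the multiplicity of $\sigma\Pfk_{\nfk,d\l}$ in this ideal is the number of monic $a\in A$ with $\deg a<\deg\nfk$, $(a,\nfk)=1$, and $\sigma^{-1}\sigma_{a_0,\deg\nfk}\sigma_{a,\deg a}^{-1}\in D$; with $\sigma^{-1}=\sigma_{b,t}$ and $D=\{\sigma_{v^j,dj}:0\le j<\l\}$, using the group law $\sigma_{a,s}\sigma_{a',s'}=\sigma_{aa',s+s'}$, this becomes the number of pairs $(a,j)$ with $a\equiv b a_0 v^{-j}\pmod\nfk$ and $\deg a\equiv t+\deg\nfk-dj\pmod{d\l}$. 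For each $j$ there is at most one admissible $a$ — the unique representative of degree $<\deg\nfk$ of the class $b a_0 v^{-j}$, counted precisely when it is monic of the right degree — and distinct $j$ give distinct classes, so this count is at most $\l$, consistently with $\ord_v(\delta_{bx}^{(t)})\le\l$. Matching the count against $\ord_v(\delta_{bx}^{(t)})$ amounts to translating the ``monic digit of prescribed degree'' conditions on the $v$-adic expansion of $\anginf{bx}$ into the corresponding conditions on these residue representatives: a bookkeeping of the $v$-adic digit expansion against degrees of polynomials, entirely parallel to the classical Stickelberger computation and to Thakur's arithmetic analog \cite[Theorem IV(1)]{thakur1988gauss}. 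This combinatorial matching is the one genuinely technical step; everything else is formal. Finally, the second assertion follows by applying the ideal norm $N_{K_{\nfk,d\l}/K_\nfk}$: since $\Gal(K_{\nfk,d\l}/K_\nfk)=\langle\tau_q\rangle$ we have $N_{K_{\nfk,d\l}/K_\nfk}(\bggs_x)=\prod_{s=0}^{d\l-1}(\bggs_x)^{\tau_q^s}=\ggs(x)$, while $N_{K_{\nfk,d\l}/K_\nfk}(\sigma_{a,s}\Pfk_{\nfk,d\l})=\rho_a\Pfk_\nfk$ because $\Pfk_{\nfk,d\l}/\Pfk_\nfk$ has residue degree $1$ and $\sigma_{a,s}|_{K_\nfk}=\rho_a$; applying this to the first identity turns $\eta_{x,\nfk,d\l}=\sigma_{a_0,\deg\nfk}\sum_a\sigma_{a,\deg a}^{-1}$ into $\rho_{a_0}\sum_a\rho_a^{-1}=\eta_{x,\nfk}$, giving $\ggs(x)\Ocal_\nfk=\Pfk_\nfk^{\eta_{x,\nfk}}$.
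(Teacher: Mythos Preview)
Your plan follows the same route as the paper: use Theorem~\ref{thm-first-gkt-formula} to compute the $\Pfk_{\nfk,d\l}$-valuation of a Galois conjugate of $\bggs_x$ (the factorial part being a unit in $A_v$, only $\delta^{(t)}_{bx}$ contributes), then match this against a coset count for the Stickelberger element. The one substantive difference is in how the matching is closed off. You aim to verify equality at \emph{every} prime $\sigma\Pfk_{\nfk,d\l}$ directly, which means also handling the cases $\delta_{bx}^{(t)}=1$. The paper instead evaluates only $(\bggs_{1/\nfk})^{\sigma_{a,\deg ab_0}}$ for each monic $a$ (with $b_0:=(v^\l-1)/\nfk$), where the delta is automatically nontrivial and the valuation is the explicit integer $\l-e+\ord_v(ab_0)$; it identifies this with the number of $\sigma_{b,\deg b}$ in the coset $\sigma_{a,\deg a}D$, thereby obtaining only a \emph{divisibility} $\Pfk_{\nfk,d\l}^{\eta_{x,\nfk,d\l}}\mid\bggs_x\cdot\Ocal_{\nfk,d\l}$, and then forces equality by applying $\sum_{\epsilon,s}\sigma_{\epsilon,s}$, invoking the reflection formula (Theorem~\ref{thm-gauss-sum-reflection}), and comparing total prime counts. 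This shortcut spares the zero-valuation analysis your direct approach needs. You correctly flag the $v$-adic digit combinatorics as the technical heart but leave it undone; the paper carries it out after passing explicitly to $\mfk=v^\l-1$ (which you will also need to do, since Theorem~\ref{thm-first-gkt-formula} is stated in terms of the $v$-adic expansion of $\anginf{bx}\cdot(v^\l-1)$, not its expansion mod~$\nfk$). Your ideal-norm derivation of the second assertion is a clean addition that the paper leaves implicit.
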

	
	\begin{proof}
		It suffices to show the case $a_0 = 1$.
		Put $\mfk := v^\l-1$ and write
		$$
		\frac{1}{\nfk} = \frac{b_0}{\mfk}.
		$$
		Fix any $a \in A_+$, $\deg a< \deg \nfk$ with $(a,\nfk) = 1$.
		From Theorem \ref{thm-first-gkt-formula}, we take
		$$
		x
		= \frac{a}{\nfk}
		= \frac{ab_0}{\mfk}
		=: \sum_{i=0}^{e} \frac{x_i v^i}{v^\l-1}
		\quad
		\text{and}
		\quad
		y = \frac{q^{\deg ab_0}}{q^{d\l}-1}
		$$
		where $0 \leq e \leq \l-1$, $x_e \in A_+$, and $\deg x_i < d$ for all $i$.
		Then we have
		$$
		\ggs (x,y)
		= \left(\bggs_{ab_0/\mfk}\right)^{\sigma_{1,\deg ab_0}}
		= v\anginf{v^{\l-e-1}x} \cdot
		\prod_{i=0}^{\l-1} \vgf \left( \anginf{v^ix}, -\ang{|v^i|_\infty y} \right)^{-1}.
		$$
		By Proposition \ref{prop-galois-actions}(1), this implies
		$$
		\left(\bggs_{1/\nfk}\right)^{\sigma_{a,\deg ab_0}}
		= v^{\l-e} \frac{ab_0}{\mfk} \cdot \prod_{i=0}^{\l-1} \vgf \left( \anginf{v^ix}, -\ang{|v^i|_\infty y} \right)^{-1}.
		$$
		From here we take the $\Pfk_{\nfk,d\l}$-adic valuations to both sides, using the fact that $\vgf(\cdot , \cdot)$ is a unit in $A_v$, we get
		$$
		\ord_{\Pfk_{\nfk,d\l}} \left( \left(\bggs_{1/\nfk}\right)^{\sigma_{a,\deg ab_0}} \right)
		= \ord_{\Pfk_{\nfk,d\l}} \left( v^{\l-e} \frac{ab_0}{\mfk} \right)
		= \l - e + \ord_v(ab_0).
		$$
		
		We will claim that this quantity is exactly the number of elements of the form $\sigma_{b,\deg b}$ in the coset $\sigma_{a,\deg a}D$ (the decomposition group of $v$ in $K_{\nfk,d\l}$) of $\Gal(K_{\nfk,d\l}/k)$, where $b \in A_+$, $\deg b< \deg \nfk$ and $(b,\nfk) = 1$.
		Assuming this for a moment, then since every such $\sigma_{b,\deg b}$ results in the same prime as $\sigma_{a,\deg a}$ does after applying to $\Pfk_{\nfk,d\l}$, and since this holds for each $a \in A_+$, $\deg a< \deg \nfk$ with $(a,\nfk) = 1$, we obtain
		\begin{equation}      \label{eq-divisors-of-geometric-gauss-sum}
			\Pfk_{\nfk,d\l}^{\sigma_{1,\deg b_0}^{-1} \eta_{\nfk,d\l}}
			\Bigm| 
			\bggs_{1/\nfk} \cdot\Ocal_{\nfk,d\l}.
		\end{equation}
		Now, applying both sides by $\sum_{\epsilon \in\Fqst} \sum_{s=0}^{d\l-1} \sigma_{\epsilon,s}$ and using Theorem \ref{thm-gauss-sum-reflection}, we have
		$$
		\left( \Pfk_{\nfk,d\l}^{\sigma_{1,\deg b_0}^{-1} \eta_{\nfk,d\l}} \right)^{\sum_{\epsilon \in\Fqst} \sum_{s=0}^{d\l-1} \sigma_{\epsilon,s}} 
		\Biggm|
		\left(\bggs_{1/\nfk}\right)^{\sum_{\epsilon \in\Fqst} \sum_{s=0}^{d\l-1} \sigma_{\epsilon,s}} \cdot\Ocal_{\nfk,d\l}
		= v^\l \cdot\Ocal_{\nfk,d\l}.
		$$
		Since both sides consist of the same amount of primes (counting multiplicity) in $K_{\nfk,d\l}$ (recall Remark \ref{rem-fixed-field} that $v$ splits into $[K_{\nfk}:k]d$ primes in $K_{\nfk,d\l}$), we see in fact they are identical.
		This means \eqref{eq-divisors-of-geometric-gauss-sum} actually gives the equality
		$$
		\bggs_{1/\nfk} \cdot\Ocal_{\nfk,d\l}
		= \Pfk_{\nfk,d\l}^{\sigma_{1,\deg b_0}^{-1} \eta_{\nfk,d\l}},
		$$
		as any other prime divisors will violate the factorization of $v^\l$.
		And this is what we want.
		(Note $\deg b_0 = \deg \mfk - \deg \nfk = d\l - \deg\nfk \equiv -\deg \nfk \pmod{d\l}$.)
		
		It remains to prove the claim.
		In fact, we prove a stronger result that for any $a \in A_+$ with $\deg a < \deg \mfk$, if
		$$
		a = a_rv^r + \cdots + a_ev^e
		\quad
		(0 \leq r \leq e \leq \l-1)
		$$
		is written in $v$-adic expansion, where $r = \ord_v(a)$, $\deg a_i < d$ for all $i$, and $a_e \in A_+$, then the number of $b \in A_+$, $\deg b< \deg \mfk$ satisfying
		\begin{equation}     \label{eq-condition-on-b}
			b \equiv av^i \Mod{\mfk}
			\text{ and }
			\deg b \equiv \deg a + di \Mod{d\l}
			\text{ for some }
			0 \leq i \leq \l-1
		\end{equation}
		is precisely $\l - e + r$.
		Indeed, one sees that \eqref{eq-condition-on-b} is satisfied if and only if $0 \leq i \leq \l-e-1$ or $\l-r \leq i \leq \l-1$.
		This gives $\l-e+r$ elements in total.		
		
		Finally, to apply the claim to our situation, observe that there is a bijection from
		$$
		\{b \mid b\in A_+, \deg b < \deg \nfk, b \equiv av^i \Mod{\nfk}, \deg b \equiv \deg a + di \Mod{d\l}\}
		$$
		to
		$$
		\{b \mid b\in A_+, \deg b < \deg \mfk, b \equiv ab_0v^i \Mod{\mfk}, \deg b \equiv \deg ab_0 + di \Mod{d\l}\}
		$$
		sending $b$ to $bb_0$.
	\end{proof}
	
	\begin{rem}
		The connection between special $v$-adic gamma values and the Stickelberger elements in Theorem \ref{thm-stickelberger} has already been pointed out by Thakur in his Gross-Koblitz theorem for the geometric case \cite[Section 8.6]{thakur2004function}.
		Our Theorem \ref{thm-stickelberger} provides a more explicit description in terms of geometric Gauss sums and generalizes it to the two-variable case.
	\end{rem}
	
	\section{Pairing comparisons and geometric Gauss sums at infinity}
	
	\subsection{The functions \texorpdfstring{$e$}{e} and \texorpdfstring{$e^*$}{e*}}
	
	Recall that in Theorem \ref{thm-restatement-of-abp-5.4.4}, we gave an algebraic reformulation of \cite[Theorem 5.4.4]{abp2004determination}.
	The original statement, along with its proof, made use of two special functions $e$ and $e^*$, whose definitions and properties are reviewed below.
	We let $k_\infty := \Fq(\!(1/\T)\!)$ be the completion of $k$ at the infinite place and $\CC_\infty$ be the completion of a fixed algebraic closure $\ovl{k}_\infty$ of $k_\infty$.
	We choose an identification from $\ovl{k}$ (fixed in \S\ref{section-preliminaries}) into $\ovl{k}_\infty$.
	
	\subsubsection{The exponential function of the Carlitz module}
	
	Recall \S\ref{subsubsection-carlitz-modules} that the Carlitz module over $\ovl{k}$ is the $\Fq$-algebra homomorphism $C: A \to \ovl{k}\{\tau\}$ given by $C_\T := \T + \tau$.
	The associated power series $\exp_C(z) \in \CC_\infty[\![z]\!]$, called the \textit{Carlitz exponential function}, is $\Fq$-linear and entire on $\CC_\infty$.
	Its kernel is a free rank-one $A$-module generated by the \textit{Carlitz period}
	\begin{equation}         \label{eq-period}
		\td{\pi} := -\td{\T}^q \prod_{i=1}^{\infty} \left( 1-\frac{\T}{\T^{q^i}} \right)^{-1} \in k_\infty(\td{\T})
	\end{equation}
	where $\td{\T} = (-\T)^{1/(q-1)}$ is a fixed $(q-1)$-st root of $-\T$.
	
	We put $e(z) := \exp_C(\td{\pi}z)$.
	Then we have the functional equation
	\begin{equation}    \label{eq-functional-equation-of-e}
		C_a(e(z)) = e(az)
		\quad
		\text{for all}
		\quad
		a \in A.
	\end{equation}
	For any $\nfk \in A_+$, from \eqref{eq-functional-equation-of-e} and the fact that $\ker e= A$, we see that the collection
	$$
	\Lambda_\nfk = \{ e(a/\nfk) \mid a \in A, \deg a < \deg \nfk \}
	$$
	is the $\nfk$-torsion points of the Carlitz module $C(\ovl{k})$, which is isomorphic to $A/\nfk$. 
	Thus, the element $e(a/\nfk)$ is an $A$-generator of $\Lambda_{\nfk}$ if and only if $(a,\nfk)=1$.
	
	\subsubsection{The exponential function of the adjoint Carlitz module}
	
	Next, recall \S\ref{subsubsection-adjoint-carlitz-modules} that the adjoint Carlitz module over $\ovl{k}$ is the $\Fq$-algebra homomorphism $C^*: A \to \ovl{k}\{\tau^{-1}\}$ given by $C^*_\T := \T + \tau^{-1}$.
	There is an analog of the exponential function for $C^*$ with properties parallel to $e(z)$.
	Let $\Res: k_\infty \to \Fq$ be the usual residue map for the variable $\T$.
	In other words, it is the unique $\Fq$-linear functional with kernel $\Fq[\T] + (1/\T^2)\Fq[\![1/\T]\!]$ and $\Res(1/\T) = 1$.
	Let $t$ be an independent variable.
	Set
	$$
	\Omega(t) := \td{\T}^{-q} \prod_{i=1}^\infty \left( 1-\frac{t}{\T^{q^i}} \right)
	\in k_\infty(\td{\T}) [\![t]\!]
	$$
	and
	$$
	\Omega^{(-1)}(t) := \td{\T}^{-1} \prod_{i=0}^\infty \left( 1-\frac{t}{\T^{q^i}} \right) =: \sum_{i=0}^\infty c_it^i
	\in k_\infty(\td{\T}) [\![t]\!].
	$$
	One checks that $\Omega$ satisfies the functional equation $\Omega^{(-1)} = (t-\T) \Omega$.
	
	For $z \in k_\infty$, we set
	$$
	e^*(z) := \sum_{i=0}^\infty \Res(\T^iz)c_i.
	$$
	Then it is a fact that $e^*$ is $\Fq$-linear with $\ker e^* = A$.
	Moreover, from the functional equation $\Omega^{(-1)} = (t-\T) \Omega$, one deduces that (cf. \eqref{eq-functional-equation-of-e})
	$$
	C_a^*(e^*(z)^q) = e^*(az)^q
	\quad
	\text{for all}
	\quad
	a \in A.
	$$
	In particular, for any $\nfk \in A_+$, $e^*(a/\nfk)$ are $q$-th roots of the $\nfk$-torsion points of the adjoint Carlitz module $C^*(\ovl{k})$ for all $a \in A$.
	In other words,
	$$
	\Lambda_\nfk^* = \{ e^*(a/\nfk)^q \mid a \in A, \deg a < \deg \nfk \} \sbe K_\nfk,
	$$
	where the containment is due to Theorem \ref{thm-goss-1.7.11}. 
	
	\subsubsection{A duality identity between torsions of \texorpdfstring{$e$}{e} and \texorpdfstring{$e^*$}{e*}}    \label{subsubsection-a-duality-identity-between-torsions-ofe-and-e^*}
	
	For each integer $N\geq 0$, recall \eqref{eq-Psi} that $\Psi_N(z)$ is defined as
	$$
	1 + \Psi_N(z)
	= \prod_{a\in A_{+,N}} \left(1+\frac{z}{a}\right).
	$$
	We can now state \cite[Theorem 5.4.4]{abp2004determination}.
	
	\begin{thm}    \label{thm-original-abp-5.4.4}
		Fix $\nfk \in A_+$ of positive degree and $\nfk$-dual families 
		$$
		\{a_i\}_{i=1}^{\deg\nfk},
		\quad
		\{b_j\}_{j=1}^{\deg\nfk}.
		$$
		Then for $a_0 \in A$ with $\deg a_0 < \deg \nfk$, we have    \\
		(1)
		$$
		\sum_{i=1}^{\deg \nfk} e^*(a_i/\nfk)^{q^{N+1}} e(b_ia_0/\nfk) = -\Psi_N(a_0/\nfk)
		$$
		for all integers $N\geq 0$.    \\
		(2) Moreover, if $a_0 \in A_+$, we have
		$$
		\sum_{i=1}^{\deg \nfk} e^*(a_i/\nfk) e(b_ia_0/\nfk)^{q^{\deg \nfk - \deg a_0 -1}} = 1.
		$$
	\end{thm}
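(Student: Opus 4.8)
The statement is \cite[Theorem 5.4.4]{abp2004determination} rewritten in terms of $e$ and $e^*$; there are two natural ways to prove it, and they come to the same thing. The direct route is to record the dictionary between the present functions $e,e^*,\Psi_N$ and the objects in \cite{abp2004determination} (their exponential-type functions and the polynomial entering their identity) and to observe that, under it, parts (1) and (2) here are exactly \cite[Theorem 5.4.4]{abp2004determination}. The route I would actually carry out, since it uses only results already in hand, is to deduce the statement from the algebraic reformulation Theorem \ref{thm-restatement-of-abp-5.4.4}.

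For that, first fix the $A$-module isomorphism $A/\nfk \isoto \Lambda_\nfk$ used in \S\ref{subsubsection-an-ores-result} to be $\ovl{a} \mapsto e(a/\nfk)$: this is well defined and $A$-linear because $\ker e = A$ and $C_a(e(z)) = e(az)$ by \eqref{eq-functional-equation-of-e}, and it is onto by the displayed description of $\Lambda_\nfk$. With this choice $\lambda = e(1/\nfk)$, $\lambda_j = C_{b_j}(\lambda) = e(b_j/\nfk)$, and hence $C_{a_0}(\lambda_j) = C_{a_0}(e(b_j/\nfk)) = e(a_0 b_j/\nfk)$ for all $a_0 \in A$. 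The one remaining ingredient is the identification of the Ore dual family of Theorem \ref{thm-ore} with the $q$-th powers of the adjoint torsion values of $e^*$, namely $e^*(a_i/\nfk)^q = \lambda_i^*$ for $i = 1,\dots,\deg\nfk$ — this is precisely the comparison that \S2 promises to settle here.

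Granting it, the theorem follows at once. Indeed $(\lambda_i^*)^{q^N} = e^*(a_i/\nfk)^{q^{N+1}}$, so Theorem \ref{thm-restatement-of-abp-5.4.4}(1) becomes $\sum_{i} e^*(a_i/\nfk)^{q^{N+1}} e(b_i a_0/\nfk) = -\Psi_N(a_0/\nfk)$, which is (1). And since $\deg a_0 < \deg\nfk$ makes $\deg\nfk - \deg a_0 - 1 \geq 0$, Theorem \ref{thm-restatement-of-abp-5.4.4}(2) reads $\sum_i e^*(a_i/\nfk)^q\, e(b_i a_0/\nfk)^{q^{\deg\nfk - \deg a_0}} = 1$, that is, the $q$-th power of $\sum_i e^*(a_i/\nfk)\, e(b_i a_0/\nfk)^{q^{\deg\nfk - \deg a_0 - 1}}$ equals $1$; since $\xi^q = 1$ forces $\xi = 1$ in $\ovl{k}$, the inner sum is $1$, which is (2).

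The main obstacle is therefore the identification $e^*(a_i/\nfk)^q = \lambda_i^*$. Both sides form $\Fq$-bases of $\Lambda_\nfk^*$ — the first by Theorem \ref{thm-ore}, the second because $\ovl{a} \mapsto e^*(a/\nfk)^q$ is an $\Fq$-isomorphism $A/\nfk \isoto \Lambda_\nfk^*$ — so it suffices to show that each is the family dual to $\{\lambda_j\} = \{e(b_j/\nfk)\}$ under the same perfect pairing and with the same normalization. For the Ore side this is the remark after Theorem \ref{thm-ore} (to be proved in Proposition \ref{prop-lambdai*-and-poonen-pairing}), the sign being the $(-1)^{\deg\nfk+i}$ built into Ore's formula; for the $e^*$ side one computes, using $e(z) = \exp_C(\td{\pi}z)$, the series $\Omega^{(-1)} = \sum_i c_i t^i$, and its functional equation $\Omega^{(-1)} = (t - \T)\Omega$, that the pairing in question corresponds via $e$ and $e^*$ to the residue pairing $\angres{\cdot,\cdot}$ on $A/\nfk$, whence the claim since $\{a_i\},\{b_j\}$ are $\nfk$-dual families (alternatively one verifies directly, by Cramer's rule, that Ore's expression is the $(i,\deg\nfk)$-entry of the inverse Moore matrix of $(e(b_1/\nfk),\dots,e(b_{\deg\nfk}/\nfk))$ and that $e^*(a_i/\nfk)$ solves the same linear system). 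Tracking the sign conventions so that one obtains equality rather than equality up to $\GL_{\deg\nfk}(\Fq)$ is the delicate point, and — unsurprisingly — this identification carries essentially the full weight of \cite[Theorem 5.4.4]{abp2004determination} itself, so quoting that theorem verbatim merely trades it for the equivalent task of matching notations.
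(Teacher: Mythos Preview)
Your proposal runs against the paper's logical architecture. In the paper, Theorem~\ref{thm-original-abp-5.4.4} is not proved at all: it \emph{is} \cite[Theorem 5.4.4]{abp2004determination}, quoted verbatim as the external input. The algebraic reformulation Theorem~\ref{thm-restatement-of-abp-5.4.4} is then \emph{deduced from} Theorem~\ref{thm-original-abp-5.4.4}, not the other way around, and the bridge between them is exactly the identification $\lambda_i^* = e^*(\alpha' a_i/\nfk)^q$ that you isolate. So your ``route I would actually carry out'' --- start from Theorem~\ref{thm-restatement-of-abp-5.4.4}, prove the identification, conclude Theorem~\ref{thm-original-abp-5.4.4} --- is circular within the paper: Theorem~\ref{thm-restatement-of-abp-5.4.4} has no independent proof here, and the paper's proof of the identification (Proposition~\ref{prop-residue-and-poonen}, via Proposition~\ref{prop-residue-and-trace}) explicitly invokes Theorem~\ref{thm-original-abp-5.4.4}(1).

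You do seem to sense this in your last paragraph, where you note that the identification ``carries essentially the full weight of \cite[Theorem 5.4.4]{abp2004determination} itself.'' That is correct, and it means your alternative route collapses to your direct route: cite ABP. Your sketch of an independent proof of the identification via the functional equation $\Omega^{(-1)} = (t-\T)\Omega$ is not wrong in spirit --- that is indeed how ABP prove their Theorem~5.4.4 --- but it is not a shortcut; it is a re-derivation of the cited result. The honest statement of what is going on is simply: Theorem~\ref{thm-original-abp-5.4.4} is quoted from \cite{abp2004determination}, and \S\ref{subsection-pairing-comparisons} then shows it is equivalent to the form Theorem~\ref{thm-restatement-of-abp-5.4.4} used earlier.
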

	
	Let us now briefly recall the terminologies in \S\ref{subsection-a-duality-identity} using the notions of this section.
	For any $\nfk \in A_+$, define the residue pairing with respect to $\nfk$ (which is perfect) as
	$$
	\angres{\cdot,\cdot}: A/\nfk \times A/\nfk \to \Fq,
	\quad
	\angres{a,b} := \Res(ab/\nfk).
	$$
	Let $\{a_i\}_{i=1}^{\deg\nfk}, \{b_j\}_{j=1}^{\deg\nfk}$ be any fixed $\nfk$-dual families so that we have $\Res(a_ib_j/\nfk) = \delta_{ij}$.
	Using the function $e(z)$, we choose an $A$-module isomorphism from $A/\nfk$ to $\Lambda_\nfk$, which sends $1 \in A/\nfk$ to $\lambda = e(\alpha/\nfk) \in \Lambda_\nfk \sbe \ovl{k}_\infty$ for some $\alpha\in A$ with $(\alpha,\nfk)=1$.
	By \eqref{eq-functional-equation-of-e}, we put
	\begin{equation}    \label{eq-lambda-j}
		\lambda_j := C_{b_j}(\lambda) = e(\alpha b_j/\nfk).
	\end{equation}
	Note that $\{\lambda_j\}_{j=1}^{\deg\nfk}$ forms an $\Fq$-basis of $\Lambda_\nfk$.
	So by Ore's formula, we obtain an $\Fq$-basis $\{\lambda^*_i\}_{i=1}^{\deg\nfk}$ of $\Lambda_\nfk^*$ given by (recall $\Delta(x_1,\ldots,x_n) := \det_{1 \leq i,j \leq n} x_j^{q^{i-1}}$ is the Moore determinant)
	$$
	\lambda_i^* = (-1)^{\deg\nfk+i} \left(\frac{\Delta_i}{\Delta}\right)^q,
	$$
	where
	$$
	\Delta_i := \Delta(\lambda_1,\ldots,\lambda_{i-1},\lambda_{i+1},\ldots,\lambda_{\deg\nfk})
	\quad
	\text{and}
	\quad
	\Delta := \Delta(\lambda_1,\ldots,\lambda_{\deg\nfk}) \neq 0.
	$$
	Then under this setting, we have the following restatement of Theorem \ref{thm-original-abp-5.4.4}.
	
	\begin{thm}      \label{thm-restatement-of-abp-5.4.4-last-section}
		Fix $\nfk \in A_+$ of positive degree and $\nfk$-dual families 
		$$
		\{a_i\}_{i=1}^{\deg\nfk},
		\quad
		\{b_j\}_{j=1}^{\deg\nfk}.
		$$
		Then for $a_0 \in A$ with $\deg a_0 < \deg \nfk$, we have   \\
		(1)
		$$
		\sum_{i=1}^{\deg \nfk} (\lambda_i^*)^{q^N} C_{a_0}(\lambda_i) = -\Psi_N(a_0/\nfk)
		$$
		for all integers $N\geq 0$.   \\
		(2) Moreover, if $a_0 \in A_+$, we have
		$$
		\sum_{i=1}^{\deg \nfk} \lambda_i^* C_{a_0}(\lambda_i)^{q^{\deg \nfk - \deg a_0}} = 1.
		$$
	\end{thm}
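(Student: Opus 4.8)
The plan is to obtain Theorem~\ref{thm-restatement-of-abp-5.4.4-last-section}---which is literally Theorem~\ref{thm-restatement-of-abp-5.4.4}, now phrased through $e$ and $e^*$---from the original form \cite[Theorem~5.4.4]{abp2004determination}, recorded above as Theorem~\ref{thm-original-abp-5.4.4}. Writing $\lambda = e(\alpha/\nfk)$ with $(\alpha,\nfk)=1$, the torsion data entering the target statement are $\lambda_j = C_{b_j}(\lambda) = e(\alpha b_j/\nfk)$ and $C_{a_0}(\lambda_i) = e(\alpha b_i a_0/\nfk)$, both by the functional equation~\eqref{eq-functional-equation-of-e}. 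So I would apply Theorem~\ref{thm-original-abp-5.4.4} not to $\{a_i\},\{b_j\}$ but to the twisted families $\{a_i'\},\{b_j'\}$, where $a_i',b_j'\in A$ have degree $<\deg\nfk$ and $a_i'\equiv\ovl{\alpha}^{-1}a_i$, $b_j'\equiv\alpha b_j\pmod{\nfk}$. These are again $\nfk$-dual because $\Res(a_i'b_j'/\nfk)=\Res(a_ib_j/\nfk)=\delta_{ij}$ (residues of polynomials vanish), and since $\ker e = A$ one has $e(b_j'/\nfk)=\lambda_j$ and $e(b_i'a_0/\nfk)=C_{a_0}(\lambda_i)$. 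Hence the $e$-factors occurring in Theorem~\ref{thm-original-abp-5.4.4} for the families $\{a_i'\},\{b_j'\}$ are exactly the $C_{a_0}(\lambda_i)$ of the target statement.

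The only substantive input is the identification $\lambda_i^* = e^*(a_i'/\nfk)^q$, where $\lambda_i^*$ is the Ore dual basis of $\{\lambda_j\}$ (Theorem~\ref{thm-ore}). Granting it, part~(1) follows by substituting $(\lambda_i^*)^{q^N}=e^*(a_i'/\nfk)^{q^{N+1}}$ and $C_{a_0}(\lambda_i)=e(b_i'a_0/\nfk)$ into Theorem~\ref{thm-original-abp-5.4.4}(1). For part~(2) I would first raise Theorem~\ref{thm-original-abp-5.4.4}(2) to the $q$-th power---valid because $x\mapsto x^q$ is additive in characteristic $p$, so $\big(\sum_i(\cdots)\big)^q=\sum_i(\cdots)^q$ and the right-hand side remains $1$---which upgrades the exponents $q^{\deg\nfk-\deg a_0-1}$ and $1$ to $q^{\deg\nfk-\deg a_0}$ and $q$, and then substitute as before.

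The hard part is precisely the identification $\lambda_i^* = e^*(a_i'/\nfk)^q$. Both sides are already known to be $\Fq$-bases of $\Lambda_\nfk^*$---the left-hand side by Ore's theorem, the right-hand side because $\Lambda_\nfk^* = \{e^*(a/\nfk)^q : \deg a<\deg\nfk\}$---so what remains is to show they are the same basis. My intended route is to characterize each as the basis dual to $\{\lambda_j\}$ with respect to the Poonen pairing $\Lambda_\nfk\times\Lambda_\nfk^*\to\Fq$: on the Ore side this is Proposition~\ref{prop-lambdai*-and-poonen-pairing}, and on the $e^*$ side it follows once one knows that the Poonen pairing corresponds, under the isomorphisms $\Lambda_\nfk\simeq A/\nfk\simeq\Lambda_\nfk^*$ induced by $e$ and $e^*$, to the residue pairing $\angres{\cdot,\cdot}$---with respect to which $\{a_i'\}$ and $\{b_j'\}$ are dual by construction---so that perfectness of the Poonen pairing forces the two bases to agree. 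This web of compatibilities among the residue, Poonen, and trace pairings is the content of \S\ref{subsection-pairing-comparisons}, culminating in Theorem~\ref{thm-pairing-summary}; a more hands-on alternative is to feed the product expansion of $e(b_j'/\nfk)$ into the Moore-determinant formula of Theorem~\ref{thm-ore} and match it against the power-series definition of $e^*$, but that identity is the delicate computation in either approach.
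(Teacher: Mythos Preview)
Your proposal is correct and follows essentially the same route as the paper: reduce to Theorem~\ref{thm-original-abp-5.4.4} applied to the twisted dual families $\{\alpha' a_i\},\{\alpha b_j\}$, with the sole nontrivial input being the identification $\lambda_i^* = e^*(\alpha' a_i/\nfk)^q$ of~\eqref{eq-the-equation-of-the-main-goal}, established via the Poonen-pairing characterization (Propositions~\ref{prop-lambdai*-and-poonen-pairing} and~\ref{prop-residue-and-poonen}, summarized in Theorem~\ref{thm-pairing-summary}). Your observation that part~(2) requires first raising Theorem~\ref{thm-original-abp-5.4.4}(2) to the $q$-th power is exactly the detail the paper leaves implicit.
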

	
	Note that by \eqref{eq-lambda-j} and \eqref{eq-functional-equation-of-e}, we have
	$$
	C_{a_0}(\lambda_j)
	= C_{a_0}(e(\alpha b_j/\nfk))
	= e(a_0 \alpha b_j/\nfk).
	$$
	On the other hand, note that if
	$$
	\{a_i\}_{i=1}^{\deg\nfk},
	\quad
	\{b_j\}_{j=1}^{\deg\nfk}
	$$
	is a pair of $\nfk$-dual families, then so is the pair
	$$
	\{\alpha' a_i\}_{i=1}^{\deg\nfk},
	\quad
	\{\alpha b_j\}_{j=1}^{\deg\nfk}
	$$
	where $\alpha' \in A$ with $\alpha\alpha' \equiv 1 \pmod{\nfk}$.
	Thus, to justify the equivalence between Theorems \ref{thm-original-abp-5.4.4} and \ref{thm-restatement-of-abp-5.4.4-last-section}, we need to show that
	\begin{equation}      \label{eq-the-equation-of-the-main-goal}
		\lambda_i^*
		= (-1)^{\deg\nfk+i} \left(\frac{\Delta_i}{\Delta}\right)^q
		= e^*(\alpha' a_i/\nfk)^q.
	\end{equation}
	This will be done in the next section by investigating the compatibility of three different pairings: residue pairing, Poonen pairing, and trace pairing.
	
	\subsection{Pairing comparisons}      \label{subsection-pairing-comparisons}
	
	\subsubsection{Residue pairing and trace pairing}        \label{subsubsection-residue-pairing-and-trace-pairing}
	
	For now, we consider the special case $\nfk = v - 1$ where $v \in A_{+,d}$ is irreducible.
	Recall that by Proposition \ref{prop-reduction-of-lambda}, every element in $\FF_\Pfk \simeq \Fqd$ is represented by a unique $\lambda \in \Lambda_\nfk$, and also by a unique $\lambda^* \in \Lambda_\nfk^*$ by Proposition \ref{prop-reduction-of-lambda*}.
	We consider the trace pairing
	$$
	\angtr{\cdot,\cdot}: \FF_\Pfk \times \FF_\Pfk \to \Fq,
	\quad
	\angtr{a,b} := \Tr_{\FF_\Pfk/\Fq}(ab),
	$$
	where we lift the image uniquely to the constant field $\Fq$.
	
	\begin{prop}      \label{prop-lambdai*-and-trace-pairing}
		Suppose $\nfk = v - 1 \in A_{+,d}$.
		Let $\{\lambda_j\}_{j=1}^{d}$ be any $\Fq$-basis of $\Lambda_\nfk$, and set
		$$
		\lambda_i^* 
		:= (-1)^{d+i} \left(\frac{\Delta_i}{\Delta}\right)^q.
		$$
		Then one has
		$$
		\angtr{\ovl{\lambda}{}^*_i , \ovl{\lambda}_j} = \delta_{ij}.
		$$
		In other words, after reduction the $\lambda_i^*$ given by Ore's formula are the dual bases of $\lambda_j$ with respect to trace pairing.
	\end{prop}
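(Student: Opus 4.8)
The plan is to carry out the entire computation inside the residue field $\FF_\Pfk$ and to reduce the asserted orthogonality to the trivial identity $\ovl{M}^{-1}\ovl{M}=I$, where $\ovl{M}$ is the reduction modulo $\Pfk$ of the Moore matrix of the $\lambda_j$.

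First I would record the relevant reductions. Since $\nfk = v-1$ we have $v\equiv 1\pmod{\nfk}$, so the order of $v$ modulo $\nfk$ is $1$; hence $v$ splits completely in $K_\nfk$, the chosen prime $\Pfk$ has residue degree $1$, and $\FF_\Pfk=\Ocal_\nfk/\Pfk\simeq A/v\simeq\Fqd$, with $\deg\nfk=d=[\FF_\Pfk:\Fq]$. By Proposition \ref{prop-reduction-of-lambda}, reduction modulo $\Pfk$ carries $\Lambda_\nfk$ isomorphically onto $\FF_\Pfk$ as $\Fq$-vector spaces, so $\{\ovl{\lambda}_j\}_{j=1}^{d}$ is an $\Fq$-basis of $\FF_\Pfk$; consequently the Moore determinant $\ovl{\Delta}:=\Delta(\ovl{\lambda}_1,\dots,\ovl{\lambda}_d)$ is nonzero and the Moore matrix $\ovl{M}:=\big(\ovl{\lambda}_j^{\,q^{r-1}}\big)_{1\le r,j\le d}$ is invertible over $\FF_\Pfk$. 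Moreover $\Delta\notin\Pfk$, so each $(-1)^{d+i}(\Delta_i/\Delta)^q$ is integral at $\Pfk$ with reduction $(-1)^{d+i}(\ovl{\Delta}_i/\ovl{\Delta})^q$ (reduction commutes with $q$-th powers and with division by $\Pfk$-units); hence $\ovl{\lambda}{}^*_i$ is well-defined, and by Theorem \ref{thm-ore} together with Proposition \ref{prop-reduction-of-lambda*} the $\ovl{\lambda}{}^*_i$ form an $\Fq$-basis of $\FF_\Pfk$ as well.

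Next I would identify $\ovl{\lambda}{}^*_i$ as an entry of $\ovl{M}^{-1}$. Expanding $\det\ovl{M}$ along its last row and invoking Cramer's rule gives $(\ovl{M}^{-1})_{id}=(-1)^{d+i}\ovl{\Delta}_i/\ovl{\Delta}$, the relevant complementary minor being exactly the Moore matrix of the $\ovl{\lambda}_j$ with $j\ne i$; hence $\ovl{\lambda}{}^*_i=\big((\ovl{M}^{-1})_{id}\big)^{q}$. The key observation is then that applying the $q^s$-power Frobenius entrywise to $\ovl{M}$ cyclically shifts its rows: using $\ovl{\lambda}_j^{\,q^d}=\ovl{\lambda}_j$ one checks $\ovl{M}^{[q^s]}=\Pi_s\ovl{M}$, where $\Pi_s$ is the permutation matrix of the row-shift $r\mapsto r+s\pmod d$. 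Since entrywise Frobenius is a ring automorphism of $\Mat_d(\FF_\Pfk)$ fixing $\Pi_s$, this yields $(\ovl{M}^{-1})^{[q^s]}=\ovl{M}^{-1}\Pi_{-s}$, and a short index chase (noting $(\Pi_{-(s+1)})_{cd}=\delta_{c,\,s+1}$ for $0\le s\le d-1$) gives
$$
(\ovl{\lambda}{}^*_i)^{q^s}=(\ovl{M}^{-1})_{i,\,s+1}\qquad(0\le s\le d-1),
$$
the case $s=0$ reading $\ovl{\lambda}{}^*_i=(\ovl{M}^{-1})_{i1}$.

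Finally I would assemble the trace. For $\alpha\in\FF_\Pfk$ we have $\Tr_{\FF_\Pfk/\Fq}(\alpha)=\sum_{s=0}^{d-1}\alpha^{q^s}$, and $\ovl{\lambda}_j^{\,q^s}=\ovl{M}_{s+1,\,j}$ by definition of $\ovl{M}$, so
$$
\angtr{\ovl{\lambda}{}^*_i,\ovl{\lambda}_j}=\sum_{s=0}^{d-1}(\ovl{\lambda}{}^*_i)^{q^s}\,\ovl{\lambda}_j^{\,q^s}=\sum_{s=0}^{d-1}(\ovl{M}^{-1})_{i,\,s+1}\,\ovl{M}_{s+1,\,j}=(\ovl{M}^{-1}\ovl{M})_{ij}=\delta_{ij},
$$
which is the assertion (and, combined with perfectness of the trace pairing, exhibits $\{\ovl{\lambda}{}^*_i\}$ as the trace-dual basis of $\{\ovl{\lambda}_j\}$). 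The only place demanding care is the previous paragraph — pinning down the cyclic-shift permutation matrices $\Pi_s$ and tracking their indices through the inversion — so that is the step I expect to be the main, if modest, obstacle; everything else follows directly from the structure of $K_\nfk$ recalled in \S\ref{subsubsection-cyclotomic-function-fields} together with Ore's formula.
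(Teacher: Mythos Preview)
Your proof is correct and follows essentially the same approach as the paper: both arguments reduce modulo $\Pfk$, identify the entries of the inverse Moore matrix via Cramer's rule, exploit that entrywise $q$-Frobenius cyclically permutes the rows of the Moore matrix, and read off the trace orthogonality from $\ovl{M}^{-1}\ovl{M}=I$. Your packaging via the permutation matrices $\Pi_s$ is slightly slicker than the paper's explicit sign-tracking of $z_{ij}\equiv(-1)^{(i+j)+(j-1)(d-j)}\Delta_i^{q^j}/\Delta$, but the underlying computation is the same.
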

	
	\begin{proof}
		We start with noticing that
		$$
		\Delta^q 
		= \left(\det_{1 \leq i,j \leq d} \lambda_j^{q^{i-1}}\right)^q
		= \det_{1 \leq i,j \leq d} \lambda_j^{q^i}
		\equiv (-1)^{d-1} \Delta \pmod{\Pfk}
		$$
		where the power of $-1$ comes from moving the last row to the first.
		On the one hand, this implies (note $\Delta \not\equiv 0 \pmod{\Pfk}$ as $\{\ovl{\lambda}_1,\ldots,\ovl{\lambda}_d\}$ is still an $\Fq$-basis of $\FF_{\Pfk}$)
		$$
		\lambda_i^* 
		= (-1)^{d+i} \left(\frac{\Delta_i}{\Delta}\right)^q
		\equiv (-1)^{i-1} \frac{\Delta_i^q}{\Delta}   \pmod{\Pfk},
		$$
		and on the other, we have by induction that for any $k\in\NN$,
		$$
		\Delta^{q^k} \equiv (-1)^{k(d-1)} \Delta \pmod{\Pfk}.
		$$
		These two imply
		\begin{equation}       \label{eq-trace-of-lambda*-and-lambda}
			\sum_{k=0}^{d-1} (\lambda_i^* \lambda_j)^{q^k}
			\equiv \sum_{k=0}^{d-1} \left( (-1)^{i-1} \frac{\Delta_i^q}{\Delta} \lambda_j \right)^{q^k}
			\equiv \frac{1}{\Delta} \sum_{k=0}^{d-1} (-1)^{(i-1)+k(d-1)} (\Delta_i^q \lambda_j)^{q^k}
			\pmod{\Pfk}.
		\end{equation}
		
		Consider the matrices
		$$
		\Mcal \equiv \left( \lambda_j^{q^{i-1}}  \right),
		\quad  
		\Zcal \equiv (z_{ij})
		\pmod{\Pfk}
		$$
		and the system of linear equations
		\begin{equation}    \label{eq-linear-equations-MZ-I}
			\Mcal \cdot \Zcal \equiv I_d   \pmod{\Pfk}.
		\end{equation}
		By Cramer's rule, one deduces that
		\begin{equation}      \label{eq-zij}
			z_{ij}
			\equiv (-1)^{(i+j)+(j-1)(d-j)} \frac{\Delta_i^{q^j}}{\Delta}
			\pmod{\Pfk}.
		\end{equation}
		Exchanging the product of matrices in \eqref{eq-linear-equations-MZ-I}, it follows that
		$$
		\delta_{ij} 
		\equiv \sum_{k=1}^d z_{ik} \lambda_j^{q^{k-1}}
		\overset{\eqref{eq-zij}}{\equiv} \sum_{k=1}^d (-1)^{(i+k)+(k-1)(d-k)} \frac{\Delta_i^{q^k}}{\Delta} \lambda_j^{q^{k-1}}
		\overset{\eqref{eq-trace-of-lambda*-and-lambda}}{\equiv} \sum_{k=0}^{d-1} (\lambda_i^* \lambda_j)^{q^k}  \pmod{\Pfk}.
		$$
		And this is what we want.
	\end{proof}
	
	The relation between $\nfk$-dual families $\{a_i\}_{i=1}^{d}, \{b_j\}_{j=1}^{d}$ and the elements
	$$
	\{\ovl{e^*(\alpha' a_i/\nfk)^q}\}_{i=1}^{d}, 
	\quad
	\{\ovl{e(\alpha b_j/\nfk)}\}_{j=1}^{d}
	$$
	in the finite field $\FF_\Pfk$ is also explained by the trace pairing.
	Precisely, we have
	
	\begin{prop}    \label{prop-residue-and-trace}
		Suppose $\nfk = v - 1 \in A_{+,d}$.
		Let $\{a_i\}_{i=1}^{d}, \{b_j\}_{j=1}^{d}$ be $\nfk$-dual families.
		Take $\alpha \in A$ prime to $\nfk$ and let $\alpha' \in A$ be such that $\alpha\alpha' \equiv 1 \pmod{\nfk}$.
		Then one has
		$$
		\angtr{ \ovl{e^*(\alpha' a_i/\nfk)^q}, \ovl{e(\alpha b_j/\nfk)} }= \delta_{ij}.
		$$
		In other words, the functions $e^*$ and $e$ together with the reduction modulo $\Pfk$ map dual bases of the residue pairing to dual bases of the trace pairing.
	\end{prop}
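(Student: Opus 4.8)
The plan is to reduce to the case $\alpha=1$, recast the statement as an identity between two perfect pairings on $A/\nfk$, verify the ``polynomial‑invariance'' of both (which is formal), and then settle one base identity, which is where the real work lies.

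First reduce to $\alpha=\alpha'=1$: if $\{a_i\}_{i=1}^d,\{b_j\}_{j=1}^d$ is a $\nfk$-dual family then so is $\{\alpha'a_i\}_{i=1}^d,\{\alpha b_j\}_{j=1}^d$, since $\angres{\alpha'a_i,\alpha b_j}=\Res(a_ib_j/\nfk)=\angres{a_i,b_j}$. Now observe that for $\nfk=v-1$ we have $v\equiv1\pmod{\nfk}$, so $\l=1$ and $\FF_\Pfk=\FF_v\simeq\Fqd$; by Propositions~\ref{prop-reduction-of-lambda} and~\ref{prop-reduction-of-lambda*} the maps $a\mapsto\overline{e^*(a/\nfk)^q}$ and $b\mapsto\overline{e(b/\nfk)}$ are $\Fq$-linear isomorphisms $A/\nfk\isoto\FF_\Pfk$. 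Hence
$$B(a,b):=\angtr{\overline{e^*(a/\nfk)^q},\overline{e(b/\nfk)}}$$
is a perfect $\Fq$-bilinear form on $A/\nfk$, and since $\{a_i\},\{b_j\}$ are dual for $\angres{\cdot,\cdot}$, the proposition is equivalent to the equality of pairings $B=\angres{\cdot,\cdot}$.

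The key formal point is that $B$ is \emph{$A$-invariant}, i.e.\ $B(fa,b)=B(a,fb)$ for all $f\in A$; by $\Fq$-bilinearity and iteration it suffices to check $f=\T$. The functional equation $C_\T(e(z))=e(\T z)$ gives $e(\T z)=\T\,e(z)+e(z)^q$, and the functional equation $C^*_\T(e^*(z)^q)=e^*(\T z)^q$ together with $C^*_\T(w)=\T w+w^{1/q}$ gives $e^*(\T z)^q=\T\,e^*(z)^q+e^*(z)$. Writing $\mu:=\overline{e^*(a/\nfk)^q}$, $\nu:=\overline{e(b/\nfk)}$, reducing mod $\Pfk$, and using that Frobenius has order $d$ on $\FF_\Pfk$, so that $\Tr_{\FF_\Pfk/\Fq}(x^{q^{d-1}}y)=\Tr_{\FF_\Pfk/\Fq}(xy^q)$, one computes
$$B(\T a,b)=\Tr_{\FF_\Pfk/\Fq}\!\big((\iota(\T)\mu+\mu^{q^{d-1}})\nu\big)=\Tr_{\FF_\Pfk/\Fq}\!\big(\mu\,(\iota(\T)\nu+\nu^q)\big)=\Tr_{\FF_\Pfk/\Fq}\!\big(\mu\,\overline{e(\T b/\nfk)}\big)=B(a,\T b).$$
Consequently $B$ is symmetric and $B(a,b)=B(ab,1)$; the residue pairing shares these properties. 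Thus $B=\angres{\cdot,\cdot}$ is equivalent to the single identity $B(h,1)=\Res(h/\nfk)$ for all $h\in A$, equivalently for $h=\T^r$ with $0\le r<\deg\nfk$.

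This last identity is the main obstacle. One route: take $N=0$ in the ABP duality (Theorem~\ref{thm-original-abp-5.4.4}(1)), where $\Psi_0(z)=z$ because $A_{+,0}=\{1\}$; this gives $\sum_i e^*(a_i/\nfk)^q\,e(b_ih/\nfk)=-h/\nfk$ for $\deg h<\deg\nfk$, and reducing mod $\Pfk$ — using $\iota(\nfk)=\iota(v-1)=-1$ — yields $\sum_i\overline{e^*(a_i/\nfk)^q}\,\overline{e(b_ih/\nfk)}=\iota(h)$. Feeding this through the $A$-invariance of $B$, the classical identity $\Res(f/\nfk)=\Tr_{(A/\nfk)/\Fq}(\overline f/\overline{\nfk'})$ for the residue pairing, and the corresponding trace computation on $\FF_\Pfk$, one gets $B(h,1)=\Res(h/\nfk)$; alternatively one can evaluate $B(h,1)=\Tr_{\FF_\Pfk/\Fq}\big(\overline{e^*(h/\nfk)^q}\cdot\overline{e(1/\nfk)}\big)$ directly by inserting the defining series $\exp_C(z)=\sum_{j\ge0}z^{q^j}/D_j$ and $e^*(z)=\sum_{j\ge0}\Res(\T^jz)\,c_j$ and matching coefficients. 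In either case the delicate point I expect to be the bottleneck is reconciling the trace on $\FF_\Pfk\simeq A/v$ with the residue/trace data living on $A/(v-1)$. Once $B(h,1)=\Res(h/\nfk)$ is established we have $B=\angres{\cdot,\cdot}$, and evaluating at the chosen dual family gives $\angtr{\overline{e^*(a_i/\nfk)^q},\overline{e(b_j/\nfk)}}=\angres{a_i,b_j}=\delta_{ij}$; undoing the reduction $\alpha\mapsto1$ completes the proof. (Combined with Proposition~\ref{prop-lambdai*-and-trace-pairing} applied to the $\Fq$-basis $\lambda_j:=e(\alpha b_j/\nfk)$, this also yields the mod-$\Pfk$ reduction of \eqref{eq-the-equation-of-the-main-goal}, since both $\overline{e^*(\alpha'a_i/\nfk)^q}$ and the reduction of the Ore element are the unique trace-dual basis of $\{\overline{e(\alpha b_j/\nfk)}\}$.)
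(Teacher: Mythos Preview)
Your reduction to $\alpha=1$ and the $A$-invariance argument for $B$ are both correct, and the latter is a pleasant structural observation that is not in the paper. However, there is a genuine gap at the decisive step: you need $B(h,1)=\Res(h/\nfk)$, and neither of your two suggested routes actually establishes it. The $N=0$ case of ABP gives $\sum_i\overline{e^*(a_i/\nfk)^q}\,\overline{e(b_ih/\nfk)}=\iota(h)$ in $\FF_\Pfk$, but this sum is not a trace and is not $B(h,1)$; even applying $\Tr_{\FF_\Pfk/\Fq}$ to both sides yields $\Tr_{\FF_\Pfk/\Fq}(\iota(h))$, the trace on $A/v$, which is \emph{not} $\Res(h/(v-1))$ in general (already for $h=1$ and $d>1$ one gets $d\bmod p$ versus $0$). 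The ``classical identity'' you invoke lives on the $\Fq$-algebra $A/\nfk=A/(v-1)$, which need not even be a field, and there is no evident map intertwining its trace with the one on $\FF_\Pfk=A/v$; you flag this yourself as the bottleneck, and it is unresolved.

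The paper sidesteps this entirely by using ABP duality for \emph{all} $0\le N\le d-1$ with the single value $a_0=1$, rather than $N=0$ with varying $a_0=h$. One computes $-\Psi_N(1/\nfk)\bmod v$ directly: for $N\ge1$ the bijection $a\mapsto a-1$ on $A_{+,N}$ gives $\prod_{a\in A_{+,N}}(\nfk a+1)/(\nfk a)\equiv\prod_a(a-1)/a=1\pmod v$, hence $0$, while $N=0$ gives $1$. These $d$ relations, together with their Frobenius twists, assemble into the matrix identity $\bigl(\overline{\lambda_j}^{\,q^{i-1}}\bigr)\bigl(\overline{\lambda_i^*}^{\,q^{j-1}}\bigr)\equiv I_d\pmod\Pfk$; exchanging the order of the factors and reading off the $(i,j)$-entry gives $\sum_{k=0}^{d-1}(\overline{\lambda_i^*}\,\overline{\lambda_j})^{q^k}=\delta_{ij}$, which is precisely the trace statement. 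If you wish to rescue your approach, the cleanest fix is to extract from your $N=0$ identity, by induction on $\deg h$ via $C_{\T h}(z)=\T\,C_h(z)+C_h(z)^q$, the relations $\sum_i\mu_i\nu_i^{q^r}=\delta_{r,0}$ for $0\le r\le d-1$; these are exactly the paper's $\Psi_N$-computations in disguise, after which the same matrix-exchange trick applies and your $A$-invariance becomes unnecessary.
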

	
	Note that Propositions \ref{prop-lambdai*-and-trace-pairing} and \ref{prop-residue-and-trace} imply the main goal \eqref{eq-the-equation-of-the-main-goal} for the special case $\nfk = v-1$.
	
	\begin{proof}
		As $\{\alpha' a_i\}_{i=1}^d, \{\alpha b_j\}_{j=1}^d$ is also a pair of $\nfk$-dual families, we may assume without loss of generality that $\alpha = 1$.
		Let $\lambda_i^* := e^*(a_i/\nfk)^q$ and $\lambda_j := e(b_j/\nfk)$.
		Consider the matrix multiplication
		\begin{equation}    \label{eq-product-in-residue-and-trace}
			\left( \lambda_j^{q^{i-1}} \right)
			\left( (\lambda_i^*)^{q^{j-1}} \right)
			\pmod{\Pfk}.
		\end{equation}
		From Theorem \ref{thm-original-abp-5.4.4}(1), we have for all integers $N \geq 0$,
		$$
		\sum_{k=1}^d (\lambda_k^*)^{q^N} \lambda_k
		= -\Psi_N(1/\nfk)
		= 1 - \prod_{a\in A_{+,N}} \frac{\nfk a+1}{\nfk a}
		$$
		are explicit rational numbers in $k$.
		Thus, taking reduction, one computes that the $(i,j)$-entry of the product in \eqref{eq-product-in-residue-and-trace} is
		$$
		\sum_{k=1}^d \lambda_k^{q^{i-1}} (\lambda_k^*)^{q^{j-1}}
		=
		\begin{cases}
			-\Psi_0(1/\nfk)^{q^{i-1}} \equiv 1 \pmod{\Pfk}, & i = j, \\
			-\Psi_{j-i}(1/\nfk)^{q^{i-1}}  \equiv 0 \pmod{\Pfk}, & i < j,  \\
			 -\Psi_{d+j-i}(1/\nfk)^{q^{i-1}} \equiv 0 \pmod{\Pfk}, & i > j.
		\end{cases}
		$$
		In other words,
		\begin{equation}      \label{eq-result-in-residue-and-trace}
		\left( \lambda_j^{q^{i-1}} \right)
		\left( (\lambda_i^*)^{q^{j-1}} \right)
		\equiv I_d  \pmod{\Pfk}.
		\end{equation}
		Exchanging the product, and the result is reflected in the $(i,j)$-entry.
	\end{proof}
	
	\begin{rem}
		Note that \eqref{eq-result-in-residue-and-trace} coincides with the system of linear equations \eqref{eq-linear-equations-MZ-I} considered in the proof of Proposition \ref{prop-lambdai*-and-trace-pairing}.
		This already suggests that $e^*(a_i/\nfk)^q$ can be obtained from Ore's formula.
		Note that the trick of exchanging the product of two matrices was also used during that proof.
	\end{rem}
		
	\subsubsection{Review of Poonen pairing}
	
	We now briefly review the notion of \textit{Poonen pairing}.
	To ease the exposition and focus on our main goal, we restrict ourselves to one particular example, the Carlitz module, and refer to \cite{poonen1996fractional} or \cite[Subsection 4.14]{goss1996basic} for those interested readers.
	We will also omit the justifications of all basic properties we mention for the same reason.
	Let $\nfk \in A_+$ be arbitrary.
	
	\begin{defn}   \label{defn-poonen-pairing}
		For $a\in \Lambda_\nfk^*$ and $b\in \Lambda_\nfk$, write
		$$
		a\tau^0 C_\nfk(\tau) = (\tau^0 - \tau)h_a(\tau).
		$$
		Then the \textit{Poonen pairing with respect to $\nfk$} is defined as
		$$
		\ang{\cdot,\cdot}_{\textnormal{Poon}(\nfk)}: \Lambda_\nfk^* \times \Lambda_\nfk \to \Fq, \quad \ang{a,b}_{\textnormal{Poon}(\nfk)} := h_a(b).
		$$
	\end{defn}
	
	This is a non-degenerate, bilinear, and Galois-equivariant pairing on the torsion points of the Carlitz and adjoint Carlitz modules.
	Moreover, it is also compatible with the $A$-module structures in the sense that the equation
	$$
	\ang{C_f^*(a),b}_{\textnormal{Poon}(\nfk)} = \ang{a,C_f(b)}_{\textnormal{Poon}(\nfk)}
	$$
	holds for all $f\in A$.
	Hence, it is viewed as an analog of Weil pairing on elliptic curves.
	(This will not be needed in the sequel but is meant to be pointed out to illustrate its analogy with the classical situation.)
	
	Another compatibility of the Poonen pairings with respect to different $\nfk$ will also be useful for us.
	
	\begin{prop}    \label{prop-compatibility-of-poonen-pairing}
		Let $\nfk,\mfk \in A_+$ be given.
		Suppose $a\in \Lambda_{\nfk}^* \sbe \Lambda_{\nfk\mfk}^*$ and $b \in \Lambda_{\nfk\mfk}$. Then the equation
		$$
		\ang{a,b}_{\textnormal{Poon}(\nfk\mfk)} = \ang{a,C_\mfk(b)}_{\textnormal{Poon}(\nfk)}
		$$
		is true.
	\end{prop}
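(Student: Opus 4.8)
The plan is to argue entirely inside the twisted polynomial ring $\ovl{k}\{\tau\}$, using the multiplicativity $C_{\nfk\mfk} = C_\nfk C_\mfk$ of the Carlitz module together with uniqueness of left division by $\tau^0 - \tau$.

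First I would unwind Definition \ref{defn-poonen-pairing}. Since $a \in \Lambda_\nfk^*$, there is a unique $h_a \in \ovl{k}\{\tau\}$ with $a\tau^0 C_\nfk(\tau) = (\tau^0-\tau) h_a(\tau)$, and then $\ang{a,b}_{\textnormal{Poon}(\nfk)} = h_a(b)$; likewise, since $\Lambda_\nfk^* \sbe \Lambda_{\nfk\mfk}^*$, the same element $a$ produces a unique $h_a' \in \ovl{k}\{\tau\}$ with $a\tau^0 C_{\nfk\mfk}(\tau) = (\tau^0-\tau) h_a'(\tau)$ and $\ang{a,b}_{\textnormal{Poon}(\nfk\mfk)} = h_a'(b)$. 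The key algebraic step is the identity $h_a' = h_a C_\mfk(\tau)$. Indeed, because $C$ is an $\Fq$-algebra homomorphism on the commutative ring $A$ we have $C_{\nfk\mfk} = C_\nfk C_\mfk$, whence
$$
a\tau^0 C_{\nfk\mfk}(\tau) = \big(a\tau^0 C_\nfk(\tau)\big) C_\mfk(\tau) = (\tau^0-\tau)\, h_a(\tau)\, C_\mfk(\tau);
$$
since $\ovl{k}$ is perfect, left division by $\tau^0-\tau$ in $\ovl{k}\{\tau\}$ is well-defined and unique, so comparing with the defining equation of $h_a'$ forces $h_a' = h_a C_\mfk$.

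The remaining step is an evaluation identity. Under the identification of $\ovl{k}\{\tau\}$ with the ring of $\Fq$-linear polynomials under composition recalled in \S\ref{subsubsection-carlitz-modules}, evaluation sends products to compositions, i.e. $(fg)(z) = f(g(z))$. Hence for $b \in \Lambda_{\nfk\mfk}$,
$$
\ang{a,b}_{\textnormal{Poon}(\nfk\mfk)} = h_a'(b) = \big(h_a C_\mfk\big)(b) = h_a\big(C_\mfk(b)\big).
$$
Finally one checks that $C_\mfk(b) \in \Lambda_\nfk$, which is immediate from $C_\nfk\big(C_\mfk(b)\big) = C_{\nfk\mfk}(b) = 0$; this makes the last expression a genuine value of the $\nfk$-Poonen pairing, namely $h_a\big(C_\mfk(b)\big) = \ang{a, C_\mfk(b)}_{\textnormal{Poon}(\nfk)}$. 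Stringing the equalities together yields the claim.

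I do not expect a serious obstacle here. The only points requiring care are the well-posedness and uniqueness of left division by $\tau^0-\tau$ in $\ovl{k}\{\tau\}$ — which uses that $\ovl{k}$ is perfect, so that the $q$-power roots of coefficients needed in the division algorithm exist — and the compatibility of evaluation at a point with multiplication of twisted polynomials. Both are standard facts about twisted polynomial rings that I would merely state or cite (e.g.\ from \cite{goss1996basic}) rather than prove in detail.
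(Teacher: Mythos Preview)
Your argument is correct. The paper does not actually supply its own proof of this proposition: in the paragraph introducing the Poonen pairing it explicitly says it will ``omit the justifications of all basic properties'' and refers the reader to \cite{poonen1996fractional} and \cite[Subsection 4.14]{goss1996basic}. So there is nothing in the paper to compare against beyond that citation; your direct computation in $\ovl{k}\{\tau\}$ via $C_{\nfk\mfk} = C_\nfk C_\mfk$ and uniqueness of the factorization through $\tau^0-\tau$ is exactly the expected argument and would serve as a self-contained replacement for the omitted proof.

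One small remark: your caveat about perfectness is not actually needed here. For left division by $\tau^0-\tau$ specifically, the recursion for the coefficients of $h$ only involves taking $q$-th \emph{powers}, not $q$-th roots, so both existence (given that $a\in\Lambda_\nfk^*$) and uniqueness hold over any $A$-field. Uniqueness also follows immediately from the fact that $\ovl{k}\{\tau\}$ is a domain. This does not affect the validity of your proof, only the justification you give for that one step.
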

	
	\subsubsection{Residue pairing and Poonen pairing}
	
	There is an analogous result to Proposition \ref{prop-lambdai*-and-trace-pairing} for the Poonen pairing.
	
	\begin{prop}     \label{prop-lambdai*-and-poonen-pairing}
		For $\nfk \in A_+$, let $\{\lambda_j\}_{j=1}^{\deg\nfk}$ be any $\Fq$-basis of $\Lambda_\nfk$.
		Take $\{\lambda_i^*\}_{i=1}^{\deg\nfk} \sbe \Lambda_\nfk^*$ so that $\ang{\lambda_i^*, \lambda_j}_{\textnormal{Poon}(\nfk)} = - \delta_{ij}$.
		Then one has
		$$
		\lambda_i^* = (-1)^{\deg\nfk+i} \left(\frac{\Delta_i}{\Delta}\right)^q.
		$$
		In other words, the $\lambda_i^*$ given by Ore's formula are the dual bases of $\lambda_j$ with respect to Poonen pairing (with a minus sign modification).
	\end{prop}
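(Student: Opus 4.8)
The plan is to prove the identity by a direct computation of the Poonen pairing. By Theorem~\ref{thm-ore} the elements $(-1)^{\deg\nfk+i}(\Delta_i/\Delta)^q$ form an $\Fq$-basis of $\Lambda_\nfk^*$, so by non-degeneracy of the Poonen pairing it suffices to prove that this particular basis satisfies the asserted duality, i.e. $\langle(-1)^{\deg\nfk+i}(\Delta_i/\Delta)^q,\lambda_j\rangle_{\textnormal{Poon}(\nfk)}=-\delta_{ij}$. Set $n:=\deg\nfk$ and recall $C_\nfk=\sum_{m=0}^n c_{\nfk,m}\tau^m$ with $c_{\nfk,0}=\nfk$, $c_{\nfk,n}=1$. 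First I would make the pairing explicit: for $a\in\Lambda_\nfk^*$, solving $a\,C_\nfk(\tau)=(1-\tau)h_a(\tau)$ by comparing $\tau$-coefficients gives $h_r(a)-h_{r-1}(a)^q=a\,c_{\nfk,r}$ with $h_{-1}(a)=0$, hence $h_r(a)=\sum_{m=0}^{r}(a\,c_{\nfk,m})^{q^{r-m}}$ for $0\le r\le n-1$; the degree-$n$ coefficient then forces $h_{n-1}(a)^q=-a$, which is exactly $\sum_{m=0}^{n}(a\,c_{\nfk,m})^{q^{n-m}}=0$, i.e. the $q^n$-th power of $C_\nfk^*(a)=0$, so $h_a$ has degree $\le n-1$ precisely because $a\in\Lambda_\nfk^*$. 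Thus $\langle a,b\rangle_{\textnormal{Poon}(\nfk)}=\sum_{r=0}^{n-1}h_r(a)\,b^{q^r}$.

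Write $\lambda_i^*:=(-1)^{n+i}(\Delta_i/\Delta)^q$ and let $\mathcal{M}=(\lambda_j^{q^r})_{0\le r\le n-1,\,1\le j\le n}$ (so $\det\mathcal{M}=\Delta\neq0$) and $U=(h_r(\lambda_i^*))_{1\le i\le n,\,0\le r\le n-1}$. By the previous step the matrix $(\langle\lambda_i^*,\lambda_j\rangle_{\textnormal{Poon}(\nfk)})_{i,j}$ equals $U\mathcal{M}$, so the goal is equivalent to $U=-\mathcal{M}^{-1}$, equivalently to $\mathcal{M}U=-I_n$, that is,
$$
\sum_{j=1}^{n}\lambda_j^{q^s}\,h_{s'}(\lambda_j^*)=-\delta_{s s'}\qquad(0\le s,s'\le n-1).
$$
Substituting the formula for $\lambda_j^*$ and expanding, the left-hand side becomes a $k$-linear combination (with coefficients built from the $c_{\nfk,m}$) of sums $\sum_j(-1)^{n+j}\lambda_j^{q^s}\Delta_j^{q^t}$, each of which is, up to an explicit sign, the first-row cofactor expansion of the $n\times n$ ``generalized Moore determinant'' $\det(\lambda_j^{q^{e_i}})$ with frequencies $q^s,q^t,q^{t+1},\ldots,q^{t+n-2}$. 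I would evaluate these using three facts: (i) a Moore-type matrix with a repeated frequency vanishes, which kills every term with $t\le s$ and produces the Kronecker delta; (ii) via the Carlitz relation $C_\nfk(\lambda_j)=0$, any vector $(\lambda_j^{q^e})_j$ with $e\ge n$ is a $k$-combination of $(\lambda_j^{q^0})_j,\ldots,(\lambda_j^{q^{n-1}})_j$, so each such determinant equals an explicit polynomial in the $c_{\nfk,m}$ times $\Delta$; and (iii) the normalization $\Delta^{q-1}=(-1)^n\nfk$, which falls out of comparing the coefficient of $z$ in the bordered identity $\Delta(\lambda_1,\ldots,\lambda_n,z)=\Delta\cdot C_\nfk(z)$ (valid since $\{\lambda_j\}$ spans $\Lambda_\nfk$ and $C_\nfk(z)=\prod_{\lambda\in\Lambda_\nfk}(z-\lambda)$). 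Putting these together, all powers of $\Delta$ cancel and, after a sign count, the combination collapses to $-\delta_{s s'}$. The case $n=1$ (e.g. $\nfk=\T$) already displays the whole mechanism in one line.

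Arbitrariness of the basis $\{\lambda_j\}$ causes no trouble: all quantities in sight are $\Fq$-multilinear and alternating in $\lambda_1,\ldots,\lambda_n$, and in fact the computation above uses no property of $\{\lambda_j\}$ beyond $\Delta\neq0$. The one real difficulty is the last step — tracking which generalized Moore minors appear and chasing every sign (from the cofactor expansions, from $(-1)^{q^s}=-1$, and from reordering rows and columns) so that the combination collapses cleanly; morally this is the ``exchange-the-matrix-product'' argument of the proof of Proposition~\ref{prop-lambdai*-and-trace-pairing}, transplanted from $\FF_\Pfk$ to $\ovl{k}$, with the finite-field identity $\lambda^{q^d}=\lambda$ there playing the role of the Carlitz relation $C_\nfk(\lambda)=0$ here. (Alternatively one could cite the residue description of the Poonen pairing from \cite{poonen1996fractional} to see that $\langle e^*(a/\nfk)^q,e(b/\nfk)\rangle_{\textnormal{Poon}(\nfk)}=-\Res(ab/\nfk)$ and then conclude by matching with Ore's formula as in \S\ref{subsection-pairing-comparisons}; the route sketched here keeps everything internal to this section.)
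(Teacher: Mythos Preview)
Your approach is genuinely different from the paper's, and the setup is correct: the explicit recursion $h_r(a)=\sum_{m=0}^{r}(a\,c_{\nfk,m})^{q^{r-m}}$, the matrix reformulation $U\mathcal{M}=-I_n\Leftrightarrow\mathcal{M}U=-I_n$, and the identification of $\sum_j(-1)^{n+j}\lambda_j^{q^s}\Delta_j^{q^t}$ with a generalized Moore determinant are all right. The diagonal case $s'=s$ indeed reduces to the single $m=0$ term and your normalization (iii) gives $-1$; likewise for $s'<s$ every term has $t\le s$ and vanishes. The gap is the case $s'>s$: there the terms with $m=0,1,\ldots,s'-s$ all survive fact (i), each contributing a generalized Moore determinant with top frequency reaching $q^{s'+n-1}$, and showing that these combine with the coefficients $c_{\nfk,m}^{q^{s'-m}}$ to cancel is a genuine identity among the $c_{\nfk,m}$, not just bookkeeping. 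You acknowledge this as ``the one real difficulty'' but do not carry it out; as written the proof is incomplete there, and the analogy with Proposition~\ref{prop-lambdai*-and-trace-pairing} is only heuristic since $C_\nfk(\lambda)=0$ mixes all the $c_{\nfk,m}$ rather than giving a clean $\lambda^{q^n}=\lambda$.

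The paper's proof sidesteps this entirely by running in the opposite direction. Starting from the hypothesis $\langle\lambda_i^*,\lambda_j\rangle_{\textnormal{Poon}(\nfk)}=-\delta_{ij}$, it observes that since $a_i:=\lambda_i^*\in\Lambda_\nfk^*$, the linear factor $a_i^{(1-q)/q}\tau^0-\tau$ left-divides $C_\nfk(\tau)$; writing $C_\nfk=(a_i^{(1-q)/q}\tau^0-\tau)Q_i$ one gets $a_i\tau^0C_\nfk=(\tau^0-\tau)a_i^{1/q}Q_i$, so $h_{a_i}=a_i^{1/q}Q_i$. The pairing condition then says $Q_i(\lambda_j)=0$ for $j\neq i$, which pins down $Q_i(z)=-\prod_{\lambda\in W_i}(z-\lambda)$, and the value at $\lambda_i$ gives $a_i=P_i(\lambda_i)^{-q}=(-1)^{n+i}(\Delta_i/\Delta)^q$ by the standard Moore-determinant identity. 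No matrix manipulation or Carlitz-relation reduction is needed. Your alternative route via Poonen's residue description would essentially reprove Proposition~\ref{prop-residue-and-poonen} and still leave the present statement to be established separately.
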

	
	\begin{proof}
		Fix $1\leq i \leq \deg\nfk$. As $a_i := \lambda_i^*$ satisfies $C_\nfk^*(z)$, we have $a_i^{(1-q)/q} \tau^0 - \tau$ left divides $C_\nfk(z)$ (see \cite[Corollary 1.7.7]{goss1996basic}).
		So we may write
		$$
		C_\nfk(\tau) = \left(a_i^{(1-q)/q} \tau^0 - \tau\right) Q_i(\tau)
		$$
		for some $Q_i(\tau) \in \ovl{k}\{\tau\}$.
		Note that
		\begin{equation}   \label{eq-apply-poonen-pairing}
			a_i\tau^0 C_\nfk(\tau)
			= a_i\tau^0 \left(a_i^{(1-q)/q} \tau^0 - \tau\right) Q_i(\tau)
			= (\tau^0-\tau) a_i^{1/q} Q_i(\tau).
		\end{equation}
		So by Definition \ref{defn-poonen-pairing}, we have
		$$
		-\delta_{ij}
		= \ang{a_i, \lambda_j}_{\textnormal{Poon}(\nfk)}
		= a_i^{1/q} Q_i(\lambda_j)
		\quad
		\text{for all}
		\quad
		1 \leq j \leq \deg\nfk.
		$$
		In particular, $Q_i(\lambda_j) = 0$ for all $j\neq i$.
		So the subspace $W_i \sbe \Lambda_\nfk$ of roots of $Q_i(z) = 0$ is spanned by $\{\lambda_1,\ldots,\lambda_{i-1},\lambda_{i+1},\ldots,\lambda_{\deg\nfk}\}$.
		The leading coefficient of $Q_i(z)$ is seen to be $-1$ from \eqref{eq-apply-poonen-pairing}. 
		Thus, we may write
		$$
		Q_i(z) = - \prod_{\lambda \in W_i} (z - \lambda) =: -P_i(z).
		$$
		Also, note that
		$$
		-1 
		= \ang{a_i, \lambda_i}_{\textnormal{Poon}(\nfk)}
		= a_i^{1/q} Q_i(\lambda_i)
		= -a_i^{1/q} P_i(\lambda_i)
		\implies
		a_i = P_i(\lambda_i)^{-q}.
		$$
		A property of the Moore determinant (see \cite[Theorem 1.3.5.2]{goss1996basic}) now implies that
		$$
		\lambda_i^* 
		= a_i
		= P_i(\lambda_i)^{-q}
		= \left(\frac{\Delta(\lambda_1,\ldots,\lambda_{i-1},\lambda_{i+1},\ldots,\lambda_{\deg\nfk})}{\Delta(\lambda_1,\ldots,\lambda_{i-1},\lambda_{i+1},\ldots,\lambda_{\deg\nfk},\lambda_i)}\right)^q
		= (-1)^{\deg\nfk+i} \left(\frac{\Delta_i}{\Delta}\right)^q.
		$$
		This completes the proof.
	\end{proof}
	
	The relation between $\nfk$-dual families $\{a_i\}_{i=1}^{\deg\nfk}, \{b_j\}_{j=1}^{\deg\nfk}$ and the torsion points
	$$
	\{e^*(\alpha' a_i/\nfk)^q\}_{i=1}^{\deg\nfk},
	\quad
	\{e(\alpha b_j/\nfk)\}_{j=1}^{\deg\nfk}
	$$
	is also explained by the Poonen pairing. Precisely, we have
	
	\begin{prop}    \label{prop-residue-and-poonen}
		Let $\nfk \in A_+$ and $\{a_i\}_{i=1}^{\deg\nfk}, \{b_j\}_{j=1}^{\deg\nfk}$ be $\nfk$-dual families.
		Take $\alpha \in A$ prime to $\nfk$ and let $\alpha' \in A$ be such that $\alpha\alpha' \equiv 1 \pmod{\nfk}$.
		Then one has
		$$
		\ang{e^*(\alpha' a_i/\nfk)^q, e(\alpha b_j/\nfk)}_{\textnormal{Poon}(\nfk)} = - \delta_{ij}.
		$$
		In other words, the functions $e^*$ and $e$ map dual bases of the residue pairing to dual bases of the Poonen pairing (again, with a minus sign modification).
	\end{prop}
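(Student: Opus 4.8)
The plan is to reduce to the single ``diagonal'' identity
$$
\ang{e^*(1/\nfk)^q,\ e(c/\nfk)}_{\textnormal{Poon}(\nfk)}\ =\ -\Res(c/\nfk)
\qquad\text{for all }c\in A,
$$
and then to prove it by descending from a modulus of the shape $v^\l-1$, where the Poonen pairing will be visibly (minus) the trace pairing of Propositions~\ref{prop-lambdai*-and-trace-pairing} and \ref{prop-residue-and-trace}. First I would observe that, since $\{\alpha'a_i\}_i$ and $\{\alpha b_j\}_j$ again form a pair of $\nfk$-dual families, one may take $\alpha=1$. Then, using that $c\mapsto e^*(c/\nfk)^q$ and $c\mapsto e(c/\nfk)$ are the $A$-module isomorphisms $A/\nfk\isoto\Lambda_\nfk^*$ and $A/\nfk\isoto\Lambda_\nfk$ (for the $C^*$- and $C$-actions) and that the Poonen pairing satisfies $\ang{C_f^*(x),y}_{\textnormal{Poon}(\nfk)}=\ang{x,C_f(y)}_{\textnormal{Poon}(\nfk)}$, I would rewrite
$$
\ang{e^*(a_i/\nfk)^q,\ e(b_j/\nfk)}_{\textnormal{Poon}(\nfk)}
=\ang{e^*(1/\nfk)^q,\ e(a_ib_j/\nfk)}_{\textnormal{Poon}(\nfk)};
$$
since $\Res(a_ib_j/\nfk)=\delta_{ij}$, the proposition becomes exactly the diagonal identity.

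To prove the diagonal identity I would pick an irreducible $v\in A_+$ prime to $\nfk$, let $\l$ be the order of $v$ in $(A/\nfk)^\times$, and set $\mfk:=v^\l-1$, so that $\nfk\mid\mfk$; write $\mfk=\nfk\mfk'$. Since $e(c/\nfk)=C_{\mfk'}(e(c/\mfk))$ by the functional equation for $e$, and $e^*(1/\nfk)^q=e^*(\mfk'/\mfk)^q$, Proposition~\ref{prop-compatibility-of-poonen-pairing} gives
$$
\ang{e^*(1/\nfk)^q,\ e(c/\nfk)}_{\textnormal{Poon}(\nfk)}
=\ang{e^*(\mfk'/\mfk)^q,\ e(c/\mfk)}_{\textnormal{Poon}(\mfk)}.
$$
Thus it suffices to establish $\ang{e^*(a/\mfk)^q,e(b/\mfk)}_{\textnormal{Poon}(\mfk)}=-\Res(ab/\mfk)$ for the special modulus $\mfk=v^\l-1$ and all $a,b\in A$ (then specialize $a=\mfk'$, $b=c$, using $\Res(\mfk'c/\mfk)=\Res(c/\nfk)$). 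In effect, Proposition~\ref{prop-compatibility-of-poonen-pairing} lets one work with an arbitrary $\nfk$ by reducing to a modulus whose torsion ``sees'' a finite field.

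For the special modulus $\mfk=v^\l-1$, reduction modulo the prime $\Pfk_\mfk$ of $K_\mfk$ above $v$ identifies both $\Lambda_\mfk$ and $\Lambda_\mfk^*$ with the finite field $\FF_{\Pfk_\mfk}$ (Propositions~\ref{prop-reduction-of-lambda} and \ref{prop-reduction-of-lambda*}), and this is the only property of $v-1$ that enters the proofs of Propositions~\ref{prop-lambdai*-and-trace-pairing} and \ref{prop-residue-and-trace}; hence those arguments apply verbatim with $v-1$ replaced by $\mfk$. Consequently the $\mfk$-version of Proposition~\ref{prop-residue-and-trace} yields $\angtr{\overline{e^*(a/\mfk)^q},\overline{e(b/\mfk)}}=\Res(ab/\mfk)$, while, for any $\Fq$-basis $\{\lambda_j\}$ of $\Lambda_\mfk$ with associated Ore basis $\{\lambda_i^*\}=\{(-1)^{\deg\mfk+i}(\Delta_i/\Delta)^q\}$ of $\Lambda_\mfk^*$, Proposition~\ref{prop-lambdai*-and-poonen-pairing} gives $\ang{\lambda_i^*,\lambda_j}_{\textnormal{Poon}(\mfk)}=-\delta_{ij}$ and the $\mfk$-version of Proposition~\ref{prop-lambdai*-and-trace-pairing} gives $\angtr{\overline{\lambda_i^*},\overline{\lambda_j}}=\delta_{ij}$; expanding arbitrary elements in these bases shows $\ang{\mu^*,\mu}_{\textnormal{Poon}(\mfk)}=-\angtr{\overline{\mu^*},\overline{\mu}}$ for all $\mu^*\in\Lambda_\mfk^*$ and $\mu\in\Lambda_\mfk$. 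Combining the last two facts gives the desired identity for $\mfk$, and with it the proposition.

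The obstacle I anticipate is not any delicate estimate but the structural observation itself: the Poonen pairing is defined by a purely formal twisted-polynomial division for every $\nfk$, yet becomes computable only after passing to a modulus $v^\l-1$ whose torsion reduces isomorphically onto a finite field, where it agrees up to sign with the trace pairing; arranging for Proposition~\ref{prop-compatibility-of-poonen-pairing} to transport this identity back down to general $\nfk$, and checking that the earlier trace-pairing arguments never used $\deg v=\deg\nfk$, is the crux.
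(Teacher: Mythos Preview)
Your argument is correct and follows the same overall strategy as the paper: pass to an auxiliary modulus where the Poonen pairing becomes (minus) the trace pairing, invoke the residue/trace compatibility, and descend via Proposition~\ref{prop-compatibility-of-poonen-pairing}. Two points of execution differ, however. First, the paper appeals to the Dirichlet density theorem to find an irreducible $v$ with $v\equiv 1\pmod{\nfk}$ and works at the modulus $v-1$; you instead take any $v$ prime to $\nfk$ and use $\mfk=v^\l-1$ with $\l$ the order of $v$ in $(A/\nfk)^\times$, which sidesteps Dirichlet entirely (the paper itself notes after the proof that the special-modulus step generalizes to $v^\l-1$, so your extension is legitimate). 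Second, at the special modulus the paper identifies Poonen with $-\mathrm{Trace}$ \emph{directly}, by observing that $\ovl{C}_{v-1}(\tau)=\tau^d-\tau^0$ forces $\ovl{h}_{\ovl{a}}(\ovl{b})=-\sum_{k=0}^{d-1}(\ovl{ab})^{q^k}$; you obtain the same identification \emph{indirectly}, by noting that Ore's $\lambda_i^*$ are simultaneously dual to $\lambda_j$ under both pairings (Propositions~\ref{prop-lambdai*-and-poonen-pairing} and~\ref{prop-lambdai*-and-trace-pairing}). Your preliminary reduction to the single ``diagonal'' identity via the $A$-adjointness of the Poonen pairing is a clean bookkeeping device that replaces the paper's extension of dual families from $\nfk$ up to the larger modulus.
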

	
	Note that Propositions \ref{prop-lambdai*-and-poonen-pairing} and \ref{prop-residue-and-poonen} imply the main goal \eqref{eq-the-equation-of-the-main-goal} for the general case.
	
	\begin{proof}
		As $\{\alpha' a_i\}_{i=1}^{\deg\nfk}, \{\alpha b_j\}_{j=1}^{\deg\nfk}$ is also a pair of $\nfk$-dual families, we may assume without loss of generality that $\alpha = 1$.
		First, we consider the special case $\nfk = v - 1$ where $v \in A_{+,d}$ is irreducible.
		Let $\Pfk$ be a prime in $K_\nfk$ above $v$.
		In the definition of Poonen pairing, one considers everything modulo $\Pfk$ in the residue field $\FF_\Pfk$ and defines the “reduction” of Poonen pairing.
		More precisely, put (recall Propositions \ref{prop-reduction-of-lambda} and \ref{prop-reduction-of-lambda*})
		$$
		\ovl{\Lambda}_\nfk := \{ \ovl{\lambda} \mid \lambda \in \Lambda_\nfk \}
		\quad
		\text{and}
		\quad
		\ovl{\Lambda}{}^*_\nfk := \{ \ovl{\lambda}{}^* \mid \lambda^* \in \Lambda_\nfk^* \}.
		$$
		For $\ovl{a} \in \ovl{\Lambda}{}^*_\nfk$ and $\ovl{b} \in \ovl{\Lambda}_\nfk$, write
		$$
		\ovl{a} \tau^0 \ovl{C}_\nfk(\tau) = (\tau^0 - \tau) \ovl{h}_{\ovl{a}}(\tau)
		$$
		where $\ovl{C}_\nfk(\tau)$ is the twisted polynomial obtained from reducing the coefficients of $C_\nfk(\tau)$ modulo $v$.
		Then the reduced Poonen pairing (with respect to $\nfk = v-1$) is defined as
		$$
		\ang{\cdot,\cdot}_{\ovl{\textnormal{Poon}}(\nfk)}: \ovl{\Lambda}{}^*_\nfk \times \ovl{\Lambda}_\nfk \to \Fq,
		\quad 
		\ang{\ovl{a},\ovl{b}}_{\ovl{\textnormal{Poon}}(\nfk)}
		:= \text{the unique lift of }
		\ovl{h}_{\ovl{a}} (\ovl{b}) \sbe \FF_{\Pfk}
		\text{ to }
		\Fq.
		$$
		
		Note that
		$$
		\ovl{h}_{\ovl{a}} (\ovl{b})
		= \ovl{h_a(b)}
		= \ovl{\ang{a,b}_{\textnormal{Poon}(\nfk)}}.
		$$
		So in fact,
		\begin{equation}    \label{eq-reduced-poonen-and-poonen}
			\ang{\ovl{a},\ovl{b}}_{\ovl{\textnormal{Poon}}(\nfk)}
			= \ang{a,b}_{\textnormal{Poon}(\nfk)}.
		\end{equation}
		On the other hand, as $\nfk = v-1$, we know $\ovl{C}_\nfk (\tau) = \tau^d - \tau^0$.
		This implies that
		$$
		\ovl{h}_{\ovl{a}} (\tau) = -(\tau^0 + \tau^1 + \cdots + \tau^{d-1}) \ovl{a}\tau^0
		\implies
		\ovl{h}_{\ovl{a}} (\ovl{b})
		= -\sum_{k=0}^{d-1} (\ovl{ab})^{q^k}.
		$$
		Thus, using the trace pairing considered in \S\ref{subsubsection-residue-pairing-and-trace-pairing}, we have
		\begin{equation}       \label{eq-reduced-poonen-and-trace}
			\ang{\ovl{a},\ovl{b}}_{\ovl{\textnormal{Poon}}(\nfk)}
			= -\angtr{\ovl{a},\ovl{b}}.
		\end{equation}
		Hence, by Proposition \ref{prop-residue-and-trace}, \eqref{eq-reduced-poonen-and-trace}, and \eqref{eq-reduced-poonen-and-poonen}, we have
		$$
		\delta_{ij}
		= \angtr{\ovl{e^*(a_i/\nfk)^q},\ovl{e(b_j/\nfk)}}
		= -\ang{e^*(a_i/\nfk)^q,e(b_j/\nfk)}_{\textnormal{Poon}(\nfk)}.
		$$
		This completes the case where $\nfk = v-1$.
		
		For general $\nfk \in A_+$, we apply Dirichlet density theorem (see \cite[Chapter 4]{rosen2002number}) to take an irreducible $v \in A_+$ such that $v \equiv 1 \pmod{\nfk}$.
		Write $\nfk\mfk = v - 1 =: \nfk'$ for some $\mfk \in A_+$.
		Note that for the pair $\{a_i\mfk\}_{i=1}^{\deg\nfk}, \{b_j\}_{j=1}^{\deg\nfk}$, we have
		$$
		\Res(a_i\mfk \cdot b_j / \nfk') = \Res(a_ib_j/\nfk) = \delta_{ij}.
		$$
		So we may extend it to a pair of $\nfk'$-dual families $\{a_i'\}_{i=1}^{\deg\nfk'}, \{b_j'\}_{j=1}^{\deg\nfk'}$ so that
		$$
		a_i' = a_i\mfk
		\quad
		\text{and}
		\quad
		b_j' = b_j
		\quad
		\text{for all}
		\quad
		1\leq i,j \leq \deg\nfk.
		$$
		Then for any such $i,j$, we have by the previous case and Proposition \ref{prop-compatibility-of-poonen-pairing} that
		\begin{align*}
			-\delta_{ij}
			&= \ang{e^*(a_i'/\nfk')^q,e(b_j'/\nfk')}_{\textnormal{Poon}(\nfk')}
			= \ang{e^*(a_i/\nfk)^q,e(b_j'/\nfk')}_{\textnormal{Poon}(\nfk\mfk)}    \\
			&= \ang{e^*(a_i/\nfk)^q,C_\mfk(e(b_j'/\nfk'))}_{\textnormal{Poon}(\nfk)}
			= \ang{e^*(a_i/\nfk)^q, e(b_j/\nfk)}_{\textnormal{Poon}(\nfk)}.
		\end{align*}
		This proves the general case.
	\end{proof}
	
	In the first half of the proof, we may consider $\mfk = v^\l - 1$ for any $\l \in \NN$ and still obtain the identities \eqref{eq-reduced-poonen-and-poonen} and \eqref{eq-reduced-poonen-and-trace}, where the trace is taken from $\FF_{\Pfk} \simeq \Fqdl$ to $\Fq$.
	Hence, to summarize, we have proved the following theorem.
	
	\begin{thm}    \label{thm-pairing-summary}
		Let $\mfk := v^\l-1$ and $\{a_i\}_{i=1}^{d\l}, \{b_j\}_{j=1}^{d\l}$ be any $\mfk$-dual families.
		Let $\lambda_i^* := e^*(\alpha' a_i/\mfk)^q$ and $\lambda_j := e(\alpha b_j/\mfk)$ where $\alpha'\alpha \equiv 1 \pmod{\mfk}$.
		Then we have
		$$
		\angres{a_i,b_j}
		= - \ang{\lambda_i^*,\lambda_j}_{\textnormal{Poon}(\mfk)}
		= \angtr{\ovl{\lambda}{}^*_i,\ovl{\lambda}_j}
		= \delta_{ij}
		$$
		(these three pairings correspond respectively to the isomorphisms in Propositions \ref{prop-reduction-of-lambda} and \ref{prop-reduction-of-lambda*})
		and
		$$
		\lambda_i^* = (-1)^{\deg\nfk+i} \left(\frac{\Delta_i}{\Delta}\right)^q
		$$
		where $\Delta_i := \Delta(\lambda_1,\ldots,\lambda_{i-1},\lambda_{i+1},\ldots,\lambda_{d\l})$ and $\Delta := \Delta(\lambda_1,\ldots,\lambda_{d\l})$ are the Moore determinants.
	\end{thm}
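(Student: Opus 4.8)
The plan is to assemble the theorem from the three pairing comparisons already proved in this section, all specialized to the modulus $\mfk := v^\l-1 \in A_+$ (so $\deg\mfk = d\l$). The equality $\angres{a_i,b_j}=\Res(a_ib_j/\mfk)=\delta_{ij}$ is just the definition of $\mfk$-dual families. Applying Proposition \ref{prop-residue-and-poonen} — which is stated for an arbitrary element of $A_+$ — with its modulus set to $\mfk$ gives at once
$$
\ang{\lambda_i^*,\lambda_j}_{\textnormal{Poon}(\mfk)}=\ang{e^*(\alpha' a_i/\mfk)^q,\,e(\alpha b_j/\mfk)}_{\textnormal{Poon}(\mfk)}=-\delta_{ij},
$$
hence $\angres{a_i,b_j}=-\ang{\lambda_i^*,\lambda_j}_{\textnormal{Poon}(\mfk)}$.

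Next I would identify $-\ang{\lambda_i^*,\lambda_j}_{\textnormal{Poon}(\mfk)}$ with $\angtr{\ovl{\lambda}{}^*_i,\ovl{\lambda}_j}$ by repeating the reduction-of-Poonen-pairing computation from the first half of the proof of Proposition \ref{prop-residue-and-poonen}, now with $\mfk=v^\l-1$ in place of $v-1$, exactly as indicated in the paragraph after that proof. The one fact that must be checked is that reducing the coefficients of $C_\mfk(\tau)$ modulo $v$ yields $\ovl{C}_\mfk(\tau)=\tau^{d\l}-\tau^0$; this follows from the Eisenstein-type congruence $C_v\equiv\tau^d\pmod{v}$ recalled in \S\ref{subsubsection-cyclotomic-function-fields}, whence $C_{v^\l}\equiv\tau^{d\l}\pmod{v}$ and $C_{v^\l-1}=C_{v^\l}-\tau^0$. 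Given this, for $\ovl{a}\in\ovl{\Lambda}{}^*_\mfk$ and $\ovl{b}\in\ovl{\Lambda}_\mfk$ the defining relation $\ovl{a}\,\tau^0\,\ovl{C}_\mfk(\tau)=(\tau^0-\tau)\ovl{h}_{\ovl{a}}(\tau)$ forces $\ovl{h}_{\ovl{a}}(\tau)=-(\tau^0+\tau+\cdots+\tau^{d\l-1})\ovl{a}\,\tau^0$, so the reduced Poonen pairing evaluates as $\ovl{h}_{\ovl{a}}(\ovl{b})=-\Tr_{\FF_\Pfk/\Fq}(\ovl{ab})=-\angtr{\ovl{a},\ovl{b}}$ with $\FF_\Pfk\simeq\Fqdl$; together with the compatibility $\ang{\ovl{a},\ovl{b}}_{\ovl{\textnormal{Poon}}(\mfk)}=\ang{a,b}_{\textnormal{Poon}(\mfk)}$ this gives $\ang{\lambda_i^*,\lambda_j}_{\textnormal{Poon}(\mfk)}=-\angtr{\ovl{\lambda}{}^*_i,\ovl{\lambda}_j}$. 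Combining the displays, $\angres{a_i,b_j}$, $-\ang{\lambda_i^*,\lambda_j}_{\textnormal{Poon}(\mfk)}$ and $\angtr{\ovl{\lambda}{}^*_i,\ovl{\lambda}_j}$ are all equal to $\delta_{ij}$.

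Finally, Ore's formula follows from Proposition \ref{prop-lambdai*-and-poonen-pairing} with modulus $\mfk$: the family $\{\lambda_j=e(\alpha b_j/\mfk)\}_{j=1}^{d\l}$ is an $\Fq$-basis of $\Lambda_\mfk$ (because $\{b_j\}$ is an $\Fq$-basis of $A/\mfk$, multiplication by the unit $\alpha$ is an automorphism, and $e(\cdot/\mfk)\colon A/\mfk\to\Lambda_\mfk$ is an $\Fq$-linear isomorphism), and we have just seen $\ang{\lambda_i^*,\lambda_j}_{\textnormal{Poon}(\mfk)}=-\delta_{ij}$; hence $\lambda_i^*=(-1)^{d\l+i}(\Delta_i/\Delta)^q$ with $\Delta_i,\Delta$ the stated Moore determinants. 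Since every ingredient is already in place, there is no substantive obstacle; the only step needing a moment's care is confirming that $C_v\equiv\tau^d\pmod{v}$ propagates to $\ovl{C}_{v^\l-1}(\tau)=\tau^{d\l}-\tau^0$, which is exactly where the passage from the prototypical case $\mfk=v-1$ treated in \S\ref{subsubsection-residue-pairing-and-trace-pairing} to general $\mfk=v^\l-1$ enters.
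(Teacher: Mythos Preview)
Your proposal is correct and follows exactly the approach indicated in the paper: the theorem is a summary of Propositions \ref{prop-residue-and-poonen} and \ref{prop-lambdai*-and-poonen-pairing} (applied with modulus $\mfk$), together with the observation that the reduced-Poonen-pairing computation from the first half of the proof of Proposition \ref{prop-residue-and-poonen} goes through verbatim for $\mfk=v^\l-1$ once one checks $\ovl{C}_\mfk(\tau)=\tau^{d\l}-\tau^0$. You have in fact spelled out this last verification more explicitly than the paper does.
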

	
	\subsection{The behavior of geometric Gauss sums at infinity}    \label{subsection-the-behavior-of-geometric-gauss-sums-at-infinity}
	
	We now resume the study of geometric Gauss sums at infinity.
	Recall in Proposition \ref{prop-absolute-values}, we proved that their (normalized) valuations at any infinite place of $K_{\nfk,d\l}$ is $-1/(q-1)$.
	By our identification from $\ovl{k}$ into $\ovl{k}_\infty$, we may regard $\bggs_x$ as elements in $\ovl{k}_\infty$ and consider their signs at infinity.
	We choose a sign function so that $\sgn(\td{\pi})^{q-1} = -1$ (cf. \eqref{eq-period}).
	By choosing a suitable embedding, we assume without loss of generality that $\lambda = e(1/\nfk)$.
	Set $\mfk := v^\l-1$ as in \S\ref{subsection-a-scalar-product-expression}.
	
	\begin{prop}   \label{prop-sign}
		Let $\epsilon := \sgn(\td{\pi})$.
		For $x \in \nfk^{-1}A$ with $0 < |x|_\infty < 1$ and $0 \leq s \leq d\l-1$, we have
		$$
		\sgn((\bggs_x)^{\tau_q^s}) = -\epsilon \psi\left( \ovl{e^*(x)^q} \right)^{q^s}.
		$$
		In particular,
		$$
		\sgn(\ggs(x)) = (-\epsilon)^{d\l} \Nr_{\FF_{\Pfk}/\Fq} \left( \ovl{e^*(x)^q} \right).
		$$
	\end{prop}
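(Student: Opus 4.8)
The plan is to read off the leading terms at infinity from the scalar product expression of Theorem~\ref{thm-coleman-function-and-gauss-sum}, with the adjoint Carlitz exponential $e^*$ entering through the identifications of Theorem~\ref{thm-pairing-summary}. I would first dispose of the ``in particular'': since $\psi$ is a field isomorphism it commutes with the $q$-power map, so $\prod_{s=0}^{d\l-1}\psi(\ovl{e^*(x)^q})^{q^s}=\psi\big(\Nr_{\FF_{\Pfk}/\Fq}(\ovl{e^*(x)^q})\big)=\Nr_{\FF_{\Pfk}/\Fq}(\ovl{e^*(x)^q})$, the norm lying in $\Fq$ on which $\psi$ is the identity; as $\sgn(\ggs(x))=\prod_{s}\sgn((\bggs_x)^{\tau_q^s})$, the second formula follows from the first. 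It therefore suffices to prove $\sgn((\bggs_x)^{\tau_q^s})=-\epsilon\,\psi(\ovl{e^*(x)^q})^{q^s}$, which I would do for all $s$ at once. Put $\mfk:=v^\l-1$; since $\tau_q$ fixes $K_\mfk$, Theorem~\ref{thm-coleman-function-and-gauss-sum} combined with Theorem~\ref{thm-pairing-summary} (which gives $\lambda_j=e(\alpha b_j/\mfk)$ and $\lambda_i^*=e^*(\alpha'a_i/\mfk)^q$, with $\alpha\alpha'\equiv1\pmod\mfk$) yields
$$(\bggs_x)^{\tau_q^s}=1-\sum_{i=1}^{d\l}C_{\mfk x}(\lambda_i)\,\psi(\ovl{\lambda}{}^*_i)^{q^s},\qquad C_{\mfk x}(\lambda_i)=e(\alpha b_i x),$$
where the last equality is the functional equation~\eqref{eq-functional-equation-of-e} together with $\mfk x\in A$.

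Next is the analytic core. Work in the completion of $K_{\mfk,d\l}$ at the infinite place determined by the fixed embedding into $\ovl{k}_\infty$; it has uniformizer $1/\td\T$, residue field $\Fqdl$, and the chosen sign satisfies $\sgn(\td\pi)=\epsilon$ with $\epsilon^{q-1}=-1$. The elementary input is that for $w\in k$ with $0<|w|_\infty<1$ one has $e(w)=\exp_C(\td\pi w)=\td\pi\anginf{w}+(\text{terms of strictly larger }\ord_\infty)$: indeed $\ord_\infty(\td\pi\anginf{w})>\ord_\infty(\td\pi)$ precisely because $\ord_\infty(\anginf w)\ge1$, and a direct estimate shows every higher monomial $(\td\pi w)^{q^j}$ of $\exp_C$, divided by its coefficient, has $\ord_\infty$ strictly above $\ord_\infty(\td\pi\anginf w)$; also $e(w)=0$ when $w\in A$. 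Applying this to $C_{\mfk x}(\lambda_i)=e(\anginf{\alpha b_i x})$: each such term is $0$ or has $\ord_\infty\ge\ord_\infty(\td\pi)+1=-1/(q-1)$, while the constant $1$ has $\ord_\infty=0$. Since $\ord_\infty((\bggs_x)^{\tau_q^s})=-1/(q-1)$ by Proposition~\ref{prop-absolute-values}, the leading term of $(\bggs_x)^{\tau_q^s}$ must coincide with that of $-\td\pi\,S$, where $S:=\sum_i\anginf{\alpha b_i x}\,\psi(\ovl{\lambda}{}^*_i)^{q^s}$; in particular $\ord_\infty(S)=1$. As $S$ is an $\Fqdl$-linear combination of $A$-fractional parts, its sign is its leading coefficient, namely $\Res(S)$ (the residue extended $\Fqdl$-linearly), so $\sgn((\bggs_x)^{\tau_q^s})=\sgn(-1)\sgn(\td\pi)\Res(S)=-\epsilon\,\Res(S)$.

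It remains to identify $\Res(S)$. Since $\Res$ annihilates $A$ and the $\psi(\ovl{\lambda}{}^*_i)^{q^s}\in\Fqdl$ are constants, $\Res(S)=\sum_i\Res(\alpha b_i x)\,\psi(\ovl{\lambda}{}^*_i)^{q^s}$ with each $\Res(\alpha b_i x)\in\Fq$. Writing $x=\widetilde a_0/\mfk$ with $\deg\widetilde a_0<\deg\mfk$, one has $\Res(\alpha b_i x)=\angres{\alpha\widetilde a_0,b_i}$, so $\mfk$-duality gives $\sum_i\Res(\alpha b_i x)\,a_i\equiv\alpha\widetilde a_0\pmod\mfk$; feeding this into the $\Fq$-linear map $z\mapsto e^*(\alpha'z/\mfk)$, which kills $A$, and using $\alpha\alpha'\equiv1\pmod\mfk$ yields the clean identity $\sum_i\Res(\alpha b_i x)\,e^*(\alpha'a_i/\mfk)=e^*(x)$, whence $\sum_i\Res(\alpha b_i x)\,e^*(\alpha'a_i/\mfk)^q=e^*(x)^q$ after raising to the $q$-th power, the scalars being in $\Fq$. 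Applying the (additive, $\Fq$-linear) reduction and then $\psi$ and the $q^s$-power gives $\Res(S)=\psi(\ovl{e^*(x)^q})^{q^s}$, hence $\sgn((\bggs_x)^{\tau_q^s})=-\epsilon\,\psi(\ovl{e^*(x)^q})^{q^s}$, and the product over $s$ yields the last formula. The main obstacle I anticipate is the estimate in the second paragraph: one must check that the Carlitz-exponential tails and the constant $1$ are genuinely subdominant, which needs $\anginf{\alpha b_i x}$ to have positive valuation for all $i$ (automatic) and, crucially, that no cancellation destroys the leading contribution---this last point is handed to us for free by Proposition~\ref{prop-absolute-values}, which forces $\ord_\infty(S)$ to be exactly $1$---together with keeping the sign normalization ($1/\td\T$ a uniformizer, $\epsilon^{q-1}=-1$) straight so that the $\Fqdl$-constant $\Res(S)$ is read off correctly.
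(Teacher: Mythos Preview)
Your argument is correct, and the final identification of $\Res(S)$ via the dual-basis expansion and the map $z\mapsto e^*(\alpha'z/\mfk)$ is clean. The estimate in the second paragraph is fine; the infinite-product form $e(w)=\td\pi w\prod_{0\neq a\in A}(1+w/a)$ shows immediately that for $|w|_\infty<1$ one has $e(w)=\td\pi\anginf{w}\cdot(\text{1-unit})$, so the error $e(w)-\td\pi\anginf{w}$ has $\ord_\infty>\ord_\infty(\td\pi\anginf w)\ge -1/(q-1)$, which is exactly what you need.

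Your route differs from the paper's in both the starting expression and the endgame. The paper works directly from the Gauss-sum definition, writing $\Sigma=(\bggs_x)^{\tau_q^s}-1$ as a sum over all of $\FF_\Pfk^\times$ (that is, $q^{d\l}-1$ terms) via the parametrization $z^{-1}=\ovl{e(a/\mfk)}$; after extracting the leading contribution it converts $\Res(a_0a/\mfk)$ into $\Tr_{\FF_\Pfk/\Fq}(\ovl{\lambda}{}^*_{a_0}\ovl{\lambda}_a)$ using Theorem~\ref{thm-pairing-summary}, performs a multiplicative change of variables in $\FF_\Pfk^\times$ to pull out $\psi(\ovl{\lambda}{}^*_{a_0})^{q^s}$, and evaluates the remaining sum $\sum_{z}\Tr(z)\psi(z^{-1})^{q^s}=-1$ via Lemma~\ref{lem-descending-ggs}. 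You instead start from the $d\l$-term scalar product of Theorem~\ref{thm-coleman-function-and-gauss-sum} and, rather than passing to the trace pairing, expand $\alpha\td a_0$ in the dual basis $\{a_i\}$ and push through $e^*(\alpha'\,\cdot\,/\mfk)$ to read off $e^*(x)^q$ directly. Your approach is shorter and avoids both the change of variables and the character-sum lemma; the paper's approach stays closer to the defining sum and makes the residue/trace compatibility do the work. Both hinge on the same pairing comparison (Theorem~\ref{thm-pairing-summary}), just exploited in opposite directions.
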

	
	\begin{proof}
		Let $\Sigma := (\bggs_x)^{\tau_q^s} - 1$.
		Note by Proposition \ref{prop-absolute-values} and the non-Archimedean property, we have $\ord_{\td{\infty}} (\Sigma) = \ord_{\td{\infty}}((\bggs_x)^{\tau_q^s}) = -1/(q-1)$.
		Thus, it suffices to determine the sign of $\Sigma$.
		
		Write $x = a_0/\mfk$ with $\deg a_0 < d\l$.
		Since every element in $\FF_{\Pfk}^\times$ is represented by $e(a/\mfk) \in \Lambda_{\mfk}$ for some unique $0 \neq a \in A/\mfk$, we have
		$$
		\Sigma
		= \sum_{z \in \FF_{\Pfk}^\times} \omega\left(C_{\mfk x} (z^{-1})\right) \psi(z)^{q^s}
		= \sum_{0 \neq a \in A/\mfk} e(a_0a/\mfk) \psi\left( \ovl{e(a/\mfk)}^{-1} \right)^{q^s}.
		$$
		For each such $a$, we let $a'$ be the unique element in $A$ such that $a' \equiv a_0a \pmod{\mfk}$ and $\deg a' < d\l$.
		Then from the infinite product expression
		$$
		e(z) = \td{\pi}z \prod_{0 \neq a \in A} \left( 1+\frac{z}{a} \right),
		$$
		we see that
		\begin{align*}
			&e(a_0a/\mfk) = e(a'/\mfk) \text{ has minimal valuation}   \\
			\iff{} &\ord_{\infty}(a'/\mfk) = d\l - \deg a' \text{ is minimal}   \\
			\iff{} &\deg a' = d\l-1     \\
			\iff{} &\Res(a'/\mfk) = \Res(a_0a/\mfk)\neq 0.
		\end{align*}
		And in this case, $\sgn(e(a_0a/\mfk)) = \epsilon \Res(a_0a/\mfk)$.
		Thus, we have
		$$
		\sgn(\Sigma)
		= \epsilon \sum_{0 \neq a \in A/\mfk} \Res(a_0a/\mfk) \psi\left( \ovl{e(a/\mfk)}^{-1} \right)^{q^s}.
		$$
		Put $\lambda_a := e(a/\mfk)$ and $\lambda_{a_0}^* := e^*(a_0/\mfk)^q$.
		Then by the compatibility of the residue pairing and the trace pairing (Theorem \ref{thm-pairing-summary}), this implies that
		\begin{align*}
			\sgn(\Sigma)
			&= \epsilon \sum_{0 \neq a \in A/\mfk} \Tr_{\FF_\Pfk/\Fq} (\ovl{\lambda}{}^*_{a_0} \ovl{\lambda}_a) \psi \left( \ovl{\lambda}_a^{-1} \right)^{q^s}   \\
			&= \epsilon \sum_{0 \neq b \in A/\mfk} \Tr_{\FF_\Pfk/\Fq}(\ovl{\lambda}_b) \psi\left( \ovl{\lambda}{}^*_{a_0} \ovl{\lambda}_b^{-1} \right)^{q^s}
			= \epsilon \psi\left( \ovl{\lambda}{}^*_{a_0} \right)^{q^s} \sum_{z \in \FF_{\Pfk}^\times} \Tr_{\FF_\Pfk/\Fq}(z) \psi(z^{-1})^{q^s}.
		\end{align*}
		By Lemma \ref{lem-descending-ggs}, the last sum is seen to be $-1$.
		This completes the proof.
	\end{proof}
	
	\printbibliography
	
\end{document}